\newcommand{\mc}[1]{\mathcal{#1}}
\newcommand{\mf}[1]{\mathfrak{#1}}
\newcommand{\ol}[1]{\overline{#1}}
\newcommand{\ti}[1]{\textit{#1}}
\newcommand{\tx}[1]{\textrm{#1}}
\newcommand{\C}{\mathbb{C}}
\newcommand{\N}{\mathbb{N}}
\newcommand{\R}{\mathbb{R}}
\newcommand{\T}{\mathbb{T}}
\newcommand{\Z}{\mathbb{Z}}
\newcommand{\dd}{d}
\newcommand{\dds}{\frac{d}{ds}}
\newcommand{\ddt}{\frac{d}{dt}}
\newcommand{\del}{\delta}
\newcommand{\deta}{\,d\eta}
\newcommand{\ds}{\,ds}
\newcommand{\dt}{\,dt}
\newcommand{\dx}{\,dx}
\newcommand{\dxi}{\,d\xi}
\newcommand{\dy}{\,dy}
\newcommand{\eps}{\epsilon}
\newcommand{\bigo}{O}
\newcommand{\cn}{\operatorname{cn}}
\newcommand{\ft}{\mc{F}}
\newcommand{\hktilde}{\widetilde{H}_{\kappa}}
\newcommand{\hvktilde}{\widetilde{H}_{\varkappa}}
\newcommand{\hkdv}{H_{\tx{KdV}}}
\newcommand{\htkdv}{\widetilde{H}_{\tx{KdV}}}
\newcommand{\I}{\mf{I}}
\newcommand{\norm}[1]{\left\lVert#1\right\rVert}
\newcommand{\op}{\tx{op}}
\newcommand{\snorm}[1]{\lVert#1\rVert}
\newcommand{\tr}{\operatorname{tr}}
\newcommand{\wh}[1]{\widehat{#1}}
\newcommand{\wt}[1]{\widetilde{#1}}
\theoremstyle{plain}\newtheorem{thm}{Theorem}[section]
\theoremstyle{plain}\newtheorem{cor}[thm]{Corollary}
\theoremstyle{plain}\newtheorem{prop}[thm]{Proposition}
\theoremstyle{plain}\newtheorem{lem}[thm]{Lemma}
\theoremstyle{definition}
\theoremstyle{definition}\newtheorem{dfn}[thm]{Definition}
\theoremstyle{remark}
\theoremstyle{definition}\newtheorem*{ack}{Acknowledgments}
\crefname{sec}{Section}{Sections}
\crefname{dfn}{Definition}{Definitions}
\crefname{hyp}{Hypothesis}{Hypotheses}
\crefname{lem}{Lemma}{Lemmas}
\crefname{prop}{Proposition}{Propositions}
\crefname{thm}{Theorem}{Theorems}
\crefname{cor}{Corollary}{Corollaries}
\titleformat{\section}[block]{\normalfont\normalsize\scshape\filcenter}{\thesection.}{0.5em}{}
\titleformat{\subsection}[runin]{\normalfont}{\thesubsection.}{0.5em}{\bfseries}
\numberwithin{equation}{section}
\begin{document}

\title[KdV with exotic spatial asymptotics]{Global well-posedness for $H^{-1}(\mathbb{R})$ perturbations of {K}d{V} with exotic spatial asymptotics}

\author{Thierry Laurens}
\address{Thierry Laurens \\
	Department of Mathematics\\
	University of California, Los Angeles, CA 90095, USA}
\email{laurenst@math.ucla.edu}

\begin{abstract}
	Given a suitable solution $V(t,x)$ to the Korteweg--de Vries equation on the real line, we prove global well-posedness for initial data $u(0,x) \in V(0,x) + H^{-1}(\mathbb{R})$.
	
	Our conditions on $V$ do include regularity but do not impose any assumptions on spatial asymptotics.  We show that periodic profiles $V(0,x)\in H^5(\mathbb{R}/\mathbb{Z})$ satisfy our hypotheses.  In particular, we can treat localized perturbations of the much-studied periodic traveling wave solutions (cnoidal waves) of KdV.  In the companion paper~\cite{Laurens2021} we show that smooth step-like initial data also satisfy our hypotheses.  
	
	We employ the method of commuting flows introduced in~\cite{Killip2019} where $V\equiv 0$.  In that setting, it is known that $H^{-1}(\R)$ is sharp in the class of $H^s(\R)$ spaces.
\end{abstract}

\maketitle


\section{Introduction}

The Korteweg--de Vries (KdV) equation
\begin{equation}
	\ddt u = - u''' + 6uu'
	\label{eq:kdv}
\end{equation}
(where primes $u' = \partial_xu$ denote spatial differentiation) was proposed in~\cite{Korteweg1895} to describe the phenomena of solitary traveling waves (solitons) in shallow channels.  Since its introduction over a century ago, the KdV equation has been intensively studied on the line $\R$ and the circle $\R/\Z$ and been shown to exhibit numerous special features.

A fundamental line of investigation for KdV has been well-posedness in the $L^2$-based Sobolev spaces $H^s(\R)$ and $H^s(\R/\Z)$.  The derivative in the nonlinearity of KdV prevents straightforward contraction mapping arguments from closing, so preliminary results produced continuous dependence in a weaker norm than the space of initial data.  One of the first results to overcome this loss of derivatives phenomenon was obtained by Bona and Smith~\cite{Bona1975} who proved global well-posedness in $H^3(\R)$.  In the following decades, an extensive list of methods has been developed in the effort to lower the regularity $s$; see for example~\cites{Bona1976,Kato1975,Saut1976,Temam1969,Tsutsumi1971,Kenig1991,Bourgain1993,Christ2003,Kenig1996,Colliander2003,Guo2009,Kishimoto2009}.  Recently, a new low-regularity method was introduced in~\cite{Killip2019} that yields global well-posedness in $H^{-1}(\R)$ and $H^{-1}(\R/\Z)$, a result that is sharp in both topologies.  In the $\R/\Z$ case this result was already known~\cite{Kappeler2006}.

Solutions in $H^s(\R/\Z)$ spaces are spatially periodic and solutions in $H^s(\R)$ spaces decay at infinity.  However, there are other classes of initial data which are of physical interest.  Waveforms that are step-like---in the sense that $u(0,x)$ approaches different constant values as $x\to \pm\infty$---arise in the study of bore propagation (cf. \cites{Benjamin1954,Peregrine1966,Whitham1999,Gurevich1973,Caputo2003,Fornberg1978}) and rarefaction waves (cf. \cites{Fornberg1978,Zaharov1980,Novokshenov2003,Leach2008,Andreiev2016}).

Asymptotically periodic functions are another important class of initial data.  This includes localized perturbations of a single periodic profile, wave dislocation where the periods as $x\to\pm\infty$ may not align, and waves with altogether different periodic asymptotics as $x\to\pm\infty$.

Quasi-periodic spatial asymptotics are also heavily studied in the literature (see for example \cites{Binder2018,Damanik2016,Eichinger2019,Deift2008,Deift2017}).  As we will discuss more thoroughly below, these classes are excluded by traditional analysis on the circle $\R/\Z$.

Our objective in this paper and in~\cite{Laurens2021} is to extend low-regularity methods for well-posedness to the regime of exotic spatial asymptotics.  Specifically, we employ the method of commuting flows that was introduced in~\cite{Killip2019} and used there to prove well-posedness in $H^{-1}(\R)$ and $H^{-1}(\R/\Z)$.  Developments of the method of commuting flows have been used to prove both symplectic non-squeezing~\cite{Ntekoume2019} and invariance of white noise~\cite{Killip2020} for KdV on the line.  The method of commuting flows has also been adapted to other completely integrable systems, including the cubic NLS and mKdV equations~\cite{HarropGriffiths2020}, the fifth-order KdV equation~\cite{Bringmann2019}, and the derivative NLS equation~\cite{Killip2021}.  However, aside from the white noise result~\cite{Killip2020}, the method of commuting flows has not yet been applied to nontrivial spatial asymptotics.

The method of commuting flows relies on the existence of a generating function $\alpha(\kappa,u)$ for the KdV hierarchy of conserved quantities with the asymptotic expansion 
\begin{equation}
	\alpha(\kappa , u) = \frac{1}{4 \kappa^{3}} P(u) - \frac{1}{16 \kappa^{5}} \hkdv(u) + \bigo(\kappa^{-7})
	\label{eq:alpha intro}
\end{equation}
for Schwartz $u$.  Here, $P$ and $\hkdv$ denote the momentum and KdV energy functionals
\begin{equation}
	P(u) := \tfrac{1}{2} \int u(x)^2\dx , \qquad
	\hkdv(u) := \int \big( \tfrac{1}{2} u'(x)^2 + u(x)^3 \big) \dx .
	\label{eq:momentum hkdv}
\end{equation}
Rearranging the expansion~\eqref{eq:alpha intro}, we might expect that the dynamics of the Hamiltonians
\begin{equation}
	H_\kappa(u) := -16\kappa^5 \alpha(\kappa,u) + 4\kappa^2 P(u)
	\label{eq:hk intro}
\end{equation}
approximate that of KdV as $\kappa\to\infty$.  In~\cite{Killip2019} the authors demonstrated that the flow induced by the Hamiltonian~\eqref{eq:hk intro} is well-posed in $H^{-1}$ and converges to that of KdV in $H^{-1}$ as $\kappa\to\infty$.

Given a solution $V(t,x)$ to KdV we define
\begin{equation*}
V_\kappa(t,x) \tx{ to be the solution to the }H_\kappa\tx{ flow with initial data }V(0,x).
\end{equation*}
Both here and in~\cite{Laurens2021} we show that the well-posedness of the $H_\kappa$ flows follows from standard PDE techniques.  We contend that other integrable methods (such as the inverse scattering transform) could also be employed to study these integrable $H_\kappa$ flows.

Throughout this paper we assume that the background wave $V$ is sufficiently regular in the following sense.
\begin{dfn}
	\label{thm:hyp}
	We call the background wave $V(t,x) : \R\times\R \to \R$ \emph{admissible} if for every $T>0$ it satisfies the following:
	\begin{enumerate}[label=(\roman*)]
		\item $V$ solves KdV~\eqref{eq:kdv} and is bounded in $W^{2,\infty}(\R_x)$ uniformly for $|t|\leq T$,
		\item The $H_\kappa$ flows $V_\kappa$ are bounded in $W^{4,\infty}(\R_x)$ uniformly for $|t|\leq T$ and $\kappa>0$ sufficiently large,
		\item $V_\kappa - V \to 0$ in $W^{2,\infty}(\R_x)$ as $\kappa\to\infty$ uniformly for $|t|\leq T$ and initial data in the set $\{V_\varkappa(t) : |t|\leq T,\ \varkappa\geq\kappa\}$.
	\end{enumerate}
\end{dfn}
 
In this paper we will show that for admissible waves $V$ the KdV equation~\eqref{eq:kdv} is well-posed for $H^{-1}(\R)$ perturbations of $V$ (cf.~\cref{thm:wellposed 2}):
\begin{thm}[Global well-posedness]
	\label{thm:intro gwp}
	Given $V$ admissible in the sense of \cref{thm:hyp}, the KdV equation~\eqref{eq:kdv} with initial data $u(0) \in V(0) + H^{-1}(\R)$ is globally well-posed in the following sense: $u(t) = V(t) + q(t)$ and $q(t)$ is given by a jointly continuous data-to-solution map $\R_t \times H^{-1}(\R) \to H^{-1}(\R)$ for the equation
	\begin{equation}
	\ddt q = - q''' + 6qq' + 6(Vq)'
	\label{eq:tkdv}
	\end{equation}
	with initial data $q(0) = u(0) - V(0)$.
\end{thm}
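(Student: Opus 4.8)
The plan is to follow the method of commuting flows from \cite{Killip2019}, but with every estimate carried out relative to the background wave $V$. The key idea is that although $u(0)-V(0)\in H^{-1}(\R)$ does not decay in any absolute sense, the \emph{difference} $q = u - V$ does, so we can run the whole machinery on the equation \eqref{eq:tkdv} for $q$. First I would set up the perturbed generating function: since $\alpha(\kappa,u)$ depends on $u$ only through the diagonal Green's function of the Schr\"odinger operator $-\partial_x^2 + u$, and $u = V + q$ with $V$ bounded in $W^{2,\infty}$, the difference $\alpha(\kappa,V+q)-\alpha(\kappa,V)$ should be a well-defined real-analytic functional of $q\in H^{-1}(\R)$ for $\kappa$ large, with the same kind of quadratic-leading-order expansion and $H^{-1}$-bounds that \cite{Killip2019} proves in the $V\equiv 0$ case. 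This is where admissibility (i), giving $V\in W^{2,\infty}$ uniformly on compact time intervals, is essential: it guarantees that $-\partial_x^2 + V$ is a bounded perturbation of $-\partial_x^2$ and that the resolvent expansions converge.

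Next I would define the perturbed $H_\kappa$ dynamics for $q$: the Hamiltonian $H_\kappa(V_\kappa + q) - H_\kappa(V_\kappa)$ (with the Poisson structure $\partial_x$) generates a flow on $V_\kappa(t) + H^{-1}(\R)$, equivalently a flow $q\mapsto \Phi_\kappa^t q$ on $H^{-1}(\R)$ whose well-posedness follows from standard PDE arguments — here admissibility (ii), the uniform $W^{4,\infty}$ control of $V_\kappa$, supplies the coefficient bounds needed to close the a priori estimates. The conserved quantity $\alpha(\kappa,V_\kappa(t)+q(t)) - \alpha(\kappa,V_\kappa(t))$ (adjusted by the $P$-type term) should be constant under this flow and should dominate $\|q(t)\|_{H^{-1}}^2$ uniformly in $\kappa$, yielding the global-in-time $H^{-1}$ a priori bound. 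Then, exactly as in \cite{Killip2019}, the difference flow for $q$ under the full KdV evolution \eqref{eq:tkdv} and under the $H_\kappa$ evolution must agree to leading order: writing the two equations of motion and subtracting, the discrepancy is controlled by $\alpha(\kappa,\cdot)$-type remainder terms that are $O(\kappa^{-1})$ in a suitable negative-regularity norm. Passing to the limit $\kappa\to\infty$ along equicontinuous orbits then produces the KdV solution and its continuous dependence on the data.

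The main obstacle I expect is the $\kappa\to\infty$ convergence step, specifically controlling the commutator/error terms that arise because the $H_\kappa$ flow now acts on a $\kappa$-dependent affine subspace $V_\kappa(t)+H^{-1}$ rather than the fixed subspace $H^{-1}$. One must show that the difference between "flow $V_\kappa$ then add $q$" and "flow $V+q$ directly" tends to zero, and this is precisely what admissibility (iii) is designed to handle: it forces $V_\kappa - V\to 0$ in $W^{2,\infty}$ uniformly over the relevant family of initial data, so that the affine subspaces converge and the extra terms $6((V_\kappa - V)q)'$ appearing in the difference equation are negligible in $H^{-1}$. Assembling these pieces — local well-posedness of \eqref{eq:tkdv} from the $H_\kappa$ approximants, the global a priori bound from the conserved $\alpha$, equicontinuity of the orbits in $H^{-1}$, and the joint continuity of the limiting data-to-solution map — gives \cref{thm:intro gwp}. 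Uniqueness in the stated class follows because any two solutions with the same data have difference solving a linear equation with $H^{-1}$ coefficients, for which an energy estimate in $H^{-2}$ (or the weak-continuity argument of \cite{Killip2019}) forces the difference to vanish.
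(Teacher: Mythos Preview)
Your outline captures the broad strategy but has two genuine gaps that the paper has to work hard to get around.

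First, the claim that $\alpha(\kappa,V_\kappa(t)+q(t)) - \alpha(\kappa,V_\kappa(t))$ (or any renormalized variant) is \emph{conserved} under the $\hktilde$ flow is not correct, and in fact the relevant quantity $\alpha(\varkappa,q(t))$ for the perturbation alone is not conserved either. The background $V_\kappa$ enters the $\hktilde$ flow as a time-dependent coefficient, and this breaks every macroscopic conservation law. (Note also that $\alpha(\kappa,V_\kappa)$ as written is not even defined: $V_\kappa\notin H^{-1}(\R)$, so the integral defining $\alpha$ diverges.) What the paper actually proves (\cref{thm:hktilde alphadot 1}) is a Gr\"onwall-type estimate $|\frac{d}{dt}\alpha(\varkappa,q(t))| \leq C\,\alpha(\varkappa,q(t))$ with $C$ independent of $\kappa$, valid for $\kappa\geq 2\varkappa$. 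This requires a nontrivial ``commutativity relation trick'' exploiting the ODE \eqref{eq:g alg prop 3} for the diagonal Green's function at two different energies, and the restriction $\kappa\geq 2\varkappa$ means you do not get full $H^{-1}$-equicontinuity of the orbits but only a weaker substitute (cf.\ the argument in \cref{thm:diff flow conv 2}).

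Second, your convergence step assumes you can compare the $H_\kappa$ flow to the KdV flow directly, but at $H^{-1}$ regularity there is no pre-existing KdV solution to compare against: one cannot make sense of the nonlinearity of \eqref{eq:tkdv} distributionally, and the paper explicitly avoids invoking any classical existence result for KdV with this background. Instead the paper shows the $\hktilde$ flows $q_\kappa(t)$ form a \emph{Cauchy sequence} in $C_tH^{-1}$ as $\kappa\to\infty$ (\cref{thm:existence}), by exploiting the commutativity of the $H_\kappa$ and $H_\varkappa$ flows to write $q_\varkappa(t) - q_\kappa(t)$ as the solution to a single ``difference flow'' \eqref{eq:diff flow q} minus its initial data, and then proving this goes to zero first in $H^{-2}$ (\cref{thm:diff flow conv 1}) and then upgrading to $H^{-1}$ using the partial equicontinuity from the first step. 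The limit is then \emph{defined} to be the solution, and \cref{thm:intro classical solns} separately checks that for $H^3$ data it coincides with the classical one. Your final uniqueness sentence is also too optimistic: the theorem as stated does not assert uniqueness at the $H^{-1}$ level, only continuous dependence of the constructed map.
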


Using the techniques introduced in~\cite{Killip2019} (cf. Cor.~5.3) one can also obtain well-posedness in $H^s(\R)$ for $s>-1$.  The key ingredient in the argument is $H^{-1}$-equicontinuity, which will be further discussed below.

As we cannot make sense of the nonlinearity of KdV for $H^{-1}(\R)$ solutions (even in the distributional sense), the solutions in \cref{thm:intro gwp} will be constructed as limits of the $H_\kappa$ flows as $\kappa\to \infty$.  This is the right notion of solution because it coincides with the classical notion on a dense subset of initial data and \cref{thm:intro gwp} guarantees continuous dependence of the solution upon the initial data.  To provide further evidence in this direction, in \cref{sec:classical} we show that for initial data $u(0) \in V(0) + H^3(\R)$ our solution $u(t)$ solves KdV and is unique:
\begin{thm}
	\label{thm:intro classical solns}
	Fix $V$ admissible and $T>0$.   Given initial data $u(0) \in V(0) + H^{3}(\R)$, the solution $u(t)$ constructed in \cref{thm:intro gwp} lies in $V(t) + (C_tH^2 \cap C^1_tH^{-1})$ $([-T,T]\times\R)$ for all $T>0$, solves KdV~\eqref{eq:kdv}, and is unique in this class.
\end{thm}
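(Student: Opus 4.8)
The solution produced by \cref{thm:intro gwp} is $u=V+q$, where $q$ is the limit in $C([-T,T];H^{-1}(\R))$ of $q_\kappa:=u_\kappa-V_\kappa$, the difference of the $H_\kappa$ flows launched from $u(0)$ and from $V(0)$. The plan is: first, when $q(0)=u(0)-V(0)\in H^3(\R)$, to promote this $H^{-1}$-convergence to convergence in $C_tH^2$ by interpolating against a $\kappa$-uniform $H^3$ bound; second, to pass to the limit in the $H_\kappa$-flow equation for $q_\kappa$ and conclude that $q$ solves \eqref{eq:tkdv}, so that $u=V+q$ solves \eqref{eq:kdv} and lies in the claimed class; third, to obtain uniqueness in the class from a crude $L^2$ energy estimate for the difference of two solutions.

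The crux is the input to the first step: the bound
\begin{equation*}
	\sup_{|t|\le T}\norm{q_\kappa(t)}_{H^3(\R)}\le C\big(T,V,\norm{u(0)-V(0)}_{H^3}\big)
\end{equation*}
uniformly for $\kappa$ large. For each fixed $\kappa$ this is persistence of $H^3(\R)$ regularity for the well-posed $H_\kappa$ flow started from $q_\kappa(0)\in H^3(\R)$, so $q_\kappa\in C([-T,T];H^3(\R))$; the issue is uniformity in $\kappa$. I would derive it exactly as the $H^{-1}$ a priori bounds underpinning \cref{thm:intro gwp} are derived --- now from the polynomial conserved quantities of the $H_\kappa$ hierarchy, namely the ones that govern $H^0,\dots,H^3$ for KdV, renormalized to make sense against the non-decaying background $V_\kappa$ --- together with the uniform $W^{4,\infty}(\R)$ control of $V_\kappa$ from \cref{thm:hyp}(ii), which is precisely what absorbs the linear coupling $6(V_\kappa q_\kappa)'$ upon differentiating $\norm{q_\kappa(t)}_{H^3}^2$. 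It is the conservation-law structure that prevents $\norm{q_\kappa}_{H^3}$ from blowing up on $[-T,T]$. I expect this uniform-regularity estimate, in the presence of exotic spatial asymptotics, to be the main obstacle; the rest is soft.

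Granting it, the interpolation $\norm{f}_{H^2}\le\norm{f}_{H^{-1}}^{1/4}\norm{f}_{H^3}^{3/4}$ combined with $q_\kappa\to q$ in $C_tH^{-1}$ shows $\{q_\kappa\}$ is Cauchy, hence convergent, in $C([-T,T];H^2(\R))$, so $q\in C([-T,T];H^2(\R))$ (and $q\in L^\infty_tH^3$ by weak-$*$ compactness). Passing to the limit in the $H_\kappa$-flow equation for $q_\kappa$: using the convergence of the $H_\kappa$-flow vector field to that of \eqref{eq:tkdv} established while proving \cref{thm:intro gwp}, together with $q_\kappa\to q$ in $C_tH^2$ (so $q_\kappa'''\to q'''$ in $C_tH^{-1}$ and $q_\kappa q_\kappa'=\tfrac12(q_\kappa^2)'\to\tfrac12(q^2)'$ in $C_tH^1$, since $H^2(\R)$ is an algebra) and $V_\kappa\to V$ in $C_tW^{2,\infty}$ with $V_\kappa$ bounded there (\cref{thm:hyp}(ii)--(iii), giving $(V_\kappa q_\kappa)'\to(Vq)'$ in $C_tL^2$), the right-hand side converges to $-q'''+6qq'+6(Vq)'$ in $C_tH^{-1}$. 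Matching the limits, $q$ solves \eqref{eq:tkdv} in the sense of distributions with right-hand side in $C([-T,T];H^{-1}(\R))$, so $q\in C^1([-T,T];H^{-1}(\R))$; adding the KdV equation for $V$ (\cref{thm:hyp}(i)) gives $\partial_t u=-u'''+6uu'$.

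For uniqueness, suppose $\wt u=V+\wt q$, with $\wt q\in(C_tH^2\cap C^1_tH^{-1})([-T,T]\times\R)$, also solves \eqref{eq:kdv} with $\wt u(0)=u(0)$, and set $w:=q-\wt q$. Then $w(0)=0$ and, subtracting the two instances of \eqref{eq:tkdv},
\begin{equation*}
	\partial_t w=-w'''+3\big((q+\wt q)\,w\big)'+6(Vw)'.
\end{equation*}
Since $w\in C_tH^1$ and $\partial_t w\in C_tH^{-1}$, the map $t\mapsto\norm{w(t)}_{L^2}^2$ is $C^1$ with derivative $2\langle\partial_t w,w\rangle_{H^{-1}\times H^1}$; the dispersive term contributes nothing, and integrating by parts in the remaining terms yields
\begin{equation*}
	\ddt\norm{w(t)}_{L^2}^2=\int\big(3(q+\wt q)'+6V'\big)\,w^2\dx\le C\norm{w(t)}_{L^2}^2,
\end{equation*}
where $C$ depends on $\sup_{|t|\le T}\big(\norm{q(t)}_{H^2}+\norm{\wt q(t)}_{H^2}\big)$ through $H^2(\R)\hookrightarrow W^{1,\infty}(\R)$ and on $\sup_{|t|\le T}\norm{V'(t)}_{L^\infty}<\infty$. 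Grönwall and $w(0)=0$ force $w\equiv0$, i.e.\ $\wt u=u$.
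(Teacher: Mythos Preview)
Your proposal is correct and follows essentially the same architecture as the paper: a $\kappa$-uniform $H^3$ a~priori estimate for $q_\kappa$ (via the polynomial conservation laws of KdV, which the $H_\kappa$ flow preserves), interpolation with the $C_tH^{-1}$ convergence to upgrade to $C_tH^2$, passage to the limit in the equation, and an $L^2$ energy argument for uniqueness. You correctly identify the uniform $H^3$ bound as the main work.

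One inaccuracy worth flagging: the convergence of the $\hktilde$ vector field to that of \eqref{eq:tkdv} is \emph{not} established in the proof of \cref{thm:intro gwp}. The well-posedness argument in \cref{sec:conv 1,sec:conv 2} proceeds indirectly through the diffeomorphism $q\mapsto 1/g(k,q)$ and never shows that $16\kappa^5[g'(\kappa,q_\kappa+V_\kappa)-g'(\kappa,V_\kappa)]+4\kappa^2 q_\kappa'$ converges to $-q'''+6qq'+6(Vq)'$ in any norm. The paper carries this out separately (\cref{thm:classical flow conv}) by expanding $g'(\kappa,q_\kappa+V_\kappa)-g'(\kappa,V_\kappa)$ as a double series in powers of $q_\kappa$ and $V_\kappa$, then using the explicit operator identities \eqref{eq:linear op id}--\eqref{eq:quad op id} to show the linear term yields $-q'''$, the two quadratic terms yield $6qq'$ and $6(Vq)'$, and the cubic-and-higher tail is $O(\kappa^{-1})$ in $H^{-1}$. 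This genuinely uses the $C_tH^2$ convergence of $q_\kappa$ and the $W^{4,\infty}$ bounds on $V_\kappa$. The ingredients you list ($q_\kappa'''\to q'''$, $q_\kappa q_\kappa'\to qq'$, $(V_\kappa q_\kappa)'\to(Vq)'$) are exactly the limits of the leading series terms once extracted, but the extraction itself is where the computation lies; your sketch skips it.
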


There are rich classes of initial data that are admissible according to our definition.  In this paper we cover the important case of smooth periodic initial data $V(0,x)$ (cf.~\cref{thm:periodic gwp}):
\begin{cor}[Periodic background]
	\label{thm:intro periodic}
	Given $V(0) \in H^5(\R/\Z)$, the KdV equation~\eqref{eq:kdv} is globally well-posed for $u(0) \in V(0) + H^{-1}(\R)$ in the sense of \cref{thm:intro gwp}.
\end{cor}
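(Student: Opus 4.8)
The plan is to derive \cref{thm:intro periodic} from \cref{thm:intro gwp}: letting $V(t,x)$ be the KdV evolution of the periodic profile $V(0,\cdot)\in H^5(\R/\Z)$, it suffices to verify that $V$ is admissible in the sense of \cref{thm:hyp}. A preliminary observation reduces everything to the circle: both the KdV flow and the $H_\kappa$ flow (which, like KdV, is well-posed on $\R/\Z$) preserve $\Z$-periodicity, so $V(t,\cdot)$ and $V_\kappa(t,\cdot)$ are $\Z$-periodic for every $t$ and $\kappa$, and their $W^{k,\infty}(\R_x)$ norms coincide with the corresponding norms on $\R/\Z$. Thus conditions (i)--(iii) of \cref{thm:hyp} become statements about the KdV and $H_\kappa$ flows of periodic data in $H^5(\R/\Z)$.

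Condition (i) is essentially classical: global well-posedness of KdV on the circle yields $V\in C_tH^5(\R/\Z)$, the polynomial conservation laws of KdV bound $\norm{V(t)}_{H^5(\R/\Z)}$ uniformly for $|t|\le T$, and $H^3(\R/\Z)\hookrightarrow W^{2,\infty}$ gives the claimed $W^{2,\infty}$ bound. The main work is condition (ii). First I would establish persistence of $H^5$ regularity for the $H_\kappa$ flow on $\R/\Z$, which follows from the local theory for this (nonlocal but well-behaved) equation; then, since the $H_\kappa$ flow commutes with the KdV hierarchy and therefore conserves the same polynomial densities that control the Sobolev norms, a coercivity argument bounds $\norm{V_\kappa(t)}_{H^5(\R/\Z)}$ uniformly for $|t|\le T$ \emph{and} uniformly in $\kappa$ large---the conserved functionals and the coercivity inequalities being $\kappa$-independent, once the a priori $H^{-1}$ bound for the $H_\kappa$ flow from~\cite{Killip2019} is invoked to close the lowest-order term. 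Since $H^5(\R/\Z)\hookrightarrow W^{4,\infty}$, this is condition (ii).

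Condition (iii) is then soft. From~\cite{Killip2019} one has $V_\kappa(t)\to V(t)$ in $H^{-1}(\R/\Z)$ as $\kappa\to\infty$, uniformly for $|t|\le T$; interpolating this against the uniform $H^5$ bound of (ii) promotes the convergence to $H^s(\R/\Z)$ for every $s<5$, and taking $s=3$ with $H^3(\R/\Z)\hookrightarrow W^{2,\infty}$ yields $V_\kappa-V\to 0$ in $W^{2,\infty}(\R_x)$. To get uniformity over the family of initial data $\{V_\varkappa(t):|t|\le T,\ \varkappa\ge\kappa\}$, note that by (ii) this family is bounded in $H^5(\R/\Z)$, hence precompact in $H^3(\R/\Z)$; the same interpolation argument, run along the $H_\kappa$ and KdV flows of any member of this family (which again remain in a fixed $H^5$ ball by the conservation laws), together with the continuous dependence of both flows on the initial datum uniformly in $\kappa$, upgrades the pointwise convergence to uniform convergence over the family by a standard finite-cover argument.

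The hard part is condition (ii): controlling $V_\kappa$ in a high-regularity norm \emph{uniformly in $\kappa$}. The well-posedness results of~\cite{Killip2019} for the $H_\kappa$ flow only supply $H^{-1}$ control, so reaching $W^{4,\infty}$ forces genuine use of the integrable structure---identifying the conserved quantities shared by the $H_\kappa$ and KdV flows, verifying that their coercivity is uniform in $\kappa$, and proving persistence of $H^5$ regularity under the $H_\kappa$ evolution on the circle. Once those $\kappa$-uniform estimates are in hand, (i) is classical and (iii) reduces to interpolation and compactness as above.
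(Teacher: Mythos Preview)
Your overall strategy---verify admissibility by (i) classical periodic KdV theory, (ii) $\kappa$-uniform $H^5$ bounds for the $H_\kappa$ flow via the shared polynomial conservation laws, and (iii) low-regularity convergence plus interpolation---is sound, and for (i) and (ii) it matches what the paper does. The genuine difference is in (iii): the paper does \emph{not} quote the $H^{-1}(\T)$ convergence from~\cite{Killip2019}, but instead proves directly, by an energy computation for $P(t)=\tfrac12\|V_\kappa(t)-V(t)\|_{L^2(\T)}^2$, that $|\tfrac{d}{dt}P|\le C(P+o(1)\sqrt{P})$ uniformly over initial data in a fixed $H^5$ ball. This takes a couple of pages (extracting linear and quadratic terms of $g'(\kappa,V_\kappa)$, a residue/trapezoid computation in Fourier variables, etc.) but is self-contained and automatically uniform over the ball. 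Your route is shorter and softer, leaning on~\cite{Killip2019} for the low-regularity convergence; that is a legitimate alternative.

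There is, however, a gap in how you obtain uniformity over the family $\{V_\varkappa(t):|t|\le T,\ \varkappa\ge\kappa\}$. Your finite-cover argument requires continuous dependence of the $H_\kappa$ flow on initial data \emph{uniformly in $\kappa$}, and this is not available by the naive route: the vector field $q\mapsto 16\kappa^5 g'(\kappa,q)+4\kappa^2 q'$ has Lipschitz constant that grows polynomially in $\kappa$, so Gr\"onwall gives a modulus of continuity like $e^{C\kappa^{4}T}$, which defeats the cover. The fix is to bypass the finite-cover step entirely: the convergence result in~\cite{Killip2019} is already stated uniformly over bounded \emph{equicontinuous} sets of initial data in $H^{-1}(\T)$, and your family---being bounded in $H^5(\T)$---is precompact (hence equicontinuous) in $H^{-1}(\T)$. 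Invoking that uniform version directly, then interpolating against the $\kappa$-uniform $H^5$ bound from (ii), gives the required $H^3$ (hence $W^{2,\infty}$) convergence uniformly over the family, with no need for continuous dependence of the $H_\kappa$ flow uniformly in $\kappa$.
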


In particular, this includes the periodic traveling wave solutions (cnoidal waves) of KdV.  Indeed, among all periodic asymptotics appearing in the literature, the most common choice for the background wave $V$ are the periodic traveling wave solutions to KdV.  These are the spatially periodic analogues of solitons, and they can be expressed in terms of the Jacobian elliptic cosine function $\cn(z;k)$ as
\begin{equation}
	V(t,x) = \eta - h \cn^2\left[ \sqrt{\tfrac{h}{2k^2}} (x-ct) ; k \right] .
	\label{eq:cnoidal 1}
\end{equation}
Here, $k\in [0,1)$ is the elliptic modulus, $h>0$ is the wave height, and the trough level $\eta$ and wave speed $c$ are determined by
\begin{equation*}
	\eta = \tfrac{h}{k^2} \left[ \tfrac{E(k)}{K(k)} - 1 + k^2 \right] , \qquad
	c = \tfrac{2h}{k^2} \left[ 2 - k^2 - \tfrac{3E(k)}{K(k)} \right]
\end{equation*}
where $K(k)$ and $E(k)$ are the complete elliptic integrals of the first and second kind.  In view of the representation~\eqref{eq:cnoidal 1}, Korteweg and de Vries~\cite{Korteweg1895} dubbed these solutions \ti{cnoidal waves}.

The explicit solutions~\eqref{eq:cnoidal 1} have been used to justify many empirical observations of water waves.  In the pioneering paper~\cite{McKean1977}, McKean proved orbital (nonlinear) stability of cnoidal waves with respect to co-periodic $C^k(\R/L\Z)$ perturbations using energy arguments, where $L$ denotes the period of the underlying cnoidal wave.  Orbital stability in $H^1(\R/L\Z)$ was later proven using variational methods~\cite{Pava2006}, and (in)stability results in $H^1(\R/L\Z)$ have since been obtained for generalized KdV, fractional KdV, and various other families of nonlinear dispersive equations; see for example~\cites{Pava2008,Neves2009,Arruda2009,Chen2013,Natali2014,Hur2015,Kapitula2015,BenzoniGavage2016,Alves2019,Andrade2019}.   Orbital stability has also been demonstrated at the lower regularity $L^2(\R/L\Z)$~\cites{Johnson2009,Bronski2011,Erdogan2012} and for $L^2(\R/nL\Z)$ with $n>1$~\cite{Nivala2010}.  All of the orbital stability results we know of pertain to spatially periodic perturbations.

The stability of cnoidal waves with respect to localized perturbations has only been studied in the context of spectral (linear) stability, where the linearized equation is considered as an operator on $L^2(\R)$ or $L^2(\R/\Z)$ perturbations.  Spectral stability for $L^2(\R)$ perturbations of cnoidal waves was established in~\cite{Bottman2009}, and spectral and modulational (in)stability for KdV-like equations have been explored in~\cites{Haragus2008,Bronski2010,Johnson2010,Nivala2010,Johnson2013,AnguloPava2016,BenzoniGavage2016,Jin2019}.  Evidently, a full nonlinear (in)stability theory must be predicated on one's ability to solve the equation for such initial data.  Indeed, this is one of the inspirations of the work undertaken herein.

More generally, existence for the Cauchy problem with periodic spatial asymptotics was first addressed in the physics literature for the case of cnoidal waves~\cite{Kuznetsov1974}, and again for more general periodic backgrounds in~\cites{Ermakova1982,Ermakova1982a,Firsova1988} via the inverse scattering transform.  A complete mathematical treatment of the Cauchy problem for highly regular initial data with distinct periodic asymptotics as $x\to\pm\infty$ was later given in~\cites{Egorova2009,Egorova2011}.

In the companion paper~\cite{Laurens2021} we will also apply \cref{thm:intro gwp} to step-like initial data.  This requires showing that smooth step-like $V$ are admissible; while such $V$ are highly studied, their $H_\kappa$ flows $V_\kappa$ are not.  We believe that \cref{thm:intro gwp} can also be applied to classes of quasi-periodic initial data, or any other class amenable to complete integrability methods.  The known results on exotic backgrounds use integrable methods like the inverse scattering transform, which are well suited to treat the $H_\kappa$ flows and verify the admissibility criteria.

In the case of a highly regular step-like background, existence for the Cauchy problem has been examined in~\cites{Buslaev1962,Cohen1984,Kappeler1986,Cohen1987,Egorova2009,Egorova2011} using the inverse scattering transform.  This process has been adapted to classes of one-sided step-like initial data~\cites{Rybkin2011,Grudsky2014,Rybkin2018} and even to one-sided step-like elements of $H^{-1}_{\tx{loc}}(\R)$~\cite{Grudsky2015}.  Despite the lack of assumptions at $-\infty$ (the direction in which radiation propagates), these low-regularity arguments require rapid decay at $+\infty$ and global boundedness from below, while our analysis is symmetric in $\pm x$ and in $\pm u$.  A great deal of attention in the literature has been given to the long-time behavior of such solutions; see for example~\cites{Hruslov1976,Kotlyarov1986,Kotlyarov1986a,Bikbaev1989,Bikbaev1989a,Bikbaev1989b,Khruslov1994,Khruslov1998,Baranetskiui2001,Novokshenov2003,Egorova2013,Andreiev2016}.  The asymptotics are spatially asymmetric and differ between the cases of tidal bores and rarefaction waves.

The broader question of well-posedness for perturbations of a fixed background wave has also been studied.  This was first done in the context of step-like backgrounds by the authors of~\cite{Iorio1998}, who proved local well-posedness for perturbations in $H^s(\R)$, $s>\tfrac{3}{2}$ and global well-posedness for $s\geq 2$.  Local well-posedness was later extended to $s>1$ in~\cite{Gallo2005} for the same class of step-like backgrounds.  Independently, local well-posedness for perturbations in $H^2(\R)$ was proved for gKdV in~\cite{Zhidkov2001}, along with global-in-time existence in the case of a kink solution background wave and initial data that is small in $H^{1}(\R)$.

Subsequent to our work, a new result~\cite{Palacios2021} for gKdV achieves local well-posedness for perturbations in $H^s(\R)$, $s>\tfrac{1}{2}$ and global well-posedness for $s\geq 1$.  In addition to a larger family of equations, this work also applies to a wide variety of background waves, including both step-like and periodic asymptotics.  In particular, the background wave is not assumed to be time-independent nor an exact solution, but rather is allowed to solve the equation modulo a localized error term.

In all cases, the presence of the background wave $V$ breaks the macroscopic conservation laws of KdV.  If $q$ is a regular solution to~\eqref{eq:tkdv} then the momentum functional~\eqref{eq:momentum hkdv} evolves according to
\begin{equation*}
\ddt \int q(t,x)^2 \dx = 6 \int V'(t,x) q(t,x)^2\dx .
\end{equation*} 
Interpreting $V$ as a potential-like function that affects the change in momentum, we will refer to~\eqref{eq:tkdv} as \emph{KdV with potential}.  In particular, for increasing step-like initial data $V(0,x)$ the term $V'$ has a sign and there is no cancellation in the integral; the resulting growth in the momentum is manifested in a dispersive shock that develops in the long-time asymptotics~\cite{Egorova2013}*{Fig.~1}.  Despite this lack of conservation, we are able to adapt the method of commuting flows to KdV with potential~\eqref{eq:tkdv} because these conserved quantities do not blow up in finite time.

In order to outline our argument, we will first introduce some notation.  The KdV equation~\eqref{eq:kdv} is governed by the Hamiltonian functional~\eqref{eq:momentum hkdv} via the Poisson structure
\begin{equation*}
\{ F,G \} = \int \frac{\del F}{\del q}(x) \bigg( \frac{\del G}{\del q} \bigg)'(x) \dx .
\end{equation*}
Here we are using the notation
\begin{equation*}
dF|_q(f)
= \frac{d}{ds} \bigg|_{s=0} F(q+sf)
= \int \frac{\del F}{\del q}(x) f(x) \dx
\end{equation*}
for the derivative of the functional $F(q)$.  This Poisson structure is the bracket associated to the almost complex structure $J := \partial_x$ and the $L^2$ pairing.  The momentum functional~\eqref{eq:momentum hkdv} generates translations under this structure, and~\eqref{eq:tkdv} is the evolution induced by the time-dependent Hamiltonian
\begin{equation*}
	\htkdv(q) := \int \big( \tfrac{1}{2} q'(x)^2 + q(x)^3 + 3V(t,x)q(x)^2 \big) \dx .
\end{equation*}

Our analysis will not rely upon this Hamiltonian structure, but we will borrow the convenient notations
\begin{equation*}
q(t) = e^{t J \nabla H} q(0) \quad\tx{for the solution to}\quad \frac{d q}{d t} = \partial_{x} \frac{\delta H}{\delta q} ,
\end{equation*}
and
\begin{equation*}
\ddt F(q(t)) = \{ F(q),H(q) \}
\quad\tx{for the quantity }F(q)\tx{ with }q(t) = e^{t J \nabla H} q(0) .
\end{equation*}
For example, given a background solution $V(t,x)$ to KdV,
\begin{equation}
V_\kappa(t,x) := e^{tJ\nabla H_\kappa} V(0,x)
\label{eq:vk dfn}
\end{equation}
denotes the solution to the $H_\kappa$ flow with initial data $V(0,x)$.

Just as the Hamiltonian $H_\kappa$ approximates $\hkdv$, we define $\hktilde$ to approximate $\htkdv$; subtracting the background $V$ from $u$ we obtain the $\hktilde$ flow of $q = u - V$ from time $0$ to $t$:
\begin{equation*}
\wt{\Phi}_{\kappa}(t) q = e^{tJ\nabla H_\kappa} (q+V(0)) - V_\kappa(t) .
\end{equation*}
We will show that for admissible $V$ the $\hktilde$ flow is well-posed in $H^{-1}$ and converges to the KdV equation with potential~\eqref{eq:tkdv} in $H^{-1}$ uniformly on bounded time intervals as $\kappa\to\infty$.  As in~\cite{Killip2019}, one asset of this method is that well-posedness of the $\hktilde$ flow in $H^{-1}(\R)$ follows from an ODE argument because $\alpha$ is real-analytic on $V + H^{-1}(\R)$ and $P$ generates translations.

The major structural difference of our argument from that in~\cite{Killip2019} is that we cannot assume the existence of regular solutions to~\eqref{eq:tkdv}.  Although some results in this direction do exist (e.g.~\cites{Egorova2009,Egorova2011}), we would need to significantly increase our assumptions on the background wave $V$ in order to employ them.  Instead of showing that the $\hktilde$ flows converge to that of KdV with potential as $\kappa\to\infty$, we show that the $\hktilde$ flows are Cauchy and we define the limit to be an $H^{-1}$ solution of~\eqref{eq:tkdv}.  To verify that this is indeed the solution map, we show that it is a jointly continuous map from $\R_t \times H^{-1}(\R)$ to $H^{-1}(\R)$ and agrees with the classical notion of solution on a dense subset of initial data (cf. \cref{thm:intro classical solns}).    The proof of \cref{thm:intro classical solns} relies on an energy argument in $H^3(\R)$ (similar to that of Bona--Smith~\cite{Bona1975}) using the fact that the $H_\kappa$ flows also conserve the polynomial conservation laws of KdV (as is suggested by the asymptotic expansion~\eqref{eq:alpha intro} and Poisson commutativity).

Following the argument of~\cite{Killip2019}, the Cauchyness of the $\hktilde$ flows in $H^{-1}(\R)$ as $\kappa\to\infty$ is implied by convergence at some lower $H^s(\R)$ regularity together with equicontinuity in $H^{-1}(\R)$ for all $\kappa$ large.  In the $V\equiv 0$ case~\cite{Killip2019}, equicontinuity quickly followed from the relation~\eqref{eq:alpha 3} and the conservation of the functional $\alpha(\varkappa,q)$ under both the $H_\kappa$ and KdV flows.  However, the appearance of the background wave $V$ in the $\hktilde$ flow breaks the conservation of $\alpha(\varkappa,q)$.  Instead of obtaining full equicontinuity, we introduce a dependence between the energy parameters $\kappa$ and $\varkappa$ in the proof of Cauchyness (cf. \cref{thm:diff flow conv 2}) that we can match when we estimate the growth of $\alpha(\varkappa,q)$ (cf. \cref{thm:hktilde alphadot 1}).

This paper is organized as follows.  In \cref{sec:prelim} we introduce the diagonal Green's function $g$ for perturbations $q\in H^{-1}(\R)$ of the background wave $V$, the logarithm $\alpha(\kappa,q)$ of the renormalized perturbation determinant, and the $\hktilde$ flow~\eqref{eq:hktilde flow}.  In \cref{sec:hktilde} we obtain an \ti{a priori} estimate for the growth of $\alpha(\varkappa,q)$ under the dynamics of $\hktilde$ flow (\cref{thm:hktilde alphadot 1}) with enough independence of the energy parameters $\kappa$ and $\varkappa$ to aid in the proof of equicontinuity.  In \cref{thm:hktilde alphadot 2} we then prove that the $\hktilde$ flow is well-posed in $H^{-1}(\R)$.

The entirety of \cref{sec:conv 1} is dedicated to demonstrating that the $\hktilde$ flows converge in $H^s(\R)$ for some $s<-1$ (\cref{thm:diff flow conv 1}).  In \cref{sec:conv 2} we upgrade this convergence to $H^{-1}(\R)$ (\cref{thm:diff flow conv 2}) and then conclude our main result \cref{thm:intro gwp}.  In \cref{sec:classical} we prove \cref{thm:intro classical solns}.

Lastly, we proceed in \cref{sec:cnoidal} with an application to cnoidal waves~\eqref{eq:cnoidal 1} which we present separately because of the significantly shorter length.  This is subsumed by the analysis in \cref{sec:periodic}, where we consider more general smooth periodic backgrounds $V(0) \in H^5(\R/\Z)$.

\begin{ack}
	I was supported in part by NSF grants DMS-1856755 and DMS-1763074.  I would like to thank my advisors, Rowan Killip and Monica Vi\c{s}an, for their guidance.  I would also like to thank Friedrich Klaus and associate editor Kenji Nakanishi, whose comments led to the revision of the proof of \cref{thm:existence}.
\end{ack}

\section{Diagonal Green's function}
\label{sec:prelim}

We begin by reviewing our notation and the necessary tools from~\cite{Killip2019}, which can be consulted for further details.

For a Sobolev space $W^{k,p}(\R)$ we use the spacetime norm
\begin{equation*}
\norm{ q }_{C_tW^{k,p}(I\times\R)} := \sup_{t\in I} \norm{ q(t) }_{W^{k,p}(\R)}
\end{equation*}
for $I\subset\R$ an interval.  In addition to the usual Sobolev spaces $W^{k,p}$ and $H^s$, we define the norm
\begin{equation}
\norm{f}^2_{H^s_\kappa(\R)} := \int_\R (\xi^2+4\kappa^2)^s |\hat{f}(\xi)|^2\dxi .
\label{eq:Hskappa norm dfn}
\end{equation}
The presence of the factor of four is to make the calculation~\eqref{eq:hskappa identity 1} an exact identity.  Our convention for the Fourier transform is
\begin{equation*}
\hat{f}(\xi) = \frac{1}{\sqrt{2\pi}} \int_{\R} e^{-i\xi x}f(x)\dx ,\qquad
\snorm{\hat{f}}_{L^2} = \norm{f}_{L^2} .
\end{equation*}
In analogy with the usual $H^s$ spaces, we have the elementary facts
\begin{equation}
\norm{ w f }_{H_{\kappa}^{\pm 1}} \lesssim \norm{ w }_{W^{1,\infty}} \norm{f}_{H_{\kappa}^{\pm 1}} ,  \qquad
\norm{ w f }_{H_{\kappa}^{\pm 1}} \lesssim \norm{ w }_{H^{1}} \norm{ f }_{H_{\kappa}^{\pm 1}}
\label{eq:hskappa ests}
\end{equation}
uniformly for $\kappa \geq 1$.  We will exclusively use the $L^2$ pairing $\langle\cdot,\cdot\rangle$; the space $H_{\kappa}^{-1}$ is dual to $H_{\kappa}^{1}$ with respect to this pairing, and so the inequalities~\eqref{eq:hskappa ests} for $H_{\kappa}^{-1}$ are implied by those for $H_{\kappa}^{1}$.  

We write $\I_p$ for the Schatten classes (also called trace ideals) of compact operators on the Hilbert space $L^2(\R)$ whose singular values are $\ell^p$-summable.  Of particular importance will be the Hilbert--Schmidt class $\I_2$: recall that an operator $A$ on $L^2(\R)$ is Hilbert--Schmidt if and only if it admits an integral kernel $a(x,y) \in L^2(\R\times\R)$, and we have
\begin{equation*}
\norm{ A }_\op \leq \norm{ A}_{\I_2} = \left( \iint |a(x,y)|^2 \dx\dy \right)^{1/2} .
\end{equation*}
The product of two Hilbert--Schmidt operators $A$ and $B$ is of trace class $\I_1$, the trace is cyclic:
\begin{equation*}
\tr(AB) := \iint a(x,y)b(y,x) \dy\dx = \tr(BA) ,
\end{equation*}
and we have the estimate
\begin{equation*}
|\tr(AB)| \leq \norm{A}_{\I_2} \norm{B}_{\I_2} .
\end{equation*}
Additionally, Hilbert--Schmidt operators form a two-sided ideal in the algebra of bounded operators, due to the inequality
\begin{equation*}
\norm{ BAC }_{\I_2} \leq \norm{B}_\op \norm{A}_{\I_2} \norm{C}_\op .
\end{equation*}

We notate the resolvent of the Schr\"odinger operator with zero potential by
\begin{equation*}
R_0(\kappa) := \left( - \partial_x^2 + \kappa^2 \right)^{-1}
\quad\tx{with integral kernel}\quad
\langle \del_x, R_0(\kappa)\del_y \rangle = \tfrac{1}{2\kappa} e^{-\kappa|x-y|} .
\end{equation*}
The energy parameter $\kappa$ will always be real and positive.  Consequently, $R_0(\kappa)$ will always be positive definite and so we may consider its positive definite square-root $\sqrt{ R_0(\kappa) }$.

Given a Banach space $X(\R)$ of functions on $\R$, we call $Q\subset X$ \emph{equicontinuous} if
\begin{equation*}
\norm{ q( \cdot+h ) - q(\cdot) }_X \to 0 \quad \tx{as }h\to 0,\tx{ uniformly for }q\in Q.
\end{equation*}
Note that for the supremum norm this definition coincides with the equicontinuity criterion of the Arzel\`{a}--Ascoli theorem.  It is natural to call this property equicontinuity as previous authors have~\cites{Riesz1933,Frechet1937,Pego1985,Edwards1995}, because it appears in the Kolmogorov--Riesz compactness theorem for the case $X = L^p$~\cite{Brezis2011}*{Th.~4.26}.  In particular, it follows that a precompact subset of $H^{-1}(\R)$ is equicontinuous in $H^{-1}(\R)$.

In Fourier variables, equicontinuity requires that the tails of $\hat{q}(\xi)$ are small in $H^{-1}(\R)$ uniformly for $q\in Q$.  Specifically, we will use the following characterization:
\begin{lem}[Equicontinuity {\cite{Killip2019}*{\S4}}]
	\label{thm:hskappa equicty}
	A bounded subset $Q\subset H^{-1}(\R)$ is equicontinuous if and only if
	\begin{equation*}
	\lim_{\kappa\to\infty}\, \sup_{q\in Q}\, \int_{\R} \frac{|\hat{q}(\xi)|^2}{\xi^2+4\kappa^2} \dxi = 0 .
	\end{equation*}
\end{lem}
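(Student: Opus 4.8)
The plan is to pass to Fourier variables, where both quantities in the claimed equivalence are weighted $L^2$ integrals of $|\hat q|^2$ against explicit multipliers, and then to compare those multipliers by elementary means; this is essentially the argument of \cite{Killip2019}*{\S4}. Normalizing $\norm{q}_{H^{-1}}^2 = \int_\R (\xi^2+1)^{-1}|\hat q(\xi)|^2\dxi$ and using $\widehat{q(\cdot+h)}(\xi) = e^{i\xi h}\hat q(\xi)$, Plancherel gives
\[
\norm{q(\cdot+h)-q}_{H^{-1}}^2 = \int_\R \frac{2(1-\cos\xi h)}{\xi^2+1}\,|\hat q(\xi)|^2 \dxi ,
\]
so equicontinuity of $Q$ is precisely the assertion that $\delta(h) := \sup_{q\in Q} \int_\R \tfrac{2(1-\cos\xi h)}{\xi^2+1}|\hat q(\xi)|^2\dxi \to 0$ as $h\to 0$. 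I will abbreviate the quantity in the lemma by $F_\kappa(q) := \int_\R (\xi^2+4\kappa^2)^{-1}|\hat q(\xi)|^2\dxi$, and I will freely use that $F_\kappa$ is nonincreasing in $\kappa$, that $\delta(h)\le 4M$ with $M := \sup_{q\in Q}\norm{q}_{H^{-1}}^2<\infty$, and that $\int_{|\xi|<1}|\hat q(\xi)|^2\dxi \le 2M$ for $q\in Q$.

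For the implication that $\sup_q F_\kappa(q)\to 0$ forces equicontinuity: given $\eps>0$, first fix $\kappa\ge1$ with $\sup_q F_\kappa(q)<\eps$, and split the integral above at $|\xi|=\kappa$. On $|\xi|>\kappa$ use $1-\cos\xi h\le2$ together with $\xi^2+1 \ge \tfrac15(\xi^2+4\kappa^2)$ (valid since $4\kappa^2<4\xi^2$ there) to bound that part by $20\,F_\kappa(q)<20\eps$, uniformly in $q$ and $h$; on $|\xi|\le\kappa$ use $1-\cos\xi h\le\tfrac12\xi^2h^2$ to bound it by $\kappa^2 h^2\int_\R(\xi^2+1)^{-1}|\hat q|^2\dxi \le M\kappa^2h^2$. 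Hence $\norm{q(\cdot+h)-q}_{H^{-1}}^2\le 20\eps+M\kappa^2h^2$, which is $<21\eps$ once $|h|$ is small depending only on $\eps$ and the now-fixed $\kappa$; this is the required uniform smallness.

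For the converse I would extract frequency decay from translation-continuity by averaging in $h$, the key input being the Laplace-transform identity $\int_0^\infty e^{-2\kappa h}(1-\cos\xi h)\,dh = \frac{\xi^2}{2\kappa(\xi^2+4\kappa^2)}$. Multiplying by $2(\xi^2+1)^{-1}|\hat q(\xi)|^2$ and integrating in $\xi$ (Tonelli) yields
\[
\int_0^\infty e^{-2\kappa h}\,\norm{q(\cdot+h)-q}_{H^{-1}}^2\,dh = \frac{1}{\kappa}\int_\R \frac{\xi^2}{\xi^2+1}\cdot\frac{|\hat q(\xi)|^2}{\xi^2+4\kappa^2}\dxi .
\]
Bounding the left side by $\int_0^\infty e^{-2\kappa h}\bar\delta(h)\,dh$, where $\bar\delta(h):=\sup_{0<h'\le h}\delta(h')$ (so $\bar\delta\le4M$ and $\bar\delta(h)\to0$ as $h\to0$), and using that $2\kappa e^{-2\kappa h}\,dh$ is a probability measure concentrating at the origin, I conclude $\sup_{q\in Q}\int_\R \tfrac{\xi^2}{\xi^2+1}\cdot\tfrac{|\hat q|^2}{\xi^2+4\kappa^2}\dxi\to0$ as $\kappa\to\infty$. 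Splitting $F_\kappa(q)$ at $|\xi|=1$ then finishes: on $|\xi|\ge1$ the weight $\tfrac{\xi^2}{\xi^2+1}\ge\tfrac12$, so that part is at most twice the quantity just shown to vanish; on $|\xi|<1$ one has $(\xi^2+4\kappa^2)^{-1}\le(4\kappa^2)^{-1}$ and $\int_{|\xi|<1}|\hat q|^2\dxi\le2M$, so that part is $\le M/(2\kappa^2)$; both bounds are uniform in $q\in Q$, and monotonicity of $F_\kappa$ gives $\lim_{\kappa\to\infty}\sup_q F_\kappa(q)=0$.

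The only genuinely non-routine step is this comparison of multipliers — recognizing $(\xi^2+4\kappa^2)^{-1}$, up to the benign factor $\xi^2/(\xi^2+1)$, as a one-parameter average of the translation multipliers $2(1-\cos\xi h)/(\xi^2+1)$; the Laplace identity does this cleanly, and as an alternative one could average $1-\cos\xi h$ over $h\in[0,H]$ and use $\tfrac1H\int_0^H(1-\cos\xi h)\,dh = 1-\tfrac{\sin\xi H}{\xi H}\gtrsim1$ when $|\xi H|\gtrsim1$. The other thing to watch is that $Q$ lies only in $H^{-1}$, not in $L^2$, so one must never integrate $|\hat q|^2$ over all of $\R$ and must fix the frequency cutoff before letting $\kappa\to\infty$ (respectively $h\to0$); with that ordering of quantifiers in place, everything else is bookkeeping.
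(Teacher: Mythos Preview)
Your proof is correct. The paper does not supply its own proof of this lemma (it is cited from \cite{Killip2019}*{\S4}), but the exponential-averaging identity you use --- recognizing $(\xi^2+4\kappa^2)^{-1}$ as a Laplace-type average of the translation multipliers --- is exactly the device the paper later deploys in the proof of \cref{thm:diff flow conv 2}, where the two-sided version $\int_\R |e^{i\xi h}-1|^2\, r e^{-2r|h|}\,dh = \tfrac{2\xi^2}{\xi^2+4r^2}$ appears, so your argument is in the same spirit as both the cited source and the surrounding text.
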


The following calculation is the basis for all of the analysis that follows.
\begin{lem}[Key estimate {\cite{Killip2019}*{Prop.~2.1}}]
	For $q\in H^{-1}(\R)$ we have
	\begin{equation}
	\norm{ \sqrt{ R_0(\kappa) }\, q \sqrt{ R_0(\kappa) } }_{\I_2}^2
	= \frac{1}{\kappa} \int \frac{|\hat{q}(\xi)|^2}{\xi^2+4\kappa^2}\dxi
	= \frac{1}{\kappa} \norm{ q }^2_{H^{-1}_\kappa} .
	\label{eq:hskappa identity 1}
	\end{equation}
\end{lem}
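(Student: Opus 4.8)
<br>

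The plan is to prove the identity~\eqref{eq:hskappa identity 1} by reducing the Hilbert--Schmidt norm computation to an explicit Fourier-side integral. Since the left-hand side involves the product $\sqrt{R_0(\kappa)}\,q\,\sqrt{R_0(\kappa)}$, the first step is to express this operator via its integral kernel. Writing $q$ as a (distributional) multiplication operator and using the known kernel $\tfrac{1}{2\kappa}e^{-\kappa|x-y|}$ of $R_0(\kappa)$, I would note that $R_0(\kappa)$ is a Fourier multiplier with symbol $(\xi^2+\kappa^2)^{-1}$, hence $\sqrt{R_0(\kappa)}$ is the Fourier multiplier with symbol $(\xi^2+\kappa^2)^{-1/2}$.

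Next I would compute the Hilbert--Schmidt norm directly on the Fourier side. For a multiplication operator $q$ sandwiched between two Fourier multipliers $m(D)$ and $\widetilde m(D)$, the operator $m(D)\, q\, \widetilde m(D)$ has integral kernel whose (partial) Fourier transform is, up to constants, $m(\xi)\,\hat q(\xi-\eta)\,\widetilde m(\eta)$. The squared Hilbert--Schmidt norm is then the $L^2$ norm of this kernel, i.e.
\begin{equation*}
\norm{ \sqrt{R_0(\kappa)}\, q\, \sqrt{R_0(\kappa)} }_{\I_2}^2
= \frac{1}{2\pi} \iint \frac{|\hat q(\xi-\eta)|^2}{(\xi^2+\kappa^2)(\eta^2+\kappa^2)} \dxi \deta .
\end{equation*}
Changing variables to $\zeta = \xi - \eta$ and $\eta$, this becomes $\frac{1}{2\pi}\int |\hat q(\zeta)|^2 \big(\int \frac{d\eta}{((\zeta+\eta)^2+\kappa^2)(\eta^2+\kappa^2)}\big)\dd\zeta$. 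The inner integral is an elementary rational-function integral that evaluates (by residues or partial fractions) to $\frac{2\pi}{\kappa}\cdot\frac{1}{\zeta^2+4\kappa^2}$; this is exactly where the factor of four arises, matching the remark after~\eqref{eq:Hskappa norm dfn}. Substituting back yields $\frac{1}{\kappa}\int \frac{|\hat q(\zeta)|^2}{\zeta^2+4\kappa^2}\dd\zeta$, and recognizing this integral as $\frac{1}{\kappa}\norm{q}_{H^{-1}_\kappa}^2$ via definition~\eqref{eq:Hskappa norm dfn} completes the proof.

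The main obstacle is justifying the formal manipulations when $q$ is merely in $H^{-1}(\R)$ rather than, say, Schwartz: the product $\sqrt{R_0(\kappa)}\,q\,\sqrt{R_0(\kappa)}$ must first be shown to define a genuine Hilbert--Schmidt operator (so that its $\I_2$ norm is finite and the kernel computation is legitimate), and the interchange of integration in the change of variables needs a density argument. The clean way to handle this is to first establish the identity for $q$ in a dense subclass (Schwartz functions, where everything is classical and Fubini applies without comment), observe that both sides are continuous in $q$ with respect to the $H^{-1}_\kappa$ topology — the right side manifestly, and the left side because $q\mapsto \sqrt{R_0(\kappa)}\,q\,\sqrt{R_0(\kappa)}$ is bounded from $H^{-1}_\kappa(\R)$ into $\I_2$ by the very identity being proved on the dense set — and then pass to the limit. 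Apart from this density/continuity bookkeeping, the computation is entirely explicit; I would keep the residue computation of the inner $\eta$-integral as the one calculational point worth displaying, since it pins down the constant.
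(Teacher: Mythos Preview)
Your argument is correct. The Fourier-side computation you outline is exactly the standard proof: the kernel of $\sqrt{R_0}\,q\,\sqrt{R_0}$ on the Fourier side is $(2\pi)^{-1/2}(\xi^2+\kappa^2)^{-1/2}\hat q(\xi-\eta)(\eta^2+\kappa^2)^{-1/2}$, the $\I_2$ norm is the $L^2$ norm of this kernel, and the residue evaluation of the inner convolution integral produces precisely $\frac{2\pi}{\kappa(\zeta^2+4\kappa^2)}$. Your density remark for passing from Schwartz $q$ to general $q\in H^{-1}(\R)$ is also the right way to close the argument.

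Note, however, that the paper does not actually supply a proof of this lemma: it is stated with a citation to \cite{Killip2019}*{Prop.~2.1} and used as a black box thereafter. So there is no ``paper's own proof'' to compare against here; your write-up is essentially the argument one finds in the cited source.
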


The identity~\eqref{eq:hskappa identity 1} guarantees that the Neumann series for the resolvent of $-\partial^2 + q$ converges for all $\kappa$ sufficiently large when $q$ belongs to a bounded subset of $H^{-1}_\kappa(\R)$.  Consequently, we will always be working within the closed balls
\begin{equation*}
B_A(\kappa) := \{ q\in H^{-1}(\R) : \norm{ q }_{H^{-1}_\kappa} \leq A \}, \qquad
B_A := \{ q\in H^{-1}(\R) : \norm{ q }_{H^{-1}} \leq A \}
\end{equation*}
of radius $A>0$.  Note that $B_A(\kappa) \supset B_A$ for $\kappa\geq 1$, and so any result obtained for $B_A(\kappa)$ with $\kappa\geq 1$ also holds for the fixed set $B_A$.  The resolvent construction also works for $q\in B_A(\kappa)$ perturbations of $V$:
\begin{lem}[Resolvents]
	\label{thm:resolvent}
	Fix $V\in L^\infty(\R)$.  Given $q\in H^{-1}(\R)$, there exists a unique self-adjoint operator corresponding to $-\partial^2_x + V + q$ with domain $H^1(\R)$.  Moreover, given $A>0$ there exists $\kappa_0>0$ so that the series
	\begin{equation}
	R(\kappa,V) = (-\partial^2+V+\kappa^2)^{-1} = \sum_{\ell=0}^\infty (-1)^\ell R_0(\kappa) [ V R_0(\kappa) ]^\ell
	\label{eq:resolvent series 1}
	\end{equation}
	converges absolutely to a positive definite operator for $\kappa\geq \kappa_0$, and the series
	\begin{equation}
	R(\kappa,V+q) = \sum_{\ell=0}^\infty (-1)^\ell \sqrt{ R(\kappa,V) } \big[ \sqrt{R(\kappa,V)} \, q \sqrt{R(\kappa,V)} \big]^\ell \sqrt{ R(\kappa,V) }
	\label{eq:resolvent series 2}
	\end{equation}
	converges absolutely for $q\in B_A(\kappa)$ and $\kappa \geq \kappa_0$.
\end{lem}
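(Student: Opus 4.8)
The plan is to build the operator $-\partial_x^2 + V + q$ in two stages: first absorb the bounded potential $V$ into the free Laplacian, then perturb by the distributional potential $q \in H^{-1}(\R)$. For the first stage, since $V \in L^\infty(\R)$, the operator $-\partial_x^2 + V$ with domain $H^1(\R)$ (as a form domain) is a bounded form perturbation of $-\partial_x^2$; self-adjointness and the convergence of the Neumann series \eqref{eq:resolvent series 1} follow at once from $\norm{R_0(\kappa)}_{\op} = \kappa^{-2}$, so that $\norm{V R_0(\kappa)}_{\op} \leq \norm{V}_{L^\infty} \kappa^{-2} < 1$ once $\kappa^2 > \norm{V}_{L^\infty}$. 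For $\kappa$ large the sum is positive definite because it is a norm-convergent perturbation of the positive operator $R_0(\kappa)$, with $R(\kappa,V) \geq \tfrac12 R_0(\kappa) > 0$ say; more precisely, $R(\kappa,V) = R_0(\kappa)^{1/2}(I + R_0(\kappa)^{1/2} V R_0(\kappa)^{1/2})^{-1} R_0(\kappa)^{1/2}$ and the middle factor is within $\norm{V}_{L^\infty}\kappa^{-2}$ of the identity in operator norm. This also records the key bound $\norm{\sqrt{R(\kappa,V)}}_{\op} = \norm{R(\kappa,V)}_{\op}^{1/2} \lesssim \kappa^{-1}$, and, by comparing quadratic forms, $\norm{\sqrt{R(\kappa,V)}\, f}_{L^2} \lesssim \norm{\sqrt{R_0(\kappa)}\, f}_{L^2}$ for all $f$, uniformly in $\kappa \geq \kappa_0$.

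For the second stage, the goal is to show the geometric-type series \eqref{eq:resolvent series 2} converges, for which it suffices that
\begin{equation*}
\norm{ \sqrt{R(\kappa,V)}\, q \sqrt{R(\kappa,V)} }_{\I_2} < 1
\quad\text{for } q\in B_A(\kappa),\ \kappa\geq\kappa_0,
\end{equation*}
after enlarging $\kappa_0$ if necessary. Here I would invoke the key estimate \eqref{eq:hskappa identity 1} together with the form-domination from the previous paragraph: writing $\sqrt{R(\kappa,V)}\, q \sqrt{R(\kappa,V)} = \big(\sqrt{R(\kappa,V)}\sqrt{R_0(\kappa)}^{-1}\big)\big(\sqrt{R_0(\kappa)}\, q \sqrt{R_0(\kappa)}\big)\big(\sqrt{R_0(\kappa)}^{-1}\sqrt{R(\kappa,V)}\big)$ is not quite legitimate since $\sqrt{R_0(\kappa)}^{-1}$ is unbounded, so instead I would argue at the level of the Hilbert--Schmidt kernel directly, or use the operator-valued inequality $A^* B A \leq \norm{B}_{\op}\, A^*A$ twice. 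Concretely, $|q|$ defines a form-bounded perturbation and the quadratic-form comparison $\sqrt{R(\kappa,V)}\,|q|\,\sqrt{R(\kappa,V)} \leq C\, \sqrt{R_0(\kappa)}\,|q|\,\sqrt{R_0(\kappa)}$ in the sense of forms, combined with the Hilbert--Schmidt identity
\begin{equation*}
\norm{ \sqrt{R_0(\kappa)}\, q \sqrt{R_0(\kappa)} }_{\I_2}^2 = \tfrac{1}{\kappa}\norm{q}^2_{H^{-1}_\kappa} \leq \tfrac{A^2}{\kappa},
\end{equation*}
gives $\norm{\sqrt{R(\kappa,V)}\, q \sqrt{R(\kappa,V)}}_{\I_2}^2 \lesssim A^2/\kappa \to 0$. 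Choosing $\kappa_0$ so that this is $< 1$ makes \eqref{eq:resolvent series 2} absolutely convergent in operator norm, with the $\ell$-th term bounded by $\norm{\sqrt{R(\kappa,V)}}_{\op}^2 \cdot \theta^\ell$ for some $\theta < 1$; summing a geometric series yields a bounded operator, which one identifies with $(-\partial^2 + V + q + \kappa^2)^{-1}$ in the standard way (it inverts the form sum on the appropriate domain), establishing self-adjointness of $-\partial_x^2 + V + q$ via the resolvent.

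The main obstacle is the middle step: transferring the Hilbert--Schmidt bound from $\sqrt{R_0(\kappa)}\, q \sqrt{R_0(\kappa)}$ to $\sqrt{R(\kappa,V)}\, q \sqrt{R(\kappa,V)}$ without the luxury of a bounded intertwiner. The clean way is to exploit that $q$, as an element of $H^{-1}(\R)$, factors as $q = (1-\partial_x^2)^{1/2} w (1-\partial_x^2)^{1/2}$ with $w \in L^2$ only in a limited sense; better is to note $|\langle f, q g\rangle| \lesssim \norm{f}_{H^1_\kappa}\norm{g}_{H^1_\kappa}$ hence $|q|^{1/2}\sqrt{R(\kappa,V)}$ and $|q|^{1/2}\sqrt{R_0(\kappa)}$ differ by a bounded factor since both $H^1_\kappa$ norms are comparable to the form norms of $-\partial^2+V+\kappa^2$ and $-\partial^2+\kappa^2$, which are themselves comparable for $\kappa$ large. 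I expect this comparison of form norms — already implicit in \eqref{eq:resolvent series 1} — to be the crux, and everything else (self-adjointness, positive-definiteness, geometric summation) to be routine book-keeping following the $V\equiv 0$ treatment in \cite{Killip2019}*{Prop.~2.1}.
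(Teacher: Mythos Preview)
Your plan matches the paper's: treat $V$ by an operator-norm Neumann series, then bound $\sqrt{R(\kappa,V)}\,q\,\sqrt{R(\kappa,V)}$ in $\I_2$ and sum geometrically, with self-adjointness following from form-boundedness (the KLMN theorem, \cite{Reed1975}*{Th.~X.17}). You also correctly single out the transfer of the $\I_2$ bound from $R_0$ to $R(\kappa,V)$ as the only nontrivial point.

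Your proposed execution of that step has a gap, however. For general $q\in H^{-1}(\R)$ the objects $|q|$ and $|q|^{1/2}$ are not defined (such $q$ need not be a measure, or even a function), so neither the form inequality $\sqrt{R(\kappa,V)}\,|q|\,\sqrt{R(\kappa,V)} \leq C\sqrt{R_0}\,|q|\,\sqrt{R_0}$ nor the factorization through $|q|^{1/2}$ makes sense as written; and the inequality $A^*BA\leq\snorm{B}_\op A^*A$ controls operator norm, not $\I_2$ norm. The paper instead computes the $\I_2$ norm as a trace,
\[
\norm{ \sqrt{R(\kappa,V)}\, q\, \sqrt{R(\kappa,V)} }_{\I_2}^2 = \tr\{ R(\kappa,V)\, q\, R(\kappa,V)\, \ol q \},
\]
substitutes the already-established series~\eqref{eq:resolvent series 1} for each $R(\kappa,V)$, and bounds term by term using $\snorm{\sqrt{R_0}V\sqrt{R_0}}_{\op}\leq\tfrac12$ together with~\eqref{eq:hskappa identity 1}; summing the geometric double series yields $\leq 4\kappa^{-1}\snorm{q}_{H^{-1}_\kappa}^2$ directly, with no need to make sense of $|q|$. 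Your form-comparison instinct can be repaired along different lines: from $-\partial^2+V+\kappa^2 \geq \tfrac12(-\partial^2+\kappa^2)$ one gets $R(\kappa,V)\leq 2R_0(\kappa)$ by operator monotonicity of the inverse, hence by Douglas' lemma there is a contraction $D$ with $\sqrt{R(\kappa,V)}=D\sqrt{2R_0(\kappa)}$, so that $\sqrt{R(\kappa,V)}\,q\,\sqrt{R(\kappa,V)} = D\big(\sqrt{2R_0}\,q\,\sqrt{2R_0}\big)D^*$ and the $\I_2$ bound follows from the two-sided ideal property. Either route yields~\eqref{eq:hskappa identity 2}; the paper's is more self-contained since it reuses only~\eqref{eq:resolvent series 1} and~\eqref{eq:hskappa identity 1}.
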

\begin{proof}
	Initially we require that $\kappa\geq 1$.  As $V\in L^\infty$, we may define the operator $-\partial^2 + V$ via the quadratic form
	\begin{equation*}
	\phi \mapsto \int \big( |\phi'(x)|^2 + V(x) |\phi(x)|^2 \big)\dx 
	\end{equation*}
	equipped with the domain $H^1(\R)$.  Using the elementary estimates $\norm{R_0}_\op \leq \kappa^{-2}$ and $\norm{V}_\op \leq \norm{V}_{L^\infty}$, it is clear that the Neumann series~\eqref{eq:resolvent series 1} for $R(\kappa,V)$ is absolutely convergent for all $\kappa^2 \geq 2\norm{V}_{L^\infty}$.  Once we know the series absolutely converges, it is straightforward to verify that multiplying by $-\partial^2 + V +\kappa^2$ produces the identity operator.
	
	Expanding the series~\eqref{eq:resolvent series 1} and using the identity~\eqref{eq:hskappa identity 1} we estimate
	\begin{align*}
	&\norm{ \sqrt{R(\kappa,V)} \, q \sqrt{R(\kappa,V)} }_{\I_2}^2
	= \tr\{ R(\kappa,V) q R(\kappa,V) \ol{q} \} \\
	&\leq \sum_{\ell,m = 0}^\infty \norm{ \sqrt{R_0}V\sqrt{R_0} }_{\op}^{\ell+m} \norm{ \sqrt{R_0}q\sqrt{R_0} }^2_{\I_2}
	\leq 4 \kappa^{-1} \norm{q}^2_{H^{-1}_\kappa}
	\end{align*}
	for all $\kappa^2 \geq 2\norm{V}_{L^\infty}$, and hence
	\begin{equation}
	\norm{ \sqrt{R(\kappa,V)} \, q \sqrt{R(\kappa,V)} }_{\I_2} \leq 2 \kappa^{-1/2} \norm{q}_{H^{-1}_\kappa} .
	\label{eq:hskappa identity 2}
	\end{equation}
	Consequently, given $q\in B_A(\kappa)$ we have
	\begin{align*}
	\int q(x) |\phi(x)|^2 \dx
	&\leq \norm{ \sqrt{R(\kappa,V)} \, q \sqrt{R(\kappa,V)} }_\op \int \big( |\phi'(x)|^2 + |V(x)| |\phi(x)|^2 \big) \dx \\
	&\leq \tfrac{1}{2} \int \big( |\phi'(x)|^2 + |V(x)| |\phi(x)|^2 \big) \dx
	\end{align*}
	for all $\phi\in H^1(\R)$ provided that $\kappa \geq 16A^2$.  We conclude that $-\partial^2 + V + q$ is a form-bounded perturbation of $-\partial^2 + V$ with relative norm strictly less than 1; this guarantees that $-\partial^2 + V + q$ exists, is unique, and has the same form domain $H^1(\R)$ (cf.~\cite{Reed1975}*{Th.~X.17}).  The estimate~\eqref{eq:hskappa identity 2} then demonstrates that the series~\eqref{eq:resolvent series 2} for $R(\kappa,V+q)$ is absolutely convergent for all $\kappa\geq 16A^2$.
\end{proof}

In~\cite{Killip2019} the diagonal Green's function---the restriction of the kernel $G(x,y;\kappa,q)$ of the operator $R(\kappa,q)$ to the diagonal---was instrumental in controlling $q$ in $H^{-1}$.  This construction also works for $q\in B_A(\kappa)$ perturbations of $V$:
\begin{prop}[Diagonal Green's function]
	\label{thm:diffeo prop}
	Fix $V\in L^\infty(\R)$.  Given $A>0$ there exists $\kappa_0>0$ such that for all $\kappa\geq\kappa_0$ the diagonal Green's function $g(x;\kappa,V+q) := G(x,x;\kappa,V+q)$ exists for $q\in B_A(\kappa)$, the two functionals
	\begin{equation}
	q \mapsto g(x;\kappa,V+q) - g(x;\kappa,V) \quad \text {and} \quad q \mapsto \frac{1}{g(x;\kappa,V+q)} - \frac{1}{g(x;\kappa,q)}
	\label{eq:diffeo prop 2}
	\end{equation}
	are real analytic from $B_{A}(\kappa)$ into $H_{\kappa}^{1}(\R)$, and we have the estimate
	\begin{equation}
	\norm{ g(x;\kappa,V+q) - g(x;\kappa,V) }_{H^1_\kappa}
	\lesssim \kappa^{-1} \norm{q}_{H^{-1}_\kappa}
	\label{eq:diffeo prop}
	\end{equation}
	uniformly for $q\in B_A(\kappa)$ and $\kappa\geq \kappa_0$.
\end{prop}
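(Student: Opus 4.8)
The plan is to expand the resolvent difference $R(\kappa,V+q)-R(\kappa,V)$ using the series~\eqref{eq:resolvent series 2} furnished by \cref{thm:resolvent}, recognise $g(x;\kappa,V+q)-g(x;\kappa,V)$ as the restriction to the diagonal of the kernel of this operator difference, and control it by testing against a dense class of functions and invoking the Hilbert--Schmidt bound~\eqref{eq:hskappa identity 2}. Write $R_V := R(\kappa,V)$. Subtracting the $\ell=0$ term from~\eqref{eq:resolvent series 2} gives
\[
	R(\kappa,V+q) - R_V = \sum_{\ell=1}^{\infty} (-1)^{\ell}\, \sqrt{R_V}\,\big[\sqrt{R_V}\,q\,\sqrt{R_V}\big]^{\ell}\,\sqrt{R_V},
\]
each summand being $R_V q R_V q\cdots q R_V$ with $\ell$ factors of $q$; since $\sqrt{R_V}\,q\,\sqrt{R_V}$ is Hilbert--Schmidt by~\eqref{eq:hskappa identity 2}, each term has a continuous integral kernel and the series converges in a norm strong enough to pass to the diagonal, so we may \emph{define} $g(x;\kappa,V+q) := g(x;\kappa,V) + \big(\text{diagonal of the above difference}\big)$. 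That $g(x;\kappa,V)$ itself exists and satisfies $g(\cdot;\kappa,V)=\tfrac1{2\kappa}+o(1)$ in $L^\infty$ for $V\in L^\infty$ (hence is bounded above and away from $0$ for $\kappa$ large, with $2\kappa\,g(\cdot;\kappa,V)$ and its reciprocal bounded in $W^{1,\infty}$) is standard one-dimensional Schr\"odinger theory, or follows by the analogous Neumann-series argument applied to $R_V-R_0(\kappa)$; this may force $\kappa_0$ to be enlarged beyond the value in \cref{thm:resolvent}, in an $A$-independent way.

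For the estimate, I would use that $H^1_\kappa$ is the $L^2$-dual of $H^{-1}_\kappa$, so it suffices to bound $\langle f,\, g(x;\kappa,V+q)-g(x;\kappa,V)\rangle$ for Schwartz $f$ and then extend by density. For such $f$ this pairing equals $\tr\big(f\,[R(\kappa,V+q)-R_V]\big)$, and by cyclicity of the trace the $\ell$-th term equals $\tr\!\big((\sqrt{R_V}f\sqrt{R_V})(\sqrt{R_V}q\sqrt{R_V})^{\ell}\big)$, which is bounded by
\[
	\norm{\sqrt{R_V}f\sqrt{R_V}}_{\I_2}\,\norm{\sqrt{R_V}q\sqrt{R_V}}_{\I_2}\,\norm{\sqrt{R_V}q\sqrt{R_V}}_{\op}^{\ell-1}.
\]
By~\eqref{eq:hskappa identity 2} the first two factors are $\leq 2\kappa^{-1/2}\norm{f}_{H^{-1}_\kappa}$ and $\leq 2\kappa^{-1/2}\norm{q}_{H^{-1}_\kappa}$, while for $\kappa_0\geq 16A^2$ the operator-norm factor is $\leq 2^{-(\ell-1)}$ on $B_A(\kappa)$. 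Summing the geometric series yields $|\langle f,\,g(x;\kappa,V+q)-g(x;\kappa,V)\rangle|\lesssim \kappa^{-1}\norm{f}_{H^{-1}_\kappa}\norm{q}_{H^{-1}_\kappa}$, which is~\eqref{eq:diffeo prop}; this bound also licenses the density extension and confirms that the difference genuinely lies in $H^1_\kappa$.

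Real analyticity of the first functional in~\eqref{eq:diffeo prop 2} then follows because its $\ell$-th term, as a function of $q$, is the restriction to the diagonal of a fixed bounded symmetric $\ell$-linear map $(H^{-1}_\kappa)^{\ell}\to H^1_\kappa$ (replace the $\ell$ copies of $q$ by distinct arguments and symmetrise), whose norm is $\lesssim \kappa^{-1}(2\kappa^{-1/2})^{\ell-1}$ by the same trace estimate; a map given near each point by a norm-convergent power series of bounded multilinear forms is real analytic. For the second functional, the estimate~\eqref{eq:diffeo prop} together with the baseline bounds on $g(x;\kappa,V)$ shows $g(x;\kappa,V+q)$ is bounded away from $0$ uniformly for $q\in B_A(\kappa)$; writing $\tfrac{1}{g(x;\kappa,V+q)}-\tfrac{1}{g(x;\kappa,V)} = -\big(g(x;\kappa,V+q)-g(x;\kappa,V)\big)\big/\big(g(x;\kappa,V+q)\,g(x;\kappa,V)\big)$, using that $t\mapsto 1/t$ is real analytic on $(0,\infty)$, and applying the algebra estimates~\eqref{eq:hskappa ests}, analyticity of the second functional reduces to that of the first.

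\textbf{Main obstacle.} I expect the genuinely delicate point is not the geometric estimate but the functional-analytic bookkeeping that legitimises the identity $\langle f,\,g-g_0\rangle=\tr\big(f[R(\kappa,V+q)-R_V]\big)$---that the relevant operators are trace class with continuous kernels and that their traces really are the integral of the diagonal---together with pinning down the baseline behaviour of $g(x;\kappa,V)$ for a potential $V$ that is merely bounded. Once those foundations are laid, the estimate~\eqref{eq:diffeo prop} and analyticity are driven entirely by~\eqref{eq:hskappa identity 2} and the convergence of a geometric series.
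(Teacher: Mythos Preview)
Your proposal is correct and follows essentially the same route as the paper: expand $R(\kappa,V+q)-R(\kappa,V)$ via~\eqref{eq:resolvent series 2}, pair the diagonal difference against $f$ as a trace, and control each term by~\eqref{eq:hskappa identity 2} to sum a geometric series; then nonvanishing of $g$ handles the reciprocal. The one point the paper makes explicit that you leave as an ``obstacle'' is the well-definedness of the diagonal: it is dispatched by the elementary Fourier computation $\snorm{\sqrt{R_0}\,\delta_{x+h}-\sqrt{R_0}\,\delta_x}_{L^2}^2\lesssim |h|$, which shows $x\mapsto\sqrt{R_0}\,\delta_x$ is $\tfrac12$-H\"older into $L^2$ and hence each term in the series for $G(x,y)$ is jointly continuous in $(x,y)$.
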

\begin{proof}
	In Fourier variables, we have
	\begin{equation*}
	\snorm{ \sqrt{R_0}\del_x }_{L^2}^2 \lesssim \kappa^{-1}, \qquad 
	\snorm{ \sqrt{R_0} \del_{x+h} - \sqrt{R_0} \del_x }_{L^2}^2
	\leq \int \frac{|e^{i\xi h}-1|^2}{\xi^2 + 1} \dxi 
	\lesssim |h|
	\end{equation*}
	for $\kappa\geq 1$.  This demonstrates that $x\mapsto \sqrt{R_0} \del_x$ is $\tfrac{1}{2}$-H\"older continuous as a map from $\R$ to $L^2$.  We initialize $\kappa_0$ to be the constant from~\cref{thm:resolvent}.  Then from the series \eqref{eq:resolvent series 1} we see that
	\begin{align*}
	&\left| \left\langle\del_x , \left[R(\kappa, V)-R_{0}(\kappa)\right] \del_y \right\rangle - \left\langle\del_{x'} , \left[R(\kappa, V)-R_{0}(\kappa)\right] \del_{y'} \right\rangle \right| \\
	&\lesssim \kappa^{-1/2} \big( |x-x'|^{1/2} + |y-y'|^{1/2} \big) \sum_{\ell =1}^{\infty} \left( \kappa^{-2} \norm{V}_{L^\infty} \right)^\ell .
	\end{align*}
	The series converges provided that $\kappa \gg \norm{V}_{L^\infty}^{1/2}$.  Consequently, the Green's function $G(x,y)=\langle \del_x, R(\kappa,V) \del_y \rangle$ is continuous in both $x$ and $y$, and so we may unambiguously define
	\begin{equation}
	g(x;\kappa,V)
	= \tfrac{1}{2\kappa} + \sum_{\ell=1}^\infty (-1)^\ell \big\langle \sqrt{R_0} \del_x, (\sqrt{R_0}V\sqrt{R_0})^\ell \sqrt{R_0} \del_x \big\rangle .
	\label{eq:g series 1}
	\end{equation}
	The zeroth-order term $\tfrac{1}{2\kappa}$ can be seen directly from the integral kernel for the free resolvent $R_0(\kappa)$.
	
	Similarly, from the series~\eqref{eq:resolvent series 2} and the estimate~\eqref{eq:hskappa identity 2} we have
	\begin{align*}
	&\left| \left\langle\del_x , \left[R(\kappa, V+q)-R(\kappa,V)\right] \del_y \right\rangle - \left\langle\del_{x'} , \left[R(\kappa, V+q)-R(\kappa,V)\right] \del_{y'} \right\rangle \right| \\
	&\lesssim \kappa^{-1/2} \big( |x-x'|^{1/2} + |y-y'|^{1/2} \big) \sum_{\ell =1}^{\infty} \big( 2 \kappa^{-1/2} A \big)^\ell
	\end{align*}
	for all $q\in B_A(\kappa)$.  The series converges provided that we also have $\kappa\gg A^2$.  Therefore $G(x,y;\kappa,V+q)$ is also a continuous function of $x$ and $y$ and so we may define
	\begin{equation*}
	g(x;\kappa,V+q) = g(x;\kappa,V) + \sum_{\ell=1}^\infty (-1)^\ell \big\langle \sqrt{R} \del_x, ( \sqrt{R}\, q \sqrt{R})^\ell \sqrt{R} \del_x \big\rangle
	\end{equation*}
	where $R = R(\kappa,V)$.  This shows that the first functional of~\eqref{eq:diffeo prop 2} is real analytic for $q\in B_A(\kappa)$.
	
	Next we check that $g(x;\kappa,V+q) - g(x;\kappa,V)$ is in $H^1_\kappa$ by duality and the operator estimate~\eqref{eq:hskappa identity 2}:
	\begin{align*}
	&\left| \int f(x) [ g(\kappa,V+q) - g(\kappa,V) ](x) \dx \right| \\
	&\leq \sum_{\ell=1}^\infty \norm{ \sqrt{R(\kappa,V)} f \sqrt{R(\kappa,V)} }_{\I_2} \norm{  \sqrt{R(\kappa,V)}\, q  \sqrt{R(\kappa,V)} }_{\I_2}^\ell  \\
	&\lesssim \kappa^{-1} \norm{f}_{H^{-1}_{\kappa}} \norm{q}_{H^{-1}_\kappa} .
	\end{align*}
	Taking a supremum over all $\norm{f}_{H^{-1}_{\kappa}} \leq 1$ we obtain the estimate~\eqref{eq:diffeo prop}.
	
	It remains to show that $g(x;\kappa,V+q)$ is nonvanishing so that the second functional of~\eqref{eq:diffeo prop 2} is also real analytic.  Using the series \eqref{eq:resolvent series 1} we estimate
	\begin{equation*}
	|g(x;\kappa,V) - g(x;\kappa,0)|
	\leq \snorm{ \sqrt{R_0} \del_x }_{L^2}^2 \sum_{\ell =1}^{\infty} \left( \kappa^{-2} \norm{V}_{L^\infty} \right)^\ell
	\lesssim \kappa^{-3}
	\end{equation*}
	for $\kappa \gg \norm{V}_{L^\infty}^{1/2}$.  As $g(x;\kappa,0) \equiv \tfrac{1}{2\kappa}$, we can take $\kappa_0$ larger if necessary to ensure
	\begin{equation*}
	\tfrac{1}{4\kappa} \leq g(x;\kappa,V) \leq \tfrac{3}{4\kappa}
	\end{equation*}
	for all $\kappa\geq \kappa_0$.  The estimate~\eqref{eq:diffeo prop} combined with the observation
	\begin{equation}
	\norm{f}_{L^\infty} 
	\leq \norm{f}^{1/2}_{L^2} \norm{f'}^{1/2}_{L^2}
	\lesssim \kappa^{-1/2} \norm{f}_{H^1_\kappa}
	\label{eq:h1kappa embedding}
	\end{equation}
	then guarantees that there exists $\kappa_0 \gg A^2$ so that
	\begin{equation*}
	\norm{ g(x;\kappa,V+q) - g(x;\kappa,V) }_{L^\infty} \leq \tfrac{1}{8\kappa} 
	\end{equation*}
	for all $q\in B_A(\kappa)$ and $\kappa\geq \kappa_0$.  Consequently, the second functional of~\eqref{eq:diffeo prop 2} is also real-analytic.
\end{proof}

Even for $V\not\equiv 0$, we will need to know that the functionals~\eqref{eq:diffeo prop 2} are diffeomorphisms in the case $V\equiv 0$.  This is because to demonstrate the convergence of flows which approximate KdV it is convenient to make the change of variables $1/g(k,q)$ in place of $q$, as was introduced in~\cite{Killip2019}.
\begin{prop}[Diffeomorphism property]
	\label{thm:diffeo prop 2}
	Given $A>0$ there exists $\kappa_0 >0$ so that the functionals
	\begin{equation*}
	q \mapsto g(x;\kappa,q) - \frac{1}{2\kappa} \quad \text {and} \quad q \mapsto \frac{1}{g(x;\kappa,q)} - 2\kappa
	\end{equation*}
	are real-analytic diffeomorphisms from $B_A$ into $H_{\kappa}^{1}(\R)$ for all $\kappa \geq \kappa_0$.
\end{prop}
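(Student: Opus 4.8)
The plan is to combine the real-analyticity already established in \cref{thm:diffeo prop} (applied with $V\equiv 0$) with the real-analytic inverse function theorem. By \cref{thm:diffeo prop}, after enlarging $\kappa_0$ if necessary, the map $\Phi_\kappa : q\mapsto g(x;\kappa,q)-\tfrac{1}{2\kappa}$ is real-analytic from $B_A$ into $H^1_\kappa(\R)$, and $\tfrac{1}{4\kappa}\leq g(x;\kappa,q)\leq\tfrac{3}{4\kappa}$ uniformly for $q\in B_A$; in particular $g\mapsto 1/g$ is itself a real-analytic diffeomorphism on this range, so it suffices to treat $\Phi_\kappa$ and then compose. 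Thus the proposition reduces to two claims: (a) $\Phi_\kappa$ is injective on $B_A$, and (b) the Fr\'echet derivative $d\Phi_\kappa|_q=dg|_q$ is a bounded linear isomorphism $H^{-1}_\kappa(\R)\to H^1_\kappa(\R)$ for every $q\in B_A$ and $\kappa\geq\kappa_0$. Granting these, the inverse function theorem makes $\Phi_\kappa$ a local real-analytic diffeomorphism near each $q$, with local inverse real-analytic into $H^{-1}_\kappa=H^{-1}$, and injectivity promotes this to a global real-analytic diffeomorphism from $B_A$ onto its image in $H^1_\kappa$; the same then follows for $q\mapsto 1/g(x;\kappa,q)-2\kappa$.

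For injectivity I would use the second-order ODE satisfied by the diagonal Green's function. Writing $g=g(x;\kappa,q)$ as the ratio $\psi_+\psi_-/W$ of the Weyl solutions of $-\psi''+(q+\kappa^2)\psi=0$ and their Wronskian, a direct computation gives
\begin{equation*}
-g\,g''+\tfrac12(g')^2+2(q+\kappa^2)g^2=\tfrac12
\end{equation*}
for Schwartz $q$ (consistent with $g\equiv\tfrac{1}{2\kappa}$ when $q\equiv0$), and this identity extends to all $q\in H^{-1}(\R)$ by approximation, using the real-analyticity and the $H^1_\kappa$-bound \eqref{eq:diffeo prop}. Since $g$ is bounded away from zero we may solve
\begin{equation*}
q=\frac{g''}{2g}-\frac{(g')^2}{4g^2}+\frac{1}{4g^2}-\kappa^2 ,
\end{equation*}
so $g$ --- equivalently $1/g$ --- determines $q$, which gives injectivity of both functionals.

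For invertibility of the derivative, differentiating the resolvent gives $dg|_q(f)(x)=-\langle\delta_x,R(\kappa,q)fR(\kappa,q)\delta_x\rangle=-\int G(x,y;\kappa,q)^2 f(y)\dy$. At $q=0$ this is the Fourier multiplier $\widehat{dg|_0 f}(\xi)=-\bigl(\kappa(\xi^2+4\kappa^2)\bigr)^{-1}\hat f(\xi)$, which by \eqref{eq:Hskappa norm dfn} equals $-\tfrac1\kappa$ times an isometric isomorphism $H^{-1}_\kappa\to H^1_\kappa$; in particular $dg|_0$ is invertible with $\snorm{(dg|_0)^{-1}}_{H^1_\kappa\to H^{-1}_\kappa}=\kappa$. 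For general $q\in B_A\subset B_A(\kappa)$, I would write $R(\kappa,q)=R_0+[R(\kappa,q)-R_0]$, expand $R(\kappa,q)-R_0$ via \eqref{eq:resolvent series 2} (with $V\equiv0$), and run the same Hilbert--Schmidt bookkeeping as in the proof of \cref{thm:diffeo prop}: after dualizing against $h$ with $\snorm{h}_{H^{-1}_\kappa}\leq1$, every term becomes a trace of a product of operators of the form $\sqrt{R_0}\,w\sqrt{R_0}$, and \eqref{eq:hskappa identity 1} together with the ideal and cyclicity properties of $\I_2$ yields a bound that decays strictly faster than $\kappa^{-1}$ in $\kappa$, e.g.
\begin{equation*}
\snorm{dg|_q-dg|_0}_{H^{-1}_\kappa\to H^1_\kappa}\lesssim\kappa^{-3/2}\snorm{q}_{H^{-1}_\kappa}\lesssim A\kappa^{-3/2}.
\end{equation*}
Enlarging $\kappa_0$ so that this is $<\kappa^{-1}$ for $\kappa\geq\kappa_0$, a Neumann series around $dg|_0$ shows $dg|_q$ is invertible for all $q\in B_A$ and $\kappa\geq\kappa_0$. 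Since $g^{-1}$ is close to $2\kappa$ in $W^{1,\infty}$ by \eqref{eq:h1kappa embedding} and \eqref{eq:diffeo prop}, multiplication by $g^{-2}$ differs from multiplication by $4\kappa^2$ by a small operator, so $d\bigl(1/g\bigr)|_q=-g^{-2}\,dg|_q$ is an isomorphism as well.

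The main obstacle is the honest version of the perturbative bound on $\snorm{dg|_q-dg|_0}_{H^{-1}_\kappa\to H^1_\kappa}$: one must keep track of enough powers of $\kappa$ to beat the $\kappa^{-1}$ size of $dg|_0$ itself (a bound merely of size $\kappa^{-1}\snorm{q}_{H^{-1}_\kappa}$ would not suffice for large $A$), which is precisely what the Schatten-class framework of \cref{sec:prelim} is designed to deliver but still requires some care in the bookkeeping. The only other point needing attention is the passage of the ODE for $g$ from Schwartz $q$ to $q\in H^{-1}(\R)$, which is routine given \cref{thm:diffeo prop}.
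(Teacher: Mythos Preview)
Your proof is correct and follows the paper's argument closely in the key analytic step: you identify $dg|_0$ as a Fourier multiplier, obtain the perturbative bound $\snorm{dg|_q-dg|_0}_{H^{-1}_\kappa\to H^1_\kappa}\lesssim A\kappa^{-3/2}$ by the same Hilbert--Schmidt bookkeeping, and deduce invertibility of $dg|_q$ by Neumann series. The paper does exactly this.

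The one genuine difference is how injectivity is handled. You prove it separately, by invoking the second-order ODE for $g$ and solving explicitly for $q$ in terms of $g$, $g'$, $g''$; this is correct (and the formula you quote appears elsewhere in the paper). The paper instead observes that because the bound $\snorm{dg|_q-dg|_0}\leq\tfrac12\kappa^{-1}$ holds \emph{uniformly} on $B_A$, the contraction-mapping proof of the inverse function theorem applies globally on $B_A$ and yields injectivity and the smooth inverse in one stroke, with no separate argument needed. Your route is slightly longer but has the merit of exhibiting an explicit inversion formula; the paper's is more economical. For the second functional the paper writes $\tfrac{1}{g}-2\kappa=-2\kappa\cdot\tfrac{2\kappa(g-\tfrac{1}{2\kappa})}{1+2\kappa(g-\tfrac{1}{2\kappa})}$ and composes with the diffeomorphism $f\mapsto f/(1+f)$, which is the same idea as your composition with $g\mapsto 1/g$.
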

\begin{proof}
	The proof follows that of \cite{Killip2019}*{Prop.~2.2}, but now we allow for arbitrary radii $A>0$ and compensate by taking $\kappa_0$ sufficiently large.  Using the resolvent identity we calculate the first functional derivative
	\begin{equation}
	dg|_q(f)
	= \dds \bigg|_{s=0} g(x;\kappa,q+sf)
	= - \langle \del_x , R(\kappa,q) f R(\kappa,q) \del_x \rangle .
	\label{eq:cleanup lem 10}
	\end{equation}
	For $q\equiv 0$, we use the integral kernel formula for $R_0$ to write
	\begin{equation*}
	dg|_0(f)
	= - \langle \del_x , R_0fR_0 \del_x \rangle
	= - \kappa^{-1} [R_0(2\kappa) f](x) .
	\end{equation*}
	Estimating by duality, expanding $R(\kappa,q)$ as the series~\eqref{eq:resolvent series 2}, and using the operator estimate~\eqref{eq:hskappa identity 1} we have
	\begin{equation*}
	\norm{ dg|_q(f) - dg|_0(f) }_{H^1_\kappa} \lesssim \kappa^{-3/2} A \norm{f}_{H^{-1}_\kappa}
	\end{equation*}
	uniformly for $q\in B_A$.  Taking a supremum over $\norm{f}_{H^{-1}_\kappa} \leq 1$, we conclude that there exists $\kappa_0 \gg A^2$ such that 
	\begin{equation*}
	\norm{ dg|_q - dg|_0 }_{H^{-1}_\kappa\to H^1_\kappa} \leq \tfrac{1}{2} \kappa^{-1} 
	\end{equation*}
	uniformly for $q\in B_A$ and $\kappa\geq\kappa_0$.  Using this and
	\begin{equation*}
	\big\lVert \big( dg|_0 \big)^{-1} \big\rVert_{H^{1}_\kappa\to H^{-1}_\kappa} \leq \kappa
	\end{equation*}
	as input, the standard contraction-mapping proof of the inverse function theorem guarantees that $q \mapsto g - \tfrac{1}{2\kappa}$ is a diffeomorphism from $B_A$ onto its image for all $\kappa\geq \kappa_0$.  
	
	For the second functional $q \mapsto \tfrac{1}{g} -2\kappa$ we write
	\begin{equation*}
	\tfrac{1}{g} - 2\kappa = -2\kappa \frac{ 2\kappa (g-\tfrac{1}{2\kappa}) }{ 1 + 2\kappa (g-\tfrac{1}{2\kappa}) } .
	\end{equation*}
	By the embedding $H^1 \hookrightarrow L^\infty$, we note that $f\mapsto \tfrac{f}{1+f}$ is a diffeomorphism from the ball of radius $\tfrac{1}{2}$ in $H^1$ into $H^1$.  The estimates~\eqref{eq:diffeo prop} and~\eqref{eq:h1kappa embedding} guarantee that we can pick $\kappa_0 \gg A^2$ so that
	\begin{equation*}
	2\kappa \norm{ g(x;\kappa,q) - \tfrac{1}{2\kappa} }_{L^\infty} \leq \tfrac{1}{2}
	\end{equation*}
	for all $q\in B_A$ and $\kappa\geq \kappa_0$.  Altogether, we conclude that $q \mapsto \tfrac{1}{g} -2\kappa$ is also a diffeomorphism from $B_A$ onto its image for all $\kappa\geq \kappa_0$.
\end{proof}

Ultimately, the convergence of the approximate flows to KdV with potential will be dominated by the linear and quadratic terms of the series~\eqref{eq:g series 1} for the diagonal Green's function.  Consequently, we will now record some useful operator identities for these two terms:
\begin{lem}
	For $\kappa\geq 1$ we have the operator identities
	\begin{align}
	&16\kappa^5 \langle \del_x, R_0 f R_0 \del_x \rangle = 16\kappa^4 R_0(2\kappa) f = \left[ 4\kappa^2 + \partial^2 + R_0(2\kappa) \partial^4 \right] f ,
	\label{eq:linear op id} \\
	&\begin{aligned} 16\kappa^5 \langle \del_x, R_0 f R_0 h R_0 \del_x \rangle &= 3fh - 3 [R_0(2\kappa)f''][R_0(2\kappa)h''] \\
	&\phantom{={}}+ 4\kappa^2 [R_0(2\kappa)f'][R_0(2\kappa)h'] ( -5 + R_0(2\kappa)\partial^2 ) \\
	&\phantom{={}}+ 4\kappa^2 [R_0(2\kappa)f][R_0(2\kappa)h] (5 \partial^2 + 2 R_0(2\kappa)\partial^4 ) , \end{aligned}
	\label{eq:quad op id}
	\end{align}
	where $R_0 = R_0(\kappa)$.
\end{lem}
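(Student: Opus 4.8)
The plan is to derive both operator identities by Fourier analysis, exploiting the fact that the relevant kernels are convolution operators composed with multiplication. First I would establish \eqref{eq:linear op id}. Writing everything in Fourier variables, the operator $\langle \del_x, R_0 f R_0 \del_x\rangle$ sends a function to $[R_0(\kappa)fR_0(\kappa)]$ evaluated on the diagonal; since $R_0(\kappa)$ has Fourier multiplier $(\xi^2+\kappa^2)^{-1}$ and the product of two such multipliers acting on a single point in $x$ becomes a convolution in Fourier space, I would compute
\begin{equation*}
  \langle \del_x, R_0(\kappa) f R_0(\kappa) \del_x\rangle
  = \int \frac{e^{i\xi x}\,\hat f(\xi)}{\sqrt{2\pi}}
    \left[ \frac{1}{2\pi i}\oint \frac{d\zeta}{(\zeta^2+\kappa^2)((\zeta-\xi)^2+\kappa^2)} \right] \dxi
\end{equation*}
— or, more cleanly, just use the explicit kernel $\tfrac{1}{2\kappa}e^{-\kappa|x-y|}$ for $R_0(\kappa)$ and integrate $\int \tfrac{1}{4\kappa^2}e^{-\kappa|x-y|}e^{-\kappa|x-z|}f(y)h(z)\,dy\,dz$ directly. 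The upshot is that the Fourier symbol of $f \mapsto 16\kappa^5\langle \del_x, R_0 f R_0\del_x\rangle$ is $16\kappa^4/(\xi^2+4\kappa^2)$, which I would then verify equals $4\kappa^2 - \xi^2 + \xi^4/(\xi^2+4\kappa^2)$ by the elementary algebraic identity $16\kappa^4 = (4\kappa^2-\xi^2)(\xi^2+4\kappa^2) + \xi^4$. Translating back to physical space via $R_0(2\kappa) \leftrightarrow (\xi^2+4\kappa^2)^{-1}$, $\partial^2 \leftrightarrow -\xi^2$, $\partial^4 \leftrightarrow \xi^4$, this gives \eqref{eq:linear op id} with both intermediate forms.

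For \eqref{eq:quad op id} I would again start from the explicit exponential kernel: the quantity $16\kappa^5\langle \del_x, R_0 f R_0 h R_0 \del_x\rangle$ equals $16\kappa^5 \iint \tfrac{1}{2\kappa}e^{-\kappa|x-y|}\,f(y)\,\tfrac{1}{2\kappa}e^{-\kappa|y-z|}\,h(z)\,\tfrac{1}{2\kappa}[\cdots]$ — wait, more precisely it is the value at $(x,x)$ of the kernel of $R_0 f R_0 h R_0$, so
\begin{equation*}
  16\kappa^5 \iint \tfrac{1}{8\kappa^3} e^{-\kappa|x-y|} f(y) e^{-\kappa|y-z|} h(z) e^{-\kappa|z-x|}\,dy\,dz .
\end{equation*}
The cleanest route is to pass to Fourier variables in $f$ and $h$, reducing to computing the scalar integral $\oint \tfrac{d\zeta}{(\zeta^2+\kappa^2)} \cdot \tfrac{1}{((\zeta-\xi)^2+\kappa^2)} \cdot \tfrac{1}{((\zeta-\xi-\eta)^2+\kappa^2)}$ by residues, then performing a partial-fraction decomposition of the resulting rational function of $\xi,\eta$ into pieces of the form (polynomial) $\times$ (factors of $(\xi^2+4\kappa^2)^{-1}$, $(\eta^2+4\kappa^2)^{-1}$). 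Each such piece corresponds, upon inverse Fourier transform, to a product $[R_0(2\kappa)\partial^{a}f][R_0(2\kappa)\partial^{b}h]$ times a power of $\kappa$ and a power of $\partial$ acting on the product; matching coefficients yields the five terms on the right-hand side of \eqref{eq:quad op id}. I would double-check the final expression by testing it on $f = h = $ a Gaussian, or by specializing to the constant-coefficient limit.

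The main obstacle is purely computational: the partial-fraction bookkeeping in the quadratic identity is delicate because the target expression mixes terms of differing homogeneity in $\kappa$ (the $3fh$ term carries no $\kappa$, while others carry $\kappa^2$) and involves $\partial^4$ acting after the resolvents. Keeping track of which $\xi^a\eta^b$ monomials survive after the residue computation, and correctly attributing the "leftover" polynomial factor (the $5\partial^2 + 2R_0(2\kappa)\partial^4$ and $-5+R_0(2\kappa)\partial^2$ pieces) to symmetrization between $f$ and $h$, is where sign and coefficient errors creep in. A useful sanity check at the end is that setting $h$ to a constant should collapse \eqref{eq:quad op id} into something consistent with \eqref{eq:linear op id}, since $R_0$ applied to a constant is that constant times $\kappa^{-2}$; verifying this consistency is the quickest way to catch an arithmetic slip.
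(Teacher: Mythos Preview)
Your approach is correct and essentially identical to the paper's: both identities are verified by passing to Fourier symbols, with \eqref{eq:linear op id} following from the algebraic identity $16\kappa^4 = (4\kappa^2-\xi^2)(\xi^2+4\kappa^2)+\xi^4$ exactly as you say, and \eqref{eq:quad op id} from a partial-fraction decomposition of the three-resolvent symbol. The only difference is packaging: the paper quotes the Fourier transform of the left-hand side of \eqref{eq:quad op id} from \cite{Killip2019}*{Appendix} (obtaining the closed form $\frac{8\kappa^4[\xi^2+(\xi-\eta)^2+\eta^2+24\kappa^2]}{(\xi^2+4\kappa^2)((\xi-\eta)^2+4\kappa^2)(\eta^2+4\kappa^2)}$) and then displays the single algebraic identity among rational functions of $(\xi,\eta)$ that matches this to the right-hand side, whereas you propose computing that Fourier transform yourself by residues---which is precisely what is done in the cited appendix.
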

\begin{proof}
	From the integral kernel formula for $R_0(\kappa)$ we see that $\langle\del_x,R_0f R_0\del_x \rangle = \kappa^{-1}R_0(2\kappa) f$, which demonstrates the first equality of~\eqref{eq:linear op id}.  The second equality follows from the symbol identity
	\begin{equation*}
	\frac{16 \kappa^{4}}{\xi^{2}+4\kappa^{2}} = 4 \kappa^{2} - \xi^{2} + \frac{\xi^{4}}{\xi^{2}+4 \kappa^{2}} 
	\end{equation*}
	in Fourier variables.
	
	Now we turn to the second identity~\eqref{eq:quad op id}.  In~\cite{Killip2019}*{Appendix} the Fourier transform of LHS\eqref{eq:quad op id} is found to be
	\begin{equation*}
	\ft\big( \tx{LHS}\eqref{eq:quad op id} \big)(\xi) = \frac{8\kappa^4}{\sqrt{2 \pi}} \int_{\R} \frac{ [\xi^{2}+(\xi-\eta)^{2}+\eta^{2}+24 \kappa^{2}] \hat{f}(\xi-\eta) \hat{h}(\eta) }{ (\xi^{2}+4 \kappa^{2}) ((\xi-\eta)^{2}+4 \kappa^{2}) (\eta^{2}+4 \kappa^{2}) } \deta .
	\end{equation*}
	(See the manipulation of the term~\eqref{eq:periodic L2 conv 3} for a similar calculation.)  The operator identity~\eqref{eq:quad op id} then follows from the equality
	\begin{align*}
	&\frac{8 \kappa^{4}\left[\xi^{2}+(\xi-\eta)^{2}+\eta^{2}+24 \kappa^{2}\right]}{(\xi^{2}+4 \kappa^{2})((\xi-\eta)^{2}+4 \kappa^{2})(\eta^{2}+4 \kappa^{2})} = 3 - \frac{3 \eta^{2}(\xi-\eta)^{2}}{((\xi-\eta)^{2}+4 \kappa^{2})(\eta^{2}+4 \kappa^{2})} \\
	&- \frac{20 \kappa^{2}\left[-\eta(\xi-\eta)+\xi^{2}\right]}{((\xi-\eta)^{2}+4 \kappa^{2})(\eta^{2}+4 \kappa^{2})} + \frac{4 \kappa^{2} \xi^{2}\left[\eta(\xi-\eta)+2 \xi^{2}\right]}{(\xi^{2}+4 \kappa^{2})((\xi-\eta)^{2}+4 \kappa^{2})(\eta^{2}+4 \kappa^{2})} . \qedhere \\
	\end{align*}
\end{proof}

As an offspring of the resolvent $R(\kappa,q)$, the diagonal Green's function comes with some algebraic identities.  In particular, in~\cite{Killip2019}*{Lem.~2.5--2.6} it is shown that for Schwartz $q$ we have the identities
\begin{equation}
\int \frac{G(x,y;\kappa,q) G(y,x;\kappa,q)}{2g(y;\kappa,q)^2}\dy = g(x;\kappa,q)
\label{eq:g alg prop 1}
\end{equation}
and
\begin{equation}
\begin{aligned}
&\int G(x,y;\kappa,q) \big[ {-}f''' + 2qf' + 2(qf)' + 4\kappa^2f' \big](y) G(y,x;\kappa,q)\dy \\
&= 2f'(x) g(x;\kappa,q) - 2f(x)g'(x;\kappa,q) 
\end{aligned}
\label{eq:g alg prop 2}
\end{equation}
for all Schwartz $f$.  To show that these hold for general $q\in B_A(\kappa)$, we argue as follows.  Given $A>0$, we pick $\kappa_0$ from \cref{thm:diffeo prop}.  Then both sides are analytic in $q$, and so equality follows from the proofs~\cite{Killip2019}*{Lem.~2.5--2.6} for the Schwartz case.

As is suggested by taking $f = g(\kappa,q)$ in~\eqref{eq:g alg prop 2}, multiplying by $1/2g(x;\kappa,q)^2$, and integrating in $x$, the diagonal Green's function satisfies the ODE
\begin{equation}
g'''(\kappa,q) = 2qg'(\kappa,q) + 2\left[ q g(\kappa,q) \right]' + 4\kappa^2g'(\kappa,q) ;
\label{eq:g alg prop 3}
\end{equation}
the proof from~\cite{Killip2019}*{Prop.~2.3} is purely algebraic and still applies.  In particular, we see that $qg'(\kappa,q)$ is a total derivative.
\begin{prop}
	Given $A>0$ there exists $F(x;\kappa,q)\in L^1(\R)$ and $\kappa_0 > 0$ so that
	\begin{equation}
	q(x)g'(x;\kappa,q) = F'(x;\kappa,q) , \qquad
	\norm{ F }_{L^1} \lesssim \kappa^{-1} \norm{q}^2_{H^{-1}_\kappa}
	\label{eq:cleanup lem 1}
	\end{equation}
	for all $q\in B_A(\kappa)$ and $\kappa\geq \kappa_0$.
\end{prop}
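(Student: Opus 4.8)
The plan is to produce $F$ in closed form. Rewriting~\eqref{eq:g alg prop 3} as $qg' = \tfrac12\partial_x\big(g'' - 2qg - 4\kappa^2 g\big)$ shows $qg'$ is a total derivative; the difficulty is that the obvious antiderivative is assembled from the distributions $g''$ and $q$ and so is not visibly integrable. To remedy this I would use the first integral of~\eqref{eq:g alg prop 3},
\begin{equation*}
2g g'' - (g')^2 - 4(q+\kappa^2)g^2 = -1 ,
\end{equation*}
which lets one trade $g''$ for lower-order terms. Writing $w := g(\,\cdot\,;\kappa,q) - \tfrac1{2\kappa}$ (so $w' = g'$), solving for $g''$, substituting, and simplifying with $2\kappa g = 1 + 2\kappa w$ turns the obvious antiderivative, up to the additive constant $\kappa$, into
\begin{equation*}
F(x;\kappa,q) := \frac{g'(x;\kappa,q)^2}{4\,g(x;\kappa,q)} - \frac{\kappa^2\,w(x;\kappa,q)^2}{g(x;\kappa,q)} ,
\end{equation*}
which is a bona fide function, and I claim this $F$ does the job.

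To justify the first integral together with the identity $F' = qg'$ I would argue by analyticity from the Schwartz case. For Schwartz $q$ everything is classical: $g$ is smooth with $w$ and all its derivatives decaying, one checks via~\eqref{eq:g alg prop 3} that $2gg'' - (g')^2 - 4(q+\kappa^2)g^2$ has vanishing $x$-derivative, hence equals its limiting value $-1$ as $x\to+\infty$, and then a one-line computation (using $w(2g-w) = g^2 - \tfrac1{4\kappa^2}$) gives $F' = g'\big(\tfrac{g''}{2g} - \tfrac{(g')^2}{4g^2} - \kappa^2 + \tfrac1{4g^2}\big) = qg'$. To pass to general $q\in B_A(\kappa)$: by \cref{thm:diffeo prop} with $V\equiv0$ and the embedding~\eqref{eq:h1kappa embedding}, the maps $q\mapsto g'$ and $q\mapsto 1/g$ are real analytic from $B_A(\kappa)$ into $L^2(\R)$ and $L^\infty(\R)$ respectively, so $q\mapsto F$ is real analytic into $L^1(\R)$; meanwhile $q\mapsto qg'$, read as $\tfrac12\big(g''' - 2(qg)' - 4\kappa^2 g'\big)$ just as in~\eqref{eq:g alg prop 3}, is real analytic into $H^{-2}(\R)$. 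These two analytic maps agree on the dense set of Schwartz functions in $B_A(\kappa)$, hence agree throughout, so $F' = qg'$ on all of $B_A(\kappa)$.

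It remains to prove the $L^1$ bound, where I will fix $\kappa_0$. By \cref{thm:diffeo prop}, $\norm{w}_{H^1_\kappa}\lesssim\kappa^{-1}\norm{q}_{H^{-1}_\kappa}$ on $B_A(\kappa)$; with~\eqref{eq:h1kappa embedding} this gives $\norm{w}_{L^\infty}\lesssim\kappa^{-3/2}\norm{q}_{H^{-1}_\kappa}\leq\kappa^{-3/2}A$, so choosing $\kappa_0\gg A^2$ yields $\norm{w}_{L^\infty}\leq\tfrac1{4\kappa}$, hence $g\geq\tfrac1{4\kappa}$ and $1/g\leq 4\kappa$, for all $q\in B_A(\kappa)$ and $\kappa\geq\kappa_0$. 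From the definitions of the $H^{\pm1}_\kappa$ norms we also get $\norm{g'}_{L^2}\lesssim\kappa^{-1}\norm{q}_{H^{-1}_\kappa}$ and $\norm{w}_{L^2}\lesssim\kappa^{-2}\norm{q}_{H^{-1}_\kappa}$. Hence, by Cauchy--Schwarz,
\begin{equation*}
\Big\| \tfrac{(g')^2}{4g} \Big\|_{L^1} \leq \kappa\,\norm{g'}_{L^2}^2 \lesssim \kappa^{-1}\norm{q}^2_{H^{-1}_\kappa} , \qquad
\Big\| \tfrac{\kappa^2 w^2}{g} \Big\|_{L^1} \leq 4\kappa^3\,\norm{w}_{L^2}^2 \lesssim \kappa^{-1}\norm{q}^2_{H^{-1}_\kappa} ,
\end{equation*}
and adding these gives $\norm{F}_{L^1}\lesssim\kappa^{-1}\norm{q}^2_{H^{-1}_\kappa}$, as required.

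The step I expect to demand the most care is the first: recognizing that the distributional antiderivative forced on us by~\eqref{eq:g alg prop 3} coincides, up to a constant, with the honest $L^1$ function $\tfrac{(g')^2}{4g} - \tfrac{\kappa^2 w^2}{g}$ — equivalently, extracting and exploiting the first integral $2gg'' - (g')^2 = 4(q+\kappa^2)g^2 - 1$. Once the closed form is secured, the analyticity-and-density extension and the norm estimate are routine consequences of \cref{thm:diffeo prop}.
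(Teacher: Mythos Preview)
Your proof is correct and arrives at exactly the same functional $F$ as the paper: writing $w = g - \tfrac{1}{2\kappa}$, both your formula and the paper's
\[
F(\kappa,q) = \tfrac{1}{g(\kappa,q)}\big\{ \tfrac{1}{4} g'(\kappa,q)^2 - \kappa^2 [g(\kappa,q) - \tfrac{1}{2\kappa}]^2 \big\}
\]
coincide. Where you invoke the first integral $2gg'' - (g')^2 - 4(q+\kappa^2)g^2 = -1$, the paper instead cites the recovery formula $q = (g'/2g)' + (g'/2g)^2 + (1/4g^2 - \kappa^2)$ from~\cite{Killip2020}; these are equivalent rearrangements of the same quadratic relation.

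The genuine difference is in the $L^1$ estimate. The paper Taylor-expands $F$ about $q\equiv 0$, verifies $F(\kappa,0)=0$ and $dF|_0 = 0$, bounds the Hessian $d^2F|_0(f,f)$ in $L^1$, and then controls the Hessian's modulus of continuity $\|d^2F|_q - d^2F|_0\|$ via several pages of functional-derivative computations. Your route is considerably shorter: you read off $\|g'\|_{L^2}$ and $\|w\|_{L^2}$ from the single bound $\|w\|_{H^1_\kappa}\lesssim\kappa^{-1}\|q\|_{H^{-1}_\kappa}$ of \cref{thm:diffeo prop}, combine with the pointwise bound $1/g \lesssim \kappa$, and estimate each term of $F$ directly. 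This is more elementary and avoids the second-variation machinery entirely; the paper's approach, on the other hand, makes the quadratic-in-$q$ structure explicit and yields slightly more (a bound on $d^2F|_q - d^2F|_0$), though that extra information is not used elsewhere.

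One small remark on your density argument: to place both $F'$ and $qg'$ in the common space $H^{-2}$, note that $L^1(\R)\hookrightarrow H^{-1}(\R)$ (since $\|\hat F\|_{L^\infty}\le\|F\|_{L^1}$), so $F'\in H^{-2}$; and for the expression $\tfrac12\partial_x(g'' - 2qg - 4\kappa^2 g)$ you should replace $g$ by $(g-\tfrac1{2\kappa})+\tfrac1{2\kappa}$ so that each piece lands in $H^{-1}$ before differentiating. With that adjustment the analyticity-and-density step is clean.
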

\begin{proof}
	From the ODE~\eqref{eq:g alg prop 3} we have
	\begin{equation*}
	qg'(\kappa,q) = \left[ \tfrac{1}{2} g''(\kappa,q) - qg(\kappa,q) - 2\kappa^2g(\kappa,q) \right]' .
	\end{equation*}
	In~\cite{Killip2020}*{Lem.~2.14} it is shown that the potential $q$ may be recovered from the diagonal Green's function via the relation
	\begin{equation*}
	q = \left[ \frac{g'(\kappa,q)}{2g(\kappa,q)} \right]' + \left[ \frac{g'(\kappa,q)}{2g(\kappa,q)} \right]^2 + \left[ \frac{1}{4g(\kappa,q)^2} - \kappa^2 \right] .
	\end{equation*}
	Rearranging this identity, we see that the claimed relation~\eqref{eq:cleanup lem 1} holds for the functional
	\begin{equation}
	F(\kappa,q) := \tfrac{1}{g(\kappa,q)} \big\{ \tfrac{1}{4}g'(\kappa,q)^2 - \kappa^2 \left[ g(\kappa,q) - \tfrac{1}{2\kappa} \right]^2 \big\} .
	\label{eq:cleanup lem 2}
	\end{equation}
	
	To see the quadratic dependence on $q$ claimed in the estimate~\eqref{eq:cleanup lem 1} we will Taylor expand $F$ about $q\equiv 0$.  From the series~\eqref{eq:g series 1} we note that $g(\kappa,0) \equiv \tfrac{1}{2\kappa}$, and so we have
	\begin{equation*}
	F(x;\kappa,0) \equiv 0 .
	\end{equation*}
	
	The Green's function for a translated potential is the translation of the original Green's function, and so
	\begin{equation}
	g(x;\kappa,q(\cdot+h)) = g(x+h;\kappa,q) \quad\tx{for all }h\in\R .
	\label{eq:g trans prop}
	\end{equation}
	This together with the resolvent identity yields
	\begin{equation}
	g'(x;\kappa,q) 
	= - \langle \del_x , R(\kappa,q) q' R(\kappa,q) \del_x \rangle .
	\label{eq:g derivative}
	\end{equation}
	Differentiating~\eqref{eq:cleanup lem 2} with respect to $q$ we obtain
	\begin{equation}
	\begin{aligned}
	dF|_q(f)
	= &- \tfrac{1}{g(\kappa,q)} F(\kappa,q)\, dg|_q(f) \\
	&+ \tfrac{1}{g(\kappa,q)} \big\{ \tfrac{1}{2} g'(\kappa,q)\, d(g')|_q(f) - 2\kappa^2 \big[ g(\kappa,q) - \tfrac{1}{2\kappa} \big]\,dg|_q(f) \big\} ,
	\end{aligned}
	\label{eq:cleanup lem 3}
	\end{equation}
	and since $F$, $g-\tfrac{1}{2\kappa}$, and $g'$ all vanish for $q\equiv 0$ we conclude
	\begin{equation*}
	dF|_0(f) \equiv 0 .
	\end{equation*}
	
	Next we turn to the Hessian of $F(q)$.  A straightforward computation shows that at $q\equiv 0$ we have
	\begin{equation}
	d^2F|_0(f,f)
	= - 4\kappa^3 \langle \del_x , R_0fR_0 \del_x \rangle^2 + \kappa \langle \del_x , R_0f'R_0 \del_x \rangle^2 ,
	\label{eq:cleanup lem 4}
	\end{equation}
	and we will estimate both terms on the RHS individually.  From the integral kernel formula for $R_0$ we write
	\begin{equation*}
	\langle \del_x , R_0fR_0 \del_x \rangle
	= \kappa^{-1} [R_0(2\kappa) f](x) .
	\end{equation*}
	Using Plancherel's theorem to estimate the first term of RHS\eqref{eq:cleanup lem 4} we have
	\begin{equation*}
	4\kappa^3 \int \left| \langle \del_x , R_0fR_0 \del_x \rangle \right|^2\dx
	= 4\kappa \int \frac{|\hat{f}(\xi)|^2}{(\xi^2+4\kappa^2)^2} \dxi
	\leq \frac{1}{\kappa} \int \frac{|\hat{f}(\xi)|^2}{\xi^2+4\kappa^2} \dxi .
	\end{equation*}
	Similarly, for the second term of RHS\eqref{eq:cleanup lem 4} we have
	\begin{equation*}
	\kappa \int \left| \langle \del_x , R_0f'R_0 \del_x \rangle \right|^2\dx
	= \frac{1}{\kappa} \int \frac{\xi^2 |\hat{f}(\xi)|^2}{(\xi^2+4\kappa^2)^2} \dxi
	\leq \frac{1}{\kappa} \int \frac{|\hat{f}(\xi)|^2}{\xi^2+4\kappa^2} \dxi .
	\end{equation*}
	Together we conclude
	\begin{equation}
	\norm{ d^2F|_0(f,f) }_{L^1} \lesssim \kappa^{-1} \norm{f}^2_{H^{-1}_\kappa} .
	\label{eq:cleanup lem 14}
	\end{equation}
	
	To finish the proof, it suffices to show that the Hessian's modulus of continuity satisfies
	\begin{equation}
	\norm{ d^2F|_q(f,f) - d^2F|_0(f,f) }_{L^1} \lesssim \kappa^{-3/2} A \norm{f}^2_{H^{-1}_\kappa}
	\label{eq:cleanup lem 5}
	\end{equation}
	uniformly for $q\in B_A(\kappa)$ and $\kappa$ large.  Indeed, the estimate~\eqref{eq:cleanup lem 1} then follows by choosing $\kappa_0 \gg A^2$ so that the RHS is smaller than RHS\eqref{eq:cleanup lem 14} for $\kappa\geq \kappa_0$.  Differentiating the first derivative~\eqref{eq:cleanup lem 3} we write
	\begin{align}
	&d^2F|_q(f,f) - d^2F|_0(f,f) \nonumber \\
	&= \tfrac{2}{g(\kappa,q)^2} F(\kappa,q) \big[ dg|_q(f) \big]^2 
	-\tfrac{1}{g(\kappa,q)} F(\kappa,q)\, d^2g|_q(f,f) 
	\label{eq:cleanup lem 6}\\
	&\phantom{={}} - \tfrac{2}{g(\kappa,q)^2} \big\{ \tfrac{1}{2} g'(\kappa,q)\, d(g')|_q(f) - 2\kappa^2 \big[ g(\kappa,q) - \tfrac{1}{2\kappa} \big]\,dg|_q(f) \big\}\, dg|_q(f) 
	\label{eq:cleanup lem 7}\\
	&\phantom{={}} + \tfrac{1}{g(\kappa,q)} \big\{ \tfrac{1}{2} g'(\kappa,q)\, d^2(g')|_q(f,f) - 2\kappa^2 \big[ g(\kappa,q) - \tfrac{1}{2\kappa} \big]\,d^2g|_q(f,f) \big\}
	\label{eq:cleanup lem 8}\\
	&\phantom{={}} + \tfrac{1}{g(\kappa,q)} \big\{  \tfrac{1}{2} [ d(g')|_q(f) ]^2 - 2\kappa^2 [ dg|_q(f) ]^2 \big\} 
	- d^2F|_0(f,f) .
	\label{eq:cleanup lem 9}
	\end{align}
	We will prove the estimate~\eqref{eq:cleanup lem 5} by estimating each of the terms~\eqref{eq:cleanup lem 6}--\eqref{eq:cleanup lem 9} in $L^1$, but first we record some useful estimates for the functional derivatives of $g$.
	
	Estimating the first functional derivative~\eqref{eq:cleanup lem 10} by duality, expanding $R(\kappa,q)$ as the series~\eqref{eq:resolvent series 2}, and using the operator estimate~\eqref{eq:hskappa identity 1} we have
	\begin{equation}
	\norm{ dg|_q(f) }_{H^1_\kappa} \lesssim \kappa^{-1} \norm{f}_{H^{-1}_\kappa}
	\label{eq:cleanup lem 11}
	\end{equation}
	uniformly for $q\in B_A(\kappa)$ and $\kappa$ large.  Similarly, if we remove the leading term of $dg|_q(f)$ we obtain
	\begin{equation}
	\norm{ dg|_q(f) - dg|_0(f) }_{H^1_\kappa} \lesssim \kappa^{-3/2} A \norm{f}_{H^{-1}_\kappa}
	\label{eq:cleanup lem 12}
	\end{equation}
	uniformly for $q\in B_A(\kappa)$ and $\kappa$ large.  Another application of the resolvent identity shows that
	\begin{equation*}
	d^2g|_q(f,f) = 2\langle \del_x, R(\kappa,q) f R(\kappa,q) f R(\kappa,q) \del_x \rangle ,
	\end{equation*}
	and estimating this by duality we have
	\begin{equation}
	\norm{ d^2g|_q(f,f) }_{H^1_\kappa} \lesssim \kappa^{-3/2} \norm{f}_{H^{-1}_\kappa}^2
	\label{eq:cleanup lem 13}
	\end{equation}
	uniformly for $q\in B_A(\kappa)$ and $\kappa$ large.
	
	For the first term~\eqref{eq:cleanup lem 6} we use the estimates~\eqref{eq:diffeo prop}, \eqref{eq:h1kappa embedding}, \eqref{eq:cleanup lem 11}, and \eqref{eq:cleanup lem 13} along with the observation that $\norm{ h }_{L^2} \lesssim \kappa^{-1} \norm{ h }_{H^1_\kappa}$ to bound
	\begin{align*}
	&\norm{ \eqref{eq:cleanup lem 6} }_{L^1} \\
	&\lesssim \big( \norm{ g'(\kappa,q) }_{L^2}^2 + \kappa^2 \norm{ g(\kappa,q) - \tfrac{1}{2\kappa} }_{L^2}^2 \big) \bigg( \norm{ \tfrac{[dg|_q(f)]^2}{g(\kappa,q)^3} }_{L^\infty} + \norm{ \tfrac{d^2g|_q(f,f) }{g(\kappa,q)^2} }_{L^\infty} \bigg) \\
	&\lesssim \kappa^{-2} A^2 \big( \kappa^3 \cdot \kappa^{-3} \norm{f}_{H^{-1}_\kappa}^2 + \kappa^2 \cdot \kappa^{-2} \norm{f}_{H^{-1}_\kappa}^2 \big)
	\lesssim \kappa^{-2} A \norm{f}^2_{H^{-1}_\kappa}
	\end{align*}
	uniformly for $q\in B_A(\kappa)$ and $\kappa$ large.
	
	For the second term~\eqref{eq:cleanup lem 7} we note that $d(g')|_q(f) = [dg|_q(f)]'$ can be bounded in $L^2$ by~\eqref{eq:cleanup lem 11}, yielding
	\begin{align*}
	&\norm{ \eqref{eq:cleanup lem 7} }_{L^1} \\
	&\lesssim \norm{ \tfrac{dg|_q(f)}{g(\kappa,q)^2} }_{L^\infty} \big( \norm{ g'(\kappa,q) }_{L^2} \norm{ d(g')|_q(f) }_{L^2} + \kappa^2 \norm{ g(\kappa,q) - \tfrac{1}{2\kappa} }_{L^2} \norm{ dg|_q(f) }_{L^2} \big) \\
	&\lesssim \kappa^{1/2} \norm{f}_{H^{-1}_\kappa} \big( \kappa^{-2} A \norm{f}_{H^{-1}_\kappa} \big)
	\lesssim \kappa^{-3/2} A \norm{f}_{H^{-1}_\kappa}^2
	\end{align*}
	uniformly for $q\in B_A(\kappa)$ and $\kappa$ large.
	
	Similarly for the third term~\eqref{eq:cleanup lem 8} we have $d^2(g')|_q(f,f) = [d^2g|_q(f,f)]'$, and hence
	\begin{align*}
	\norm{ \eqref{eq:cleanup lem 8} }_{L^1}
	&\leq \tfrac{1}{2} \norm{ \tfrac{1}{g(\kappa,q)} }_{L^\infty} \norm{ g'(\kappa,q) }_{L^2} \norm{ d^2(g')|_q(f,f) }_{L^2} \\
	&\phantom{\leq{}}+ 2\kappa^2  \norm{ \tfrac{1}{g(\kappa,q)} }_{L^\infty} \norm{ g(\kappa,q) - \tfrac{1}{2\kappa} }_{L^2} \norm{ d^2g|_q(f,f) }_{L^2} \\
	&\lesssim \kappa^{-3/2} A \norm{f}_{H^{-1}_\kappa}^2
	+ \kappa^2\cdot \kappa^{-7/2} A \norm{f}_{H^{-1}_\kappa}^2
	\lesssim \kappa^{-3/2} A \norm{f}_{H^{-1}_\kappa}^2
	\end{align*}
	uniformly for $q\in B_A(\kappa)$ and $\kappa$ large.
	
	Lastly, to witness the convergence within the fourth term~\eqref{eq:cleanup lem 9} we estimate $[ dg|_q(f) ]^2 - [ dg|_0(f) ]^2$ in $L^1$ as the difference of squares in $L^2$ using~\eqref{eq:cleanup lem 12}:	
	\begin{align*}
	&\norm{ \eqref{eq:cleanup lem 9} }_{L^1}
	= \norm{ \left[ \tfrac{ [ d(g')|_q(f) ]^2 }{2g(\kappa,q)} - \kappa [d(g')|_0(f)]^2 \right] - 4\kappa^3 \left[ \tfrac{ [ dg|_q(f) ]^2 }{ 2\kappa g(\kappa,q) } - [dg|_0(f)]^2 \right] }_{L^1} \\
	&\leq \kappa \norm{ [ d(g')|_q(f) ]^2 - [ d(g')|_0(f) ]^2 }_{L^1} + \norm{ \tfrac{ 1 }{2g(\kappa,q)} - \kappa }_{L^\infty} \norm{ d(g')|_q(f) }_{L^2}^2 \\
	&\phantom{\leq{}}+ 4\kappa^3 \norm{ [ dg|_q(f) ]^2 - [ dg|_0(f) ]^2 }_{L^1} + 4\kappa^2 \norm{ \tfrac{ 1 }{2g(\kappa,q)} - \kappa }_{L^\infty} \norm{ dg|_q(f) }_{L^2}^2 \\
	&\lesssim \kappa^{-3/2} A \norm{f}_{H^{-1}_\kappa}^2 + \kappa^{-2} A \norm{f}_{H^{-1}_\kappa}^2
	\lesssim \kappa^{-3/2} A \norm{f}_{H^{-1}_\kappa}^2
	\end{align*}
	uniformly for $q\in B_A(\kappa)$ and $\kappa$ large.  Altogether we have demonstrated the desired inequality~\eqref{eq:cleanup lem 5}, which concludes the proof.
\end{proof}

We now recall the key conserved quantity $\alpha(\kappa,q)$ constructed in ~\cite{Killip2019}*{Prop.~2.4} to control $q$ in $H^{-1}$.  The same proof shows that given $A>0$, if we take the corresponding constant $\kappa_0$ from \cref{thm:diffeo prop}, then for all $\kappa\geq \kappa_0$ the quantity
\begin{equation*}
\alpha(\kappa,q) := \int_{\R} \left\{ - \frac{1}{2g(x;\kappa,q)} + \kappa + 2\kappa [ R_0(2\kappa)q ] (x) \right\} \dx
\end{equation*}
exists for all $q\in B_A(\kappa)$, is a real analytic functional of $q \in B_A(\kappa)$, and is conserved under the KdV flow (cf.~\cite{Killip2019}*{Prop.~3.1}):
\begin{equation}
\{ \alpha, \hkdv \} = 0 .
\label{eq:alpha 1}
\end{equation}
The quantity $\alpha(\kappa, q)$ is a renormalized logarithm of the transmission coefficient for the Schr\"odinger operator with potential $q$ (called the perturbation determinant) at energy $-\kappa^2$.

The formula for $\alpha$ is the trace of the integral kernel $-1/2G(x,y;\kappa,q)$ with the first two terms of its functional Taylor series about $q\equiv 0$ canceled, and consequently $\alpha(\kappa,q)$ is a nonnegative, strictly convex, real-analytic functional of $q\in B_A$.  Specifically, in ~\cite{Killip2019}*{Prop.~2.4} it is shown that the first derivative of $\alpha$ is given by
\begin{equation}
\frac{\del\alpha}{\del q} = \frac{1}{2\kappa} - g(x;\kappa,q) .
\label{eq:alpha 2}
\end{equation}
This vanishes for $q\equiv 0$, but the nondegenerate second derivative yields
\begin{equation}
\tfrac{1}{4} \kappa^{-1} \norm{ q }^2_{H^{-1}_\kappa} \leq \alpha(\kappa,q) \leq \kappa^{-1} \norm{ q }^2_{H^{-1}_\kappa} 
\label{eq:alpha 3}
\end{equation}
uniformly for $q\in B_A(\kappa)$ and $\kappa\geq \kappa_0$.  This last statement follows from the original proof of~\cite{Killip2019}*{Prop.~2.4} together with the estimate
\begin{equation*}
\left| d^2\alpha|_q(f,f) - d^2\alpha|_0(f,f) \right|
\lesssim \kappa^{-3/2} A \norm{ f}_{H^{-1}_\kappa}^2
\end{equation*}
uniformly for $q\in B_A(\kappa)$ and $\kappa$ large (which is true by~\eqref{eq:cleanup lem 12} and~\eqref{eq:alpha 2}).

The quantity $\alpha$ is also used to construct the $H_\kappa$ flows (cf.~\cite{Killip2019}*{Prop.~3.2}) which approximate the KdV flow as $\kappa\to\infty$.  For $\kappa \geq 1$ the Hamiltonian evolution induced by
\begin{equation}
H_\kappa := - 16\kappa^5 \alpha(\kappa,q) + 2\kappa^2 \int q(x)^2 \dx
\label{eq:hk}
\end{equation}
is
\begin{equation}
\ddt q(x) = 16\kappa^5 g'(x;\kappa,q) + 4\kappa^2 q'(x) .
\label{eq:hk flow}
\end{equation}
The flow conserves $\alpha(\varkappa,q(t))$ and commutes with those of KdV and $H_\varkappa$ for all $\varkappa \geq 1$:
\begin{equation}
\{ \alpha , H_\kappa \} = 0 , \qquad \{ H_\kappa , \hkdv \} = 0 , \qquad \{ H_\kappa , H_\varkappa \} = 0 .
\label{eq:hk flow 2}
\end{equation}

Given a solution $V(t,x)$ to KdV we define $V_\kappa(t,x)$ to be the $H_\kappa$ evolution of $V(0,x)$ as in~\eqref{eq:vk dfn}.  We will always assume that $V$ and $V_\kappa$ are admissible in the sense of \cref{thm:hyp}.  Just as how we obtained KdV with potential~\eqref{eq:tkdv} from KdV, we define the $\hktilde$ flow of $q$ from time $0$ to $t$ via
\begin{equation*}
\wt{\Phi}_{\kappa}(t) q = e^{tJ\nabla H_\kappa} (q+V(0)) - V_\kappa(t) .
\end{equation*}
In other words, $q(t,x)$ solves
\begin{equation}
\ddt q(t,x) = 16\kappa^5\left[ g'(x;\kappa,q(t)+V_\kappa(t)) - g'(x;\kappa,V_\kappa(t)) \right] + 4\kappa^2q'(t,x) .
\label{eq:hktilde flow}
\end{equation}
Formally, this flow is induced by the (time-dependent) Hamiltonian
\begin{equation*}
\hktilde := - 16\kappa^5 \left\{ \alpha(\kappa,q+V_\kappa) + \int [g(x;\kappa,V_\kappa) - \tfrac{1}{2\kappa}] q(x) \dx \right\}+ 2\kappa^2 \int q(x)^2 \dx .
\end{equation*}
We will not need this explicit formula for the Hamiltonian $\hktilde$, but we include it so that we are justified in using the Poisson bracket notation for its flow.

Throughout our analysis we will need to know that the first two terms of the series~\eqref{eq:g series 1} for $g(\kappa,V_\kappa)$ converge and dominate in the limit $\kappa\to\infty$.
\begin{lem}
	\label{thm:g conv 1}
	Fix $V$ admissible (in the sense of \cref{thm:hyp}) and $T>0$.  Then
	\begin{equation*}
	- 4\kappa^3 \left[ g(\kappa,V_\kappa) - \tfrac{1}{2\kappa} \right] \to V\quad
	\tx{in }W^{2,\infty}\tx{ as }\kappa\to\infty	
	\end{equation*}
	uniformly for $|t|\leq T$.
\end{lem}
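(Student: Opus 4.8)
The plan is to split the series~\eqref{eq:g series 1} for $g(\kappa,V_\kappa)$ into its term linear in $V_\kappa$ and the remainder, show that $-4\kappa^3$ times the linear term converges to $V$ in $W^{2,\infty}$, and show that $-4\kappa^3$ times the remainder tends to $0$ in $W^{2,\infty}$. By \cref{thm:hyp}(ii) there is a constant $C=C(T)$ with $\norm{V_\kappa(t)}_{W^{4,\infty}}\le C$ for all $|t|\le T$ and all $\kappa$ large; in particular (for $\kappa$ large depending only on $C$) the series~\eqref{eq:g series 1} with potential $V_\kappa$ converges absolutely, and since each term together with its first two $x$-derivatives will be bounded by a summable sequence, term-by-term differentiation in $x$ is legitimate.

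For the linear term, the integral kernel of $R_0(\kappa)$ gives $\langle\del_x, R_0 V_\kappa R_0\del_x\rangle = \kappa^{-1}[R_0(2\kappa)V_\kappa](x)$, so the $\ell=1$ contribution to $-4\kappa^3[g(\kappa,V_\kappa)-\tfrac{1}{2\kappa}]$ is exactly $4\kappa^2 R_0(2\kappa)V_\kappa$. Since $4\kappa^2 R_0(2\kappa)-I = R_0(2\kappa)\partial^2$ as Fourier multipliers, for $j=0,1,2$ we have $\partial^j\big(4\kappa^2 R_0(2\kappa)V_\kappa - V_\kappa\big) = R_0(2\kappa)\partial^{j+2}V_\kappa$, and $\norm{R_0(2\kappa)f}_{L^\infty}\le\tfrac{1}{4\kappa^2}\norm{f}_{L^\infty}$ then yields $\norm{4\kappa^2 R_0(2\kappa)V_\kappa - V_\kappa}_{W^{2,\infty}}\lesssim\kappa^{-2}\norm{V_\kappa}_{W^{4,\infty}}\lesssim\kappa^{-2}$ uniformly for $|t|\le T$. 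Combined with $\norm{V_\kappa-V}_{W^{2,\infty}}\to 0$ from \cref{thm:hyp}(iii), this gives $4\kappa^2 R_0(2\kappa)V_\kappa\to V$ in $W^{2,\infty}$ uniformly for $|t|\le T$. This is the one step that genuinely uses the $W^{4,\infty}$ control in \cref{thm:hyp}(ii): the mollification error $R_0(2\kappa)V_\kappa''$ must itself be controlled in $W^{2,\infty}$, costing two extra derivatives.

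For the remainder $r(\cdot;\kappa):=\sum_{\ell\ge 2}(-1)^\ell g_\ell$ with $g_\ell(x):=\langle\sqrt{R_0}\del_x, (\sqrt{R_0}V_\kappa\sqrt{R_0})^\ell\sqrt{R_0}\del_x\rangle$, the translation property~\eqref{eq:g trans prop}, applied term by term, lets us write $\partial_x^j g_\ell$ (for $j\le 2$) as a sum of at most $\ell^2$ expressions of the same form in which the factors $V_\kappa$ are replaced by $x$-derivatives of $V_\kappa$ of total order $j$. Estimating each such expression by $\norm{\sqrt{R_0}\del_x}_{L^2}^2 = \tfrac1{2\kappa}$ and $\norm{\sqrt{R_0}w\sqrt{R_0}}_{\op}\le\kappa^{-2}\norm{w}_{L^\infty}$, and using $\norm{V_\kappa^{(i)}}_{L^\infty}\le C$ for $i\le 2$, gives $\norm{\partial_x^j g_\ell}_{L^\infty}\lesssim\ell^2\kappa^{-1}(C\kappa^{-2})^\ell$; summing the resulting geometric series over $\ell\ge 2$ (valid once $C\kappa^{-2}\le\tfrac12$) yields $\norm{r(\cdot;\kappa)}_{W^{2,\infty}}\lesssim\kappa^{-5}$, hence $\norm{-4\kappa^3 r(\cdot;\kappa)}_{W^{2,\infty}}\lesssim\kappa^{-2}\to 0$ uniformly for $|t|\le T$. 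Adding the two pieces, $-4\kappa^3[g(\kappa,V_\kappa)-\tfrac1{2\kappa}] = 4\kappa^2 R_0(2\kappa)V_\kappa - 4\kappa^3 r(\cdot;\kappa)\to V$ in $W^{2,\infty}$, uniformly for $|t|\le T$.

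I expect the only delicate point to be the derivative bookkeeping in the linear term, which is exactly what forces the $W^{4,\infty}$ hypothesis; everything past the linear term is routinely $O(\kappa^{-5})$ because each additional resolvent factor $R_0$ contributes a factor $\kappa^{-2}$ while the two endpoint factors $\sqrt{R_0}\del_x$ contribute only $\kappa^{-1}$ in total.
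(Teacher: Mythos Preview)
Your proof is correct and follows the same overall decomposition as the paper: isolate the $\ell=1$ term of the series~\eqref{eq:g series 1}, show it converges to $V$, and bound the $\ell\ge 2$ tail by $O(\kappa^{-5})$ via the operator estimates $\snorm{\sqrt{R_0}\del_x}_{L^2}^2\lesssim\kappa^{-1}$ and $\snorm{\sqrt{R_0}V_\kappa\sqrt{R_0}}_{\op}\lesssim\kappa^{-2}$.

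The one noteworthy difference is in the linear term. You use the Fourier-multiplier identity $4\kappa^2 R_0(2\kappa)-I=R_0(2\kappa)\partial^2$, which costs two extra derivatives and gains $\kappa^{-2}$; the paper instead uses the explicit kernel $\kappa e^{-2\kappa|x|}$ and the fundamental theorem of calculus to get $|4\kappa^2 R_0(2\kappa)V_\kappa - V_\kappa|\le 2\kappa^{-1}\norm{V_\kappa'}_{L^\infty}$, spending only one derivative for $\kappa^{-1}$ decay. Consequently the paper's argument needs only $V_\kappa\in W^{3,\infty}$ for this lemma, whereas your version uses the full $W^{4,\infty}$ bound. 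Your side remark that this step ``forces'' the $W^{4,\infty}$ hypothesis is therefore not quite right: in the paper the $W^{4,\infty}$ requirement is actually driven by the term~\eqref{eq:diff flow 13} in \cref{sec:conv 1}, not by this lemma.
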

\begin{proof}
	First we will examine the leading term of the series~\eqref{eq:g series 1} for $g(\kappa,V_\kappa) - \tfrac{1}{2\kappa}$.  From the integral kernel formula for $R_0$ we note that
	\begin{equation*}
	4\kappa^3 \langle \del_x , R_0 V_\kappa R_0 \del_x \rangle
	= 4\kappa^2 R_0(2\kappa) V_\kappa ,
	\end{equation*}
	and we claim this converges to $V$ in $W^{2,\infty}$ uniformly for $|t|\leq T$.  The operator $4\kappa^2 R_0(2\kappa)$ is convolution by the function $\kappa e^{-2\kappa|x|}$, whose integral is 1 for all $\kappa>0$.  Using this and the fundamental theorem of calculus we have
	\begin{align*}
	\left| 4\kappa^2 R_0(2\kappa) V_\kappa (x) - V_\kappa(x) \right|
	&= \left| \int \kappa e^{-2\kappa|y|} [V_\kappa(x-y) - V_\kappa(x)] \dy \right| \\
	&\leq \kappa \norm{V'_\kappa}_{L^\infty} \int e^{-2\kappa|y|} |y| \dy
	= 2 \kappa^{-1} \norm{V'_\kappa}_{L^\infty} 
	\end{align*}
	for all $x$.  As $V'_\kappa \in L^\infty$ and $V_\kappa \to V$ in $L^\infty$ uniformly for $|t|\leq T$ (by \cref{thm:hyp}), we conclude that $4\kappa^2 R_0(2\kappa) V_\kappa \to V$ in $L^\infty$ uniformly for $|t|\leq T$.  Differentiation commutes with $R_0(2\kappa)$, and so replacing $V_\kappa$ with $V'_\kappa,V''_\kappa$ and recalling that $V_\kappa \in W^{3,\infty}$ uniformly for $|t|\leq T$, we conclude that $4\kappa^2 R_0(2\kappa) V_\kappa \to V$ in $W^{2,\infty}$ uniformly for $|t|\leq T$.
	
	It remains to show that
	\begin{equation*}
	- 4\kappa^3 \left[ g(\kappa,V_\kappa) - \tfrac{1}{2\kappa} \right] - 4\kappa^2 R_0(2\kappa)V_\kappa \to 0\quad
	\tx{in }W^{2,\infty}\tx{ as }\kappa\to\infty
	\end{equation*}
	uniformly for $|t|\leq T$.  Using the series~\eqref{eq:g series 1} we estimate
	\begin{align*}
	&\left| 4\kappa^3 \left[ g(x;\kappa,V_\kappa) - \tfrac{1}{2\kappa} \right] + 4\kappa^2 R_0(2\kappa)V_\kappa(x) \right| \\
	&\leq 4\kappa^3 \sum_{\ell=2}^\infty \norm{ \sqrt{R_0}\del_x }_{L^2}^2 \norm{ \sqrt{R_0}V_\kappa \sqrt{R_0} }^\ell_\op 
	\lesssim 4\kappa^2 \sum_{\ell=2}^\infty \big( \kappa^{-2} \norm{ V_\kappa }_{L^\infty} \big)^\ell
	\lesssim_V \kappa^{-2} ,
	\end{align*}
	where we noted that $\snorm{ \sqrt{R_0}\del_x }_{L^2}^2 \lesssim \kappa^{-1}$ in Fourier variables.  This demonstrates the desired convergence in $L^\infty$.  Differentiating the translation identity~\eqref{eq:g trans prop} with respect to $h$ at $h=0$ yields
	\begin{equation*}
	g'(\kappa,V_\kappa)
	= \sum_{\ell=1}^\infty (-1)^\ell \sum_{j=0}^{\ell-1} \left\langle \del_x , R_0(V_\kappa R_0)^j V'_\kappa R_0 (V_\kappa R_0)^{\ell-1-j} \del_x \right\rangle .
	\end{equation*}
	Computing the second derivative similarly and using the same estimates, we also conclude 
	\begin{equation*}
	\snorm{ 4\kappa^3 g'(\kappa,V_\kappa) + 4\kappa^2 R_0(2\kappa)V'_\kappa }_{L^\infty} 
	+ \snorm{ 4\kappa^3 g''(\kappa,V_\kappa) + 4\kappa^2 R_0(2\kappa)V''_\kappa }_{L^\infty} 
	\lesssim_V \kappa^{-2}
	\end{equation*}
	because $V_\kappa \in W^{2,\infty}$ uniformly for $|t|\leq T$.  This demonstrates that the first and second derivatives converge in $L^\infty$ uniformly for $|t|\leq T$ as well.
\end{proof}

\section{The \texorpdfstring{$\hktilde$}{modified H-kappa} flow}
\label{sec:hktilde}

To eventually show that the $\hktilde$ flow~\eqref{eq:hktilde flow} converges to KdV with potential~\eqref{eq:tkdv} we will need to control the $H^{-1}$ norm of $q(t)$ under the $\hktilde$ flow.  As the $\hktilde$ flow already has the associated energy parameter $\kappa$, our tool for controlling $q$ in $H^{-1}$ is $\alpha(\varkappa,q(t))$ at an independent energy parameter $\varkappa$.  Both the $\hktilde$ flow and $\alpha$ involve the diagonal Green's function, and so we will be led to an integral involving $g(x;\kappa,q)$ and $g(x;\varkappa,q)$.  Expanding both into series, the resulting summands are no longer simply traces and so we will need to develop a new technique in order efficiently estimate such an integral.

To introduce the technique that we later use, we will first prove the commutativity relation
\begin{equation}
\int g(x;\varkappa,q) g'(x;\kappa,q) \dx = 0
\label{eq:biham relation}
\end{equation}
for Schwartz functions $q$, which expresses that $\alpha(\kappa,q)$ and $\alpha(\varkappa,q)$ Poisson commute (cf.~\cite{Killip2019}*{Prop.~3.2}).  When $\varkappa = \kappa$ the integrand is a total derivative and the vanishing of the integral is immediate, so assume $\varkappa\neq\kappa$.  First, we use the ODE~\eqref{eq:g alg prop 3} for $g(\kappa,q) = g(x;\kappa,q)$ to write
\begin{equation}
4(\kappa^2-\varkappa^2) g'(\kappa,q)
= g'''(\kappa,q) - 2q g'(\kappa,q) - 2 \left[ qg(\kappa,q) \right]' - 4\varkappa^2 g'(\kappa,q) .
\label{eq:biham relation 2}
\end{equation}
Substituting this for $g'(\kappa,q)$ in~\eqref{eq:biham relation} and integrating by parts we obtain
\begin{align*}
&\int g(\varkappa,q) g'(\kappa,q) \dx \\
&= \tfrac{1}{4(\kappa^2-\varkappa^2)} \int g(\varkappa,q) \left\{ g'''(\kappa,q) - 2q g'(\kappa,q) - 2 \left[ qg(\kappa,q) \right]' - 4\varkappa^2 g'(\kappa,q) \right\} \dx \\
&= - \tfrac{1}{4(\kappa^2-\varkappa^2)} \int \left\{ g'''(\varkappa,q) - 2q g'(\varkappa,q) - 2 \left[ qg(\varkappa,q) \right]' - 4\varkappa^2 g'(\varkappa,q) \right\} g(\kappa,q) \dx .
\end{align*}
Now we see that this last integral vanishes due to the ODE~\eqref{eq:g alg prop 3} for $g(\varkappa,q)$, thus proving~\eqref{eq:biham relation}.  We will refer to this procedure of using the ODE for one term, integrating by parts, and using the ODE for the other term as the \ti{commutativity relation trick}.

Now we are prepared to prove our main estimate for controlling the $H^{-1}$ norm of $q(t)$ under the $\hktilde$ flow:
\begin{prop}
	\label{thm:hktilde alphadot 1}
	Fix $T>0$ and $V$ admissible.  There exists a constant $C>0$ so that the following holds: given $A>0$ there exists $\varkappa_0>0$ so that solutions $q(t)\in B_A(\varkappa)$ to the $\hktilde$ flow~\eqref{eq:hktilde flow} obey
	\begin{equation*}
	\left| \ddt \alpha(\varkappa,q(t)) \right| \leq C\alpha(\varkappa,q(t))
	\end{equation*}
	uniformly for $|t|\leq T$, $\kappa \geq 2\varkappa$, and $\varkappa\geq \varkappa_0$.
\end{prop}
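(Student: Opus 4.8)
The plan is to differentiate $t\mapsto\alpha(\varkappa,q(t))$, discard the pieces that integrate to zero, and then run the \emph{commutativity relation trick} described just above---the new wrinkle being that the two diagonal Green's functions now carry \emph{different} potentials, $q+V_\kappa$ and $q$, so the cancellation is no longer complete and leaves error terms built from the background. Since $\tfrac{\del\alpha(\varkappa,\cdot)}{\del q} = \tfrac1{2\varkappa} - g(x;\varkappa,q)$ by~\eqref{eq:alpha 2}, the chain rule applied to the $\hktilde$ flow~\eqref{eq:hktilde flow} gives
\[
\ddt\alpha(\varkappa,q(t)) = \Big\langle\, \tfrac1{2\varkappa} - g(x;\varkappa,q),\ 16\kappa^5\big[g'(x;\kappa,q+V_\kappa) - g'(x;\kappa,V_\kappa)\big] + 4\kappa^2 q'\,\Big\rangle .
\]
The constant $\tfrac1{2\varkappa}$ pairs against spatial derivatives and drops out; the pairing of $-g(x;\varkappa,q)$ with $4\kappa^2 q'$ equals $4\kappa^2\int q(x)\,g'(x;\varkappa,q)\,dx$, which vanishes because $q\,g'(\varkappa,q)$ is a total derivative with $L^1$ antiderivative by~\eqref{eq:cleanup lem 1}; so, writing $h := g(x;\kappa,q+V_\kappa) - g(x;\kappa,V_\kappa)$,
\[
\ddt\alpha(\varkappa,q(t)) = -16\kappa^5\big\langle g(x;\varkappa,q),\ h'\big\rangle .
\]

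Next I would insert the ODE~\eqref{eq:g alg prop 3} at energy $\kappa$ for the potentials $q+V_\kappa$ and $V_\kappa$ separately in order to rewrite $h'$ in terms of $h'''$, a lower-order piece, and $4\varkappa^2 h'$; here the factor $\tfrac1{4(\kappa^2-\varkappa^2)}$ appears, which makes sense precisely because $\kappa\geq 2\varkappa$ forces $\kappa\neq\varkappa$. Integrating by parts onto $g(x;\varkappa,q)$ and then invoking~\eqref{eq:g alg prop 3} at energy $\varkappa$ for the potential $q$, the $4\varkappa^2$-contributions cancel and the potential mismatch leaves, after collecting terms, an identity of the shape
\[
\ddt\alpha(\varkappa,q(t)) = \frac{\kappa^5}{\kappa^2-\varkappa^2}\Big(\, c_1\big\langle q,\ g'(\varkappa,q)g(\kappa,V_\kappa) - g(\varkappa,q)g'(\kappa,V_\kappa)\big\rangle + c_2\big\langle V_\kappa,\ g'(\varkappa,q)h - g(\varkappa,q)h'\big\rangle \,\Big)
\]
with absolute constants $c_1,c_2$ (equivalently, one could reach this by subtracting the identities $\langle\tfrac1{2\varkappa}-g(\varkappa,p),\,16\kappa^5 g'(\kappa,p)+4\kappa^2p'\rangle=0$, which encode $\{\alpha(\varkappa,\cdot),H_\kappa\}=0$ from~\eqref{eq:biham relation} and~\eqref{eq:hk flow 2}, at $p=q+V_\kappa$ and at $p=V_\kappa$). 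As a consistency check, the right-hand side vanishes to second order at $q\equiv 0$, matching the fact that $q(t)\equiv 0$ is fixed by the flow. The separation of energies is used only here: $\kappa\geq 2\varkappa$ gives $\kappa^2-\varkappa^2\geq\tfrac34\kappa^2$, so the prefactor is $\lesssim\kappa^3$---two of the five powers of $\kappa$ have been traded for the spectral gap, and any fixed ratio $\kappa/\varkappa>1$ would serve equally well.

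It then remains to show that each bracket is $O(\kappa^{-3}\varkappa^{-1}\norm{q}^2_{H^{-1}_\varkappa})$, for then multiplying by the $\lesssim\kappa^3$ prefactor and invoking the lower bound in~\eqref{eq:alpha 3} yields $|\ddt\alpha(\varkappa,q(t))|\lesssim_{V,T}\varkappa^{-1}\norm{q}^2_{H^{-1}_\varkappa}\lesssim_{V,T}\alpha(\varkappa,q(t))$; since the implied constant depends only on $T$ and on the uniform $W^{4,\infty}$ bounds on $V_\kappa$ from \cref{thm:hyp}, a single $C$ works, while $\varkappa_0$ is taken large (in terms of $A$) so that \cref{thm:resolvent,thm:diffeo prop,thm:diffeo prop 2} and the estimates~\eqref{eq:diffeo prop},~\eqref{eq:cleanup lem 1},~\eqref{eq:alpha 3} are all in force for $\kappa\geq2\varkappa\geq2\varkappa_0$. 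The benign ingredients are: $g(\kappa,V_\kappa)-\tfrac1{2\kappa}$ and its first two derivatives are $O(\kappa^{-3})$ in $W^{2,\infty}$ by~\cref{thm:g conv 1}; the increments $g(\varkappa,q)-\tfrac1{2\varkappa}$ and $h$ lie in $H^1$ at their matching energies with norms $\lesssim\varkappa^{-1}\norm q_{H^{-1}_\varkappa}$ and $\lesssim\kappa^{-1}\norm q_{H^{-1}_\kappa}\leq\kappa^{-1}\norm q_{H^{-1}_\varkappa}$ (using $\kappa\geq\varkappa$) by~\eqref{eq:diffeo prop} and \cref{thm:diffeo prop,thm:diffeo prop 2}; the pieces in which $q$ appears undifferentiated collapse, once the constant $\tfrac1{2\kappa}$ or $\tfrac1{2\varkappa}$ is split off, to $\int q\,g'(\varkappa,q)\,dx=0$ or to pairings dominated by $\norm{F(\varkappa,q)}_{L^1}\lesssim\varkappa^{-1}\norm q^2_{H^{-1}_\varkappa}$ from~\eqref{eq:cleanup lem 1}; and the surviving $H^1$--$H^{-1}$ pairings are closed with~\eqref{eq:hskappa ests}. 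The two leftover constant-part terms $\tfrac1{2\varkappa}\langle q,g'(\kappa,V_\kappa)\rangle$ and $\tfrac1{2\varkappa}\langle V_\kappa,h'\rangle$ are individually one power of $\kappa$ short, but they combine: using $\int V_\kappa'(x)\,G(x,y;\kappa,V_\kappa)\,G(y,x;\kappa,V_\kappa)\,dx=-g'(y;\kappa,V_\kappa)$ (which is~\eqref{eq:g derivative} with $q=V_\kappa$), their sum equals $\pm\tfrac1{2\varkappa}\langle V_\kappa',\,h-dg|_{V_\kappa}(q)\rangle$, a pairing against the \emph{quadratic-in-$q$ remainder} of $h$, whose $L^1$ norm is $O(\kappa^{-3}\norm q^2_{H^{-1}_\kappa})$ since the diagonal of $d^2g$ is nonnegative and its integral is the trace estimate behind~\eqref{eq:cleanup lem 13}.

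The main obstacle, already visible in that last point, is the estimate of the remaining $V_\kappa$-error terms in the second bracket: after a naive Cauchy--Schwarz several of them are only of size $\kappa^{-2}$, which fails to absorb the $\kappa^5/(\kappa^2-\varkappa^2)$ prefactor, so one must extract an extra $\kappa^{-1}$ by a second, more delicate pass---another integration by parts driven by~\eqref{eq:g alg prop 1}--\eqref{eq:g alg prop 2}, again exploiting that $h$ is essentially linear in $q$ with a better-behaved remainder, and once more matching the energies $\kappa,\varkappa$. A secondary, mostly bookkeeping, difficulty is that for $q$ merely in $H^{-1}(\R)$ many of the intermediate products (for instance $q$ against the non-decaying $g(\kappa,V_\kappa)-\tfrac1{2\kappa}$) are not a priori meaningful $H^{-1}$--$H^1$ pairings; I would therefore first carry out the entire computation, and the differential inequality, for $q(t)$ produced by the $\hktilde$ flow from data in a dense, regular subclass---where~\eqref{eq:g alg prop 2}--\eqref{eq:g alg prop 3} and~\eqref{eq:cleanup lem 1} hold classically---and then extend the resulting Gr\"onwall consequence $\alpha(\varkappa,q(t))\leq e^{C\lvert t\rvert}\alpha(\varkappa,q(0))$ to all $q(0)\in B_A(\varkappa)$ by continuity of the $\hktilde$ flow and of $\alpha(\varkappa,\cdot)$ in $H^{-1}(\R)$, which recovers the stated bound.
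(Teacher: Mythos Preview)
Your proposal is correct and follows essentially the same route as the paper: differentiate $\alpha(\varkappa,q(t))$, drop the momentum piece via~\eqref{eq:cleanup lem 1}, run the commutativity relation trick once, and then apply it a \emph{second} time to the stubborn $V_\kappa$ term that is one power of $\kappa$ short. Your two-bracket identity is in fact a clean repackaging of the paper's three terms \eqref{eq:hktilde alphadot 1}--\eqref{eq:hktilde alphadot 3} (one checks $c_1=c_2$ after the $qh$ cross-terms cancel), and your observation that the two constant-part pieces combine into $\tfrac{1}{2\varkappa}\langle V_\kappa',\,h-dg|_{V_\kappa}(q)\rangle$ gives a genuinely different---and arguably more conceptual---treatment of what the paper calls~\eqref{eq:hktilde alphadot 2}: the paper instead expands $h$ as a resolvent series and moves the derivative onto $V_\kappa$ by trace-cycling. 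Your argument there needs the quadratic remainder of $h$ to lie in $L^1$ (since $V_\kappa'\notin L^2$); this follows because the diagonal of $R(qR)^\ell$ is nonnegative for even $\ell$ and the odd terms are controlled termwise, but you should make that explicit rather than pointing to~\eqref{eq:cleanup lem 13}, which is only an $H^1_\kappa$ bound. One small slip: the second pass is driven by the ODE~\eqref{eq:g alg prop 3} (as you correctly describe earlier), not by~\eqref{eq:g alg prop 1}--\eqref{eq:g alg prop 2}.
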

\begin{proof}
	We initialize $\varkappa_0$ so that the results from \cref{sec:prelim} hold for the balls $B_A(\varkappa)$ for all $\varkappa\geq\varkappa_0$.  First we compute the time derivative of $\alpha(\varkappa,q(t))$.  We will show that the $\hktilde$ flow is locally well-posed in $H^{-1}$ in \cref{thm:hktilde alphadot 2}, and so we may assume that $q$ is Schwartz by approximation.  Using the functional derivative~\eqref{eq:alpha 2} of $\alpha$ and the $\hktilde$ flow~\eqref{eq:hktilde flow}, we compute
	\begin{align*}
	&\ddt \alpha(\varkappa,q(t))
	= \{ \alpha , \hktilde \} \\
	&= - \int \left( g(x;\varkappa,q) - \tfrac{1}{2\varkappa} \right) \left\{ 16\kappa^5\left[ g'(x;\kappa,q+V_\kappa) - g'(x;\kappa,V_\kappa) \right] + 4\kappa^2q'(x) \right\} \dx ,
	\end{align*}
	where we have suppressed the time dependence of $q$ and $V_\kappa$.  The contribution from $q'$ vanishes because the $H_\kappa$ flow conserves momentum; this can be seen by integrating by parts and noting from~\eqref{eq:cleanup lem 1} that $qg'(\kappa,q)$ is a total derivative.  We are left with the expression
	\begin{equation*}
	\ddt \alpha(\varkappa,q(t))
	= - 16\kappa^5 \int \left( g(x;\varkappa,q) - \tfrac{1}{2\varkappa} \right) \left[ g'(x;\kappa,q+V_\kappa) - g'(x;\kappa,V_\kappa) \right] \dx .
	\end{equation*}
	We expect this to remain bounded in the limit $\kappa\to\infty$ from the convergence of $H_\kappa$ to $\hkdv$, but the factor of $\kappa^5$ obscures this bound.  To circumvent this, we use  the commutativity relation trick~\eqref{eq:biham relation 2} introduced at the beginning of this section.  Using the ODEs~\eqref{eq:g alg prop 3} for $g(\kappa,q+V_\kappa)$ and $g(\kappa,V_\kappa)$, integrating by parts, and then using the ODE for $g(\varkappa,q)$, we obtain
	\begin{align}
	&\ddt\alpha(\varkappa,q(t))
	\nonumber \\
	&= - \tfrac{8\kappa^5}{\kappa^2-\varkappa^2} \int \big\{ \left[ q ( g(\varkappa,q) - \tfrac{1}{2\varkappa} ) \right]' + qg'(\varkappa,q) \big\} g(\kappa,V_\kappa) \dx 
	\label{eq:hktilde alphadot 1}\\
	&+ \tfrac{8\kappa^5}{\kappa^2-\varkappa^2} \int \tfrac{1}{2\varkappa} q' \left[ g(\kappa,q+V_\kappa) - g(\kappa,V_\kappa) \right] \dx
	\label{eq:hktilde alphadot 2}\\
	&- \tfrac{8\kappa^5}{\kappa^2-\varkappa^2} \int \big\{ \left[ V_\kappa ( g(\varkappa,q) - \tfrac{1}{2\varkappa} ) \right]' + V_\kappa g'(\varkappa,q) \big\} \left[ g(\kappa,q+V_\kappa) - g(\kappa,V_\kappa) \right] \dx .
	\label{eq:hktilde alphadot 3}
	\end{align}
	We have suppressed the spatial integration variable $x$ for all integrands.  We will show that~\eqref{eq:hktilde alphadot 1} and~\eqref{eq:hktilde alphadot 2} are acceptable contributions, and then we will manipulate~\eqref{eq:hktilde alphadot 3} further.
	
	For the term~\eqref{eq:hktilde alphadot 1} we insert $\tfrac{1}{2\kappa} - \tfrac{1}{4\kappa^3} V$ in place of $g(\kappa,V_\kappa)$:
	\begin{align*}
	\eqref{eq:hktilde alphadot 1}
	&= - \tfrac{8\kappa^5}{\kappa^2-\varkappa^2} \int \left[ q ( g(\varkappa,q) - \tfrac{1}{2\varkappa} ) \right]' \big( g(\kappa,V_\kappa) -\tfrac{1}{2\kappa} + \tfrac{1}{4\kappa^3} V \big) \dx \\
	&\phantom{={}}- \tfrac{8\kappa^5}{\kappa^2-\varkappa^2} \int q g'(\varkappa,q) \big( g(\kappa,V_\kappa) -\tfrac{1}{2\kappa} + \tfrac{1}{4\kappa^3} V \big) \dx \\
	&\phantom{={}}- \tfrac{8\kappa^5}{\kappa^2-\varkappa^2} \int \big\{ \big[ q ( g(\varkappa,q) - \tfrac{1}{2\varkappa} ) \big]' + qg'(\varkappa,q) \big\} \left( \tfrac{1}{2\kappa} - \tfrac{1}{4\kappa^3} V \right) \dx .
	\end{align*}
	To estimate the first integral on the RHS we integrate by parts, use $H^1_\varkappa$-$H^{-1}_\varkappa$ duality, and use the estimates~\eqref{eq:hskappa ests}, \eqref{eq:diffeo prop}, and \eqref{eq:alpha 3}:
	\begin{align*}
	&\tfrac{8\kappa^5}{\kappa^2-\varkappa^2} \left| \int ( g(\varkappa,q) - \tfrac{1}{2\varkappa} ) q \big( g(\kappa,V_\kappa) -\tfrac{1}{2\kappa} + \tfrac{1}{4\kappa^3} V \big)' \dx \right| \\
	&\lesssim \tfrac{\kappa^5}{\kappa^2-\varkappa^2} \norm{ g(\kappa,V_\kappa) -\tfrac{1}{2\kappa} + \tfrac{1}{4\kappa^3} V }_{W^{2,\infty}} \norm{ g(\varkappa,q) - \tfrac{1}{2\varkappa} }_{H^1_\varkappa} \norm{q}_{H^{-1}_\varkappa} \\
	&\lesssim \tfrac{\kappa^5}{\kappa^2-\varkappa^2} \norm{ g(\kappa,V_\kappa) -\tfrac{1}{2\kappa} + \tfrac{1}{4\kappa^3} V }_{W^{2,\infty}} \alpha(\varkappa,q) .
	\end{align*}
	The prefactor of $\alpha(\varkappa,q)$ here is bounded uniformly for $\kappa\geq 2\varkappa$ and $\varkappa$ large by the convergence of \cref{thm:g conv 1}.  For the second integral we use the identity~\eqref{eq:cleanup lem 1} for $qg'(\varkappa,q)$ and the $\alpha$ estimate~\eqref{eq:alpha 3}:
	\begin{align*}
	&\tfrac{8\kappa^5}{\kappa^2-\varkappa^2} \left| \int ( g(\varkappa,q) - \tfrac{1}{2\varkappa} ) \big[ q \big( g(\kappa,V_\kappa) -\tfrac{1}{2\kappa} + \tfrac{1}{4\kappa^3} V \big) \big]' \dx \right| \\
	&\lesssim \tfrac{\kappa^5}{\kappa^2-\varkappa^2} \norm{ g(\kappa,V_\kappa) -\tfrac{1}{2\kappa} + \tfrac{1}{4\kappa^3} V }_{W^{1,\infty}} \alpha(\varkappa,q) ,
	\end{align*}
	and the prefactor is again bounded uniformly for $\kappa\geq 2\varkappa$ and $\varkappa$ large.  For the third integral we integrate by parts to obtain
	\begin{equation*}
	\tfrac{8\kappa^5}{\kappa^2-\varkappa^2} \int ( g(\varkappa,q) - \tfrac{1}{2\varkappa} ) \big\{ \big[ ( \tfrac{1}{2\kappa} - \tfrac{1}{4\kappa^3} V ) q \big]' - \tfrac{1}{4\kappa^3} V'q \big\} \dx .
	\end{equation*}
	Note that the contribution from the term $\tfrac{1}{2\kappa}$ vanishes since $qg'(\varkappa,q)$ is a total derivative (cf.~\eqref{eq:cleanup lem 1}).  This leaves
	\begin{equation*}
	- \tfrac{2\kappa^2}{\kappa^2-\varkappa^2} \int ( g(\varkappa,q) - \tfrac{1}{2\varkappa} ) \left[ V'q + (Vq)' \right] \dx .
	\end{equation*}
	The prefactor $\tfrac{2\kappa^2}{\kappa^2-\varkappa^2}$ is now bounded uniformly for $\kappa \geq 2\varkappa$.  As before the contribution of $V'q$ is estimated by $H^1_\varkappa$-$H^{-1}_\varkappa$ duality and the contribution of $(Vq)'$ by~\eqref{eq:cleanup lem 1}, yielding
	\begin{equation*}
	\tfrac{2\kappa^2}{\kappa^2-\varkappa^2} \left| \int ( g(\varkappa,q) - \tfrac{1}{2\varkappa} ) \left[ V'q + (Vq)' \right] \dx \right| \lesssim \norm{ V }_{W^{2,\infty}} \alpha(\varkappa,q) .
	\end{equation*}
	
	To estimate the second term~\eqref{eq:hktilde alphadot 2} we expand
	\begin{equation*}
	\eqref{eq:hktilde alphadot 2}
	= \tfrac{4\kappa^5}{(\kappa^2-\varkappa^2)\varkappa} \sum_{\ell=1}^\infty (-1)^\ell \tr\{ (R(\kappa,V_\kappa)q)^\ell R(\kappa,V_\kappa) q' \} .
	\end{equation*}
	Next, we write
	\begin{equation*}
	R(\kappa,V_\kappa) q' = [ \partial , R(\kappa,V_\kappa) q ] - [\partial, R(\kappa,V_\kappa) ] q .
	\end{equation*}
	Note that contribution from $[ \partial , R(\kappa,V_\kappa) q ]$ vanishes by cycling the trace, and the contribution from $[\partial, R(\kappa,V_\kappa)] = - R(\kappa,V_\kappa) V'_\kappa R(\kappa,V_\kappa)$ is acceptable using the estimate~\eqref{eq:hskappa identity 2}:
	\begin{align*}
	|\eqref{eq:hktilde alphadot 2}|
	&\leq \tfrac{4\kappa^5}{(\kappa^2-\varkappa^2)\varkappa} \sum_{\ell=1}^\infty \left| \tr\{ (R(\kappa,V_\kappa)q)^{\ell} R(\kappa,V_\kappa) V'_\kappa R(\kappa,V_\kappa) q \} \right| \\
	&\leq \tfrac{4\kappa^5}{(\kappa^2-\varkappa^2)\varkappa} \sum_{\ell=1}^\infty \norm{ \sqrt{R(\kappa,V_\kappa)} \, q \sqrt{R(\kappa,V_\kappa)} }_{\I_2}^{\ell+1} \norm{ \sqrt{ R(\kappa,V_\kappa) } V'_\kappa \sqrt{ R(\kappa,V_\kappa) } }_\op \\
	&\lesssim \tfrac{4\kappa^5}{(\kappa^2-\varkappa^2)\varkappa} \sum_{\ell=1}^\infty \big(  \kappa^{-1/2} \norm{q}_{H^{-1}_\kappa} \big)^{\ell+1} \kappa^{-2} \norm{V'_\kappa}_{L^\infty} 
	\lesssim \norm{V'_\kappa}_{L^\infty} \alpha(\varkappa,q) .
	\end{align*}
	In the last step we noted that $\norm{q}_{H^{-1}_\kappa} \leq \norm{q}_{H^{-1}_\varkappa}$ for $\kappa\geq 2\varkappa$.
	
	It remains to estimate the third term~\eqref{eq:hktilde alphadot 3}, which will require more manipulation because the leading term in the expansion of $g(\kappa,q+V_\kappa) - g(\kappa,V_\kappa)$ is only $\bigo(\kappa)$.  First, we integrate by parts to move the derivative back onto $g(\kappa,q+V_\kappa) - g(\kappa,V_\kappa)$:
	\begin{align}
	\eqref{eq:hktilde alphadot 3}
	&= \tfrac{8\kappa^5}{\kappa^2-\varkappa^2} \int V'_\kappa ( g(\varkappa,q) - \tfrac{1}{2\varkappa} ) \left[ g(\kappa,q+V_\kappa) - g(\kappa,V_\kappa) \right] \dx 
	\label{eq:hktilde alphadot 4} \\
	&\phantom{={}}+ \tfrac{16\kappa^5}{\kappa^2-\varkappa^2} \int V_\kappa ( g(\varkappa,q) - \tfrac{1}{2\varkappa} ) \left[ g'(\kappa,q+V_\kappa) - g'(\kappa,V_\kappa) \right] \dx .
	\label{eq:hktilde alphadot 5}
	\end{align}
	Using $H^1_\kappa$-$H^{-1}_\kappa$ duality, the diagonal Green's function estimate~\eqref{eq:diffeo prop}, and the observation that $\norm{f}_{H^{-1}_\kappa} \leq \kappa^{-2} \norm{f}_{H^1_\kappa}$, we have
	\begin{align*}
	|\eqref{eq:hktilde alphadot 4}|
	&\leq \tfrac{8\kappa^5}{\kappa^2-\varkappa^2} \norm{ V'_\kappa }_{W^{1,\infty}} \norm{ g(\kappa,q+V_\kappa) - g(\kappa,V_\kappa) }_{H^{-1}_\kappa} \norm{ g(\varkappa,q) - \tfrac{1}{2\varkappa} }_{H^1_\kappa} \\
	&\lesssim \tfrac{8\kappa^5}{\kappa^2-\varkappa^2} \norm{ V'_\kappa }_{W^{1,\infty}} \kappa^{-3}\norm{q}_{H^{-1}_\kappa} \varkappa^{-1} \norm{q}_{H^{-1}_\varkappa} 
	\lesssim \norm{ V'_\kappa }_{W^{1,\infty}} \alpha(\varkappa,q)
	\end{align*}
	since $\norm{q}_{H^{-1}_\kappa} \leq \norm{q}_{H^{-1}_\varkappa}$ for $\kappa\geq 2\varkappa$.
	
	The remaining term~\eqref{eq:hktilde alphadot 5} resembles our original expression for $\ddt\alpha(\varkappa,q(t))$, except we have gained $\kappa^{-2}$ in decay and have introduced an extra factor of $V_\kappa$.  Consequently, we repeat the commutativity relation trick~\eqref{eq:biham relation 2}; pushing derivatives past the factor of $V_\kappa$ introduces extra terms, but they are relatively harmless.  After this manipulation, we regroup terms to arrive at
	\begin{align}
	&\eqref{eq:hktilde alphadot 5}
	\nonumber \\
	&= \int \tfrac{8\kappa^5 V_\kappa}{(\kappa^2-\varkappa^2)^2} \big\{ \big[ q ( g(\varkappa,q) - \tfrac{1}{2\varkappa} ) \big]' + qg'(\varkappa,q) \big\} g(\kappa,V_\kappa)  
	\label{eq:hktilde alphadot 6}\\ 
	&- \int \tfrac{8\kappa^5 V_\kappa q'}{ 2\varkappa (\kappa^2-\varkappa^2)^2} [ g(\kappa,q+V_\kappa) - g(\kappa,V_\kappa) ] 
	\label{eq:hktilde alphadot 7}\\ 
	&+ \int \tfrac{8\kappa^5 V_\kappa}{(\kappa^2-\varkappa^2)^2} \big\{ \big[ V_\kappa ( g(\varkappa,q) - \tfrac{1}{2\varkappa} ) \big]' + V_\kappa g'(\varkappa,q) \big\} [ g(\kappa,q+V_\kappa) - g(\kappa,V_\kappa) ]  
	\label{eq:hktilde alphadot 8}\\ 
	&- \int \tfrac{4\kappa^5}{(\kappa^2-\varkappa^2)^2} (V'''_\kappa - 4\varkappa^2 V'_\kappa - 2V'_\kappa) ( g(\varkappa,q) - \tfrac{1}{2\varkappa} ) [ g(\kappa,q+V_\kappa) - g(\kappa,V_\kappa) ] 
	\label{eq:hktilde alphadot 9}\\ 
	&- \int \tfrac{12\kappa^5}{(\kappa^2-\varkappa^2)^2} \big[ V'_\kappa g'(\varkappa,q) \big]' [ g(\kappa,q+V_\kappa) - g(\kappa,V_\kappa) ] 
	\label{eq:hktilde alphadot 10}\\ 
	&- \int \tfrac{8\kappa^5 V'_\kappa}{(\kappa^2-\varkappa^2)^2} ( g(\varkappa,q) - \tfrac{1}{2\varkappa} ) \big\{ q g(\kappa,q+V_\kappa) + V_\kappa [ g(\kappa,q+V_\kappa) - g(\kappa,V_\kappa) ] \big\} .
	\label{eq:hktilde alphadot 11} 
	\end{align}
	The first three terms~\eqref{eq:hktilde alphadot 6}--\eqref{eq:hktilde alphadot 8} are analogous to~\eqref{eq:hktilde alphadot 1}--\eqref{eq:hktilde alphadot 3} respectively, and the new terms~\eqref{eq:hktilde alphadot 9}--\eqref{eq:hktilde alphadot 11} are the result of derivatives falling on the new factor of $V_\kappa$.
	
	Estimating~\eqref{eq:hktilde alphadot 6} as we did~\eqref{eq:hktilde alphadot 1}, we obtain
	\begin{equation*}
	|\eqref{eq:hktilde alphadot 6}| \lesssim \big( \norm{V_\kappa}_{W^{2,\infty}} + \norm{V_\kappa}^2_{W^{2,\infty}} \big) \alpha(\varkappa,q) .
	\end{equation*}
	
	Conversely, the extra factor of $V_\kappa$ prohibits us from treating the term~\eqref{eq:hktilde alphadot 7} as we did~\eqref{eq:hktilde alphadot 2}.  Instead, we must maneuver the derivative onto $V_\kappa$.  Expanding
	\begin{equation*}
	\eqref{eq:hktilde alphadot 7}
	=- \frac{4\kappa^5}{(\kappa^2-\varkappa^2)^2} \sum_{\ell=1}^\infty \frac{(-1)^\ell}{\varkappa} \tr\{ (R(\kappa,V_\kappa)q)^\ell R(\kappa,V_\kappa) V_\kappa [\partial,q] \} ,
	\end{equation*}
	we write
	\begin{align*}
	&\tr\{ (R(\kappa,V_\kappa)q)^\ell R(\kappa,V_\kappa) V_\kappa [\partial,q] \} \\
	&= \tr\{ (R(\kappa,V_\kappa)q)^\ell [ R(\kappa,V_\kappa) , V_\kappa ] \partial q \}
	- \tr\{ (R(\kappa,V_\kappa)q)^\ell V_\kappa [ \partial, R(\kappa,V_\kappa)]  q \}
	\end{align*}
	by linearity and cycling the trace.  For the contribution of $[ \partial, R(\kappa,V_\kappa)] = - R(\kappa,V_\kappa)$ $V'_\kappa R(\kappa,V_\kappa)$, we use the operator estimate~\eqref{eq:hskappa identity 2} to bound
	\begin{align*}
	&|\tr\{ (R(\kappa,V_\kappa)q)^\ell V_\kappa [ \partial, R(\kappa,V_\kappa)]  q \}| \\
	&\leq \norm{ \sqrt{R(\kappa,V_\kappa)} V'_\kappa \sqrt{R(\kappa,V_\kappa)} }_\op \norm{ \sqrt{R(\kappa,V_\kappa)} \, q \sqrt{R(\kappa,V_\kappa)} }^{\ell+1}_{\I_2}  \\
	&\lesssim \kappa^{-2} \norm{V'_\kappa}_{L^\infty} \big( \kappa^{-1/2} \norm{q}_{H^{-1}_\kappa} \big)^{\ell+1}
	\end{align*}
	for $\ell \geq 1$.  For the contribution of first commutator
	\begin{equation*}
	[ R(\kappa,V_\kappa) , V_\kappa ] \partial
	= R(\kappa,V_\kappa) V''_\kappa R(\kappa,V_\kappa) \partial + R(\kappa,V_\kappa) 2V'_\kappa \partial R(\kappa,V_\kappa) \partial ,
	\end{equation*}
	we pair each $q$ with two copies of $\sqrt{R(\kappa,V_\kappa)}$ in $\I_2$ and each $\partial$ with one copy of $\sqrt{R(\kappa,V_\kappa)}$ in operator norm; the first term contributes
	\begin{align*}
	&| \tr\{ (R(\kappa,V_\kappa)q)^\ell R(\kappa,V_\kappa) V''_\kappa R(\kappa,V_\kappa) \partial q \} | \\
	&\leq \norm{ \sqrt{R(\kappa,V_\kappa)} V''_\kappa \partial \sqrt{R(\kappa,V_\kappa)} }_\op \norm{ \sqrt{R(\kappa,V_\kappa)}\, q \sqrt{R(\kappa,V_\kappa)} }^{\ell+1}_{\I_2} \\
	&\phantom{\leq{}}+ | \tr\{ (R(\kappa,V_\kappa)q)^\ell R(\kappa,V_\kappa) V''_\kappa [ \partial, R(\kappa,V_\kappa) ] q \} |  \\
	&\lesssim \kappa^{-1} \big( \norm{V_\kappa}_{W^{2,\infty}} + \norm{V_\kappa}_{W^{2,\infty}}^2 \big) \big( \kappa^{-1/2} \norm{q}_{H^{-1}_\kappa} \big)^{\ell+1} ,
	\end{align*}
	and the second term contributes
	\begin{align*}
	&| \tr\{ (R(\kappa,V_\kappa)q)^\ell R(\kappa,V_\kappa) V'_\kappa \partial R(\kappa,V_\kappa) \partial q \} | \\
	&\leq \norm{ \sqrt{R(\kappa,V_\kappa)} \big( \partial V'_\kappa - V''_\kappa \big) \partial \sqrt{R(\kappa,V_\kappa)} }_\op \norm{ \sqrt{R(\kappa,V_\kappa)} \, q \sqrt{R(\kappa,V_\kappa)} }^{\ell+1}_{\I_2} \\
	&\phantom{\leq{}}+ | \tr\{ (R(\kappa,V_\kappa)q)^\ell R(\kappa,V_\kappa) V'_\kappa \partial [\partial,R(\kappa,V_\kappa)] q \} |  \\
	&\lesssim \norm{V'_\kappa}_{L^\infty} \big( \kappa^{-1/2} \norm{q}_{H^{-1}_\kappa} \big)^{\ell+1} .
	\end{align*}
	Summing over $\ell \geq 1$ we gain a factor of $\kappa^{-1}$ to counteract the prefactor $\kappa^5 (\kappa^2-\varkappa^2)^{-2}$, and the remaining factor of $\varkappa^{-1}$ is paired with $\norm{q}_{H^{-1}_\kappa}^2\leq \norm{q}_{H^{-1}_\varkappa}^2$ to conclude
	\begin{equation}
	|\eqref{eq:hktilde alphadot 7}| \lesssim \big( \norm{V_\kappa}_{W^{2,\infty}} + \norm{V_\kappa}_{W^{2,\infty}}^2 \big) \alpha(\varkappa,q)
	\label{eq:hktilde alphadot 12}
	\end{equation}
	uniformly for $\kappa \geq 2\varkappa$ and $\varkappa$ large.
	
	We now have enough decay in $\kappa$ to treat the term~\eqref{eq:hktilde alphadot 8} directly.  Using Cauchy--Schwarz, the diagonal Green's function estimate~\eqref{eq:diffeo prop}, and the observation that $\norm{f}_{L^2} \lesssim \kappa^{-1} \norm{f}_{H^1_\kappa}$ we have
	\begin{align*}
	|\eqref{eq:hktilde alphadot 8}|
	&\lesssim \tfrac{\kappa^5}{(\kappa^2-\varkappa^2)^2} \norm{V_\kappa}_{W^{1,\infty}}^2 \norm{ g(\varkappa,q) - \tfrac{1}{2\varkappa} }_{H^1} \norm{ g(\kappa,q+V_\kappa) - g(\kappa,V_\kappa) }_{L^2} \\
	&\lesssim \tfrac{\kappa^5}{(\kappa^2-\varkappa^2)^2} \norm{V_\kappa}_{W^{1,\infty}}^2 \varkappa^{-1} \norm{q}_{H^{-1}_\varkappa}  \kappa^{-2} \norm{q}_{H^{-1}_\kappa}
	\lesssim \norm{V_\kappa}_{W^{1,\infty}}^2 \alpha(\varkappa,q)
	\end{align*}
	uniformly for $\kappa\geq 2\varkappa$ and $\varkappa$ large.  This same technique also works for~\eqref{eq:hktilde alphadot 9} and~\eqref{eq:hktilde alphadot 10} after integrating by parts once:
	\begin{align*}
	|\eqref{eq:hktilde alphadot 9}| 
	&\lesssim \tfrac{\kappa^5}{(\kappa^2-\varkappa^2)^2} \big( \norm{ V''_\kappa }_{L^\infty} \varkappa^{-2} \kappa^{-1} + \norm{ V'_\kappa}_{L^\infty} \kappa^{-2} \big) \norm{q}_{H^{-1}_\varkappa} \norm{q}_{H^{-1}_\kappa} \\
	&\lesssim \norm{ V'_\kappa}_{W^{1,\infty}} \alpha(\varkappa,q),  \\
	|\eqref{eq:hktilde alphadot 10}| 
	&\lesssim \tfrac{\kappa^5}{(\kappa^2-\varkappa^2)^2} \norm{V'_\kappa}_{L^\infty} \varkappa^{-1} \norm{q}_{H^{-1}_\varkappa} \kappa^{-1} \norm{q}_{H^{-1}_\kappa}
	\lesssim \norm{V'_\kappa}_{L^\infty} \alpha(\varkappa,q)
	\end{align*}
	uniformly for $\kappa\geq 2\varkappa$ and $\varkappa$ large.  For the last term~\eqref{eq:hktilde alphadot 11} we note that the extra factor of $g(\kappa,q+V_\kappa)$ can be put in $H^1$ by the second inequality of~\eqref{eq:hskappa ests}, and so by $H^1_\varkappa$-$H^{-1}_\varkappa$ duality we have
	\begin{align*}
	|\eqref{eq:hktilde alphadot 11}|
	&\lesssim \tfrac{\kappa^5}{(\kappa^2-\varkappa^2)^2} \norm{V'_\kappa}_{L^\infty} \big\{ \norm{ g(\kappa,q+V_\kappa) }_{H^1} \norm{ q }_{H^{-1}_\varkappa} \norm{ g(\varkappa,q) - \tfrac{1}{2\varkappa} }_{H^1_\varkappa} \\
	&\qquad\qquad\qquad + \norm{V_\kappa}_{L^\infty} \norm{ g(\varkappa,q) - \tfrac{1}{2\varkappa} }_{L^2} \norm{ g(\kappa,q+V_\kappa) - g(\kappa,V_\kappa) }_{L^2} \big\} \\
	&\lesssim \norm{V'_\kappa}_{L^\infty} \left( 1+\norm{V_\kappa}_{L^\infty} \right) \alpha(\varkappa,q)
	\end{align*}
	uniformly for $\kappa\geq 2\varkappa$ and $\varkappa$ large.  This concludes the estimate of~\eqref{eq:hktilde alphadot 5} and hence the proof of \cref{thm:hktilde alphadot 1}.
\end{proof}

From \cref{thm:hktilde alphadot 1} we are able to conclude that the $H^{-1}$ norm of $q(t)$ is controlled by that of $q(0)$.  We use this to show that the approximate flows $\hktilde$ are globally well-posed in $H^{-1}$:
\begin{prop}
	\label{thm:hktilde alphadot 2}
	Fix $V$ admissible.  Given $A,T>0$ there exists $\kappa_0>0$ so that for $\kappa \geq \kappa_0$ the $\hktilde$ flows~\eqref{eq:hktilde flow} with initial data $q(0) \in B_A$ have solutions $q_\kappa(t)$ which are unique in $C_tH^{-1}([-T,T]\times\R)$, depend continuously on the initial data, and are bounded in $C_tH^{-1}([-T,T]\times\R)$ uniformly for $\kappa\geq \kappa_0$.
	
	Moreover, for all $\varkappa$ sufficiently large the diagonal Green's function $g(\varkappa,q) = g(x;\varkappa,q(t))$ evolves according to
	\begin{align*}
	&\begin{aligned} 
	\tfrac{d}{dt} \tfrac{1}{2g(\varkappa,q)} =
	\left\{ \tfrac{4\kappa^5}{\kappa^2-\varkappa^2} \left[ \tfrac{\varkappa}{\kappa} - \tfrac{g(\kappa,q+V_\kappa)-g(\kappa,V_\kappa)}{g(\varkappa,q)} \right] + \left( 2\kappa^2 + \tfrac{\kappa^4}{\kappa^2-\varkappa^2} \right) \left[ \tfrac{1}{g(\varkappa,q)} - 2\varkappa \right] \right\}'
	\end{aligned}\\
	&\begin{aligned} - &\tfrac{4\kappa^5}{\kappa^2-\varkappa^2} \tfrac{1}{g(\varkappa,q)^2} \int G(x,y) \big\{ \big[ q(g(\kappa,V_\kappa)-\tfrac{1}{2\kappa}) \big]' + qg'(\kappa,V_\kappa) \\
	&+ \big[ V_\kappa ( g(\kappa,q+V_\kappa) - g(\kappa,V_\kappa) ) \big]' + V_\kappa [ g'(\kappa,q+V_\kappa) - g'(\kappa,V_\kappa) ] \big\}(y) G(y,x) \dy ,
	\end{aligned}
	\end{align*}
	where $G(x,y) = G(x,y;\varkappa,q)$ and the dependence on $(t,x)$ is suppressed.
\end{prop}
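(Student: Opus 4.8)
The plan is to prove the two assertions in turn, each building on \cref{thm:hktilde alphadot 1}.

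\textbf{Well-posedness of the $\hktilde$ flow.} I would treat~\eqref{eq:hktilde flow} as a semilinear evolution equation driven by the translation group: the linear term $4\kappa^2 q'$ generates translations (it is $4\kappa^2$ times the momentum flow) and so is propagated by $e^{4\kappa^2 t\partial_x}$, an isometry of every Sobolev space, while the remaining term $16\kappa^5[g'(\,\cdot\,;\kappa,q+V_\kappa(t))-g'(\,\cdot\,;\kappa,V_\kappa(t))]$ is, by \cref{thm:diffeo prop} applied with the $L^\infty$ background $V_\kappa(t)$ (bounded uniformly in $t$ and $\kappa$ by admissibility) together with~\eqref{eq:diffeo prop}, a real analytic — hence locally Lipschitz — map from $B_A$ into $L^2\hookrightarrow H^{-1}$; it is continuous in $t$ because $V_\kappa$ solves the $H_\kappa$ flow and is therefore Lipschitz in time. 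A contraction-mapping argument in $C_tH^{-1}$ on a short interval then gives a solution, unique in this class and Lipschitz in the data. To globalize and get the $\kappa$-uniform bound, I would invoke \cref{thm:hktilde alphadot 1} with the \emph{enlarged} radius $A':=2e^{CT/2}A$, which fixes a parameter $\varkappa_0=\varkappa_0(A')$ and the (radius-independent) constant $C=C(T,V)$; for $\kappa\geq 2\varkappa_0$ and as long as the solution stays in $B_{A'}(\varkappa_0)$, the differential inequality $|\ddt\alpha(\varkappa_0,q(t))|\leq C\alpha(\varkappa_0,q(t))$, Gr\"onwall, and the norm equivalence~\eqref{eq:alpha 3} force $\norm{q(t)}_{H^{-1}_{\varkappa_0}}\leq 2e^{C|t|/2}\norm{q(0)}_{H^{-1}_{\varkappa_0}}\leq A'$ on $[-T,T]$. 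A continuity argument closes the bootstrap, and comparing the $H^{-1}$ and $H^{-1}_{\varkappa_0}$ norms yields the asserted uniform bound in $C_tH^{-1}([-T,T]\times\R)$.

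\textbf{Evolution of $1/2g(\varkappa,q)$.} Granted the well-posedness, I may assume $q$ is Schwartz by approximation. The chain rule and the derivative formula~\eqref{eq:cleanup lem 10} at energy $\varkappa$ give
\begin{equation*}
\ddt\tfrac{1}{2g(\varkappa,q)} = \tfrac{1}{2g(\varkappa,q)^2}\int G(x,y;\varkappa,q)\,\dot q(y)\,G(y,x;\varkappa,q)\dy ,
\end{equation*}
where $\dot q = 16\kappa^5[g'(\kappa,q+V_\kappa)-g'(\kappa,V_\kappa)] + 4\kappa^2 q'$ is the $\hktilde$ vector field. The key is to write $\dot q = \mathcal{L} f + (\text{remainder})$, where $\mathcal{L}\phi:=-\phi''' + 2q\phi' + 2(q\phi)' + 4\varkappa^2\phi'$ is exactly the operator appearing in~\eqref{eq:g alg prop 2}, so that~\eqref{eq:g alg prop 2} turns the $\mathcal{L} f$ piece into $\tfrac{1}{2g(\varkappa,q)^2}(2f'g(\varkappa,q)-2fg'(\varkappa,q)) = (f/g(\varkappa,q))'$. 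To produce $f$, I would run the \emph{pointwise} (un-integrated) version of the commutativity-relation trick from the proof of \cref{thm:hktilde alphadot 1}: applying the ODE~\eqref{eq:g alg prop 3} for $g(\kappa,\cdot)$ at each of the potentials $q+V_\kappa$ and $V_\kappa$, subtracting, and carrying the derivatives past the resulting lower-order factors, one obtains — writing $h:=g(\kappa,q+V_\kappa)-g(\kappa,V_\kappa)$ — an identity of the schematic shape
\begin{equation*}
16\kappa^5 h' = -\tfrac{4\kappa^5}{\kappa^2-\varkappa^2}\,\mathcal{L} h - \tfrac{8\kappa^5}{\kappa^2-\varkappa^2}\,\Sigma + (\text{a multiple of }q') ,
\end{equation*}
where $\Sigma$ is precisely the bracketed integrand displayed in the statement (and $\mathcal{L} h$ is meaningful since $h\in H^1_\kappa$). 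The residual $q'$ from this step and the $4\kappa^2 q'$ from the translation part of the flow are absorbed into $\mathcal{L}$ applied to constants, using $\mathcal{L}[\tfrac12]=q'$ — so, for instance, $\tfrac{1}{2g(\varkappa,q)^2}\int G[\mathcal{L}(2\kappa^2)]G\dy = 2\kappa^2(1/g(\varkappa,q))'$, the source of the leading $2\kappa^2$ — and since $\mathcal{L}[g(\varkappa,q)]=0$ one is free to add any multiple of $g(\varkappa,q)$ to $f$. Collecting coefficients, $f/g(\varkappa,q)$ should coincide, modulo an additive constant killed by the derivative, with the bracket $\tfrac{4\kappa^5}{\kappa^2-\varkappa^2}[\tfrac{\varkappa}{\kappa}-\tfrac{h}{g(\varkappa,q)}]+(2\kappa^2+\tfrac{\kappa^4}{\kappa^2-\varkappa^2})[\tfrac{1}{g(\varkappa,q)}-2\varkappa]$ in the statement, while the $\Sigma$ piece contributes exactly $-\tfrac{4\kappa^5}{\kappa^2-\varkappa^2}\tfrac{1}{g(\varkappa,q)^2}\int G(x,y)\Sigma(y)G(y,x)\dy$.

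\textbf{Main obstacle.} The delicate part is the bookkeeping in the second step: among the many terms generated by substituting~\eqref{eq:g alg prop 3} and integrating by parts, one must separate the genuine total derivatives (which collapse via~\eqref{eq:g alg prop 2} into $(f/g(\varkappa,q))'$) from those that accumulate into $\Sigma$, and then fix the constant and the $g(\varkappa,q)$-multiple inside $f$ so the stated coefficients emerge. This is the $x$-localized analogue of the manipulation in \cref{thm:hktilde alphadot 1} — where the same rearrangement, once paired with $\tfrac{1}{2g(\varkappa,q)^2}$ and integrated in $x$, produces the growth estimate — and it relies on~\eqref{eq:g alg prop 2} and~\eqref{eq:g alg prop 3} being available for all $q\in B_A(\varkappa)$ by analytic continuation from the Schwartz case. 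The first step, by contrast, is routine; its one subtlety is choosing the radius $A'$ in the application of \cref{thm:hktilde alphadot 1} large enough to absorb the Gr\"onwall growth while keeping the constant $C$ independent of the radius.
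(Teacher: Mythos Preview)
Your proposal is correct and follows essentially the same route as the paper: contraction mapping for local well-posedness via the Lipschitz/analyticity of $q\mapsto g(\kappa,q+V_\kappa)-g(\kappa,V_\kappa)$, then globalization via \cref{thm:hktilde alphadot 1} on the enlarged ball $B_{2e^{CT/2}A}(\varkappa_0)$; and for the evolution formula, the paper likewise substitutes the ODE~\eqref{eq:g alg prop 3} into $16\kappa^5[g'(\kappa,q+V_\kappa)-g'(\kappa,V_\kappa)]$ and applies~\eqref{eq:g alg prop 2}, though it computes $\ddt g(\varkappa,q)$ first and applies the chain rule at the end rather than at the start. Your explicit introduction of the operator $\mathcal{L}$ and the observations $\mathcal{L}[\tfrac12]=q'$, $\mathcal{L}[g(\varkappa,q)]=0$ are a tidy way to organize the same bookkeeping the paper does by hand.
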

\begin{proof}
	The solution $q(t)$ of the $\hktilde$ flow satisfies the integral equation
	\begin{equation*}
	q(t) = e^{t4\kappa^2\partial_x}q(0) + 16\kappa^5 \int_0^t e^{(t-s)4\kappa^2\partial_x} \left[ g'(\kappa,q(s)+V_\kappa(s)) - g'(\kappa,V_\kappa(s)) \right]\ds .
	\end{equation*}
	Local well-posedness is proved by contraction mapping, provided we have the Lipschitz estimate
	\begin{align*}
	&\norm{ g'(\kappa,q+V_\kappa) - g'(\kappa,\tilde{q}+V_\kappa) }_{H^{-1}} \\
	&\lesssim \norm{ \left[ g(\kappa,q+V_\kappa) - g(\kappa,V_\kappa) \right] - \left[ g(\kappa,\tilde{q}+V_\kappa) - g(\kappa,V_\kappa) \right] }_{H^{1}}
	\lesssim \norm{ q - \tilde{q} }_{H^{-1}} .
	\end{align*}
	To prove this Lipschitz estimate, it suffices to show that $f \mapsto \dd [ g(\kappa, \cdot + V_\kappa)] |_q (f)$ is bounded as an operator $H^{-1} \to H^1$ uniformly for $q\in B_A$.  Using the resolvent identity we calculate
	\begin{equation*}
	d[ g(\kappa, \cdot + V_\kappa) ]|_q (f)
	= - \langle \del_x , R(\kappa,q+V_\kappa) f R(\kappa,q+V_\kappa) \del_x \rangle .
	\end{equation*}
	Estimating by duality, expanding the series~\eqref{eq:resolvent series 2}, and using the estimate~\eqref{eq:hskappa identity 2} we obtain
	\begin{equation*}
	\norm{ d[g(\kappa, \cdot + V_\kappa)]|_q (f) }_{H^1} \lesssim \norm{f}_{H^{-1}}
	\end{equation*}
	uniformly for $q\in B_A$ and $\kappa\geq \kappa_0$.  Here, $\kappa_0$ is chosen so that the results of \cref{sec:prelim} and the above estimate apply to the ball of radius $A$.
	
	For global well-posedness, we will need to choose $\kappa_0$ even larger.  Let $C$ denote the constant from \cref{thm:hktilde alphadot 1}, which depends only on the background wave $V$ and $T>0$.  Then Gr\"onwall's inequality and the $\alpha$ estimate~\eqref{eq:alpha 3} tell us that the $\hktilde$ flows $q_\kappa(t)$ obey
	\begin{equation}
	\norm{q_\kappa(t)}^2_{H^{-1}_\varkappa}
	\leq 4 \varkappa\alpha(\varkappa,q_\kappa(t))
	\leq 4 e^{CT} \varkappa\alpha(\varkappa,q(0))
	\leq 4 e^{CT} A^2
	\label{eq:gwp}
	\end{equation}
	for $|t|\leq T$, $\kappa\geq 2\varkappa$, and $\varkappa$ sufficiently large.  
	
	Fix $\varkappa=\varkappa_0$ sufficiently large so that \cref{thm:hktilde alphadot 1}  and the $\alpha$ estimate~\eqref{eq:alpha 3} apply throughout the ball $B_R(\varkappa)$ in $H^{-1}_\varkappa$ with radius $R = 2e^{CT/2}A$.  The estimate~\eqref{eq:gwp} then applies, and so the $\hktilde$ flows $q_\kappa(t)$ remain in the ball $B_R(\varkappa)$ as long as $|t|\leq T$ and $\kappa\geq 2\varkappa$.  For each $\kappa \geq 2\varkappa$, the $\hktilde$ flow is locally well-posed on the ball $B_R$ in $H^{-1}_\varkappa$ by the elementary estimate $\norm{f}_{H^{-1}_\varkappa} \approx_\varkappa \norm{f}_{H^{-1}}$.  Therefore we may iterate the local well-posedness result to the whole time interval $[-T,T]$.  Moreover, using the estimate $\norm{f}_{H^{-1}} \lesssim \varkappa \norm{f}_{H^{-1}_\varkappa}$ again, we conclude that the $\hktilde$ flows $q_\kappa(t)$ are bounded in $H^{-1}$ uniformly for $|t|\leq T$ and $\kappa\geq 2\varkappa$.
	
	Next we turn to the second statement.  From the expression~\eqref{eq:cleanup lem 10} for the functional derivative of $g$ we have
	\begin{align*}
	&\ddt g(x;\varkappa,q(t))
	= \big\{ g(\varkappa,q) , \hktilde \big\}
	= - \int G(x,y) \frac{dq}{dt}(y) G(y,x) \dy \\
	&= - \int G(x,y) \big\{ 16\kappa^5 [g'(\kappa,q+V_\kappa)-g'(\kappa,V_\kappa)] + 4\kappa^2 q' \big\} (y) G(y,x) \dy \\
	&= - 4\kappa^2g'(x;\varkappa,q) + 16\kappa^5 \int G(x,y) [g'(\kappa,q+V_\kappa)-g'(\kappa,V_\kappa)](y) G(y,x) \dy ,
	\end{align*}
	where $G(x,y) = G(x,y;\varkappa,q)$.  Using the ODEs~\eqref{eq:g alg prop 3} for $g(\kappa,q+V_\kappa)$ and $g(\kappa,V_\kappa)$ and then the identity~\eqref{eq:g alg prop 2}, we obtain
	\begin{align*}
	&16\kappa^5 \int G(x,y) [g'(\kappa,q+V_\kappa)-g'(\kappa,V_\kappa)](y) G(y,x) \dy = \\
	&= \tfrac{8\kappa^5}{\kappa^2-\varkappa^2} \big\{ g'(\varkappa,q) [g(\kappa,q+V_\kappa)-g(\kappa,V_\kappa)] - g(\varkappa,q) \big[ g(\kappa,q+V_\kappa)-g(\kappa,V_\kappa) \big]' \big\} \\
	&\phantom{={}} - \tfrac{4\kappa^5}{\kappa^2-\varkappa^2} \int G(x,y) \big\{ \big( V_\kappa [ g(\kappa,q+V_\kappa) - g(\kappa,V_\kappa) ] \big)' \\
	&\phantom{={}}\qquad + V_\kappa [ g'(\kappa,q+V_\kappa) - g'(\kappa,V_\kappa) ] + \big[ qg(\kappa,V_\kappa) \big]' + qg'(\kappa,V_\kappa) \big\}(y) G(y,x) \dy.
	\end{align*}
	Lastly, replacing $g$ by $g-\tfrac{1}{2\kappa}$ in the term $[ qg(\kappa,V_\kappa) ]'$ and using the formula~\eqref{eq:g derivative} for $g'$ we write
	\begin{align*}
	&- \tfrac{4\kappa^5}{\kappa^2-\varkappa^2} \int G(x,y) \big[ qg(\kappa,V_\kappa) \big]'(y) G(y,x) \dy \\
	&= - \tfrac{4\kappa^5}{\kappa^2-\varkappa^2} \int G(x,y) \big[ q( g(\kappa,V_\kappa) - \tfrac{1}{2\kappa} ) \big]'(y) G(y,x) \dy
	+ \tfrac{2\kappa^4}{\kappa^2-\varkappa^2} g'(x;\varkappa,q) .
	\end{align*}
	Differentiating $1/2g(\varkappa,q)$ using the chain rule and regrouping terms yields the desired expression.
\end{proof}

\section{Convergence at low regularity}
\label{sec:conv 1}

Ultimately we will show that the $\hktilde$ flows $q_\kappa(t)$ are convergent in $H^{-1}$ as $\kappa\to\infty$.  To this end, we will show the difference $q_\kappa - q_\varkappa$ for $\varkappa\geq \kappa$ converges to zero as $\kappa\to\infty$.  This is a difficult task, as it involves estimating two different functions that solve separate nonlinear equations.  To circumvent this, we will use that the $H_\kappa$ and $H_\varkappa$ flows commute (cf.~\eqref{eq:hk flow 2}).  This allows us to write the $H_\varkappa$ flow of $u$ by time $t$ as
\begin{equation*}
e^{tJ\nabla H_\varkappa} u = e^{tJ\nabla (H_\varkappa - H_\kappa)} e^{tJ\nabla H_\kappa} u .
\end{equation*}
We apply this identity to $u = q+V$ and $u=V$.  Then $q_\varkappa(t)$, the $\hvktilde$ flow of $q(0)$ by time $t$, is the solution to
\begin{equation}
\begin{aligned}
\ddt q ={} &16\varkappa^5 [ g'(\varkappa,q+W(t)) - g'(\varkappa,W(t)) ] + 4\varkappa^2 q' \\
&-16\kappa^5 [ g'(\kappa,q+W(t)) - g'(\kappa,W(t)) ] - 4\kappa^2 q'  
\end{aligned}
\label{eq:diff flow q}
\end{equation}
at time $t$ with initial data $q_\kappa(t)$.  Here, the background wave $W(t) \equiv W_{\varkappa,\kappa}(t)$ is the solution to
\begin{equation}
\ddt W = 16\varkappa^5 g'(\varkappa,W) + 4\varkappa^2 W' -16\kappa^5 g'(\kappa,W) - 4\kappa^2 W'
\label{eq:diff flow V}
\end{equation}
at time $t$ with initial data $V_\kappa(t)$.  The upshot of this manipulation is that we may now write the difference $q_\varkappa(t) - q_\kappa(t)$ as the solution to the \emph{single} equation~\eqref{eq:diff flow q} minus its initial data.

The purpose of this section is to first demonstrate convergence at some lower $H^s$ regularity.  As was introduced in~\cite{Killip2019}, the change of variables $1/g(k,q)$ in place of $q$ is convenient in witnessing this convergence.
\begin{prop}
	\label{thm:diff flow conv 1}
	Fix $V$ admissible, $T>0$, and $k>0$ sufficiently large.  Given a bounded and equicontinuous set $Q\subset H^{-1}$ of initial data, define the set of $H_\kappa$ and $\hktilde$ flows
	\begin{equation*}
	V^*_T(\kappa) := \{ e^{tJ\nabla H_\varkappa}V(0) : |t|\leq T,\ \varkappa\geq \kappa\}, \qquad Q^*_{T}(\kappa) := \{ \wt{\Phi}_\kappa(t) q : q\in Q,\ |t| \leq T \}
	\end{equation*}
	for $\kappa > 0$ sufficiently large.  Then the solutions $q(t)$ to the difference flows~\eqref{eq:diff flow q} with background waves $W(t)$ and initial data $q(0)$ obey
	\begin{equation*}
	\lim_{\kappa\to\infty}\, \sup_{\substack{ q(0)\in Q^*_T(\kappa) \\ W(0) \in V^*_T(\kappa)}}\ \sup_{\varkappa\geq \kappa}\ \norm{ \frac{1}{g(k,q(t))} - \frac{1}{g(k,q(0))} }_{C_tH^{-2}([-T,T]\times\R)} = 0 .
	\end{equation*}
\end{prop}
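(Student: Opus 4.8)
The plan is to show that, along any solution $q(t)$ of \eqref{eq:diff flow q} with background $W(t)$, the derivative $\ddt\tfrac{1}{2g(k,q(t))}$ converges in $H^{-2}$, as $\kappa\to\infty$, to a limit that does not depend on $\kappa$ or $\varkappa$ (the evolution of $\tfrac{1}{2g(k,q)}$ under KdV with potential~\eqref{eq:tkdv}); integrating over $t\in[-T,T]$ and noting that this common limit cancels in the difference then yields the claim, since $q\mapsto\tfrac{1}{g(k,q)}-2k$ is well defined and bounded on the relevant balls by \cref{thm:diffeo prop 2}. Differentiating via \eqref{eq:cleanup lem 10} and \eqref{eq:diff flow q} gives $\ddt\tfrac{1}{2g(k,q)}=\mathcal A_\varkappa-\mathcal A_\kappa$ with
\begin{equation*}
\mathcal A_\mu:=\tfrac{1}{2g(k,q)^2}\int G(x,y;k,q)\,X_\mu[q](y)\,G(y,x;k,q)\dy,\qquad X_\mu[q]:=16\mu^5\big[g'(\mu,q+W)-g'(\mu,W)\big]+4\mu^2q',
\end{equation*}
so it suffices to prove that $\mathcal A_\mu$ converges, in $H^{-2}$ and uniformly over the stated sets, to a $\mu$-independent limit.

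I would isolate this limit using the commutativity relation trick of \cref{sec:hktilde}: apply the ODE \eqref{eq:g alg prop 3} for $g(\mu,q+W)$ and $g(\mu,W)$, integrate by parts, and invoke \eqref{eq:g alg prop 2} with the Green's function at energy $k$. This trades $\mu^5$ for the prefactor $\tfrac{\mu^5}{k^2-\mu^2}=O(\mu^3)$ and, after combining with the transport term (whose contribution $\int G_kq'G_k$ is read off from \eqref{eq:g alg prop 2}) and accounting for the $\tfrac{1}{2\mu}$-part of $g(\mu,W)$ exactly as in \cref{thm:hktilde alphadot 2}, produces
\begin{equation*}
\mathcal A_\mu=\partial_x\!\left(\tfrac{1}{g(k,q)}\Big[\tfrac{4\mu^5}{k^2-\mu^2}\big(g(\mu,q+W)-g(\mu,W)\big)+\tfrac{2\mu^2k^2}{k^2-\mu^2}\Big]\right)+\mathcal E_\mu ,
\end{equation*}
where $\mathcal E_\mu$ collects the terms in which a factor of $W$, of $g(\mu,W)-\tfrac{1}{2\mu}$, or of $g'(\mu,W)$ appears explicitly. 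Since $\tfrac{2\mu^2k^2}{k^2-\mu^2}\to-2k^2$, and since writing $g(\mu,q+W)-g(\mu,W)$ as its part linear in $q$, namely $-\langle\del_x,R(\mu,W)qR(\mu,W)\del_x\rangle$, plus a higher-order remainder, and using $\langle\del_x,R_0(\mu)qR_0(\mu)\del_x\rangle=\mu^{-1}R_0(2\mu)q$, the resolvent series \eqref{eq:resolvent series 2} to pass from $R(\mu,W)$ to $R_0(\mu)$, and the elementary strong convergence $4\mu^2R_0(2\mu)\to\mathrm{Id}$, gives $\tfrac{4\mu^5}{k^2-\mu^2}(g(\mu,q+W)-g(\mu,W))\to q$, the principal term converges to $\partial_x\big(\tfrac{q}{g(k,q)}-\tfrac{2k^2}{g(k,q)}\big)$.

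The higher-order remainder of $g(\mu,q+W)-g(\mu,W)$ I would control through the operator identity \eqref{eq:quad op id}, which expresses its quadratic part (against the $\tfrac{1}{k^2-\mu^2}$ prefactor) as a sum of terms each bounded in $L^1$ by $\|q\|_{H^{-1}_\mu}^2$, so that it vanishes once the relevant perturbations are equicontinuous; the same equicontinuity upgrades the convergence of the principal term from $H^{-2}$ (automatic) to $H^{-1}$, which is what makes it contribute at the claimed $H^{-2}$ level after the outer $\partial_x$. In $\mathcal E_\mu$, the summands carrying $g(\mu,W)-\tfrac{1}{2\mu}$ or $g'(\mu,W)$ gain $o(\mu^{-3})$ from the argument of \cref{thm:g conv 1} applied to $W$ (which is bounded in $W^{2,\infty}$ uniformly, being an $H_\varkappa$-flow of an $H_\kappa$-flow of $V(0)$ and hence covered by the admissibility of $V$), beating the $O(\mu^3)$ prefactor; the summands of the shape $\tfrac{\mu^5}{k^2-\mu^2}\int G_k[W\,\partial_x(\mathrm{small})]G_k$ are the delicate ones, since the extra derivative against the $O(\mu^3)$ prefactor cannot be absorbed directly. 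For these, as in the passage from \eqref{eq:hktilde alphadot 5} to \eqref{eq:hktilde alphadot 6}--\eqref{eq:hktilde alphadot 11} in the proof of \cref{thm:hktilde alphadot 1}, I would apply the commutativity relation trick a \emph{second} time (pushing derivatives past the new factor of $W$, which only spawns harmless lower-order terms); afterwards the Schatten-class estimates of \cref{sec:prelim}, the diagonal Green's function bound \eqref{eq:diffeo prop}, $H^1_k$--$H^{-1}_k$ duality, and \eqref{eq:hskappa ests} show that each remaining term either tends to $0$ or converges to the relevant $\mu$-independent limit (the contributions of $6(Wq)'$ in \eqref{eq:tkdv}). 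The equicontinuity of $q(t)$ used above is inherited from that of $Q$ via the a priori estimate \cref{thm:hktilde alphadot 1} applied at an auxiliary energy $\le\tfrac12\kappa$, together with the lower bound in \eqref{eq:alpha 3} and \cref{thm:hskappa equicty}, which give $\sup\|q(t)\|^2_{H^{-1}_{\varkappa'}}\lesssim e^{CT}\sup\|q(0)\|^2_{H^{-1}_{\varkappa'}}\to0$.

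The step I expect to be the main obstacle is the simultaneous bookkeeping of the three energy scales $k$ (fixed), $\kappa$, and $\varkappa\ge\kappa$: each application of the commutativity relation trick gains a factor $(k^2-\mu^2)^{-1}\sim-\mu^{-2}$ but costs explicit powers of $\mu$, so for every term one must iterate it exactly as many times as needed to reach the $H^{-2}$ level --- no fewer, or the prefactor $\mu^5$ is not tamed, and no more than necessary, or extra derivatives accumulate --- all while keeping every estimate uniform in $\varkappa$, which is guaranteed because $\mu\in\{\kappa,\varkappa\}\ge\kappa$ and the gains involve only negative powers of these large energies.
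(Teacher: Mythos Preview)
Your overall architecture matches the paper's proof: reduce to the time derivative, write it as the difference of two copies (at $\kappa$ and $\varkappa$) of the expression from \cref{thm:hktilde alphadot 2}, apply the commutativity relation trick once to trade $\mu^5$ for $\mu^5/(k^2-\mu^2)$, identify the principal term, and apply the trick a second time to the summand with $W\,[g(\mu,q+W)-g(\mu,W)]'$. The use of equicontinuity of $Q$ propagated via \cref{thm:hktilde alphadot 1} at an auxiliary energy $\le\tfrac12\kappa$ is exactly right.

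There is, however, a genuine gap in your treatment of the summands ``carrying $g(\mu,W)-\tfrac{1}{2\mu}$.'' First, \cref{thm:g conv 1} does not give $o(\mu^{-3})$; it gives that $-4\mu^3(g(\mu,W)-\tfrac{1}{2\mu})\to V$, i.e.\ these quantities are $O(\mu^{-3})$ with a nonzero limit. More importantly, one of these summands (the paper's \eqref{eq:diff flow 7}) has the shape
\[
\frac{1}{g(k,q)^2}\int G(x,y;k,q)\,(F_\mu q)'(y)\,G(y,x;k,q)\dy,\qquad F_\mu:=V+\tfrac{4\mu^5}{\mu^2-k^2}\big[g(\mu,W)-\tfrac{1}{2\mu}\big]\to 0\ \text{in }W^{2,\infty}.
\]
Since $q\in H^{-1}$, one only has $(F_\mu q)'\to 0$ in $H^{-2}$, and the integral operator $h\mapsto\int G_k h G_k$ is merely bounded $H^{-1}\to H^1$; your ``Schatten-class estimates and $H^1_k$--$H^{-1}_k$ duality'' do not close here, and this term is \emph{not} of the form $W\,\partial_x(\text{small})$, so the second commutativity trick does not apply either. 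The paper handles this with a separate device (\cref{thm:g conv 3}): expand both Green's functions $G(\cdot,\cdot;k,q)$ as resolvent series in $q$, write $(F_\mu q)'=[\partial,F_\mu q]$, and pair the commutators across the series so that the derivative ultimately lands on $F_\mu$ or on the test function rather than on $q$. This same lemma is needed again inside the second application of the trick (for the paper's term \eqref{eq:diff flow 17}). You should incorporate this commutator argument; without it the $(F_\mu q)'$ term cannot be controlled at the $H^{-2}$ level.

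A smaller point: for the higher-order remainder of $g(\mu,q+W)-g(\mu,W)$ you invoke \eqref{eq:quad op id} and an $L^1$ bound, but what is needed is $H^{-1}$ convergence of $4\mu^3[g(\mu,q+W)-g(\mu,W)]+q$; the paper obtains this directly by the double series expansion in $q$ and $W$ together with the equicontinuity input (\cref{thm:g conv 2}), which is cleaner than routing through \eqref{eq:quad op id}.
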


Throughout the proof of~\cref{thm:diff flow conv 1} all spacetime norms will be over the slab $[-T,T]\times\R$.  As $V$ is admissible, there exists a constant $\kappa_0$ so that the $H_\kappa$ flows $V_\kappa(t)$ exist and are bounded in $C_tW^{4,\infty}$ uniformly for $\kappa\geq\kappa_0$.  By \cref{thm:hktilde alphadot 2} the difference flows $q(t)$ for $q(0)\in Q_T^*(\kappa)$ are bounded in $C_tH^{-1}$ uniformly for $\kappa$ large, and hence are contained in a ball $B_A$ for some $A>0$.  In particular, the functional $g(k,q) - \tfrac{1}{2k}$ for $q(t)$ exists for all $k$ sufficiently large.

By the fundamental theorem of calculus we have
\begin{equation*}
\norm{ \frac{1}{2g(k,q(t))} - \frac{1}{2g(k,q(0))} }_{C_tH^{-2}}
\leq T \norm{ \ddt \left( \frac{1}{2g(k,q(t))} - k \right) }_{C_tH^{-2}} ,
\end{equation*}
and so it suffices to show that
\begin{equation*}
\lim_{\kappa\to\infty}\, \sup_{\substack{ q(0)\in Q^*_T(\kappa) \\ W(0) \in V^*_T(\kappa)}}\ \sup_{\varkappa\geq \kappa}\ \norm{ \ddt \left( \frac{1}{2g(k,q(t))} - k \right) }_{C_tH^{-2}} = 0.
\end{equation*}

The equation~\eqref{eq:diff flow q} for the evolution of $q$ is the difference of the equations for the $\hvktilde$ and $\hktilde$ flows with the same background wave $W(t)$.  In fact, for a general function $F(q)$ evaluated at $q(t)$ we have
\begin{equation*}
\ddt F(q(t))
= \big\{ F , \hvktilde \big\} - \big\{ F, \hktilde \big\} .
\end{equation*}
We will apply this to the quantity $1/2g(k,q(t))$.  The expression for the evolution of $1/2g(k,q(t))$ under the $\hktilde$ flow was obtained in \cref{thm:hktilde alphadot 2}.  After regrouping terms, we arrive at
\begin{align}
&\ddt \frac{1}{2g(k,q(t))}
\nonumber \\
&= \left\{ \tfrac{1}{g(k,q)} \left( q + \tfrac{4\kappa^5}{\kappa^2-k^2} \left[ g(\kappa,q+W) - g(\kappa,W) \right] - \tfrac{4k^5}{\kappa^2-k^2} \left[ g(k,q) - \tfrac{1}{2k} \right] \right) \right\}' 
\label{eq:diff flow 4}\\
&+\tfrac{1}{g(k,q)^2} \int G(x,y) \big\{ V'q + \tfrac{4\kappa^5}{\kappa^2-k^2} g'(\kappa,W)q \big\}(y) G(y,x) \dy
\label{eq:diff flow 5}\\
&+\tfrac{1}{g(k,q)^2} \int G(x,y) \big\{ V'q + \tfrac{4\kappa^5}{\kappa^2-k^2} W' [g(\kappa,q+W) - g(\kappa,W)] \big\}(y) G(y,x) \dy
\label{eq:diff flow 6}\\
&+\tfrac{1}{g(k,q)^2} \int G(x,y) \big\{ (Vq)' + \tfrac{4\kappa^5}{\kappa^2-k^2} ( [ g(\kappa,W)-\tfrac{1}{2\kappa} ] q )' \big\}(y) G(y,x) \dy
\label{eq:diff flow 7}\\
&+\tfrac{1}{g(k,q)^2} \int G(x,y) \big\{ 2Vq' + \tfrac{8\kappa^5}{\kappa^2-k^2} W [g(\kappa,q+W) - g(\kappa,W)]' \big\}(y) G(y,x) \dy
\label{eq:diff flow 8}\\[0.4em]
&- \left\{ \tx{(4.3)--(4.7) with }\kappa\tx{ replaced by }\varkappa \right\} ,
\nonumber
\end{align}
where $G(x,y) = G(x,y;k,q)$.  Note that for each term we have subtracted the limiting expression as $\kappa\to\infty$ (e.g. inserting $(q/g(k,q))'$ in~\eqref{eq:diff flow 4} and $V'q$ in the integrand of~\eqref{eq:diff flow 5}) which is canceled by its counterpart in the corresponding $\varkappa$ terms.

To prove \cref{thm:diff flow conv 1} we must show that all of the terms above converge to zero in $C_tH^{-2}$ as $\kappa\to\infty$ uniformly for $\varkappa \geq \kappa$, $q(0)\in Q^*_T(\kappa)$, and $W(0) \in V^*_T(\kappa)$.  To simplify the notation, we will only show that the terms~\eqref{eq:diff flow 4}--\eqref{eq:diff flow 8} converge to zero as $\kappa\to\infty$; the upper bound we will obtain for each $\kappa$ term will also hold for the corresponding $\varkappa$ term uniformly for $\varkappa \geq \kappa$.

First, we claim that the admissibility of $V$ implies that the background waves $W(t) = e^{tJ\nabla (H_\varkappa - H_\kappa)}W$ obey
\begin{equation}
\lim_{\kappa\to\infty}\, \sup_{W\in V^*_T(\kappa)}\, \sup_{\varkappa\geq\kappa}\, \snorm{ e^{tJ\nabla (H_\varkappa - H_\kappa)}W - W }_{C_tW^{2,\infty}} = 0 .
\label{eq:diff flow 19}
\end{equation}
For $\varkappa\geq\kappa$ and $W\in V^*_T(\kappa)$, we use the commutativity of the KdV and $H_\kappa$ flows (cf.~\eqref{eq:hk flow 2}) to write
\begin{align*}
&\snorm{ e^{tJ\nabla (H_\varkappa - H_\kappa)}W - W }_{C_tW^{2,\infty}} \\
&\leq \snorm{ e^{tJ\nabla H_\varkappa} e^{- tJ\nabla H_\kappa} W - e^{tJ\nabla \hkdv} e^{- tJ\nabla H_\kappa} W }_{C_tW^{2,\infty}} \\
&\phantom{\leq{}}+ \snorm{ e^{- tJ\nabla H_\kappa} e^{tJ\nabla \hkdv} W - W }_{C_tW^{2,\infty}} \\
&\leq 2\, \sup_{W \in V^*_{2T}(\kappa)}\, \sup_{\varkappa\geq \kappa}\, \snorm{ e^{tJ\nabla H_\varkappa} W - e^{tJ\nabla \hkdv} W }_{C_tW^{2,\infty}} .
\end{align*}
The RHS converges to zero as $\kappa\to\infty$ by condition (iii) of \cref{thm:hyp}.

Now we turn to the first term~\eqref{eq:diff flow 4}, which arises in the case $V \equiv 0$ and is handled as in~\cite{Killip2019}.  Using the second estimate of~\eqref{eq:hskappa ests} we can put the factor of $1/g(k,q)$ in $H^1$ and bound
\begin{align*}
&\norm{\eqref{eq:diff flow 4}}_{H^{-2}} \\
&\lesssim \norm{ \tfrac{1}{g(k,q)} }_{H^1} \norm{ q + \tfrac{4\kappa^5}{\kappa^2-k^2} \left[ g(\kappa,q+W) - g(\kappa,W) \right] - \tfrac{4k^5}{\kappa^2-k^2} \left[ g(k,q) - \tfrac{1}{2k} \right] }_{H^{-1}} \\
&\lesssim \norm{ q + 4\kappa^3 \left[ g(\kappa,q+W) - g(\kappa,W) \right] }_{H^{-1}} \\
&\phantom{\lesssim{}}+ \kappa \norm{ g(\kappa,q+W) - g(\kappa,W) }_{H^{-1}} + \kappa^{-2}\norm{ g(k,q) - \tfrac{1}{2k} }_{H^{-1}} .
\end{align*}
We allow implicit constants to depend on the fixed constant $k>0$.  The second and third terms converge to zero as $\kappa\to 0$ by the estimate~\eqref{eq:diffeo prop}.  In the following lemma we check that the first term also converges to zero:

\begin{lem}
	\label{thm:g conv 2}
	We have
	\begin{equation*}
	4\kappa^3 [ g(\kappa,q+W) - g(\kappa,W) ] +q \to 0 \quad\tx{in }H^{-1}\tx{ as }\kappa\to\infty
	\end{equation*}
	uniformly for $q\in Q^*_T(\kappa)$.
\end{lem}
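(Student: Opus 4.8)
The idea is to substitute the Neumann-type expansion from \cref{thm:diffeo prop} into the claim and separate the linear-in-$q$ part (which furnishes $-q$) from the rest (which is an error). Write $R = R(\kappa,W)$; by \cref{thm:resolvent} together with the uniform bound on $\snorm{W}_{L^\infty}$ coming from admissibility of $V$, \cref{thm:hyp}, and \eqref{eq:diff flow 19}, the resolvent series for $R$ converges for $\kappa$ large, and \cref{thm:diffeo prop} gives
\[
g(\kappa,q+W) - g(\kappa,W) = -\langle \del_x, RqR\del_x\rangle + \mathcal{E}, \qquad \mathcal{E} := \sum_{\ell\geq 2} (-1)^\ell \big\langle \sqrt{R}\del_x, A^\ell \sqrt{R}\del_x\big\rangle, \quad A := \sqrt{R}\,q\sqrt{R}.
\]
So it suffices to show $4\kappa^3\mathcal{E}\to 0$ and $-4\kappa^3\langle\del_x,RqR\del_x\rangle + q \to 0$, both in $H^{-1}$, uniformly for the $q$ under consideration.

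\textbf{The error term $\mathcal{E}$ (the crux).} The naive route—estimating $\mathcal{E}$ in $H^1_\kappa$ by $H^{-1}_\kappa$--$H^1_\kappa$ duality and passing to $H^{-1}$—loses a power of $\kappa$: it only yields $\snorm{4\kappa^3\mathcal{E}}_{H^{-1}}\lesssim \kappa^{1/2}\snorm{q}_{H^{-1}_\kappa}^2$, which need not vanish even for a single $q\in H^{-1}$ (equicontinuity gives no rate on $\snorm{q}_{H^{-1}_\kappa}$). The fix is to exploit the trace/positivity structure together with the one-dimensional embedding $\snorm{f}_{H^{-1}}\lesssim\snorm{f}_{L^1}$. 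By \eqref{eq:hskappa identity 2}, $\snorm{A}_{\I_2}\lesssim \kappa^{-1/2}\snorm{q}_{H^{-1}_\kappa}$, and for $\ell\geq 2$ writing $A^\ell = A\,A^{\ell-2}A$ and using $A^*=A$ gives, pointwise in $x$,
\[
\big| \big\langle \sqrt{R}\del_x, A^\ell\sqrt{R}\del_x\big\rangle \big| = \big| \big\langle A\sqrt{R}\del_x, A^{\ell-2}(A\sqrt{R}\del_x)\big\rangle \big| \leq \snorm{A}_{\op}^{\ell-2}\,\big\| A\sqrt{R}\del_x\big\|_{L^2}^2 .
\]
Since $\int \|A\sqrt{R}\del_x\|_{L^2}^2\dx = \tr(A^2R) \leq \snorm{A}_{\I_2}^2\,\snorm{R}_{\op} \lesssim \kappa^{-3}\snorm{q}_{H^{-1}_\kappa}^2$, and $\snorm{A}_{\op}\leq\snorm{A}_{\I_2}\leq\tfrac12$ for $\kappa$ large, summing the geometric series gives $\snorm{\mathcal{E}}_{L^1}\lesssim \kappa^{-3}\snorm{q}_{H^{-1}_\kappa}^2$, hence $\snorm{4\kappa^3\mathcal{E}}_{H^{-1}}\lesssim\snorm{q}_{H^{-1}_\kappa}^2$. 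This tends to $0$ uniformly because the $q$ in play form an equicontinuous family in $H^{-1}$: by \cref{thm:hktilde alphadot 1} (and its analogue for the difference flow) the quantity $\alpha(\varkappa,q)$ stays comparable to its value at the initial data $Q$, so $\snorm{q}_{H^{-1}_\varkappa}^2\lesssim\varkappa\,\alpha(\varkappa,q(0))\to0$ as $\varkappa\to\infty$ uniformly, which via \cref{thm:hskappa equicty} is exactly the required equicontinuity; in particular $\sup\snorm{q}_{H^{-1}_\kappa}^2\to0$.

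\textbf{The linear term.} Split $R = R_0(\kappa) + \big[R - R_0(\kappa)\big]$. The $R_0$--$R_0$ contribution is $-4\kappa^3\langle\del_x,R_0qR_0\del_x\rangle = -4\kappa^2 R_0(2\kappa)q$ by \eqref{eq:linear op id}, and on the Fourier side $q - 4\kappa^2 R_0(2\kappa)q$ has symbol $\xi^2/(\xi^2+4\kappa^2)$; estimating $\int \frac{1}{\xi^2+1}\frac{\xi^4|\hat q(\xi)|^2}{(\xi^2+4\kappa^2)^2}\dxi$ by splitting at $|\xi|=M$—gaining $M^4\kappa^{-4}$ on $|\xi|\leq M$ and using the smallness of the $H^{-1}$-tail on $|\xi|>M$, which is uniform by \cref{thm:hskappa equicty}—shows this goes to $0$ in $H^{-1}$ uniformly. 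The remaining pieces each carry one factor $R - R_0(\kappa) = -R_0(\kappa)WR = -RWR_0(\kappa)$; estimating by $H^{-1}$--$H^1$ duality, cycling the trace so that $q$ is flanked by square-roots in $\I_2$ (with mixed norms $\|\sqrt{R}\,q\sqrt{R_0}\|_{\I_2}\lesssim\kappa^{-1/2}\snorm{q}_{H^{-1}_\kappa}$) and absorbing $W$ through $\|\sqrt{R_0}\,W\sqrt{R}\|_{\op}\lesssim\kappa^{-2}\snorm{W}_{L^\infty}$, one obtains a bound $\lesssim\kappa^{-1}\snorm{W}_{L^\infty}\snorm{q}_{H^{-1}_\kappa}\to0$ (here only boundedness of $q$ in $H^{-1}_\kappa$ is used). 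Adding the three contributions yields the claim. The single genuinely delicate point is the error term $\mathcal{E}$, where the expected ``quadratic smallness'' fails for $H^{-1}$ data under any duality estimate and must instead be recovered from the nonnegativity/trace structure via $L^1\hookrightarrow H^{-1}$.
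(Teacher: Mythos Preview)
Your proof is correct and arrives at the same quantitative conclusion as the paper: every piece is controlled by $\snorm{q}_{H^{-1}_\kappa}$ (or its square), which tends to zero uniformly over $Q^*_T(\kappa)$ via Gr\"onwall on $\alpha$ and the equicontinuity of $Q$.

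The one genuine difference is how you handle the tail $\mathcal{E}$. You invoke the pointwise positivity $|\langle\sqrt{R}\del_x,A^\ell\sqrt{R}\del_x\rangle|\leq\snorm{A}_{\op}^{\ell-2}\snorm{A\sqrt{R}\del_x}_{L^2}^2$, integrate to a trace, and pass through the embedding $L^1\hookrightarrow H^{-1}$. The paper instead tests against $f\in H^1$ and expands \emph{both} resolvents, in powers of $q$ and of $W$; the decisive move is to place the test function in operator norm via $\snorm{\sqrt{R_0}f\sqrt{R_0}}_{\op}\lesssim\kappa^{-2}\snorm{f}_{L^\infty}\lesssim\kappa^{-2}\snorm{f}_{H^1}$, which recovers exactly the missing power of $\kappa$ that your ``naive route'' loses. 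So your remark that the duality approach must fail is not quite right: it fails if one insists on putting the test function in $\I_2$ (equivalently, estimating $\mathcal{E}$ in $H^1_\kappa$), but succeeds once one uses $H^1\hookrightarrow L^\infty$ on $f$. Your $L^1$ argument is the dual of this; it is a bit slicker in that it avoids the double series in $q$ and $W$, while the paper's version keeps everything within the same $\I_2$/operator-norm bookkeeping used throughout the section. A minor omission: the paper is explicit that \cref{thm:hktilde alphadot 1} only controls $\alpha(\varkappa,q)$ for $\kappa\geq 2\varkappa$, so one takes $\varkappa=\kappa/2$ before invoking equicontinuity; you should make this step visible.
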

\begin{proof}
	We claim that the first term $4\kappa^2R_0(2\kappa)q$ of the series for $4\kappa^3 [ g(\kappa,q+W) - g(\kappa,W) ]$ converges to $q$ in $H^{-1}$.  We compute
	\begin{equation*}
	\norm{ 4\kappa^2R_0(2\kappa)q-q }_{H^{-1}}^2
	= \int \frac{\xi^4|\hat{q}(\xi)|^2}{(\xi^2+4\kappa^2)^2(\xi^2+4)} \dxi
	\lesssim \int  \frac{|\hat{q}(\xi)|^2}{\xi^2+4\kappa^2} \dxi
	= \norm{q}^2_{H^{-1}_\kappa} .
	\end{equation*}
	Note that for $q\in Q^*_T(\kappa)$, \cref{thm:hktilde alphadot 1} only gives us control over $\alpha(\varkappa,q)$ for $\kappa \geq 2\varkappa$ and not $\varkappa = \kappa$.  To circumvent this, we simply take $\varkappa = \kappa/2$ and note that trivially $\norm{q}^2_{H^{-1}_\kappa} \leq \norm{q}^2_{H^{-1}_{\kappa/2}}$.  If we let $C$ denote the constant from \cref{thm:hktilde alphadot 1}, then Gr\"onwall's inequality and the $\alpha$ estimate~\eqref{eq:alpha 3} yield
	\begin{equation*}
	\norm{q}^2_{H^{-1}_\kappa}
	\leq \norm{q}^2_{H^{-1}_{\kappa/2}}
	\lesssim \tfrac{\kappa}{2} \alpha(\tfrac{\kappa}{2},q)
	\leq e^{CT} \tfrac{\kappa}{2} \alpha(\tfrac{\kappa}{2},q(0)) ,
	\end{equation*}
	where $q(0)\in Q$ is the initial data for $q\in Q^*_T(\kappa)$.  As $Q$ is equicontinuous, we know $\tfrac{\kappa}{2} \alpha(\tfrac{\kappa}{2},q(0))$ converges to zero as $\kappa \to \infty$ uniformly for $q(0)\in Q$ by \cref{thm:hskappa equicty} and the $\alpha$ estimate~\eqref{eq:alpha 3}.  This completes the claim.
	
	It remains to show that
	\begin{equation*}
	4\kappa^3 [ g(\kappa,q+W) - g(\kappa,W) ] + 4\kappa^2 R_0(2\kappa)q \to 0 \quad\tx{in }H^{-1}\tx{ as }\kappa\to\infty
	\end{equation*}
	uniformly for $q\in B_A$.  We expand $g(\kappa,q+W) - g(\kappa,W)$ as a series in powers of $q$, and then expand each resolvent $R(\kappa,W)$ in powers of $W$.  We estimate by duality; for $f\in H^1$ and $R_0 = R_0(\kappa)$ we have
	\begin{align*}
	&\left| \int f(x) \big\{ 4\kappa^3 [  g(\kappa,q+W) - g(\kappa,W) ] + \tfrac{1}{\kappa} R_0(2\kappa)q \big\}(x) \dx \right| \\
	&\leq 4\kappa^3 \sum_{\substack{ m_0,m_1\geq 0 \\ m_0+m_1 \geq 1 }} \big| \tr\{ f R_0 (W R_0)^{m_0} q R_0 (W R_0)^{m_1} \} \big| \\
	&\phantom{\leq{}} + 4\kappa^3 \sum_{\ell\geq 2,\ m_0,\dots,m_\ell\geq 0} \big| \tr\{ f R_0 (W R_0)^{m_0} q R_0 (W R_0)^{m_1} q R_0 \cdots qR_0 (W R_0)^{m_\ell} \} \big| .
	\intertext{In the first sum, we put $\sqrt{R_0}q\sqrt{R_0}$ and $\sqrt{R_0}f\sqrt{R_0}$ in $\I_2$ and  measure the rest in operator norm.  For the second sum there are always at least two factors of $\sqrt{R_0}q\sqrt{R_0}$, and so we put $\sqrt{R_0}q\sqrt{R_0}$ in $\I_2$ and the rest in operator norm:}
	&\lesssim \kappa^3 \sum_{\substack{ m_0,m_1\geq 0 \\ m_0+m_1 \geq 1 }} \frac{\norm{f}_{L^2}}{\kappa^{3/2}} \frac{\norm{q}_{H^{-1}_\kappa}}{\kappa^{1/2}} \left( \frac{\norm{W}_{L^\infty}}{\kappa^2} \right)^{m_0+m_1}  \\
	&\phantom{\lesssim{}} + \kappa^3 \sum_{\ell\geq 2,\ m_0,\dots,m_\ell\geq 0} \frac{\norm{f}_{L^\infty}}{\kappa^{2}} \left( \frac{\norm{q}_{H^{-1}_\kappa}}{\kappa^{1/2}} \right)^\ell \left( \frac{\norm{W}_{L^\infty}}{\kappa^2} \right)^{m_0+\dots+m_\ell} .
	\end{align*}
	Re-indexing $m = m_0+\dots+m_\ell$, we compute
	\begin{equation}
	\begin{aligned}
	\sum_{m_0,\dots,m_\ell \geq 0} \left( \frac{\norm{W}_{L^\infty}}{\kappa^2} \right)^{m_0+\dots+m_\ell}
	&= \sum_{m=0}^\infty \frac{(\ell+m)!}{\ell!\,m!} \left( \frac{\norm{W}_{L^\infty}}{\kappa^2} \right)^{m} \\
	&= \left( 1 - \frac{\norm{W}_{L^\infty}}{\kappa^2} \right)^{\ell+1}
	\leq 1
	\end{aligned}
	\label{eq:double sum inner}
	\end{equation}
	uniformly for $\ell\geq 1$ and $\kappa$ large.  Altogether we obtain
	\begin{equation*}
	\left| \int f \big\{ 4\kappa^3 [  g(\kappa,q+W) - g(\kappa,W) ] + \tfrac{1}{\kappa} R_0(2\kappa)q \big\} \dx \right|
	\lesssim \norm{f}_{H^1} \norm{q}_{H^{-1}_\kappa} .
	\end{equation*}
	Taking a supremum over $\norm{f}_{H^1} \leq 1$, we conclude
	\begin{equation*}
	\norm{ 4\kappa^3 [  g(\kappa,q+W) - g(\kappa,W) ] + \tfrac{1}{\kappa} R_0(2\kappa)q }_{H^{-1}} \lesssim \norm{q}_{H^{-1}_\kappa} .
	\end{equation*}
	We have already shown that the RHS converges to zero as $\kappa\to\infty$ uniformly for $q(0)\in Q^*_T(\kappa)$, and so the claim follows.
\end{proof}

For the terms~\eqref{eq:diff flow 5} and~\eqref{eq:diff flow 6} we note that the expressions inside the curly brackets converge to zero in $H^{-1}$ by~\eqref{eq:diff flow 19} and \cref{thm:g conv 1,,thm:g conv 2}.  In fact, this is enough to show that the contributions of~\eqref{eq:diff flow 5} and~\eqref{eq:diff flow 6} converge to zero in $H^{-2}$ because the integral operator is bounded on $H^{-1}$:
\begin{lem}
	\label{thm:g bddness}
	There exists $k>0$ sufficiently large so that the operator
	\begin{equation*}
	h(x) \mapsto \frac{1}{g(x;k,q)^2} \int G(x,y;k,q) h(y) G(y,x;k,q) \dy
	\end{equation*}
	is bounded $H^{-1} \to H^1$ uniformly for $q\in B_A$.
\end{lem}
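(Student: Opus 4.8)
The plan is to factor this operator as
\[
h \mapsto \frac{1}{g(x;k,q)^2}\,\big\langle \del_x,\, R(k,q)\,h\,R(k,q)\,\del_x \big\rangle,
\]
where $h$ acts by multiplication and $R(k,q)$ is the resolvent of \cref{thm:resolvent} with $V\equiv 0$, and to bound the two factors separately: (a) the map $h \mapsto \langle \del_\cdot, R(k,q)\,h\,R(k,q)\,\del_\cdot\rangle$ is bounded $H^{-1}_k \to H^1_k$ uniformly for $q\in B_A(k)$ and $k$ large; and (b) multiplication by $g(\cdot\,;k,q)^{-2}$ is bounded $H^1(\R)\to H^1(\R)$ uniformly for $q\in B_A$. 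Since $k$ is fixed, $H^{\pm1}_k$ and $H^{\pm1}$ carry equivalent norms, so composing (a) and (b) gives the lemma for Schwartz $h$, and density of Schwartz functions in $H^{-1}(\R)$ finishes it. Throughout we take $k$ large enough that \cref{thm:resolvent,thm:diffeo prop,thm:diffeo prop 2} apply on $B_A(k)$ and that $\norm{\sqrt{R_0(k)}\,q\,\sqrt{R_0(k)}}_{\op}\leq\tfrac12$ for $q\in B_A(k)$; write $R_0 = R_0(k)$.

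For (a) we argue by duality. For Schwartz $h$ and $v$ the kernel of $R(k,q)\,h\,R(k,q)$ is continuous, so
\[
\int v(x)\,\big\langle \del_x, R(k,q)\,h\,R(k,q)\,\del_x \big\rangle\dx = \tr\big( v\,R(k,q)\,h\,R(k,q) \big) .
\]
Using the series \eqref{eq:resolvent series 2} with $V\equiv 0$ to write $R(k,q) = \sqrt{R_0}\,S\,\sqrt{R_0}$ with $S := \sum_{\ell\geq0}(-1)^\ell\big[\sqrt{R_0}\,q\,\sqrt{R_0}\big]^\ell$, and cycling the trace, the right side equals $\tr\big( (\sqrt{R_0}\,v\,\sqrt{R_0})\,S\,(\sqrt{R_0}\,h\,\sqrt{R_0})\,S \big)$. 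The key estimate \eqref{eq:hskappa identity 1} gives $\norm{\sqrt{R_0}\,v\,\sqrt{R_0}}_{\I_2} = k^{-1/2}\norm{v}_{H^{-1}_k}$ and likewise for $h$, while $\norm{\sqrt{R_0}\,q\,\sqrt{R_0}}_{\op}\leq k^{-1/2}\norm{q}_{H^{-1}_k}\leq\tfrac12$ forces $\norm{S}_{\op}\leq 2$. Hölder for the Schatten ideals then bounds the trace by $4k^{-1}\norm{v}_{H^{-1}_k}\norm{h}_{H^{-1}_k}$, and taking the supremum over Schwartz $v$ with $\norm{v}_{H^{-1}_k}\leq1$ (the supremum being the $H^1_k$ norm, since $H^1_k$ is dual to $H^{-1}_k$) yields $\norm{\langle\del_\cdot, R(k,q)\,h\,R(k,q)\,\del_\cdot\rangle}_{H^1_k}\lesssim k^{-1}\norm{h}_{H^{-1}_k}$, uniformly for $q\in B_A(k)$.

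For (b), \cref{thm:diffeo prop,thm:diffeo prop 2} (with $V\equiv0$) give that $g(x;k,q)$ is bounded above and below by positive multiples of $k^{-1}$ and that $\tfrac{1}{g(\cdot\,;k,q)} - 2k$ lies in $H^1(\R)$ with norm bounded uniformly for $q\in B_A$. Hence, since $H^1(\R)\hookrightarrow L^\infty(\R)$,
\[
\frac{1}{g(\cdot\,;k,q)^2} - 4k^2 = \Big( \tfrac{1}{g} - 2k \Big)\Big( 4k + \big(\tfrac{1}{g} - 2k\big) \Big) \in H^1(\R)
\]
with norm $\lesssim_{k,A}1$ uniformly for $q\in B_A$. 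As $H^1(\R)$ is a Banach algebra under pointwise multiplication, multiplication by $g^{-2} = 4k^2 + (g^{-2} - 4k^2)$ is bounded $H^1(\R)\to H^1(\R)$ with norm $\lesssim_{k,A}1$. Combining (a) and (b), using the ($k$-dependent) equivalence of $H^{\pm1}_k$ with $H^{\pm1}$, gives $\norm{ g^{-2}\int G(\cdot,y;k,q)\,h(y)\,G(y,\cdot;k,q)\dy }_{H^1}\lesssim_{k,A}\norm{h}_{H^{-1}}$ for Schwartz $h$, hence for all $h\in H^{-1}(\R)$ by density, uniformly for $q\in B_A$.

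The argument is largely Schatten-class bookkeeping; the two points that genuinely draw on \cref{sec:prelim} are the membership $g(\cdot\,;k,q)^{-2} - 4k^2 \in H^1(\R)$, which is what lets multiplication by $g^{-2}$ preserve $H^1$, and the uniform convergence of the Neumann series for $R(k,q)$, which supplies the bound $\norm{S}_{\op}\leq 2$ underlying the uniformity in $q\in B_A$. I do not anticipate an essential difficulty.
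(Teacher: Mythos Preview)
Your proposal is correct and follows essentially the same approach as the paper: factor the operator into the multiplier $g^{-2}$ and the inner integral, estimate the inner part by duality (writing it as a trace, expanding the resolvent in the Neumann series, and applying the key Hilbert--Schmidt identity~\eqref{eq:hskappa identity 1}), and handle the multiplier using the $H^1$ regularity of $1/g - 2k$ from \cref{thm:diffeo prop 2}. Your treatment of the multiplier is in fact more explicit than the paper's brief sketch, since you correctly split off the constant $4k^2$ before invoking the algebra property of $H^1$.
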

\begin{proof}
	First, we use the second estimate of~\eqref{eq:hskappa ests} to put the factors of $1/g(k,q)$ in $H^1$.  The remaining operator is easily estimated by duality, expanding $G(k,q)$ in a series, and using the estimate~\eqref{eq:hskappa identity 1}.
\end{proof}

The remaining two terms~\eqref{eq:diff flow 7} and~\eqref{eq:diff flow 8} are more delicate, because the derivative falling on $q$ obstructs convergence of curly-bracketed terms in $H^{-1}$.  First we consider~\eqref{eq:diff flow 7}.  Write
\begin{equation*}
\norm{ \eqref{eq:diff flow 7} }_{H^{-2}}
\lesssim \norm{ \int G(x,y;k,q) (F_\kappa q)'(y) G(y,x;k,q) \dy }_{H^{-1}}
\end{equation*}
for a function $F_\kappa$ which we know converges to $0$ in $W^{2,\infty}$ by~\eqref{eq:diff flow 19} and \cref{thm:g conv 1}.  To show that the RHS converges to zero as $\kappa\to\infty$, we exploit that the integrand and the Green's functions all contain $q$:
\begin{lem}
	\label{thm:g conv 3}
	If $F_\kappa \to 0$ in $W^{2,\infty}$ as $\kappa\to\infty$, then there exists $k>0$ sufficiently large so that
	\begin{equation*}
	\int G(x,y;k,q) (F_\kappa q)'(y) G(y,x;k,q) \dy \to 0
	\quad\tx{in }H^{-1}\tx{ as }\kappa\to\infty
	\end{equation*}
	uniformly for $q\in B_A$.
\end{lem}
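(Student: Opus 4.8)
The plan is to move the derivative off of $F_\kappa q$ — which lies only in $H^{-1}(\R)$ — and onto objects smooth enough to absorb it: either the diagonal Green's functions, or, after a Neumann expansion, the free resolvents $R_0(k)$ together with the smooth background factor $F_\kappa$ itself.

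As a first reduction, split $(F_\kappa q)' = F_\kappa' q + F_\kappa q'$. The $F_\kappa' q$ contribution is immediate: $F_\kappa' q$ is a genuine element of $H^{-1}$ with $\norm{F_\kappa' q}_{H^{-1}} \lesssim \norm{F_\kappa}_{W^{2,\infty}}\norm{q}_{H^{-1}}$ by \eqref{eq:hskappa ests}, and the map $h \mapsto \int G(x,y;k,q)\,h(y)\,G(y,x;k,q)\dy$ is bounded from $H^{-1}(\R)$ into $H^1(\R)$ uniformly for $q\in B_A$ and $k\geq k_0(A)$ — this is \cref{thm:g bddness} stripped of the harmless factor $1/g(k,q)^2$, and follows by duality after expanding $G(k,q)$ as the series \eqref{eq:resolvent series 2} (with $V\equiv 0$) and applying \eqref{eq:hskappa identity 1}. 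Hence this piece tends to $0$ in $H^1 \hookrightarrow H^{-1}$ as $\kappa\to\infty$.

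For the remaining term I would integrate by parts in $y$. Using the symmetry $G(x,y;k,q) = G(y,x;k,q)$ of the Schr\"odinger Green's function,
\[
\int G(x,y;k,q)\,(F_\kappa q)'(y)\,G(y,x;k,q)\dy = -2\int (\partial_2 G)(x,y;k,q)\,G(x,y;k,q)\,(F_\kappa q)(y)\dy ,
\]
so the derivative now lands on a Green's function, which can absorb it (the operator with kernel $(\partial_2 G)(x,y;k,q)$ is $-R(k,q)\partial$, smoothing by one order). I would bound the $H^{-1}$ norm of the right-hand side by duality against $f\in H^1$, rewrite the pairing as a trace, expand each $R(k,q)$ through the Neumann series \eqref{eq:resolvent series 2}, and estimate the resulting series of traces by the Schatten-class machinery of \cref{sec:prelim,sec:hktilde}: pair each factor of $q$ with two neighbouring $\sqrt{R_0(k)}$'s via \eqref{eq:hskappa identity 1}, move the single surviving derivative onto an adjacent free resolvent (where $\sqrt{R_0(k)}\partial$ is a bounded Fourier multiplier) or onto the smooth factor $F_\kappa$, and exploit that the test function lies in $H^1$, not merely $H^{-1}$. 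Summing the geometric series — convergent since $\norm{\sqrt{R_0}q\sqrt{R_0}}_{\op} \leq k^{-1/2}\norm{q}_{H^{-1}_k}$ is small for $k\geq k_0(A)$ — yields a bound of the form
\[
\Bigl\| \int G(x,y;k,q)\,(F_\kappa q)'(y)\,G(y,x;k,q)\dy \Bigr\|_{H^{-1}} \lesssim_k \norm{F_\kappa}_{W^{2,\infty}}\,\norm{q}_{H^{-1}} ,
\]
uniformly for $q\in B_A$, which goes to $0$ as $\kappa\to\infty$ since $F_\kappa\to 0$ in $W^{2,\infty}$.

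The crux — and the reason the hypothesis asks for $W^{2,\infty}$ rather than $W^{1,\infty}$ control of $F_\kappa$ — is precisely the bookkeeping for this lone derivative: one cannot place it on $F_\kappa q$ (the resulting $F_\kappa q' \in H^{-2}$ is too rough for \eqref{eq:hskappa identity 1}), nor on a single factor $R(k,q)$ (which would generate a bad $q'$ through $[\partial,R(k,q)] = -R(k,q)\,q'\,R(k,q)$); it must be distributed between a free-resolvent factor (absorbing a full derivative), the smooth background $F_\kappa$ (at the cost of an $F_\kappa'$ or $F_\kappa''$), and the extra half-derivative of regularity carried by $f\in H^1$. Once this is arranged, every remaining factor is estimated exactly as in the proofs of \cref{thm:resolvent,thm:g conv 2}.
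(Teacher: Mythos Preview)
Your overall strategy---duality against $f\in H^1$, Neumann expansion of $R(k,q)$, and moving the stray derivative onto $F_\kappa$ or a free resolvent---is the paper's strategy too, and your integration-by-parts identity is correct. But the sentence ``move the single surviving derivative onto an adjacent free resolvent \dots\ or onto the smooth factor $F_\kappa$'' hides the actual difficulty, and as written the argument does not close.

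After expanding, the generic term is $\tr\{f(R_0q)^\ell R_0\partial\, F_\kappa q\,(R_0q)^m R_0\}$. Splitting into $\sqrt{R_0}$'s, the factor $F_\kappa q$ sits with only one $\sqrt{R_0}$ on its right; for $q\in H^{-1}$ this is not a bounded (let alone Hilbert--Schmidt) object. Your proposed fixes do not help: placing $f\in L^\infty$ in operator norm frees a $\sqrt{R_0}$, but that $\sqrt{R_0}$ lives on the far side of the trace, not next to $F_\kappa q$; absorbing $\partial$ into the neighbouring $\sqrt{R_0}$ likewise removes the only half-resolvent that could have gone to $F_\kappa q$; and pushing $\partial$ through $F_\kappa$ leaves a residual $F_\kappa\partial$ that lands on $q$ and produces $q'\in H^{-2}$. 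Each individual term in the expansion is genuinely bad.

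What the paper does---and what your sketch is missing---is a telescoping re-pairing \emph{across} the $(\ell,m)$ indices: the $\partial F_\kappa q$ part of the $(\ell,m)$ summand is combined with the $F_\kappa q\,\partial$ part of the $(\ell-1,m+1)$ summand to produce the operator commutator $[R_0\partial,F_\kappa] = R_0F_\kappa' + R_0F_\kappa''\partial R_0 + 2R_0F_\kappa'\partial^2 R_0$. This commutator carries an \emph{extra} $R_0$, which is exactly what is needed to sandwich every $q$. The leftover boundary terms (one at $\ell=0$, one at $m=0$) are then handled separately, and the $(0,0)$ case is where $f'\in L^2$ is used. This telescoping is the heart of the proof and the real reason $F_\kappa\in W^{2,\infty}$ enters; your last paragraph gestures at the right ingredients but does not supply the cancellation that makes them usable.
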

\begin{proof}
	We estimate the integral by duality and maneuver the derivative onto $F_\kappa$ and the test function.  For $f\in H^1$ we have
	\begin{align*}
	&\int f(x) \int G(x,y;k,q) (F_\kappa q)'(y) G(y,x;k,q) \dy \\
	&= \sum_{\ell,m=0}^\infty (-1)^{\ell+m} \tr\{ f(R_0q)^\ell R_0 [\partial,F_\kappa q] R_0 (qR_0)^m \} \\
	&= \sum_{\ell,m=0}^\infty(-1)^{\ell+m} \big( \tr\{ f(R_0q)^\ell R_0 \partial F_\kappa (qR_0)^{m+1} \} - \tr\{ f(R_0q)^{\ell+1} F_\kappa R_0 \partial (qR_0)^{m} \} \big) ,
	\end{align*}
	where $R_0 = R_0(k)$.  The first factor of $fR_0$ prevents us from combining the two terms in the summand to create a commutator.  Instead, we pair the first term of the $(\ell,m)$ summand with the second term of the $(\ell-1,m+1)$ summand to create a commutator, and leave the first term of the $(0,m)$ summand and the second term of the $(\ell,0)$ summand:
	\begin{align}
	&\left| \int f(x) \int G(x,y;k,q) (F_\kappa q)'(y) G(y,x;k,q) \dy \right| 
	\nonumber \\
	&\leq \left| \sum_{n=1}^\infty (-1)^{n} \sum_{j=1}^n \tr\{ f(R_0q)^j [ R_0\partial, F_\kappa ] (qR_0)^{n+1-j} \} \right|
	\label{eq:diff flow 9} \\
	&\phantom{\leq{}}+ \left| \sum_{m=1}^\infty \tr\{ f R_0 \partial F_\kappa (qR_0)^{m+1} \} \right| + \left| \sum_{\ell=1}^\infty \tr\{ f(R_0q)^{\ell+1} F_\kappa R_0 \partial \} \right|
	\label{eq:diff flow 10} \\
	&\phantom{\leq{}}+ \left| \tr\{ f R_0 \partial F_\kappa qR_0 \} - \tr\{ fR_0qF_\kappa R_0 \partial \} \right| .
	\label{eq:diff flow 11}
	\end{align}
	
	For the first term~\eqref{eq:diff flow 9}, we write
	\begin{equation*}
	[R_{0} \partial, F_\kappa]
	= R_{0}[\partial, F_\kappa] + [R_{0}, F_\kappa] \partial
	= R_{0} F'_\kappa + R_{0} F''_\kappa \partial R_{0} + R_{0} 2 F'_\kappa \partial^{2} R_{0} .
	\end{equation*}
	Putting each factor of $\sqrt{R_0}q\sqrt{R_0}$ in $\I_2$ and pairing each $\partial$ with one copy of $\sqrt{R_0}$ in operator norm (cf.~\eqref{eq:hktilde alphadot 12}) we estimate
	\begin{equation*}
	\eqref{eq:diff flow 9}
	\lesssim \sum_{n=1}^\infty n k^{-2} \norm{f}_{L^\infty} \norm{ F_\kappa }_{W^{2,\infty}} \big( k^{-1/2} \norm{q}_{H^{-1}} \big)^{n+1}
	\lesssim \norm{f}_{H^1} A^2 \norm{ F_\kappa }_{W^{2,\infty}}
	\end{equation*}
	uniformly for $q\in B_A$ and $k$ sufficiently large.  The RHS converges to zero because $F_\kappa \to 0$ in $W^{2,\infty}$.
	
	Similarly, for the second term~\eqref{eq:diff flow 10} we have
	\begin{align*}
	\eqref{eq:diff flow 10}
	&\leq 2 \sum_{m=1}^\infty \norm{ f \sqrt{R_0} }_\op \norm{ \partial \sqrt{R_0} }_\op \norm{\sqrt{R_0}F_\kappa q \sqrt{R_0} }_{\I_2} \norm{ \sqrt{R_0} \, q \sqrt{R_0} }_{\I_2}^m \\
	&\lesssim \norm{f}_{H^1} A^2 \norm{F_\kappa}_{W^{2,\infty}}
	\end{align*}
	uniformly for $q\in B_A$ and $k$ sufficiently large, and again the RHS converges to zero.
	
	For the last term~\eqref{eq:diff flow 11}, we recombine the traces as a commutator to obtain
	\begin{equation*}
	\eqref{eq:diff flow 10}
	\lesssim \norm{ \sqrt{R_0} f' \sqrt{R_0} }_{\I_2} \norm{ \sqrt{R_0} F_\kappa q \sqrt{R_0} }_{\I_2}
	\lesssim \norm{f}_{H^1} A \norm{F_\kappa}_{W^{2,\infty}}
	\end{equation*}
	uniformly for $q\in B_A$ and $k$ sufficiently large.  Again, the RHS converges to zero by premise.
\end{proof}

To finish the proof of \cref{thm:diff flow conv 1}, we must show that the last term~\eqref{eq:diff flow 8} converges to zero in $H^{-2}$ as $\kappa\to\infty$.  As previously mentioned the convergence within the curly brackets occurs in $H^{-2}$, and now $q$ is concealed within another Green's function.  To overcome this, we use the commutativity relation trick~\eqref{eq:biham relation 2} used in \cref{thm:hktilde alphadot 1}.  Using the ODEs~\eqref{eq:g alg prop 3} for $g(\kappa,q+W)$ and $g(\kappa,W)$, applying the identity~\eqref{eq:g alg prop 2}, and regrouping terms, we have
\begin{align}
&\eqref{eq:diff flow 8}
\nonumber \\
&= - \tfrac{4\kappa^5}{(\kappa^2-k^2)^2} \left\{ \tfrac{W [ g(\kappa,q+W) - g(\kappa,W) ] }{g(k,q)} \right\}' 
\label{eq:diff flow 12}\\
&+ \tfrac{2\kappa^5}{(\kappa^2-k^2)^2} \tfrac{1}{g(k,q)^2} \int G(x,y) \big\{ (-W''' + 4k^2W') [g(\kappa,q+W) \!-\! g(\kappa,W)]
\label{eq:diff flow 13}\\
&\quad -3W'' \big[ g(\kappa,q+W) \!-\! g(\kappa,W) \big]' - 3W' \big[ g(\kappa,q+W) \!-\! g(\kappa,W) \big]''
\label{eq:diff flow 14}\\
&\quad - (W^2)' [g(\kappa,q+W) \!-\! g(\kappa,W)] - 4W^2\big[ g(\kappa,q+W) \!-\! g(\kappa,W) \big]^{\!\prime} \big\} G(y,x) 
\label{eq:diff flow 15}\\
&+ \tfrac{4\kappa^5}{(\kappa^2-k^2)^2} \tfrac{1}{g(k,q)^2} \int G(x,y) \big\{ W' q [g(\kappa,q+W) \!-\! g(\kappa,W)] \big\}(y) G(y,x) \dy
\label{eq:diff flow 16}\\
&+ \tfrac{1}{g(k,q)^2} \int G(x,y) \big\{ 2Vq' - \tfrac{4\kappa^5}{(\kappa^2-k^2)^2} W \big[ qg(\kappa,W) \big]' \big\}(y) G(y,x) \dy ,
\label{eq:diff flow 17}
\end{align}
where $G(x,y) = G(x,y;k,q)$.  Note that in~\eqref{eq:diff flow 17} we have isolated the term which cancels $2Vq'$.  We will show that each of the terms~\eqref{eq:diff flow 12}--\eqref{eq:diff flow 17} converge to zero.

The first term~\eqref{eq:diff flow 12} is easily estimated using the estimates~\eqref{eq:hskappa ests}:
\begin{equation*}
\norm{ \eqref{eq:diff flow 12} }_{H^{-2}}
\lesssim \norm{W}_{W^{1,\infty}} \norm{ \tfrac{1}{g(k,q)} }_{H^1} \tfrac{4\kappa^5}{(\kappa^2-k^2)^2} \norm{ g(\kappa,q+W) - g(\kappa,W) }_{H^{-1}} .
\end{equation*}
The RHS converges to zero as $\kappa\to\infty$ by \cref{thm:g conv 2}.  

For the contribution from~\eqref{eq:diff flow 13}, we first use \cref{thm:g bddness} to put the curly bracketed terms in $H^{-1}$:
\begin{equation*}
\norm{ \eqref{eq:diff flow 13} }_{H^{-2}}
\lesssim \norm{ W'}_{W^{3,\infty}} \tfrac{2\kappa^5}{(\kappa^2-k^2)^2} \norm{ g(\kappa,q+W) - g(\kappa,W) }_{H^{-1}} .
\end{equation*}
Again, the RHS converges to zero by \cref{thm:g conv 2}.  It was exactly to estimate this term that we required that $V_\kappa$ be in $W^{4,\infty}$ in \cref{thm:hyp}.

For the contributions~\eqref{eq:diff flow 14} and~\eqref{eq:diff flow 15} we again use \cref{thm:g bddness} to obtain
\begin{align*}
\norm{ \eqref{eq:diff flow 14} }_{H^{-2}}
&\lesssim \norm{ W'}_{W^{2,\infty}} \tfrac{2\kappa^5}{(\kappa^2-k^2)^2} \norm{ g(\kappa,q+W) - g(\kappa,W) }_{H^{1}} , \\
\norm{ \eqref{eq:diff flow 15} }_{H^{-2}}
&\lesssim \norm{ W}_{W^{2,\infty}}^2 \tfrac{2\kappa^5}{(\kappa^2-k^2)^2} \norm{ g(\kappa,q+W) - g(\kappa,W) }_{H^{1}} .
\end{align*}
These still converge to zero as $\kappa\to\infty$ by the equicontinuity of $Q$:
\begin{lem}
	\label{thm:g conv 4}
	We have
	\begin{equation*}
	\kappa [ g(\kappa,q+W) - g(\kappa,W) ] \to 0
	\quad\tx{in }H^{1}\tx{ as }\kappa\to\infty
	\end{equation*}
	uniformly for $q\in Q^*_T(\kappa)$.
\end{lem}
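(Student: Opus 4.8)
The plan is to deduce this directly from the diagonal Green's function estimate~\eqref{eq:diffeo prop}, combined with the decay of $\norm{q}_{H^{-1}_\kappa}$ on $Q^*_T(\kappa)$ that already appeared inside the proof of \cref{thm:g conv 2}.

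First I would record that, by admissibility and the convergence~\eqref{eq:diff flow 19}, the background waves $W(t)$ are bounded in $W^{2,\infty}$---and in particular in $L^\infty$---uniformly for $|t|\leq T$, $W(0)\in V^*_T(\kappa)$, $\varkappa\geq\kappa$, and $\kappa$ large. Since the difference flows $q(t)$ lie in a fixed ball $B_A$, this supplies a single constant $\kappa_0$ for which \cref{thm:diffeo prop} applies with the background wave $W$ in place of $V$ on $B_A$. The estimate~\eqref{eq:diffeo prop} then gives
\begin{equation*}
\norm{ g(\kappa,q+W) - g(\kappa,W) }_{H^1_\kappa} \lesssim \kappa^{-1} \norm{q}_{H^{-1}_\kappa} ,
\end{equation*}
and since $1+\xi^2 \leq \xi^2 + 4\kappa^2$ for $\kappa\geq 1$ we have $\norm{f}_{H^1}\leq\norm{f}_{H^1_\kappa}$, whence
\begin{equation*}
\norm{ \kappa\big[ g(\kappa,q+W) - g(\kappa,W) \big] }_{H^1} \lesssim \norm{q}_{H^{-1}_\kappa}
\end{equation*}
uniformly for $q\in B_A$ and $\kappa\geq\kappa_0$.

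It then suffices to show that $\norm{q}_{H^{-1}_\kappa}\to 0$ as $\kappa\to\infty$ uniformly for $q\in Q^*_T(\kappa)$, and this is precisely the argument used in the proof of \cref{thm:g conv 2}: taking $\varkappa=\kappa/2$ and using the trivial inequality $\norm{q}^2_{H^{-1}_\kappa}\leq\norm{q}^2_{H^{-1}_{\kappa/2}}$, \cref{thm:hktilde alphadot 1} together with Gr\"onwall's inequality and the $\alpha$ estimate~\eqref{eq:alpha 3} yields $\norm{q}^2_{H^{-1}_\kappa}\lesssim\tfrac{\kappa}{2}\alpha(\tfrac{\kappa}{2},q(0))$, where $q(0)\in Q$ is the initial datum for $q$; the equicontinuity of $Q$, via \cref{thm:hskappa equicty} and~\eqref{eq:alpha 3}, forces the right-hand side to zero uniformly as $\kappa\to\infty$.

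There is no genuine obstacle in this lemma---all of the substance is inherited from \cref{thm:diffeo prop,thm:hktilde alphadot 1,thm:g conv 2}. The only point deserving a moment's attention is that the constant $\kappa_0$ in \cref{thm:diffeo prop} can be chosen independently of the particular background wave $W$, which holds as soon as $\norm{W}_{L^\infty}$ is bounded uniformly---exactly what~\eqref{eq:diff flow 19} and admissibility guarantee.
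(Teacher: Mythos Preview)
Your proof is correct and follows essentially the same route as the paper: apply the diagonal Green's function estimate~\eqref{eq:diffeo prop} to bound $\kappa\norm{g(\kappa,q+W)-g(\kappa,W)}_{H^1}$ by $\norm{q}_{H^{-1}_\kappa}$, then invoke the argument from the proof of \cref{thm:g conv 2} for the uniform decay of the latter. Your additional remarks on the uniformity of $\kappa_0$ in $W$ and the elementary inequality $\norm{f}_{H^1}\leq\norm{f}_{H^1_\kappa}$ are correct elaborations of details the paper leaves implicit.
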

\begin{proof}
	By the diagonal Green's function estimate~\eqref{eq:diffeo prop} we have
	\begin{equation*}
	\kappa \norm{ g(\kappa,q+W) - g(\kappa,W) }_{H^1}
	\lesssim \norm{q}_{H^{-1}_{\kappa}} .
	\end{equation*}
	In the proof of \cref{thm:g conv 2} we used Gr\"onwall's inequality and equicontinuity to show that the RHS converges to zero uniformly for $q\in Q^*_T(\kappa)$.
\end{proof}

For the term~\eqref{eq:diff flow 16} we use \cref{thm:g bddness} and the estimates~\eqref{eq:hskappa ests} to put $q$ in $H^{-1}$:
\begin{equation*}
\norm{ \eqref{eq:diff flow 16} }_{H^{-2}}
\lesssim \norm{W'}_{W^{1,\infty}} A \tfrac{2\kappa^5}{(\kappa^2-k^2)^2} \norm{ g(\kappa,q+W) - g(\kappa,W) }_{H^1} , 
\end{equation*}
and the RHS converges to zero by \cref{thm:g conv 4}.

Finally, for the last term~\eqref{eq:diff flow 17} we write
\begin{align*}
\norm{ \eqref{eq:diff flow 17} }_{H^{-2}}
&\lesssim \norm{ \int G(x,y) \big\{ 2Vq' - \tfrac{4\kappa^5}{(\kappa^2-k^2)^2} W g(\kappa,W) q' \big\}(y) G(y,x) \dy }_{H^{-1}} \\
&\phantom{\lesssim{}}+ \norm{ \int G(x,y) \big\{ \tfrac{4\kappa^5}{(\kappa^2-k^2)^2} W g'(\kappa,W) q \big\}(y) G(y,x) \dy }_{H^{-1}} .
\end{align*}
The first term converges to zero by \cref{thm:g conv 3,,thm:g conv 1} due to the leading term $\tfrac{1}{2\kappa}$ of $g(\kappa,W)$.  The second term converges to zero by \cref{thm:g bddness,,thm:g conv 1}.  This completes the estimate of~\eqref{eq:diff flow 8}, and hence concludes the proof of \cref{thm:diff flow conv 1}.

\section{Well-posedness}
\label{sec:conv 2}

We are now equipped to prove that KdV with potential~\eqref{eq:tkdv} is globally well-posed in $H^{-1}$.  We begin by constructing solutions as the limit of the $\hktilde$ flow solutions as $\kappa \to \infty$.
\begin{thm}
	\label{thm:existence}
	Fix $V$ admissible and $T>0$.  Given initial data $q(0) \in H^{-1}(\R)$, the corresponding solutions $q_\kappa(t)$ to the $\hktilde$ flows~\eqref{eq:hktilde flow} are Cauchy in $C_tH^{-1} ([-T,T]\times\R)$ as $\kappa\to\infty$.  
	
	We define the limit $q(t) := \lim_{\kappa\to\infty} q_{\kappa}(t)$ in $C_tH^{-1}$ $([-T,T]\times\R)$ to be the $H^{-1}$ solution of~\eqref{eq:tkdv} with initial data $q(0)$.
\end{thm}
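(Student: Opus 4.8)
The plan is to run the two–frequency–scale argument of \cite{Killip2019}. The second assertion is a definition, and since $C([-T,T];H^{-1}(\R))$ is complete it suffices to show the $\hktilde$ flows are Cauchy, i.e.\ $\sup_{|t|\le T}\norm{q_\kappa(t)-q_\varkappa(t)}_{H^{-1}}\to 0$ as $\kappa\to\infty$, uniformly for $\varkappa\ge\kappa$. The two ingredients are an equicontinuity bound (to control high frequencies), extracted from the \emph{a priori} estimate \cref{thm:hktilde alphadot 1}, and the $H^{-2}$-convergence of \cref{thm:diff flow conv 1} (to control low frequencies).

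First I would record the a priori bounds. Put $A:=\norm{q(0)}_{H^{-1}}$, so $q(0)\in B_A$. For $\kappa$ large and $1\le\varkappa\le\kappa/2$, Gr\"onwall applied to \cref{thm:hktilde alphadot 1} exactly as in the proof of \cref{thm:hktilde alphadot 2} (cf.~\eqref{eq:gwp}), combined with~\eqref{eq:alpha 3}, produces a constant $C=C(V,T)$ with
\begin{equation*}
\sup_{|t|\le T}\norm{q_\kappa(t)}^2_{H^{-1}_\varkappa}
\ \le\ 4e^{CT}\varkappa\,\alpha(\varkappa,q(0))
\ \le\ 4e^{CT}\int_\R\frac{|\widehat{q(0)}(\xi)|^2}{\xi^2+4\varkappa^2}\dxi .
\end{equation*}
Taking $\varkappa=1$ this bounds $\sup_{|t|\le T}\norm{q_\kappa(t)}_{H^{-1}}$ by a constant multiple of $A$ uniformly for $\kappa$ large, so every $q_\kappa(t)$ lies in a fixed ball $B_A$; taking $\varkappa=R/2$ (legitimate once $R$ is large and $R\le\kappa$) and discarding the low frequencies on the left it gives the equicontinuity estimate
\begin{equation*}
\int_{|\xi|>R}\frac{|\widehat{q_\kappa(t)}(\xi)|^2}{\xi^2+1}\dxi
\ \le\ 2\norm{q_\kappa(t)}^2_{H^{-1}_{R/2}}
\ \lesssim\ e^{CT}\int_\R\frac{|\widehat{q(0)}(\xi)|^2}{\xi^2+R^2}\dxi ,
\end{equation*}
uniformly for $|t|\le T$ and $R\le\kappa$, with right-hand side tending to $0$ as $R\to\infty$ since $q(0)\in H^{-1}$ (cf.~\cref{thm:hskappa equicty}).

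Next I would invoke \cref{thm:diff flow conv 1} with $Q=\{q(0)\}$, which is bounded and equicontinuous. By the construction in \cref{sec:conv 1} the difference flow~\eqref{eq:diff flow q} connects $q_\kappa(t)$ to $q_\varkappa(t)$, so \cref{thm:diff flow conv 1} gives $\norm{1/g(k,q_\varkappa(t))-1/g(k,q_\kappa(t))}_{C_tH^{-2}}\to0$ as $\kappa\to\infty$, uniformly for $\varkappa\ge\kappa$. To turn this into convergence of $q_\kappa(t)$ itself in $C_tH^{-2}$ I would argue as in \cite{Killip2019}: the functions $w_\kappa(t):=\tfrac1{2g(k,q_\kappa(t))}-k$ are bounded in $H^1$ uniformly in $\kappa$ (for the fixed $k$, by \cref{thm:diffeo prop 2} and~\eqref{eq:diffeo prop}) and, by the above, Cauchy in $H^{-2}$, hence Cauchy in $H^\sigma$ for every $\sigma<1$ by interpolation; plugging a choice $\sigma\in(\tfrac12,1)$ into the reconstruction formula of \cite{Killip2020}*{Lem.~2.14}, which in terms of $w$ reads $q=\big[-\tfrac{w'}{2(w+k)}\big]'+\tfrac{(w')^2}{4(w+k)^2}+w^2+2kw$, and using that $w_\kappa+k=\tfrac1{2g(k,q_\kappa(t))}$ is bounded below by a positive constant, one obtains $q_\kappa(t)=2kw_\kappa(t)+(\text{terms Cauchy in }H^{\sigma-1})$, so $q_\kappa(t)$ is Cauchy in $C_tH^{-2}$.

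Finally I would splice the two estimates. Given $\epsilon>0$, use the second display above to choose $R$ large enough that $\int_{|\xi|>R}\tfrac{|\widehat{q_\kappa(t)}(\xi)|^2}{\xi^2+1}\dxi<\epsilon/8$ for all $\kappa\ge R$ and $|t|\le T$, and then $\kappa_0\ge R$ large enough that $(R^2+1)\sup_{|t|\le T}\norm{q_\kappa(t)-q_\varkappa(t)}^2_{H^{-2}}<\epsilon/2$ for $\varkappa\ge\kappa\ge\kappa_0$ (possible by the previous paragraph). Splitting $\norm{q_\kappa(t)-q_\varkappa(t)}^2_{H^{-1}}$ at frequency $R$ — the low-frequency part being $\le(R^2+1)\norm{q_\kappa(t)-q_\varkappa(t)}^2_{H^{-2}}$ and the high-frequency part being $\le2\int_{|\xi|>R}\tfrac{|\widehat{q_\kappa(t)}(\xi)|^2}{\xi^2+1}\dxi+2\int_{|\xi|>R}\tfrac{|\widehat{q_\varkappa(t)}(\xi)|^2}{\xi^2+1}\dxi$ — one gets $\sup_{|t|\le T}\norm{q_\kappa(t)-q_\varkappa(t)}_{H^{-1}}^2<\epsilon$ for $\varkappa\ge\kappa\ge\kappa_0$, which is the desired Cauchyness. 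The genuinely hard analysis is already contained in \cref{thm:hktilde alphadot 1,,thm:diff flow conv 1}; the one delicate point here is the passage in the third paragraph from convergence of $1/g(k,\cdot)$ back to convergence of $q_\kappa$ in the weaker $H^{-2}$ topology, which is why I keep the uniform $H^1$-bound on $w_\kappa$ available for the interpolation.
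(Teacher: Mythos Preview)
Your overall strategy—combining high-frequency control from \cref{thm:hktilde alphadot 1} with the $H^{-2}$ convergence of \cref{thm:diff flow conv 1}—is the right one, and your Steps~1 and~2 are fine. The gap is in Step~3, where you pass from Cauchyness of $w_\kappa := \tfrac{1}{2g(k,q_\kappa)} - k$ back to Cauchyness of $q_\kappa$ in $H^{-2}$ via the reconstruction formula.

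You assert that the non-leading terms in the formula are ``Cauchy in $H^{\sigma-1}$''. The derivative term $[-w'_\kappa/(2(w_\kappa+k))]'$ is only Cauchy in $H^{\sigma-2}$, not $H^{\sigma-1}$; this is harmless since $\sigma-2>-2$. The real problem is the quadratic term $f_\kappa^2$ with $f_\kappa = w'_\kappa/(2(w_\kappa+k))$. From $w_\kappa$ Cauchy in $H^\sigma$ and bounded in $H^1$ you obtain $f_\kappa$ Cauchy in $H^{\sigma-1}$ and bounded in $L^2$, but there is no product estimate placing $(f_\kappa+f_\varkappa)(f_\kappa-f_\varkappa) \in L^2 \cdot H^{\sigma-1}$ into any Sobolev space: the sum of the regularity exponents is negative, so the standard multiplication rules do not apply. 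Without additional input you cannot conclude $f_\kappa^2$ is Cauchy in any $H^s$.

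The cleanest repair is to do the frequency splicing at the level of $w_\kappa$ rather than $q_\kappa$, which is essentially the paper's route in \cref{thm:diff flow conv 2}. The map $q\mapsto w$ is Lipschitz from $H^{-1}$ to $H^1$ and translation-covariant by~\eqref{eq:g trans prop}, so the equicontinuity of $\{q_\kappa(t)\}$ in $H^{-1}$ that you already established in Step~1 transfers: $\{w_\kappa(t)\}$ is equicontinuous in $H^1$. Combining this with the $H^{-2}$ Cauchyness of $w_\kappa$ from Step~2 and splitting at frequency $R$ gives $w_\kappa$ Cauchy in $C_tH^1$; then \cref{thm:diffeo prop 2} (the inverse diffeomorphism) yields $q_\kappa$ Cauchy in $C_tH^{-1}$ directly. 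No reconstruction formula is needed, and your final splicing paragraph becomes unnecessary. The paper carries out this same combination for the difference flow $q(t)$ (so that the translate estimate and the $H^{-2}$ estimate refer to the same object) and then invokes the diffeomorphism, but the mechanism is identical to what you set up.
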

\begin{proof}
	In the following all spacetime norms will be taken over the slab $[-T,T]\times\R$.  \Cref{thm:hktilde alphadot 2} guarantees that there exists a constant $\kappa_0$ so that the $\hktilde$ flows $q_\kappa(t)$ are bounded in $H^{-1}$ uniformly for $|t|\leq T$ and $\kappa\geq \kappa_0$.
	
	We want to show that the difference $q_\kappa - q_\varkappa$ for $\varkappa \geq \kappa$ converges to zero as $\kappa\to\infty$.  As the $H_\kappa$ and $H_\varkappa$ flows commute (cf.~\eqref{eq:hk flow 2}), we may write the $H_\varkappa$ flow of $u$ by time $t$ as
	\begin{equation*}
	e^{tJ\nabla H_\varkappa} u = e^{tJ\nabla (H_\varkappa - H_\kappa)} e^{tJ\nabla H_\kappa} u .
	\end{equation*}
	We apply this identity to $u = q+V$ and $u=V$.  This allows us to write 
	\begin{equation*}
	q_\varkappa(t) = \wt{\Phi}_{\varkappa,\kappa,W}(t) q_\kappa(t) ,
	\end{equation*}
	where $\wt{\Phi}_{\varkappa,\kappa,W}(t)$ denotes the flow of~\eqref{eq:diff flow q} by time $t$ for the background wave obeying~\eqref{eq:diff flow V} with initial data $W(0) = V_\kappa(t)$.  We estimate
	\begin{equation}
	\norm{ q_\kappa - q_\varkappa }_{C_tH^{-1}}
	\leq \sup_{\substack{ q\in Q^*_{T}(\kappa) \\ W(0)\in V^*_T(\kappa)}}\ \sup_{\varkappa\geq \kappa}\ \snorm{ \wt{\Phi}_{\varkappa,\kappa,W}(t) q - q }_{C_tH^{-1}} ,
	\label{eq:diff flow 18}
	\end{equation}
	for the sets
	\begin{equation}
	\begin{aligned}
	Q^*_{T}(\kappa) &:= \{ \wt{\Phi}_\kappa(t) q : q\in Q,\ |t| \leq T \} , \\
	V^*_T(\kappa) &:= \{e^{tJ\nabla H_\varkappa}V(0) : |t|\leq T,\ \varkappa\geq\kappa\}
	\end{aligned}
	\label{eq:flowout}
	\end{equation}
	with $Q = \{q(0)\}$.  As $Q\subset H^{-1}$ is trivially bounded and equicontinuous, then the following more general fact will conclude the proof.  We allow $Q$ to be an arbitrary bounded and equicontinuous set so that we may reuse this fact in \cref{thm:wellposed 1}.
\end{proof}

\begin{prop}
	\label{thm:diff flow conv 2}
	Fix $V$ admissible, $T>0$, and a bounded and equicontinuous set $Q\subset H^{-1}$ of initial data.  Then the solutions $\wt{\Phi}_{\varkappa,\kappa,W}(t)q$ of the difference flow~\eqref{eq:diff flow q} with background wave $W(t)$ and initial data $q$ obey
	\begin{equation}
	\lim_{\kappa\to\infty}\, \sup_{\substack{ q\in Q^*_{T}(\kappa) \\ W(0)\in V^*_T(\kappa)}}\ \sup_{\varkappa\geq \kappa}\ \snorm{ \wt{\Phi}_{\varkappa,\kappa,W}(t) q - q }_{C_tH^{-1}([-T,T]\times\R)}  = 0 ,
	\label{eq:wellposed 2}
	\end{equation}
	where $Q^*_T(\kappa)$ and $V^*_T(\kappa)$ are defined in~\eqref{eq:flowout}.
\end{prop}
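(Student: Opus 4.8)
The plan is to deduce~\eqref{eq:wellposed 2} from the low-regularity convergence of \cref{thm:diff flow conv 1} together with an $H^{-1}$-equicontinuity bound, precisely the mechanism used in the $V\equiv 0$ case~\cite{Killip2019}. Abbreviate $r(t) := \wt{\Phi}_{\varkappa,\kappa,W}(t)q - q$; by \cref{thm:hktilde alphadot 2} (and the a priori bound below) these are bounded in $C_tH^{-1}([-T,T]\times\R)$ uniformly for $\kappa$ large, so all the $r(t)$ lie in a fixed ball $B_A$. For an auxiliary scale $\nu\geq 1$ I would split, using Fourier truncation $P_{\leq 2\nu}$,
\[
\norm{ r(t) }_{H^{-1}}^2 \leq (1+4\nu^2)^3 \norm{ r(t) }_{H^{-4}}^2 + 2\norm{ r(t) }_{H^{-1}_\nu}^2 ,
\]
since on $|\xi|>2\nu$ one has $(\xi^2+1)^{-1}\leq 2(\xi^2+4\nu^2)^{-1}$, while on $|\xi|\leq 2\nu$ the $H^{-1}$ and $H^{-4}$ norms are comparable up to the displayed factor. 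The first term will be controlled by \cref{thm:diff flow conv 1} (for each fixed $\nu$, as $\kappa\to\infty$) and the second by equicontinuity (as $\nu\to\infty$, uniformly in $\kappa$); the conclusion then follows by taking $\limsup_{\kappa\to\infty}$ for fixed $\nu$ and then $\nu\to\infty$.

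For the first term I would upgrade \cref{thm:diff flow conv 1}, which gives $\norm{\tfrac{1}{g(k,\wt{\Phi}_{\varkappa,\kappa,W}(t)q)} - \tfrac{1}{g(k,q)}}_{C_tH^{-2}} \to 0$, to convergence of $r(t)$ itself. By \cref{thm:diffeo prop 2} the map $q\mapsto \tfrac{1}{g(k,q)}-2k$ is a real-analytic diffeomorphism of $B_A$ into $H^1_k(\R)$ whose linearization at $q\equiv 0$ is the Fourier multiplier $4kR_0(2k)$, elliptic of order $-2$; the modulus-of-continuity bounds from the proof of \cref{thm:diffeo prop 2} show that $d[\tfrac{1}{g(k,\cdot)}]|_q - 4kR_0(2k)$ is a lower-order, small-norm operator uniformly for $q\in B_A$, so the inverse map loses exactly two derivatives and is Lipschitz from $H^{-2}$ into $H^{-4}$ on the relevant set. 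Hence
\[
\lim_{\kappa\to\infty}\, \sup_{\substack{q\in Q^*_T(\kappa)\\ W(0)\in V^*_T(\kappa)}}\, \sup_{\varkappa\geq\kappa}\, \norm{ r }_{C_tH^{-4}([-T,T]\times\R)} = 0 ,
\]
which controls the first term for each fixed $\nu$.

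For the second term I would show that the families $\{q : q\in Q^*_T(\kappa)\}$ and $\{\wt{\Phi}_{\varkappa,\kappa,W}(t)q : q\in Q^*_T(\kappa),\ W(0)\in V^*_T(\kappa),\ \varkappa\geq\kappa,\ |t|\leq T\}$ are equicontinuous in $H^{-1}(\R)$ uniformly for $\kappa$ large, whence so is the family of differences $r(t)$. For the first family this is the Gr\"onwall argument already carried out in the proof of \cref{thm:g conv 2}: \cref{thm:hktilde alphadot 1} and \eqref{eq:alpha 3} give $\norm{q(t)}_{H^{-1}_\nu}^2 \leq 4e^{CT}\norm{q(0)}_{H^{-1}_\nu}^2$ whenever $\kappa\geq 2\nu$, and $\sup_{q(0)\in Q}\norm{q(0)}_{H^{-1}_\nu}^2\to 0$ as $\nu\to\infty$ by equicontinuity of $Q$ and \cref{thm:hskappa equicty}. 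For the second family I would differentiate $\alpha(\nu,\cdot)$ along the difference flow~\eqref{eq:diff flow q}: writing $\tfrac{d}{dt}\alpha(\nu,\cdot) = \{\alpha(\nu,\cdot),\hvktilde\} - \{\alpha(\nu,\cdot),\hktilde\}$ for the common background $W(t)$, the a priori estimate of \cref{thm:hktilde alphadot 1} — whose proof only invokes the uniform $W^{4,\infty}$ bounds on the background, the convergence \cref{thm:g conv 1} (which also holds for $W(t)$ by~\eqref{eq:diff flow 19}), and the commutativity relation trick — bounds each bracket by $C\alpha(\nu,\cdot)$, since the hypotheses $\kappa\geq 2\nu$ and $\varkappa\geq 2\nu$ both hold once $\kappa\geq 2\nu$ (recall $\varkappa\geq\kappa$). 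Gr\"onwall, \eqref{eq:alpha 3}, and the bound $\norm{q}_{H^{-1}_\nu}^2\lesssim e^{CT}\norm{q_0}_{H^{-1}_\nu}^2$ from the first family then yield $\norm{\wt{\Phi}_{\varkappa,\kappa,W}(t)q}_{H^{-1}_\nu}^2\lesssim e^{C'T}\norm{q_0}_{H^{-1}_\nu}^2\to 0$ as $\nu\to\infty$, uniformly for $\kappa\geq 2\nu$ and the remaining parameters.

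Combining, for $\kappa\geq 2\nu$ one obtains $\sup\norm{r}_{C_tH^{-1}}^2 \leq (1+4\nu^2)^3\sup\norm{r}_{C_tH^{-4}}^2 + \varepsilon(\nu)$ with $\varepsilon(\nu):=2\sup\norm{r}_{C_tH^{-1}_\nu}^2\to 0$ as $\nu\to\infty$ by the previous paragraph; sending $\kappa\to\infty$ annihilates the first term (second paragraph) and then $\nu\to\infty$ annihilates $\varepsilon(\nu)$, proving~\eqref{eq:wellposed 2}. The main obstacle is the equicontinuity of the second family: one must check that \cref{thm:hktilde alphadot 1} — stated for the $\hktilde$ flow with background $V_\kappa$ — is genuinely robust under replacing that background by the $W_{\varkappa,\kappa}(t)$ appearing in the difference flow, which is where~\eqref{eq:diff flow 19} and the uniform regularity of the $W$ enter. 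The transfer of \cref{thm:diff flow conv 1} through the diffeomorphism $\tfrac{1}{g(k,\cdot)}$ in the low-frequency step is routine but is the other place requiring care.
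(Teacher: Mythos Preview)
Your overall architecture --- high/low frequency split combined with equicontinuity obtained by propagating $\alpha(\nu,\cdot)$ through the difference flow via \cref{thm:hktilde alphadot 1} --- is exactly the mechanism the paper uses, and you correctly identify the one point that requires checking: that the Gr\"onwall estimate of \cref{thm:hktilde alphadot 1} applies to the difference flow with background $W$ rather than $V_\kappa$. (The paper uses this too, writing $K\alpha(K,q(t))\leq e^{2CT}K\alpha(K,q)$ without further comment; your flag is well placed.)

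The gap is in your low-frequency step. You assert that the inverse of $q\mapsto \tfrac{1}{g(k,q)}$ is Lipschitz from $H^{-2}$ to $H^{-4}$ on bounded sets of $H^{-1}$. This does not follow from the modulus-of-continuity bounds in \cref{thm:diffeo prop 2}. Writing $\Psi(q)=\tfrac{1}{g(k,q)}-2k$ and $r=q_1-q_2$, the identity $4kR_0(2k)r=\Psi(q_1)-\Psi(q_2)-E$ with $E=\int_0^1[d\Psi|_{q_s}-d\Psi|_0](r)\,ds$ gives, after applying $(-\partial^2+4k^2)/4k$,
\[
\norm{r}_{H^{-4}}\lesssim_k \norm{\Psi(q_1)-\Psi(q_2)}_{H^{-2}}+\norm{E}_{H^{-2}}.
\]
But the only available bound on $E$ is the $H^1$ bound $\norm{E}_{H^1}\lesssim k^{-3/2}A\norm{r}_{H^{-1}}$ from the proof of \cref{thm:diffeo prop 2}; the nonlinear part of $d\Psi|_q$ has leading contribution $\sim qr$ (cf.~\eqref{eq:quad op id}), which for $q\in H^{-1}$ is not controlled in $H^{-2}$ by any negative Sobolev norm of $r$ alone. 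So $\norm{E}_{H^{-2}}$ is bounded only by a fixed constant depending on $A$ and $k$, not by $\norm{r}_{H^{-4}}$ with a small coefficient --- there is nothing to absorb.

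The paper avoids this by performing the frequency split in the $\tfrac{1}{g(k,\cdot)}$ variable rather than in $q$. One shows directly that $\tfrac{1}{g(k,q(t))}-\tfrac{1}{g(k,q)}\to 0$ in $H^1$: the high-frequency tails of $\tfrac{1}{g(k,q(t))}$ in $H^1$ are controlled by the translates of $q(t)$ in $H^{-1}$ (via the $H^{-1}\to H^1$ Lipschitz bound of \cref{thm:diffeo prop 2} and the translation identity $g(x;k,q(\cdot+h))=g(x+h;k,q)$), and these translates are small by exactly your equicontinuity argument; the low frequencies are then handled by \cref{thm:diff flow conv 1}. Once one has $H^1$ convergence of $\tfrac{1}{g(k,\cdot)}$, the diffeomorphism property applies at its natural $H^1\leftrightarrow H^{-1}$ level and yields~\eqref{eq:wellposed 2} directly. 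Your equicontinuity step survives unchanged; only the location of the frequency split needs to move.
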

\begin{proof}
	For $q\in Q_T^*(\kappa)$ and $W(0)\in V^*_T(\kappa)$, let $q(t)$ denote the solution to the difference flow~\eqref{eq:diff flow q} with initial data $q$ and background wave $W(t)$.  As was introduced in~\cite{Killip2019}, the change variables $1/2g(k,q)$ in place of $q$ is convenient in witnessing this convergence.  Indeed, it suffices to show that under difference flow~\eqref{eq:diff flow q} we have
	\begin{equation*}
	\lim_{\kappa\to\infty}\, \sup_{\substack{ q\in Q^*_{T}(\kappa) \\ W(0)\in V^*_T(\kappa)}}\, \norm{ \frac{1}{2g(k,q(t))} - \frac{1}{2g(k,q)} }_{C_tH^{1}} = 0
	\end{equation*}
	for $k>0$ fixed sufficiently large, because then an application of the diffeomorphism property (\cref{thm:diffeo prop 2}) shows that this implies~\eqref{eq:wellposed 2}.

	Fix $\eps>0$.  We aim to show that
	\begin{equation}
	\norm{ \frac{1}{2g(k,q(t))} - \frac{1}{2g(k,q)} }_{C_tH^{1}} \leq \eps
	\label{eq:diff flow 0}
	\end{equation}
	for all $\kappa$ sufficiently large, uniformly for $\varkappa\geq \kappa$, $q\in Q^*_T(\kappa)$, and $W(0)\in V^*_T(\kappa)$.  In \cref{thm:diff flow conv 1} we already saw this convergence in $H^{-2}$, and now we will upgrade this convergence to $H^1$.  We do not rely on equicontinuity as in~\cite{Killip2019} because the presence of the background wave $V$ breaks the conservation of $\alpha$.  Instead, we will choose the parameters $\kappa$ and $\varkappa$ dependently.
	
	For $h\in\R$, we estimate the translation of $q$ by $h$ in $H^{-1}$ by truncating in Fourier variables at a large radius $K$:
	\begin{equation}
	\begin{aligned}
	&\norm{ q(t,x+h) - q(t,x) }^2_{H^{-1}_x}
	\lesssim \int_{\R} |e^{i h\xi}-1|^2 \frac{|\hat{q}(t,\xi)|^2}{\xi^2+4} \dxi \\
	&\leq  K^2h^2 \int_{|\xi|\leq K} \frac{|\hat{q}(t,\xi)|^2}{\xi^2+4} \dxi + \int_{|\xi|\geq K} \frac{|\hat{q}(t,\xi)|^2}{\xi^2+4} \dxi \\
	&\lesssim  K^2h^2 \norm{ q(t) }_{H^{-1}}^2 + \int_{\R} \frac{|\hat{q}(t,\xi)|^2}{\xi^2+4 K^2} \dxi
	\lesssim  K^2h^2 A^2 +  K\alpha( K,q(t)) ,
	\end{aligned}
	\label{eq:diff flow 1}
	\end{equation}
	where in the last inequality we used the $\alpha$ estimate~\eqref{eq:alpha 3}.  If we let $C$ denote the constant from \cref{thm:hktilde alphadot 1} then Gr\"onwall's inequality yields
	\begin{equation*}
	K\alpha( K,q(t)) \leq e^{2CT} K\alpha(K,q) \leq e^{3CT} K\alpha( K,q(0))
	\end{equation*}
	for all $|t|\leq T$ and $\kappa \geq 2K$, where $q(0) \in Q$ is the initial data of the $\hktilde$ flow $q\in Q_T^*(\kappa)$.  As the set $Q$ is equicontinuous, then $K\alpha( K,q(0)) \to 0$ as $K\to\infty$ uniformly for $q(0)\in Q$ by \cref{thm:hskappa equicty} and the $\alpha$ estimate~\eqref{eq:alpha 3}.  Therefore, given $\eta = \eta(\eps)>0$ small (to be chosen later) there exists $K$ sufficiently large so that
	\begin{equation*}
	 K \alpha( K,q(t)) \lesssim \eta
	\quad\tx{uniformly for }|t|\leq T,\ q\in Q^*_T(\kappa),\tx{ and }\kappa\geq 2 K.
	\end{equation*}
	Combining this with the estimate~\eqref{eq:diff flow 1} and optimizing in $h$, there is a value $h_0 \sim \eta^{1/2} K^{-1}$ such that
	\begin{equation}
	\norm{q(t,x+h) - q(t,x)}^2_{C_tH^{-1}} \lesssim \eta
	\quad\tx{for }|h|\leq h_0 .
	\label{eq:diff flow 2}
	\end{equation}
	
	We now will turn this control over the translates of $q$ into control of the Fourier tails of $1/g(k,q)$.  For $r\in\R$ we have
	\begin{equation*}
	\int |e^{i\xi h}-1|^2 re^{-2r|h|} \,dh
	= \frac{2\xi^2}{\xi^2+4r^2}
	\geq \begin{cases}
	\tfrac{2}{5} & |\xi|\geq r , \\ 0 & |\xi|<r .
	\end{cases}
	\end{equation*}
	Writing $\ft$ for the Fourier transform, this yields
	\begin{align*}
	 &\sup_{|t|\leq T}\, \int_{|\xi|\geq r} \left| \ft \left[ \frac{1}{2g(k,q(t))} \right](\xi) \right|^2 (\xi^2+1)\dxi \\
	 &\lesssim \int_{\R} \norm{ \frac{1}{2g(x+h;k,q(t))} - \frac{1}{2g(x;k,q(t))} }_{C_tH^1_x}^2 re^{-2r|h|} \,dh \\
	 &= \int_{\R} \norm{ \frac{1}{2g(x;k,q(t,\cdot+h))} - \frac{1}{2g(x;k,q(t,\cdot))} }_{C_tH^1_x}^2 re^{-2r|h|} \,dh \\
	 &\lesssim \int_{\R} \norm{ q(t,x+h) - q(t,x) }_{C_tH^{-1}_x}^2 re^{-2r|h|} \,dh
	 \intertext{by the diffeomorphism property (\cref{thm:diffeo prop 2}).  Splitting this integral into $|h|\leq h_0$ and $|h|\geq h_0$ and using~\eqref{eq:diff flow 2}, we obtain the upper bound}
	 &\lesssim \eta h_0 r + e^{-2rh_0} .
	 \intertext{Optimizing in $r$, we pick $r = r_0 := \tfrac{1}{2h_0} \log \tfrac{2}{\eta}$ to arrive at}
	 &\lesssim \eta\big( 1 + \log\tfrac{2}{\eta} \big) .
	 \end{align*}
	 
	 Finally, we employ this uniform control over the Fourier tails of $1/q(k,q)$ to upgrade our $H^{-2}$ convergence to $H^{1}$.  Separating $|\xi|\leq r_0$ and $|\xi|\geq r_0$, we have
	 \begin{equation}
	 \begin{aligned}
	 \norm{ \frac{1}{2g(k,q(t))} - \frac{1}{2g(k,q(0))} }_{H^1}^2
	 &\lesssim (r_0^2+1)^6 \norm{ \frac{1}{2g(k,q(t))} - \frac{1}{2g(k,q(0))} }_{H^{-2}}^2 \\
	 &\phantom{\lesssim{}}+ \sup_{|t|\leq T}\, \norm{ \frac{1}{2g(k,q(t))} }_{H^{1}(|\xi|\geq r_0)}^2 .
	 \end{aligned}
	 \label{eq:diff flow 3}
	 \end{equation}
	 We just saw that the second term of RHS\eqref{eq:diff flow 3} is $\lesssim \eta ( 1 + \log\tfrac{2}{\eta} )$, and picking $\eta = \eta(\eps)>0$ sufficiently small we can make this upper bound $\leq \tfrac{1}{2}\eps$.  With all other parameters determined, we then use \cref{thm:diff flow conv 1} to make the first term of RHS\eqref{eq:diff flow 3} less than $\tfrac{1}{2}\eps$ for all $\kappa$ sufficiently large.  This demonstrates~\eqref{eq:diff flow 0}, and hence concludes \cref{thm:diff flow conv 2} and \cref{thm:existence}.
\end{proof}

Applying the previous result to a different set $Q$, we also obtain uniform control over the limits $q(t)$ as we vary the initial data:
\begin{thm}
	\label{thm:wellposed 1}
	Fix $V$ admissible and $T>0$.  Given a convergent sequence $q_n(0) \in H^{-1}(\R)$ of initial data, the corresponding solutions $q_n(t)$ of KdV with potential~\eqref{eq:tkdv} constructed in \cref{thm:existence} are Cauchy in $C_tH^{-1}([-T,T]\times\R)$ as $n\to\infty$.
\end{thm}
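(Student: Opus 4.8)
The plan is to combine two facts: that the $\hktilde$ flows converge to the limiting solutions as $\kappa\to\infty$ \emph{uniformly} over a bounded equicontinuous family of initial data, and that for each fixed (large) $\kappa$ the $\hktilde$ flow depends continuously on its initial data. Write $q_{n,\kappa}(t):=\wt{\Phi}_\kappa(t)q_n(0)$ for the $\hktilde$ flow of the $n$-th datum, so that by \cref{thm:existence} we have $q_n(t)=\lim_{\kappa\to\infty}q_{n,\kappa}(t)$ in $C_tH^{-1}([-T,T]\times\R)$. Let $q_\infty(0):=\lim_n q_n(0)$ and put $Q:=\{q_n(0):n\in\N\}\cup\{q_\infty(0)\}$. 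A convergent sequence together with its limit is compact, hence bounded and equicontinuous in $H^{-1}(\R)$ (as recorded in \cref{sec:prelim}); in particular $Q\subset B_A$ for some $A>0$.

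\textbf{Step 1 (convergence $q_{n,\kappa}\to q_n$, uniform in $n$).} This is precisely \cref{thm:diff flow conv 2} applied to the present set $Q$ --- this is why that proposition was stated for an arbitrary bounded equicontinuous $Q$ rather than a single point. As in the proof of \cref{thm:existence}, the commuting-flows identity $e^{tJ\nabla H_\varkappa}=e^{tJ\nabla(H_\varkappa-H_\kappa)}e^{tJ\nabla H_\kappa}$ applied to $u=q_n(0)+V(0)$ and $u=V(0)$ gives $q_{n,\varkappa}(t)=\wt{\Phi}_{\varkappa,\kappa,W}(t)\,q_{n,\kappa}(t)$ with $q_{n,\kappa}(t)\in Q^*_T(\kappa)$ and $W(0)=V_\kappa(t)\in V^*_T(\kappa)$. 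Hence \cref{thm:diff flow conv 2} yields
\[
\sup_{n}\ \snorm{ q_{n,\kappa}(t)-q_{n,\varkappa}(t) }_{C_tH^{-1}}\ \longrightarrow\ 0\qquad\text{as }\kappa\to\infty,\ \text{for }\varkappa\geq\kappa,
\]
and letting $\varkappa\to\infty$ (using \cref{thm:existence}) gives $\sup_n\snorm{q_{n,\kappa}(t)-q_n(t)}_{C_tH^{-1}}\to0$ as $\kappa\to\infty$.

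\textbf{Step 2 (continuity in data at fixed $\kappa$).} By \cref{thm:hktilde alphadot 2}, for $\kappa$ large the map $\wt{\Phi}_\kappa$ is continuous from $H^{-1}(\R)$ to $C_tH^{-1}([-T,T]\times\R)$ on $B_A$. Since $q_n(0)$ converges in $H^{-1}$, for each fixed such $\kappa$ the sequence $(q_{n,\kappa})_n$ is Cauchy in $C_tH^{-1}$.

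\textbf{Step 3 (conclusion).} Given $\eps>0$, Step 1 provides $\kappa$ large with $\snorm{q_{n,\kappa}(t)-q_n(t)}_{C_tH^{-1}}<\eps/3$ for every $n$; with this $\kappa$ fixed, Step 2 provides $N$ with $\snorm{q_{n,\kappa}(t)-q_{m,\kappa}(t)}_{C_tH^{-1}}<\eps/3$ for all $n,m\geq N$. Then
\begin{align*}
\snorm{q_n(t)-q_m(t)}_{C_tH^{-1}}
&\leq \snorm{q_n(t)-q_{n,\kappa}(t)}_{C_tH^{-1}}+\snorm{q_{n,\kappa}(t)-q_{m,\kappa}(t)}_{C_tH^{-1}} \\
&\quad+\snorm{q_{m,\kappa}(t)-q_m(t)}_{C_tH^{-1}}<\eps
\end{align*}
for $n,m\geq N$, so $(q_n)$ is Cauchy. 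The only genuine subtlety --- the step I expect to require the most care --- is verifying that the uniformity over $Q$ in \cref{thm:diff flow conv 2} really passes through the commuting-flows rewriting to give a rate for $q_{n,\kappa}\to q_n$ that is independent of $n$; this rests on the fact that $Q^*_T(\kappa)$ and $V^*_T(\kappa)$ are manufactured from $Q$ and from the admissible background $V$ alone, together with the equicontinuity of the precompact set $Q$ (which enters the Grönwall estimate through \cref{thm:hskappa equicty} and the $\alpha$ bound \eqref{eq:alpha 3}). Everything else is a soft $\eps/3$ argument.
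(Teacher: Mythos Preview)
Your proposal is correct and follows essentially the same approach as the paper: form the bounded equicontinuous set $Q$ of initial data, invoke \cref{thm:diff flow conv 2} to get convergence of $q_{n,\kappa}$ to $q_n$ uniformly in $n$, and combine this with continuity of the $\hktilde$ flow in the initial data (\cref{thm:hktilde alphadot 2}) via a three-term triangle inequality. The paper compresses your Steps~1 and~3 into a single inequality~\eqref{eq:wellposed 1} with an $\eps/2$ split, but the content is identical.
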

\begin{proof}
	Consider the set $Q := \{ q_n(0) : n\in\N \}$ of initial data, which is bounded and equicontinuous in $H^{-1}$ since it is convergent in $H^{-1}$.  We estimate the difference $q_n(t) - q_m(t)$ using the triangle inequality, by first mediating via $\hvktilde$ flows and then estimating the difference between $\hktilde$ and $\hvktilde$ flows using~\eqref{eq:diff flow 18}.  This yields
	\begin{equation}
	\begin{aligned}
	\norm{ q_n(t) - q_m(t) }_{C_tH^{-1}}
	&\leq \snorm{ \wt{\Phi}_{\kappa}(t)q_n(0) - \wt{\Phi}_{\kappa}(t)q_m(0) }_{C_tH^{-1}} \\
	&\phantom{\leq{}} + 2 \sup_{\substack{ q\in Q^*_{T}(\kappa) \\ W(0)\in V^*_T(\kappa)}}\ \sup_{\varkappa\geq \kappa}\ \snorm{ \wt{\Phi}_{\varkappa,\kappa,W}(t) q - q }_{C_tH^{-1}} ,
	\end{aligned}
	\label{eq:wellposed 1}
	\end{equation}
	where $Q^*_{T}(\kappa)$ and $V^*_T(\kappa)$ are defined in~\eqref{eq:flowout}.  Fix $\eps>0$.  By \cref{thm:diff flow conv 2} there exists $\kappa_0$ sufficiently large so that the second term of \tx{RHS}\eqref{eq:wellposed 1} is $\leq \tfrac{1}{2}\eps$ for all $n,m \in \N$.  With $\kappa = \kappa_0$ fixed, we then know that the first term of \tx{RHS}\eqref{eq:wellposed 1} is $\leq \tfrac{1}{2} \eps$ for all $n,m$ sufficiently large due to the well-posedness of the $\hktilde$ flow (cf. \cref{thm:hktilde alphadot 2}).
\end{proof}

Finally, we use \cref{thm:wellposed 1} to conclude our main result \cref{thm:intro gwp}, following the argument of~\cite{Killip2019}.
\begin{cor}
	\label{thm:wellposed 2}
	Given $V$ admissible the KdV equation with potential $V$~\eqref{eq:tkdv} with initial data $q(0) \in H^{-1}(\R)$ is globally well-posed, in the sense that the solution map $\Phi: \R \times H^{-1}(\R) \to H^{-1}(\R)$ obtained in \cref{thm:existence} is jointly continuous.
\end{cor}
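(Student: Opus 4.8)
The plan is to factor joint continuity of $\Phi$ through two one-variable statements:
\textbf{(a)} for each fixed $q(0)\in H^{-1}(\R)$ the trajectory $t\mapsto\Phi(t,q(0))$ is continuous from $\R$ to $H^{-1}(\R)$; and
\textbf{(b)} for each $T>0$ the data-to-solution map $q(0)\mapsto\Phi(\,\cdot\,,q(0))$ is continuous from $H^{-1}(\R)$ into $C_tH^{-1}([-T,T]\times\R)$.
Since $\R\times H^{-1}(\R)$ is metrizable it suffices to check sequential continuity, and if $(t_n,q_n(0))\to(t_*,q_*(0))$ then, choosing $T>\sup_n|t_n|<\infty$,
\[
\norm{\Phi(t_n,q_n(0))-\Phi(t_*,q_*(0))}_{H^{-1}}
\leq \norm{\Phi(\,\cdot\,,q_n(0))-\Phi(\,\cdot\,,q_*(0))}_{C_tH^{-1}([-T,T]\times\R)}
+\norm{\Phi(t_n,q_*(0))-\Phi(t_*,q_*(0))}_{H^{-1}},
\]
where the first term tends to $0$ by (b) and the second by (a).

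Statement (a) is immediate from \cref{thm:existence}: the approximants $q_\kappa$ lie in $C_tH^{-1}([-T,T]\times\R)$ for every $\kappa$ and $T$, and $q_\kappa\to\Phi(\,\cdot\,,q(0))$ in that space as $\kappa\to\infty$, so the limit is again continuous in time.

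For (b) I would argue sequentially. Fix $q_*(0)\in H^{-1}(\R)$ and a sequence $q_n(0)\to q_*(0)$, and set $Q:=\{q_n(0):n\in\N\}\cup\{q_*(0)\}$; being convergent, $Q$ is precompact, hence bounded and equicontinuous in $H^{-1}(\R)$. Following the proof of \cref{thm:wellposed 1}, insert the $\hktilde$ flows (via the commuting-flow identity used to obtain~\eqref{eq:diff flow 18}) to estimate, with all norms over $[-T,T]\times\R$ and $\kappa$ large,
\[
\norm{\Phi(\,\cdot\,,q_n(0))-\Phi(\,\cdot\,,q_*(0))}_{C_tH^{-1}}
\leq \norm{\Phi(\,\cdot\,,q_n(0))-\wt{\Phi}_\kappa(\,\cdot\,)q_n(0)}_{C_tH^{-1}}
+\norm{\wt{\Phi}_\kappa(\,\cdot\,)q_n(0)-\wt{\Phi}_\kappa(\,\cdot\,)q_*(0)}_{C_tH^{-1}}
+\norm{\wt{\Phi}_\kappa(\,\cdot\,)q_*(0)-\Phi(\,\cdot\,,q_*(0))}_{C_tH^{-1}}.
\]
Since the $\hktilde$-flow orbits of the members of $Q$ constitute $Q^*_T(\kappa)$ and the background waves $V_\kappa(t)$, $|t|\leq T$, lie in $V^*_T(\kappa)$, the first and third terms are bounded --- uniformly in $n$ --- by the supremum that \cref{thm:diff flow conv 2} asserts tends to $0$ as $\kappa\to\infty$; the middle term tends to $0$ as $n\to\infty$ for each fixed $\kappa$ by the continuous dependence on initial data furnished by \cref{thm:hktilde alphadot 2}. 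A standard $3\eps$ argument --- first fixing $\kappa$ large to control the outer two terms, then $n$ large --- gives $\Phi(\,\cdot\,,q_n(0))\to\Phi(\,\cdot\,,q_*(0))$ in $C_tH^{-1}([-T,T]\times\R)$; as the sequence was arbitrary, (b) follows.

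I expect the only genuinely substantive point to be the one that licenses the $3\eps$ argument: the limits $\kappa\to\infty$ and $n\to\infty$ must be decoupled, which is exactly what \cref{thm:diff flow conv 2} provides, its convergence being uniform over the equicontinuous family $Q$. Everything else is assembly of \cref{thm:existence}, \cref{thm:hktilde alphadot 2}, and \cref{thm:diff flow conv 2}, together with the elementary fact that convergent (indeed precompact) subsets of $H^{-1}(\R)$ are equicontinuous; I do not anticipate any further obstacle.
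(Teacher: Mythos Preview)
Your proposal is correct and follows essentially the same approach as the paper: the paper's proof simply cites \cref{thm:wellposed 1} for the uniform-in-$t$ continuous dependence on initial data, whereas you inline that argument (the same $3\eps$ splitting through the $\hktilde$ flows, controlled by \cref{thm:diff flow conv 2} and \cref{thm:hktilde alphadot 2}). Both then combine this with the continuity in $t$ coming from the uniform convergence in \cref{thm:existence} to conclude joint continuity.
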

\begin{proof}
	Given $q \in H^{-1}(\R)$, we define $\Phi(t,q(0))$ to be the limit
	\begin{equation*}
	\Phi(t,q(0)) = \lim_{\kappa\to\infty} q_\kappa(t)
	\end{equation*}
	guaranteed by \cref{thm:existence}.  The limit exists in $H^{-1}(\R)$ and the convergence is uniform on bounded time intervals.  Fix $T>0$ and a sequence $q_n(0)\to q(0)$ in $H^{-1}(\R)$.  From \cref{thm:wellposed 1} we obtain
	\begin{equation*}
	\sup_{|t|\leq T}\, \norm{ \Phi(t,q_n(0)) - \Phi(t,q(0)) }_{H^{-1}} \to 0
	\quad\tx{as }n\to\infty,
	\end{equation*}
	and so we conclude that $\Phi$ is jointly continuous.
\end{proof}

\section{Uniqueness for regular initial data}
\label{sec:classical}

We will now demonstrate that for more regular initial data the solutions constructed in \cref{thm:existence} solve KdV and are unique.

First, we use a well-known $L^2$-energy argument to show that we have uniqueness for $H^2$ initial data.
\begin{lem}
	\label{thm:tkdv unique}
	Fix $V$ admissible and $T>0$.  Given an initial data $q(0) \in H^2$, there exists at most one corresponding solution to KdV with potential~\eqref{eq:tkdv} in $(C_tH^2 \cap C^1_tH^{-1})([-T,T]\times\R)$.
\end{lem}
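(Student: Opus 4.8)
The plan is to run a standard $L^2$ energy estimate on the difference of two solutions. Suppose $q_1,q_2\in(C_tH^2\cap C^1_tH^{-1})([-T,T]\times\R)$ both solve~\eqref{eq:tkdv} with $q_1(0)=q_2(0)=q(0)\in H^2$, and set $w:=q_1-q_2$, so that $w(0)=0$. Using the identity $q_1q_1'-q_2q_2'=\tfrac12\partial_x\big((q_1+q_2)w\big)$, the function $w$ solves
\begin{equation*}
\ddt w = -w''' + 3\big[(q_1+q_2)w\big]' + 6(Vw)' .
\end{equation*}
Since $H^2(\R)$ is an algebra and $V$ is bounded in $W^{2,\infty}$ uniformly for $|t|\le T$ by admissibility, each term on the right-hand side lies in $C_tH^{-1}$, consistent with the assumption $w\in C^1_tH^{-1}$.

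Next I would invoke the standard fact that for $w\in C_tH^1\cap C^1_tH^{-1}$ the scalar function $t\mapsto\norm{w(t)}_{L^2}^2$ is $C^1$ with $\ddt\norm{w(t)}_{L^2}^2=2\langle\partial_tw(t),w(t)\rangle$, where the pairing is the $H^{-1}$--$H^1$ duality. Pairing the equation for $w$ against $w$ and integrating by parts — all manipulations being legitimate for $w\in H^2$ — the dispersive term contributes $\langle-w''',w\rangle=\int w''w'\dx=0$, the quadratic term contributes $3\langle[(q_1+q_2)w]',w\rangle=-3\int(q_1+q_2)ww'\dx=\tfrac32\int(q_1'+q_2')w^2\dx$, and the potential term contributes $6\langle(Vw)',w\rangle=-6\int Vww'\dx=3\int V'w^2\dx$. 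Therefore
\begin{equation*}
\ddt\norm{w(t)}_{L^2}^2 = 3\int\big(q_1'(t)+q_2'(t)\big)w(t)^2\dx + 6\int V'(t)w(t)^2\dx .
\end{equation*}

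To close, I would bound the right-hand side by $C(t)\norm{w(t)}_{L^2}^2$: the embedding $H^1\hookrightarrow L^\infty$ applied to $q_1',q_2'$ together with the a priori bounds $q_1,q_2\in C_tH^2$ controls the first integral, and the uniform bound on $V'\in L^\infty$ for $|t|\le T$ controls the second, so that $\sup_{|t|\le T}C(t)<\infty$. Since $w(0)=0$, Gr\"onwall's inequality then forces $w\equiv0$ on $[-T,T]$, giving the asserted uniqueness. The only point that requires care is the justification of the energy identity $\ddt\norm{w}_{L^2}^2=2\langle\partial_tw,w\rangle$ and of the integrations by parts at the regularity $(C_tH^2\cap C^1_tH^{-1})$; both are classical (a density/regularization argument suffices), and no quantitative input beyond the uniform $W^{1,\infty}$-bound on $V$ and the $H^2$-bounds on $q_1,q_2$ is needed, so I do not anticipate any genuine obstacle.
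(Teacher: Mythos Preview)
Your argument is correct and is essentially identical to the paper's proof: both run the $L^2$ energy estimate on the difference $w=q_1-q_2$, integrate by parts to obtain $\ddt\norm{w}_{L^2}^2=3\int(q_1'+q_2')w^2\dx+6\int V' w^2\dx$, bound the coefficients in $L^\infty$ via $H^2\hookrightarrow W^{1,\infty}$ and the admissibility of $V$, and conclude by Gr\"onwall. Your additional remarks about justifying the energy identity and the integrations by parts at the regularity $C_tH^2\cap C^1_tH^{-1}$ are a welcome point of care that the paper leaves implicit.
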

\begin{proof}
	Suppose $q(t)$ and $\tilde{q}(t)$ are both in $(C_tH^2 \cap C^1_tH^{-1})([-T,T]\times\R)$, solve  KdV with potential, and have the same initial data $q(0) = \tilde{q}(0)$.  From the differential equation~\eqref{eq:tkdv} we see that the $L^2$-norm of the difference grows according to
	\begin{align*}
	\left| \ddt \int \tfrac{1}{2} (q - \tilde{q})^2\dx \right|
	&= \left| \int (q-\tilde{q}) \{ - (q-\tilde{q})''' + 3(q^2 - \tilde{q}^2)' + [6V(q-\tilde{q})]' \} \dx \right| .
	\intertext{The first term $(q-\tilde{q})'''$ contributes a total derivative and vanishes, while the remaining terms can be integrated by parts to obtain}
	&= \left| \int (q-\tilde{q})^2 \{ \tfrac{3}{2} (q + \tilde{q})' + 3V' \}(t,x) \dx \right| \\
	&\leq \big( \tfrac{3}{2} \norm{q'}_{L^\infty} + \tfrac{3}{2} \norm{\tilde{q}'}_{L^\infty} + 3 \norm{V'}_{L^\infty} \big) \norm{ q-\tilde{q} }_{L^2}^2 .
	\end{align*}
	Estimating $\norm{q'}_{L^\infty} \lesssim \norm{q}_{H^2}$, $\norm{\tilde{q}'}_{L^\infty} \lesssim \norm{\tilde{q}}_{H^2}$, and recalling that $V\in W^{2,\infty}$ uniformly for $|t|\leq T$ by \cref{thm:hyp}, we conclude that there exists a constant $C$ (depending on $V$ and the norms of $q$ and $\tilde{q}$ in $C_tH^2([-T,T]\times\R)$) such that 
	\begin{equation*}
	\left| \ddt \norm{ q(t)-\tilde{q}(t) }_{L^2}^2 \right| \leq C \norm{ q(t) -\tilde{q}(t) }_{L^2}^2 .
	\end{equation*}
	Gr\"onwall's inequality then yields
	\begin{equation*}
	\norm{ q(t) -\tilde{q}(t) }_{L^2}^2 \leq e^{CT} \norm{ q(0) - \tilde{q}(0) }_{L^2}^2
	\end{equation*}
	for $|t|\leq T$.  The RHS vanishes by premise, and so we conclude $\tilde{q}(t) = q(t)$ for all $|t|\leq T$.
\end{proof}

In order to employ the uniqueness of \cref{thm:tkdv unique}, we need to first show that the limits of \cref{thm:existence} are in $C_tH^2 \cap C^1_tH^{-1}$ and solve KdV with potential.  The following proposition shows that it suffices to know that the sequence $q_\kappa(t)$ of $\hktilde$ flows converges in $C_tH^2$:
\begin{prop}
	\label{thm:classical flow conv}
	Fix $V$ admissible and $T>0$.  If the sequence $q_\kappa(t)$ of solutions to the $\hktilde$ flow~\eqref{eq:hktilde flow} converges in $C_tH^2([-T,T]\times\R)$ as $\kappa\to\infty$, then the limit $q(t)$ is in $(C_tH^2 \cap C^1_tH^{-1})([-T,T]\times\R)$ and solves KdV with potential~\eqref{eq:tkdv}.
\end{prop}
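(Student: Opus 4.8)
The plan is to pass to the limit $\kappa\to\infty$ in the integrated form of the $\hktilde$ equation~\eqref{eq:hktilde flow} obeyed by each $q_\kappa$, using the hypothesized $C_tH^2$ convergence to identify the limiting nonlinearity. That $q\in C_tH^2([-T,T]\times\R)$ is immediate, since $C([-T,T];H^2(\R))$ is complete in the uniform norm; and this $q$ coincides with the $C_tH^{-1}$ limit of \cref{thm:existence} because $C_tH^2\hookrightarrow C_tH^{-1}$. It therefore remains to show $q\in C^1_tH^{-1}$, that $q$ has the prescribed initial data, and that $\ddt q = -q'''+6qq'+6(Vq)'$.

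Each $q_\kappa$ lies in $C_tH^2$ --- this is part of the hypothesis that the sequence converges there --- so its nonlinearity $16\kappa^5[g'(\kappa,q_\kappa+V_\kappa)-g'(\kappa,V_\kappa)]+4\kappa^2q_\kappa'$ is continuous in $t$ with values in $L^2$, and $q_\kappa$ satisfies the $H^{-1}$-valued identity
\begin{equation*}
q_\kappa(t) = q_\kappa(0) + \int_0^t \Big\{ 16\kappa^5\big[ g'(\kappa, q_\kappa(s)+V_\kappa(s)) - g'(\kappa, V_\kappa(s)) \big] + 4\kappa^2 q_\kappa'(s) \Big\}\ds
\end{equation*}
for $|t|\leq T$. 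Pairing with an arbitrary Schwartz function $\phi$ and using $q_\kappa\to q$ in $C_tH^2$ for the two boundary terms, the proof reduces to showing that the integrand converges, pointwise in $s$ and with a $\kappa$-uniform dominating bound, to $-q'''(s)+6q(s)q'(s)+6(V(s)q(s))'$ when tested against $\phi$; dominated convergence then handles the time integral, the dominating bound following from $\snorm{q_\kappa}_{C_tH^2}\lesssim_T 1$ and $\snorm{V_\kappa}_{C_tW^{4,\infty}}\lesssim_T 1$ (admissibility).

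To prove this vector-field convergence I would expand $g'(\kappa, q_\kappa+V_\kappa) - g'(\kappa, V_\kappa)$ first in powers of $q_\kappa$ via~\eqref{eq:resolvent series 2}, then expand each resolvent $R(\kappa,V_\kappa)$ in powers of $V_\kappa$ via~\eqref{eq:resolvent series 1}, and group the resulting summands by their total degree in $(q_\kappa,V_\kappa)$. Using~\eqref{eq:hskappa identity 1} and the elementary bounds $\snorm{\sqrt{R_0}\del_x}_{L^2}^2\lesssim\kappa^{-1}$, $\snorm{\sqrt{R_0}\,w\sqrt{R_0}}_{\op}\lesssim\kappa^{-2}\snorm{w}_{L^\infty}$ together with the above uniform bounds, every summand of total degree $\geq 3$ and every summand carrying an extra factor of $V_\kappa$ beyond the one treated below is $O(\kappa^{-2})$ and drops out. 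The three surviving pieces are treated exactly as in the case $V\equiv 0$ of~\cite{Killip2019}: the part linear in $q_\kappa$ with no $V_\kappa$ equals, by the operator identity~\eqref{eq:linear op id}, $-4\kappa^2 q_\kappa' - q_\kappa''' - R_0(2\kappa)q_\kappa^{(5)}$, in which $-4\kappa^2 q_\kappa'$ cancels the $+4\kappa^2 q_\kappa'$ of the flow, $q_\kappa'''\to q'''$, and $R_0(2\kappa)q_\kappa^{(5)}\to 0$ in $H^{-1}$ by dominated convergence in Fourier; the part quadratic in $q_\kappa$ with no $V_\kappa$ converges, by~\eqref{eq:quad op id} together with $4\kappa^2R_0(2\kappa)\to\mathrm{Id}$ and $R_0(2\kappa)\to 0$ on the functions at hand, to $3q^2$, whose $x$-derivative is $6qq'$; and the two terms linear in each of $q_\kappa$ and $V_\kappa$ converge similarly, using $V_\kappa\to V$ in $W^{2,\infty}$, to $6Vq$, whose $x$-derivative is $6(Vq)'$. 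Working in the weak sense against a fixed $\phi$ is convenient here precisely because $V_\kappa$ need not decay at spatial infinity, so several of the intermediate expressions lie in $L^\infty$ but not $L^2$.

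Collecting these contributions and applying dominated convergence yields, for every Schwartz $\phi$ and every $|t|\leq T$,
\begin{equation*}
\langle q(t),\phi\rangle = \langle q(0),\phi\rangle + \int_0^t \big\langle -q''' + 6qq' + 6(Vq)', \phi \big\rangle\ds .
\end{equation*}
Because $q\in C_tH^2$ and $V$ is admissible, the map $s\mapsto -q'''(s)+6q(s)q'(s)+6(V(s)q(s))'$ is a continuous $H^{-1}$-valued function and its time integral lies in $C_tH^{-1}$; since Schwartz functions are dense in $H^1$, the displayed weak identity upgrades to an identity in $H^{-1}$, whence $q\in C^1_tH^{-1}$ with $\ddt q = -q'''+6qq'+6(Vq)'$ and $q(0)$ as prescribed, i.e.\ $q$ solves KdV with potential~\eqref{eq:tkdv}. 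The main obstacle is the vector-field convergence of the third paragraph: the blowing-up prefactors $16\kappa^5$ and $4\kappa^2$ must be absorbed entirely through the exact algebraic cancellations encoded in~\eqref{eq:linear op id}--\eqref{eq:quad op id}, and the double Neumann expansion in $q_\kappa$ and $V_\kappa$ must be bookkept carefully enough to confirm that exactly the terms producing $-q'''$, $6qq'$ and $6(Vq)'$ survive while the rest decay; the admissibility hypotheses enter precisely to supply the uniform $W^{4,\infty}$ bound on $V_\kappa$ and its $W^{2,\infty}$ convergence to $V$, which control the high-order remainders and the cross terms respectively.
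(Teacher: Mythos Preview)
Your proposal is correct and follows essentially the same route as the paper: expand $g'(\kappa,q_\kappa+V_\kappa)-g'(\kappa,V_\kappa)$ as a double series in $q_\kappa$ and $V_\kappa$, use the operator identities~\eqref{eq:linear op id}--\eqref{eq:quad op id} to extract $-q'''$, $6qq'$, and $6(Vq)'$ from the linear, quadratic, and cross terms respectively, and estimate the degree-$\geq 3$ tail by duality. The paper works directly in $C_tH^{-1}$ rather than weakly against Schwartz $\phi$ followed by dominated convergence in time, and your claimed $O(\kappa^{-2})$ tail decay is actually only $O(\kappa^{-1})$ (the $\ell=3$, $m=0$ term), but neither difference affects the argument.
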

\begin{proof}
	In the following all spacetime norms will be taken over the slab $[-T,T]\times\R$.  We will extract the linear and quadratic terms of the $\hktilde$ flow to witness its convergence to KdV with potential.  Differentiating the translation identity~\eqref{eq:g trans prop} for $g(\kappa,q_\kappa+V_\kappa) - g(\kappa,V_\kappa)$ at $h=0$, expanding it as a series in powers of $q$, and then expanding each resolvent $R(\kappa,V_\kappa)$ in powers of $V_\kappa$, we write
	\begin{align}
	&\ddt q_\kappa 
	\nonumber \\
	&= - 16\kappa^5 \langle \del_x , R_0 q'_\kappa R_0 \del_x \rangle + 4\kappa^2 q'_\kappa 
	\label{eq:classical conv 1}\\
	&\phantom{={}}+ 16\kappa^5 \langle \del_x , [\partial, R_0 q_\kappa R_0 q_\kappa R_0] \del_x \rangle
	\label{eq:classical conv 2}\\
	&\phantom{={}}+ 16\kappa^5 \big\{ \langle \del_x , [\partial, R_0 V_\kappa R_0 q_\kappa R_0] \del_x \rangle +  \langle \del_x , [\partial, R_0 q_\kappa R_0 V_\kappa R_0] \del_x \rangle \big\}
	\label{eq:classical conv 3}\\
	&\phantom{={}} + 16\kappa^5 \sum (\tx{terms with 3 or more }q_\kappa\tx{ or }V_\kappa) .
	\label{eq:classical conv 4}
	\end{align}
	We will show that the first three terms \eqref{eq:classical conv 1}--\eqref{eq:classical conv 3} converge to the three terms of KdV with potential~\eqref{eq:tkdv} respectively, and the tail~\eqref{eq:classical conv 4} converges to zero as $\kappa\to\infty$.
	
	We begin with the linear term~\eqref{eq:classical conv 1}.  Using the operator identity~\eqref{eq:linear op id} we write
	\begin{equation*}
	\eqref{eq:classical conv 1}
	= -q'''_\kappa  - R_0(2\kappa)\partial^2 (q_\kappa-q)''' - R_0(2\kappa) \partial^2 q''' .
	\end{equation*}
	As $q_\kappa\to q$ in $C_tH^2$ by premise, then the first term of the RHS above converges to $-q'''$ in $C_tH^{-1}$ and the second term converges to zero in $C_tH^{-1}$ because the operators $R_0(2\kappa)\partial^2$ are bounded uniformly in $\kappa$.  The last term converges to zero since the operator $R_0(2\kappa) \partial^2$ is readily seen in Fourier variables to converge strongly to zero as $\kappa\to\infty$.  Altogether we conclude
	\begin{equation*}
	\eqref{eq:classical conv 1} \to - q''' \quad\tx{in }C_tH^{-1}\tx{ as }\kappa\to\infty.
	\end{equation*}
	
	Next, we turn to the first quadratic term~\eqref{eq:classical conv 2}.  First we write
	\begin{equation*}
	\eqref{eq:classical conv 2}
	= 6q_\kappa q'_\kappa +  \big\{ 16\kappa^5 \langle \del_x , [\partial, R_0 q_\kappa R_0 q_\kappa R_0] \del_x \rangle - 6q_\kappa q'_\kappa \big\} .
	\end{equation*}
	As $q_\kappa\to q$ in $C_tH^2$ by premise, then the first term of the RHS above converges to $6qq'$ in $C_tH^{1}$ and hence in $C_tH^{-1}$ as well.  For the second term we distribute the derivative $[\partial,\cdot]$, use the operator identity~\eqref{eq:quad op id}, and estimate in $H^{-1}$ by duality.  For $\phi\in H^1$, the identity~\eqref{eq:quad op id} yields
	\begin{align*}
	&\left| \int \big\{ 16\kappa^5 \langle \del_x , R_0 q_\kappa R_0 q'_\kappa R_0 \del_x \rangle - 3q_\kappa q'_\kappa \big\} \phi \dx \right| = \\
	&\begin{aligned}\bigg| \int \big\{ {-3} [R_0(2\kappa)q''_\kappa][R_0(2\kappa)q'''_\kappa] \phi
	+ 4\kappa^2 [R_0(2\kappa)q'_\kappa][R_0(2\kappa)q''_\kappa] ( {-5} \phi + R_0(2\kappa) \phi'' ) &\\
	+ 4\kappa^2 [R_0(2\kappa)q_\kappa][R_0(2\kappa)q'_\kappa] (5 \phi'' + 2 R_0(2\kappa) \partial^2 \phi'') \big\} \dx \bigg| . & \end{aligned}
	\end{align*}
	For those terms with $\phi''$ we integrate by parts once to obtain $\phi'$, which can be put in $L^2$.  Putting the highest order $q_\kappa$ term in $L^2$, putting one term in $L^\infty \supset H^1$, and using $\snorm{R_0(2\kappa)\partial^j}_\op \lesssim \kappa^{j-2}$ for $j=0,1,2$ (the estimate for $j=0$ is also true as an operator on $L^\infty$ by the explicit kernel formula for $R_0$ and Young's inequality), we obtain
	\begin{equation*}
	\left| \int \big\{ 16\kappa^5 \langle \del_x , R_0 q_\kappa R_0 q'_\kappa R_0 \del_x \rangle - 3q_\kappa q'_\kappa \big\} \phi \dx \right|
	\lesssim \kappa^{-2} \norm{\phi}_{H^1} \norm{q_\kappa}_{H^2}^2 .
	\end{equation*}
	Taking a supremum over $\norm{\phi}_{H^1}\leq 1$, and noting that the other term from the product rule is handled analogously (indeed, the identity~\eqref{eq:quad op id} is symmetric in $f$ and $h$), we conclude
	\begin{equation*}
	\eqref{eq:classical conv 2} \to 6qq' \quad\tx{in }C_tH^{-1}\tx{ as }\kappa\to\infty.
	\end{equation*}
	
	The second quadratic term~\eqref{eq:classical conv 3} is similar, but now we must put $V_\kappa$ in $L^\infty$.  First we write
	\begin{align*}
	\eqref{eq:classical conv 3}
	= 6(V_\kappa q_\kappa)' +  \big\{ &16\kappa^5 \langle \del_x , [\partial, R_0 V_\kappa R_0 q_\kappa R_0] \del_x \rangle \\ 
	&+ 16\kappa^5 \langle \del_x , [\partial, R_0 q_\kappa R_0 V_\kappa R_0] \del_x \rangle - 6(V_\kappa q_\kappa)' \big\} .
	\end{align*}
	As $V_\kappa \to V$ in $W^{2,\infty}$, the first term of the RHS above converges to $6(Vq)'$ in $C_tH^{1}$ and hence in $C_tH^{-1}$ as well.  For the second term we distribute the two derivatives $[\partial,\cdot]$ to get four terms, use the operator identity~\eqref{eq:quad op id}, and then estimate in $H^{-1}$ by duality.  For example, for $\phi\in H^1$ we have
	\begin{align*}
	&\left| \int \big\{ 16\kappa^5 \langle \del_x , R_0 V_\kappa R_0 q'_\kappa R_0 \del_x \rangle - 3V_\kappa q'_\kappa \big\} \phi \dx \right| = \\
	&\begin{aligned}\bigg| \int \big\{ {-3} [R_0(2\kappa)V''_\kappa][R_0(2\kappa)q'''_\kappa] \phi
	+ 4\kappa^2 [R_0(2\kappa)V'_\kappa][R_0(2\kappa)q''_\kappa] ( {-5} \phi + R_0(2\kappa) \phi'' ) &\\
	+ 4\kappa^2 [R_0(2\kappa)V_\kappa][R_0(2\kappa)q'_\kappa] (5 \phi'' + 2 R_0(2\kappa) \partial^2 \phi'') \big\} \dx \bigg| . & \end{aligned}
	\end{align*}
	For those terms with $\phi''$ we integrate by parts once to obtain $\phi'$, which can be put in $L^2$.  Putting all $V_\kappa$ terms in $L^\infty$ and the remaining terms in $L^2$, we obtain
	\begin{equation*}
	\left| \int \big\{ 16\kappa^5 \langle \del_x , R_0 V_\kappa R_0 q'_\kappa R_0 \del_x \rangle - 3q_\kappa q'_\kappa \big\} \phi \dx \right|
	\lesssim \kappa^{-2} \norm{\phi}_{H^1} \norm{V_\kappa}_{W^{2,\infty}} \norm{q_\kappa}_{H^2} .
	\end{equation*}
	The other three terms obtained from the product rule are handled analogously; replacing $q'_\kappa$ by $q_\kappa$ and $V_\kappa$ by $V'_\kappa$ is harmless because we know that $V_\kappa \in W^{4,\infty}$ uniformly for $|t|\leq T$ and $\kappa$ large.  Taking a supremum over $\norm{\phi}_{H^1}\leq 1$, we conclude
	\begin{equation*}
	\eqref{eq:classical conv 3} \to 6(Vq)' \quad\tx{in }C_tH^{-1}\tx{ as }\kappa\to\infty.
	\end{equation*}

	Lastly, we show that the series tail~\eqref{eq:classical conv 4} converges to zero in $C_tH^{-1}$.  We estimate by duality; for $\phi\in H^1$ we write
	\begin{align*}
	&\left| \int \phi \cdot \eqref{eq:classical conv 4} \dx \right| \\
	&\leq 16\kappa^5 \sum_{\substack{ \ell\geq 1,\ m_0,\dots,m_\ell\geq 0 \\ \ell+m_0+\dots+m_\ell \geq 3 }} \big| \tr \big\{ \phi [\partial, R_0(V_\kappa R_0)^{m_0} q_\kappa R_0 \cdots q_\kappa R_0 (V_\kappa R_0)^{m_\ell} ] \big\} \big| .
	\intertext{Recall that we first expanded $g(\kappa,q_\kappa+V_\kappa)$ in powers of $q_\kappa$, the $\ell$th term having $\ell$-many factors of $q_\kappa R(\kappa,V_\kappa)$, and then expanded each $R(\kappa,V_\kappa)$ into a series in $V_\kappa$ indexed by $m_i$.  The condition $\ell+m_0+\dots+m_\ell \geq 3$ reflects that we have already accounted for all of the summands with one and two $q_\kappa$ or $V_\kappa$.  We distribute the derivative $[\partial,\cdot]$, use the estimate~\eqref{eq:hskappa identity 1} and the observation $\norm{ f }_{H^{-1}_\kappa} \lesssim \kappa^{-1} \norm{ f }_{L^2}$ to put $\phi$ and all copies of $q_\kappa$ in $L^2$, and then estimate $V_\kappa$ in operator norm to obtain}
	&\lesssim \kappa^5 \sum_{\substack{ \ell\geq 1,\ m_0,\dots,m_\ell\geq 0 \\ \ell+m_0+\dots+m_\ell \geq 3 }} \frac{\norm{\phi}_{L^2}}{\kappa^{3/2}} \left( \frac{\norm{q_\kappa}_{H^1}}{\kappa^{3/2}} \right)^\ell \left( \frac{\norm{V_\kappa}_{W^{1,\infty}}}{\kappa^2} \right)^{m_0 + \dots + m_\ell} .
	\intertext{We first sum over the indices $m_0,\dots,m_\ell \geq 0$ as we did in~\eqref{eq:double sum inner} using that $V_\kappa \in C_tW^{4,\infty}$ uniformly for $\kappa$ large.  Then we sum over $\ell\geq 1$ and use that $q_\kappa$ is bounded in $C_tH^2$ for $\kappa$ sufficiently large.  The condition $\ell+m_0+\dots+m_\ell \geq 3$ guarantees that summing over the two pararenthetical terms yields a gain $\lesssim (\kappa^{-3/2})^3$, from which we obtain}
	&\lesssim \kappa^{-1} \norm{\phi}_{H^1} .
	\end{align*}
	Taking a supremum over $\norm{\phi}_{H^1} \leq 1$, we conclude
	\begin{equation*}
	\eqref{eq:classical conv 4} \to 0 \quad\tx{in }C_tH^{-1}\tx{ as }\kappa\to\infty. \qedhere
	\end{equation*}
\end{proof}

It only remains to show that the sequence $q_\kappa(t)$ converges in $H^2$ as $\kappa\to\infty$.  To accomplish this task, we will use the uniform boundedness of the sequence $q_\kappa(t)$ in $H^3$ which follows from a simple \ti{a priori} estimate; this suffices by interpolating with the convergence in $H^{-1}$.
\begin{prop}
	\label{thm:classical a priori}
	Given $V$ admissible and $A,T>0$, there exist constants $C$ and $\kappa_0$ such that solutions $q_\kappa(t)$ to the $\hktilde$ flow~\eqref{eq:hktilde flow} obey
	\begin{equation*}
	\norm{q(0)}_{H^3} \leq A \quad \Longrightarrow \quad \norm{q_{\kappa}(t)}_{H^3} \leq C\tx{ for all }|t| \leq T\tx{ and }\kappa \geq \kappa_{0} .
	\end{equation*}
\end{prop}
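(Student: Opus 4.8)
The plan is to run a Bona--Smith-style energy argument directly on the $\hktilde$ flow, using the fact that the $H_\kappa$ flows commute with KdV and therefore conserve the polynomial conservation laws of KdV. The cleanest route is to work with the third conserved quantity of KdV,
\[
E_3(q) = \int \big( \tfrac{1}{2} (q'')^2 + c_1 q (q')^2 + c_2 q^4 \big)\dx ,
\]
(with the standard numerical constants $c_1,c_2$) which is equivalent, for $q$ in a bounded subset of $H^{-1}$, to $\norm{q}_{H^2}^2$ up to lower-order terms; more precisely $E_3(q) + \text{(lower order in } H^1) \sim \norm{q''}_{L^2}^2$. Since we already control $\norm{q_\kappa(t)}_{H^{-1}}$ uniformly in $\kappa$ and $|t|\le T$ by \cref{thm:hktilde alphadot 2}, it suffices to bootstrap control of $\norm{q_\kappa''}_{L^2}$; by interpolation this then controls the full $H^3$ norm once we also handle $\norm{q_\kappa'''}_{L^2}$. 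Actually, to close at the $H^3$ level as stated I would instead use the \emph{fourth} KdV invariant $E_4$, whose leading term is $\tfrac12\int (q''')^2\dx$, and which is comparable to $\norm{q}_{H^3}^2$ modulo terms controlled by lower Sobolev norms and $\norm{q}_{L^\infty}$-type factors that are in turn controlled by the $H^2$ norm via Gagliardo--Nirenberg.

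The key steps, in order, are: (i) recall that $e^{tJ\nabla H_\kappa}$ conserves each $E_j$ (this follows from $\{H_\kappa,\hkdv\}=0$ in~\eqref{eq:hk flow 2} and Poisson commutativity of the KdV hierarchy, exactly as invoked after \cref{thm:intro classical solns} in the introduction); hence $E_j(q_\kappa(t)+V_\kappa(t)) = E_j(q_\kappa(0)+V(0))$ for all $t$. (ii) Expand $E_j(q_\kappa(t)+V_\kappa(t))$ into $E_j(q_\kappa(t))$ plus cross terms involving $V_\kappa$ and $q_\kappa$; since $V_\kappa$ is bounded in $W^{4,\infty}$ uniformly in $\kappa$ and $|t|\le T$ (admissibility, condition (ii) of \cref{thm:hyp}), these cross terms are each bounded by a polynomial in $\norm{q_\kappa(t)}_{H^{s}}$ for $s\le 2$ times a constant depending on $\norm{V_\kappa}_{W^{4,\infty}}$ — so they are \emph{lower order} relative to the $\norm{q_\kappa'''}_{L^2}^2$ we are trying to bound. (iii) Likewise $E_j(q_\kappa(0)+V(0))$ is bounded by a constant depending only on $A$ and $\norm{V(0)}_{W^{4,\infty}}$. (iv) Therefore $\norm{q_\kappa'''(t)}_{L^2}^2 \lesssim E_4(q_\kappa(0)+V(0)) + P(\norm{q_\kappa(t)}_{H^2})$, where $P$ is a fixed polynomial; combined with the already-established uniform $H^{-1}$ bound and interpolation $\norm{q}_{H^2} \lesssim \norm{q}_{H^{-1}}^{1/4}\norm{q}_{H^3}^{3/4}$, a Young's-inequality absorption argument closes the bound, giving $\norm{q_\kappa(t)}_{H^3} \le C$ uniformly.

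The main obstacle is step (iv): one must verify that the lower-order terms coming from the expansion of $E_4(q_\kappa+V_\kappa)$ genuinely do not contain a term that is quadratic in the top derivative $q_\kappa'''$ with a coefficient that could be large. Concretely, $E_4$ contains a term like $\int q (q'')^2$ or $\int q (q''')$-type pieces after integration by parts, and one needs the identity structure so that, modulo integration by parts, the only appearance of $\int (q''')^2$ has coefficient $\tfrac12$ and the remaining $q_\kappa'''$-linear terms pair against things controlled in $L^2$ by the $H^2$ norm (using $\norm{q_\kappa}_{L^\infty},\norm{q_\kappa'}_{L^\infty},\norm{q_\kappa''}_{L^\infty} \lesssim \norm{q_\kappa}_{H^3}^{\theta}\norm{q_\kappa}_{H^{-1}}^{1-\theta}$). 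This is a finite, explicit computation, but it is the part that must be done carefully; everything else is standard Gronwall-free energy conservation plus interpolation. (If the direct $H^3$ computation proves cumbersome, the fallback is to prove the $H^2$ bound first via $E_3$-conservation exactly as above — which is all that \cref{thm:classical flow conv} strictly needs, via interpolation with the $H^{-1}$ convergence — and then note that the same scheme applied to $E_4$ yields the $H^3$ statement; I would present the $E_4$ version since it is what the proposition claims.)
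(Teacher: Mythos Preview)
There is a genuine gap in step (i)--(ii). The functionals $E_j(q_\kappa+V_\kappa)$ are infinite: $V_\kappa$ does not decay, so the density $\rho_j(q_\kappa+V_\kappa)$ is not integrable over $\R$. The natural fix---subtracting $E_j(V_\kappa)$---still does not yield a usable quantity. Expanding $\rho_j(q+V)-\rho_j(V)$, every term that is \emph{linear} in $q$ (for instance the top-order cross term $q'''V'''$ coming from $\tfrac12(u''')^2$ in $E_4$) lies only in $L^2(\R)$, not $L^1(\R)$, because $q^{(k)}\in L^2$ and $V^{(m)}\in L^\infty$ with no decay. These linear cross terms are therefore not absolutely integrable, and---more to the point for your step (ii)---they are \emph{not} bounded by any Sobolev norm $\norm{q}_{H^s}$; one would need an $L^1$-type norm of $q$, which you do not have. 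So the assertion in (ii) that the cross terms are bounded by $P(\norm{q_\kappa}_{H^2})$ fails, and the ``Gronwall-free'' conservation argument cannot close.

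The paper's route avoids this entirely by never forming $E_j(q+V)$. Instead it works with the finite quantities $E_s(q_\kappa)$ for $s=0,1,2$ (momentum, KdV energy, and the next invariant, each evaluated at $q_\kappa$ alone), computes their time derivatives under the $\hktilde$ flow~\eqref{eq:hktilde flow}, and bounds those derivatives by $C\,E_s(q_\kappa)+C$ using the already-established $H^{s-1}$ bound; Gr\"onwall then gives the $H^s$ bound, and one bootstraps through $s=0,1,2$ sequentially. For $s=3$ the paper does not use $E_4$ at all but directly estimates $\tfrac{d}{dt}\norm{q_\kappa'''}_{L^2}^2$. This is \emph{not} Gronwall-free, but that is the price of the background $V$ breaking exact conservation; the point is that the growth rate is uniform in $\kappa$. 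Your interpolation/absorption scheme in (iv) would also need this sequential structure to close, since the lower-order pieces of $E_4$ are of high polynomial degree in $q$ and cannot be absorbed into $\norm{q'''}_{L^2}^2$ without prior control of $\norm{q}_{H^2}$.
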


The proof is a repetition of the energy arguments that yield the \ti{a priori} estimates in $H^s$ necessary for the Bona--Smith theorem~\cite{Bona1975} applied to the $\hktilde$ flow.  It is based on the fact that the $H_\kappa$ flow preserves the polynomial conservation laws of KdV.  For $s=0,1,2$ we control the growth of the first three conserved quantities in time (which are no longer exactly conserved for the $\hktilde$ flow), and then for $s = 3$ we directly control the growth of $q'''_\kappa$ in $L^2$.  See~\cite{Laurens2021}*{\S3} for details, where for a similar differential equation we obtain \ti{a priori} estimates in $H^s$ spaces for all integers $s\geq 0$.

It is natural to ask if for initial data in $H^3$ we have convergence in $H^3$ and not merely $H^2$.  This is also true, but the argument is more subtle.  In the companion paper~\cite{Laurens2021} we will present a more thorough argument for a similar equation, which can be adapted to this context to directly show convergence in $H^3$.

Altogether, we can now conclude our main result \cref{thm:intro classical solns}:
\begin{cor}
	\label{thm:classical solns}
	Fix $V$ admissible and $T>0$.   Given initial data $q(0) \in H^{3}$, the solution constructed in \cref{thm:existence} is the unique solution to KdV with potential~\eqref{eq:tkdv} in $(C_tH^2 \cap C^1_tH^{-1})([-T,T]\times\R)$.
\end{cor}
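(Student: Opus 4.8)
The plan is to assemble the three preceding results into a single argument: the \emph{a priori} bound of \cref{thm:classical a priori}, the convergence criterion of \cref{thm:classical flow conv}, and the uniqueness statement of \cref{thm:tkdv unique}. Throughout, write $q(t) = \lim_{\kappa\to\infty} q_\kappa(t)$ for the $H^{-1}$ solution produced by \cref{thm:existence}, where $q_\kappa(t)$ solves the $\hktilde$ flow~\eqref{eq:hktilde flow} with the given data $q(0)\in H^3$. All spacetime norms below are over the slab $[-T,T]\times\R$.

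The one substantive step is to upgrade the convergence $q_\kappa \to q$ from $C_tH^{-1}$ to $C_tH^2$. By \cref{thm:classical a priori} there are constants $C,\kappa_0$ with $\norm{q_\kappa}_{C_tH^3} \le C$ for all $\kappa\ge\kappa_0$; combined with the $C_tH^{-1}$ convergence from \cref{thm:existence}, a standard weak-$*$ compactness argument identifies the limit $q(t)$ and places it in $C_tH^3$ as well. Then the interpolation inequality $\norm{f}_{H^2}\lesssim \norm{f}_{H^{-1}}^{1/4}\norm{f}_{H^{3}}^{3/4}$, applied to $f = q_\kappa(t)-q(t)$ together with the uniform $H^3$ bound, gives $\norm{q_\kappa-q}_{C_tH^2}\to 0$ as $\kappa\to\infty$ (the constant is uniform in $t$ since all the bounds are uniform over $[-T,T]$).

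With convergence in $C_tH^2$ in hand, \cref{thm:classical flow conv} immediately yields that $q(t)\in (C_tH^2 \cap C^1_tH^{-1})([-T,T]\times\R)$ and that $q$ solves KdV with potential~\eqref{eq:tkdv}. Finally, \cref{thm:tkdv unique} shows that $q$ is the only solution of~\eqref{eq:tkdv} in the class $(C_tH^2\cap C^1_tH^{-1})([-T,T]\times\R)$ with data $q(0)$, which completes the proof. I do not expect a genuine obstacle here: every step is a direct appeal to an already-established result, the only mild care being the identification of the weak-$*$ limit in $H^3$ with the strong $H^{-1}$ limit. The place where extra work would be needed is obtaining convergence in $C_tH^3$ rather than merely $C_tH^2$, but this is unnecessary, since $H^2$ regularity already lands inside the uniqueness class of \cref{thm:tkdv unique}.
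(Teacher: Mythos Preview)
Your proof is correct and follows essentially the same route as the paper: uniform $H^3$ bounds from \cref{thm:classical a priori}, interpolation $\norm{f}_{H^2}\le\norm{f}_{H^{-1}}^{1/4}\norm{f}_{H^3}^{3/4}$ to upgrade the $C_tH^{-1}$ convergence of \cref{thm:existence} to $C_tH^2$, then \cref{thm:classical flow conv} and \cref{thm:tkdv unique}. The paper's version is marginally more direct in that it applies the interpolation inequality to the Cauchy differences $q_\kappa-q_\varkappa$, so convergence in $C_tH^2$ follows immediately without any weak-$*$ identification of the limit; your extra step is harmless, though the claim that weak-$*$ compactness places $q$ in $C_tH^3$ is a slight overreach (one only gets $L^\infty_tH^3$ this way), and in any case that regularity is not needed for the rest of the argument.
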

\begin{proof}
	We know from \cref{thm:existence} that the $\hktilde$ flows $q_\kappa(t)$ converge in $H^{-1}$ as $\kappa\to\infty$, and from \cref{thm:classical a priori} we know they are bounded in $C_tH^3([-T,T]\times\R)$ uniformly for $\kappa$ large.  From the inequality
	\begin{equation*}
	\norm{f}_{H^2} \leq \norm{f}^{1/4}_{H^{-1}} \norm{f}^{3/4}_{H^3} 
	\end{equation*}
	(which can be obtained using H\"older's inequality in Fourier variables), we deduce that $q_\kappa$ converges in $C_tH^2([-T,T]\times\R)$ as well.  \Cref{thm:classical flow conv} then tells us that the limit $q(t)$ is in $(C_tH^2\cap C^1_tH^{-1})([-T,T]\times\R)$ and solves KdV with potential.  Finally, \cref{thm:tkdv unique} guarantees that this is the unique solution in this class.
\end{proof}

\section{Example: cnoidal waves}
\label{sec:cnoidal}

Next, we will see that the periodic traveling wave solutions (cnoidal waves) of KdV are admissible background waves $V$ in the sense of \cref{thm:hyp}.  In fact, we will see that cnoidal wave profiles $V(0,x)$ are also traveling waves for the $H_\kappa$ flow~\eqref{eq:hk flow} (with a different propagation speed), which makes the analysis particularly straightforward.  The $H_\kappa$ flow possessing the same traveling wave profile as KdV is not surprising, since the $H_\kappa$ flow preserves the polynomial conserved quantities of KdV and cnoidal waves are minimizers of the KdV energy with constrained momentum (cf.~\cite{Lax1975}*{\S3}).

Rather than working with the Jacobian elliptic functions, it is much easier to perform calculus on the cnoidal waves~\eqref{eq:cnoidal 1} when expressed in terms of Weierstrass elliptic functions:
\begin{equation}
V(t,x) = 2\wp\left( x + 6\wp(\omega_1) t + \omega_3 ; \omega_1,\omega_3 \right) + \wp(\omega_1) .
\label{eq:cnoidal 2}
\end{equation}
Here, $\wp( z ; \omega_1,\omega_3 ) =: \wp(z)$ is the Weierstrass p-function with lattice generators $2\omega_1,2\omega_3 \in \C$ (see~\cite{DLMF}*{\href{http://dlmf.nist.gov/23.2}{\S23.2}} for its definition).  We must choose $\omega_1$ purely real and $\omega_3$ purely imaginary for the wave~\eqref{eq:cnoidal 2} to solve KdV, and to avoid redundancy we insist that $\omega_1$ and $\omega_3/i$ are positive.  Note that the argument $z$ of $\wp( z )$ in~\eqref{eq:cnoidal 2} is not on the real axis but is translated vertically by the imaginary half-period $\omega_3$ and thus runs halfway between two rows of poles for $\wp( z )$; this guarantees that the profile~\eqref{eq:cnoidal 2} is regular and real-valued.

\begin{prop}
	\label{thm:hk cnoidal 1}
	The cnoidal wave profile admits the traveling wave solution
	\begin{equation*}
	V_\kappa(t,x) = V(0,x+\nu t) , \qquad
	\nu = \nu(\kappa)
	\end{equation*}
	to the $H_\kappa$ flow~\eqref{eq:hk flow}.
\end{prop}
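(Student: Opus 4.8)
The plan is to reduce the statement to a structural fact about the diagonal Green's function of the cnoidal profile — namely that $g(x;\kappa,V(0,\cdot))$ is an \emph{affine} function of $V(0,x)$ — and then read off the wave speed $\nu(\kappa)$. Write $P(x):=\wp(x+\omega_3;\omega_1,\omega_3)$, so that by~\eqref{eq:cnoidal 2} the profile is $V(0,x)=2P(x)+\wp(\omega_1)$. Substituting the ansatz $V_\kappa(t,x)=V(0,x+\nu t)$ into the $H_\kappa$ flow~\eqref{eq:hk flow} and using the translation identity~\eqref{eq:g trans prop} for the diagonal Green's function, the equation collapses (after the substitution $y=x+\nu t$) to the profile identity
\begin{equation*}
(\nu-4\kappa^2)\,V'(0,y) = 16\kappa^5\, g'(y;\kappa,V(0,\cdot)) \qquad\text{for all }y\in\R .
\end{equation*}
Since $V(0,\cdot)$ is bounded and $0\in B_A(\kappa)$, the Green's function $g(\cdot;\kappa,V(0,\cdot))=g(\cdot;\kappa,V(0,\cdot)+0)$ exists and is real-analytic for $\kappa$ large by \cref{thm:diffeo prop}. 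It therefore suffices to show that $g(\cdot;\kappa,V(0,\cdot))$ is affine in $V(0,\cdot)$, say $g=a\,V(0,\cdot)+b$ with $a=a(\kappa)$; then $\nu:=4\kappa^2+16\kappa^5 a(\kappa)$ makes $V_\kappa(t,x):=V(0,x+\nu t)$ a classical solution of~\eqref{eq:hk flow} with initial data $V(0,\cdot)$, which is exactly the (unique, by standard uniqueness for this evolution) definition of $V_\kappa$.

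To establish the affine structure I would use the third-order linear ODE~\eqref{eq:g alg prop 3} satisfied by $g(\cdot;\kappa,q)$ for any smooth bounded potential (the derivation in~\cite{Killip2019}*{Prop.~2.3} is purely algebraic and applies verbatim with $q=V(0,\cdot)$), and look for a solution of the form $\wt g(x)=\alpha\,P(x)+\beta$. Using the Weierstrass relations $(\wp')^2=4\wp^3-\mf{g}_2\wp-\mf{g}_3$ and $\wp''=6\wp^2-\tfrac12\mf{g}_2$ (so $\wp'''=12\wp\wp'$), a short substitution into~\eqref{eq:g alg prop 3} shows the equation holds precisely when $\beta=-(\wp(\omega_1)+\kappa^2)\alpha$, i.e.
\begin{equation*}
\wt g(x) = \alpha\big(P(x)-\wp(\omega_1)-\kappa^2\big) = \tfrac{\alpha}{2}\,V(0,x) - \alpha\big(\tfrac32\wp(\omega_1)+\kappa^2\big),
\end{equation*}
which is affine in $V(0,x)$ for every scalar $\alpha$.

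It then remains to pin $\alpha$ down by identifying $\wt g$ with the genuine Green's function. The solution space of~\eqref{eq:g alg prop 3} is spanned by the symmetric squares of the Floquet solutions of $-\psi''+(V(0,\cdot)+\kappa^2)\psi=0$; for $\kappa$ large the energy $0$ lies in a spectral gap, so these may be taken as Bloch solutions $\psi_\pm$ with a real Floquet exponent, and of $\psi_+^2,\ \psi_+\psi_-,\ \psi_-^2$ only $\psi_+\psi_-$ is bounded — hence the bounded solutions of~\eqref{eq:g alg prop 3} form a one-dimensional space, spanned by the diagonal Green's function. Thus $g(\cdot;\kappa,V(0,\cdot))=\alpha\big(P-\wp(\omega_1)-\kappa^2\big)$ for a unique $\alpha=\alpha(\kappa)$, and its magnitude and sign are fixed by the first integral $g g''-\tfrac12(g')^2-2(V(0,\cdot)+\kappa^2)g^2\equiv-\tfrac12$ (obtained by differentiating and invoking~\eqref{eq:g alg prop 3}, the constant being $-\tfrac12$ because the diagonal resolvent kernel is a product of solutions with unit Wronskian, consistently with $g=\tfrac1{2\kappa}+\bigo(\kappa^{-3})$ from~\eqref{eq:g series 1}). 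Substituting $\wt g$ and using $4t^3-\mf{g}_2 t-\mf{g}_3=4\prod_j(t-e_j)$ with $e_j=\wp(\omega_j)$ gives $\alpha^2=\big[4\kappa^2(\kappa^2+e_1-e_2)(\kappa^2+e_1-e_3)\big]^{-1}$, positive for $\kappa$ large, with $\alpha<0$ chosen so that $\wt g\to\tfrac1{2\kappa}$. Combined with the reduction above this produces the traveling wave with $\nu(\kappa)=4\kappa^2+8\kappa^5\alpha(\kappa)$ (one checks, using $e_1+e_2+e_3=0$, that $\nu(\kappa)\to 6\wp(\omega_1)$, the KdV cnoidal speed, as $\kappa\to\infty$).

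The hard part will be the last two steps: recognizing the cnoidal potential as the simplest finite-gap (Lam\'e, $\ell=1$) potential so that its diagonal Green's function is a degree-one polynomial in $\wp$, and then rigorously matching the explicit solution $\wt g$ of~\eqref{eq:g alg prop 3} to the actual Green's function via the one-dimensionality of the space of bounded solutions together with the correct normalization. The reduction in the first paragraph and the substitution establishing $\beta=-(\wp(\omega_1)+\kappa^2)\alpha$ are routine once the Weierstrass form~\eqref{eq:cnoidal 2} is used; the only bookkeeping nuisance is the clash between the Green's function $g$ and the Weierstrass invariants $\mf{g}_2,\mf{g}_3$.
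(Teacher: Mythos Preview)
Your argument is correct and reaches the same conclusion as the paper, but by a genuinely different route. The paper constructs the Floquet solutions $\psi_\pm$ explicitly via the Weierstrass sigma/zeta ansatz~\eqref{eq:floquet 4}, verifies they satisfy $-\psi''+V\psi=-\kappa^2\psi$ and the unit-Wronskian condition, and then reads off $g=\psi_+\psi_-$ directly as an affine function of $V$ (equation~\eqref{eq:cnoidal g 1}). You instead work purely at the level of the diagonal Green's function: you substitute the affine ansatz into the third-order ODE~\eqref{eq:g alg prop 3}, use $\wp'''=12\wp\wp'$ to see that affine-in-$\wp$ solutions exist precisely when $\beta=-(\wp(\omega_1)+\kappa^2)\alpha$, and then single out the correct scalar via the one-dimensionality of bounded solutions together with the first integral $gg''-\tfrac12(g')^2-2(q+\kappa^2)g^2\equiv-\tfrac12$. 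Your approach is more structural and avoids the sigma/zeta calculus entirely, at the price of invoking (rather than exhibiting) the Floquet dichotomy to justify the dimension count; the paper's approach is more explicit and yields the closed form $g=\big(\wp(b)-\wp(x+\omega_3)\big)/\big({-}\wp'(b)\big)$ with $\kappa^2=\wp(b)-\wp(\omega_1)$, which then feeds directly into the coefficient asymptotics needed later in the section. The two normalizations agree: your $\alpha$ is the paper's $1/(2\wp'(b))$, since $4\kappa^2(\kappa^2+e_1-e_2)(\kappa^2+e_1-e_3)=4\prod_j(\wp(b)-e_j)=\wp'(b)^2$.
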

\begin{proof}
	Let $V(x) = V(0,x)$ denote the initial data.  In order to see that $V(x+\nu t)$ solves the $H_\kappa$ flow~\eqref{eq:hk flow} we need $g'(x;\kappa,V)$ to be proportional to $V(x)$.  To compute the diagonal Green's function $g(x;\kappa,V)$, we will use the representation
	\begin{equation}
	g(x) = \psi_+(x) \psi_-(x)
	\label{eq:floquet 1}
	\end{equation}
	in terms of normalized Floquet solutions $\psi_\pm$.  Recall from Floquet theory that there exist solutions $\psi_\pm(x)$ to
	\begin{equation}
	-\psi''+V\psi = - \kappa^2\psi
	\label{eq:floquet 2}
	\end{equation}
	who decay exponentially (along with their derivatives) as $x\to\pm\infty$ and grow exponentially as $x\to\mp\infty$.  Constancy of the Wronskian guarantees that these solutions are unique up to scalar multiples.  For the expression~\eqref{eq:floquet 1} to hold, we partially normalize the solutions $\psi_\pm$ by enforcing the Wronskian relation
	\begin{equation}
	\psi_+(x)\psi'_-(x) - \psi'_+(x)\psi_-(x) = 1
	\label{eq:floquet 3}
	\end{equation}
	and requiring that both $\psi_\pm$ are positive.
	
	Consider the ansatz
	\begin{equation}
	\psi_\pm(x) = a_\pm \frac{\sigma(x+\omega_3\pm b)}{\sigma(x+\omega_3)\sigma(\pm b)} e^{\mp\zeta(b) x} ,
	\label{eq:floquet 4}
	\end{equation}
	where $\sigma(z)$ and $\zeta(z)$ are the other two Weierstrass elliptic functions with the same lattice generators $\omega_1,\omega_3$ as $V$ (see~\cite{DLMF}*{\href{http://dlmf.nist.gov/23.2}{\S23.2(ii)}} for their definition and relations), and $a_\pm$ and $b$ are parameters to be chosen depending on $\kappa$.  Substituting the ansatz~\eqref{eq:floquet 4} into the eigenvalue equation~\eqref{eq:floquet 2} and using the additive identities~\cite{DLMF}*{\href{http://dlmf.nist.gov/23.10}{\S23.10(i)}}, we see that~\eqref{eq:floquet 4} solves~\eqref{eq:floquet 2} provided that $b = b(\kappa)$ satisfies
	\begin{equation}
	\kappa^2 = \wp(b) - \wp(\omega_1) .
	\label{eq:floquet 5}
	\end{equation}
	As $\wp(x)$ is real, positive, and symmetrically U-shaped for $x\in (0,2\omega_1)$, we see that in order to have $\kappa \in (0,\infty)$ we can take $b \in (0,\omega_1)$, with $b(\kappa)\downarrow 0$ as $\kappa\to\infty$.  To ensure the ansatz~\eqref{eq:floquet 4} satisfies the Wronskian relation~\eqref{eq:floquet 3} and the condition $\psi_\pm(x) > 0$, we set
	\begin{equation*}
	a_\pm = \pm \left[ - \wp'(b) \right]^{-\frac{1}{2}} .
	\end{equation*}
	As $\wp(b)$ is real, positive, and strictly decreasing for $b\in(0,\omega_1)$, then $- \wp'(b)$ is positive and we may take the positive square-root.  Although it is incidental to the proof, we note that the Floquet exponents for $\psi_\pm$ are
	\begin{equation*}
	\frac{\psi_\pm(x+2\omega_1)}{\psi_\pm(x)} = e^{\mp 2\omega_1\zeta(b)} ,
	\end{equation*}
	and they are multiplicative inverses of each other (as expected from Floquet theory).
	
	Now that we have determined the Floquet solutions~\eqref{eq:floquet 4}, the representation~\eqref{eq:floquet 1} determines the diagonal Green's function:
	\begin{equation}
	g(x;\kappa,V)
	= \frac{\wp(b(\kappa)) - \wp(x+\omega_3)}{-\wp'(b(\kappa))}
	= \frac{\wp(b(\kappa)) + \tfrac{1}{2}\wp(\omega_1)}{-\wp'(b(\kappa))} + \frac{1}{2\wp'(b(\kappa))} V(x) .
	\label{eq:cnoidal g 1}
	\end{equation}
	We notice in particular that $g'(x;\kappa,V)$ is proportional to $V'(x)$.  Recalling the translation property~\eqref{eq:g trans prop}, we conclude that the solution $V_\kappa(t,x)$ to the $H_\kappa$ flow~\eqref{eq:hk flow} with initial data $V(0,x)$ is the traveling wave $V(0,x+\nu t)$.  Moreover, the propagation speed is given by
	\begin{equation}
	\nu(\kappa) = \frac{8\kappa^5}{\wp'(b(\kappa))} + 4\kappa^2
	\label{eq:hk cnoidal 2}
	\end{equation}
	for all $\kappa$ sufficiently large.
\end{proof}

To see the convergence of $V_\kappa$ to $V$, we will first need to take a slightly closer look at the exact form of the coefficients.
\begin{lem}
	The diagonal Green's function for the traveling waves $V_\kappa$ takes the form
	\begin{equation*}
	g(x;\kappa,V_\kappa(t)) = c_1(\kappa) + c_2(\kappa) V_\kappa(t,x) ,
	\end{equation*}
	where the coefficients have the asymptotics
	\begin{equation}
	c_1(\kappa) = \tfrac{1}{2\kappa} + \bigo( \kappa^{-5} ) , \quad
	c_2(\kappa) = -\tfrac{1}{4\kappa^3} + \bigo( \kappa^{-5} )
	\quad \tx{as }\kappa\to\infty.
	\label{eq:asymp 1}
	\end{equation}
\end{lem}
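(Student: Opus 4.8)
The plan is to read off the affine form of $g(x;\kappa,V_\kappa(t))$ directly from the computation in \cref{thm:hk cnoidal 1}, and then to extract the large-$\kappa$ asymptotics of the two coefficients from the Laurent expansions of the Weierstrass functions at the origin. Indeed, formula~\eqref{eq:cnoidal g 1} together with the translation identity~\eqref{eq:g trans prop} and the traveling wave form $V_\kappa(t,x) = V(0,x+\nu t)$ established in \cref{thm:hk cnoidal 1} give at once
\begin{equation*}
g(x;\kappa,V_\kappa(t)) = g(x+\nu t;\kappa,V(0)) = c_1(\kappa) + c_2(\kappa) V_\kappa(t,x),
\end{equation*}
with
\begin{equation*}
c_1(\kappa) = \frac{\wp(b(\kappa)) + \tfrac12\wp(\omega_1)}{-\wp'(b(\kappa))},
\qquad
c_2(\kappa) = \frac{1}{2\wp'(b(\kappa))} .
\end{equation*}
So the lemma reduces to estimating $b(\kappa)$ and $\wp'(b(\kappa))$ for $\kappa$ large.

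First I would record the expansions $\wp(z) = z^{-2} + \tfrac{g_2}{20}z^2 + \bigo(z^4)$ and $\wp'(z) = -2z^{-3} + \tfrac{g_2}{10}z + \bigo(z^3)$ near $z = 0$ (see~\cite{DLMF}), where $g_2$ is the lattice invariant. The crucial point is that~\eqref{eq:floquet 5} is the \emph{exact} identity $\wp(b(\kappa)) = \kappa^2 + \wp(\omega_1)$; since $b(\kappa)\downarrow 0$ as $\kappa\to\infty$, inverting the Laurent series for $\wp$ yields $b(\kappa)^2 = \kappa^{-2} - \wp(\omega_1)\kappa^{-4} + \bigo(\kappa^{-6})$, hence $b(\kappa) = \kappa^{-1} + \bigo(\kappa^{-3})$. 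Substituting this into the Laurent series for $\wp'$ then gives $\wp'(b(\kappa)) = -2\kappa^3 - 3\wp(\omega_1)\kappa + \bigo(\kappa^{-1})$.

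The asymptotics of $c_2$ follow by a geometric series:
\begin{equation*}
c_2(\kappa) = \frac{1}{2\wp'(b(\kappa))} = -\frac{1}{4\kappa^3}\Bigl(1 + \tfrac32\wp(\omega_1)\kappa^{-2} + \bigo(\kappa^{-4})\Bigr)^{-1} = -\frac{1}{4\kappa^3} + \bigo(\kappa^{-5}) .
\end{equation*}
For $c_1$ I would use that, again by~\eqref{eq:floquet 5}, the numerator equals \emph{exactly} $\kappa^2 + \tfrac32\wp(\omega_1)$; since $-\wp'(b(\kappa)) = 2\kappa\bigl(\kappa^2 + \tfrac32\wp(\omega_1)\bigr) + \bigo(\kappa^{-1})$, the factor $\kappa^2 + \tfrac32\wp(\omega_1)$ cancels and one is left with $c_1(\kappa) = \tfrac{1}{2\kappa}\bigl(1 + \bigo(\kappa^{-4})\bigr) = \tfrac{1}{2\kappa} + \bigo(\kappa^{-5})$.

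The one point requiring care is that the errors genuinely improve to $\bigo(\kappa^{-5})$ rather than the naive $\bigo(\kappa^{-3})$. This is exactly where it matters that~\eqref{eq:floquet 5} holds as an identity (so that the leading corrections to $c_1$ and $c_2$ appear with the same factor $\kappa^2 + \tfrac32\wp(\omega_1)$, which cancels in $c_1$) and that $b(\kappa)$ be expanded one order past its leading term before being fed into $\wp'$. Beyond this bookkeeping the remaining manipulations are elementary Taylor expansions, so I expect no further obstacle.
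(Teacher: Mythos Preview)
Your proof is correct and follows the same route as the paper: read off $c_1,c_2$ from~\eqref{eq:cnoidal g 1} and then expand using the Laurent series of $\wp$ together with the relation~\eqref{eq:floquet 5}. Your treatment is in fact more explicit than the paper's about why the error in $c_1$ is $\bigo(\kappa^{-5})$ rather than $\bigo(\kappa^{-3})$, correctly isolating the cancellation of the common factor $\kappa^2 + \tfrac{3}{2}\wp(\omega_1)$ between numerator and denominator.
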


The asymptotics~\eqref{eq:asymp 1} are consistent with the convergence found in \cref{thm:g conv 1}.  In fact, for cnoidal waves, \cref{thm:g conv 1} follows immediately from~\eqref{eq:asymp 1} and the fundamental theorem of calculus.

\begin{proof}
	From the expression~\eqref{eq:cnoidal g 1} for the diagonal Green's function $g(\kappa,V_\kappa)$ we have
	\begin{equation}
	c_1(\kappa) = \frac{\wp(b(\kappa)) + \tfrac{1}{2}\wp(\omega_1)}{-\wp'(b(\kappa))} , \qquad
	c_2(\kappa) = \frac{1}{2\wp'(b)} ,
	\label{eq:asymp 2}
	\end{equation}
	where $b=b(\kappa)$ is defined by the relation~\eqref{eq:floquet 5}.  As $\wp'(b)$ is nonvanishing for $b\in (0,\omega_1)$ and the p-function possesses the Laurent expansion~\cite{DLMF}*{\href{http://dlmf.nist.gov/23.9.E2}{Eq.~23.9.2}}
	\begin{equation}
	\wp(z;\omega_1,\omega_2) = \tfrac{1}{z^{2}} + \bigo( z^2 ) \quad \tx{for }0<|z|<\min\{|\omega_1|,|\omega_3|\} ,
	\label{eq:asymp 3}
	\end{equation}
	then the inverse function theorem guarantees that $b(\kappa)$ is an analytic function at $\kappa=+\infty$.  Combining the Laurent expansion~\eqref{eq:asymp 3} with the defining relation~\eqref{eq:floquet 5} for $b(\kappa)$, we can solve for the first few coefficients in the expansion for $b(\kappa)$:
	\begin{equation}
	b(\kappa) = \tfrac{1}{\kappa} +\bigo(\kappa^{-5}) .
	\label{eq:asymp 4}
	\end{equation}
	This combined with the coefficient formulas~\eqref{eq:asymp 2} yields the asymptotics~\eqref{eq:asymp 1}.
\end{proof}

Altogether, we conclude that cnoidal waves are admissible:
\begin{cor}
	If $V$ is a periodic traveling wave solution~\eqref{eq:cnoidal 1} of KdV, then the KdV equation~\eqref{eq:kdv} with initial data $u(0) \in V(0) + H^{-1}(\R)$ is globally well-posed.
\end{cor}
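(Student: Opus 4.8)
The plan is to verify that the cnoidal wave $V$ of~\eqref{eq:cnoidal 2} is admissible in the sense of~\cref{thm:hyp} and then invoke the main well-posedness result~\cref{thm:wellposed 2} (equivalently~\cref{thm:intro gwp}). The key structural input is~\cref{thm:hk cnoidal 1}: both $V(t,\cdot)$ and $V_\kappa(t,\cdot)$ are rigid spatial translates of the single fixed profile $V(0,x) = 2\wp(x+\omega_3;\omega_1,\omega_3)+\wp(\omega_1)$. Since $\wp(z)$ is analytic and $2\omega_1$-periodic along the horizontal line $\Im z = \Im\omega_3$, on which it has no poles, this profile lies in $W^{k,\infty}(\R)$ for every $k$, and every translate of it shares the same norm. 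Hence $\norm{V(t)}_{W^{2,\infty}}$ and $\norm{V_\kappa(t)}_{W^{4,\infty}}$ equal fixed finite constants, independent of $t$ and of $\kappa$; together with the fact that the choice of parameters makes $V$ an exact solution of KdV, this gives conditions (i) and (ii) of~\cref{thm:hyp} for $\kappa$ large.

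For condition (iii) I would argue as follows. By~\cref{thm:hk cnoidal 1} and the translation covariance of the $H_\kappa$ flow (cf.~\eqref{eq:g trans prop}), every element of $\{V_\varkappa(t):|t|\le T,\ \varkappa\ge\kappa\}$ is a translate $V(0,\cdot+s)$ of the base profile; its KdV evolution is the cnoidal traveling wave $V(0,\cdot+s+6\wp(\omega_1)t)$, while its $H_\kappa$ evolution is the traveling wave $V(0,\cdot+s+\nu(\kappa)t)$. Subtracting and applying the fundamental theorem of calculus in the traveling-wave argument,
\begin{equation*}
\norm{ V_\kappa(t) - V(t) }_{W^{2,\infty}} \lesssim |t|\,\bigl|\nu(\kappa)-6\wp(\omega_1)\bigr|\,\norm{V(0)}_{W^{3,\infty}} ,
\end{equation*}
uniformly over the shift $s$ (by translation invariance of $W^{2,\infty}$) and over $|t|\le T$. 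Thus (iii) reduces to the single asymptotic statement $\nu(\kappa)\to 6\wp(\omega_1)$ as $\kappa\to\infty$: the $H_\kappa$ propagation speed converges to the KdV speed appearing in~\eqref{eq:cnoidal 2}, as expected since $H_\kappa$ approximates KdV.

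To establish this limit I would combine~\eqref{eq:hk cnoidal 2} with the defining relation~\eqref{eq:floquet 5}: differentiating $\kappa^2=\wp(b(\kappa))-\wp(\omega_1)$ gives $\wp'(b(\kappa))\,b'(\kappa)=2\kappa$, whence $\nu(\kappa)=4\kappa^4 b'(\kappa)+4\kappa^2$. Inverting~\eqref{eq:floquet 5} with the Laurent expansion~\eqref{eq:asymp 3}, carried one order beyond~\eqref{eq:asymp 4}, gives $b(\kappa)=\kappa^{-1}-\tfrac12\wp(\omega_1)\kappa^{-3}+\bigo(\kappa^{-5})$; differentiating and substituting yields $\nu(\kappa)=6\wp(\omega_1)+\bigo(\kappa^{-2})$. (Equivalently, one may feed the asymptotics of $c_2(\kappa)$ from the preceding lemma into $\nu(\kappa)=16\kappa^5 c_2(\kappa)+4\kappa^2$.) With (i)--(iii) in hand, $V$ is admissible, so~\cref{thm:wellposed 2} applies and gives global well-posedness of KdV for $u(0)\in V(0)+H^{-1}(\R)$.

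The substantive content is entirely contained in the explicit Floquet-solution calculation of~\cref{thm:hk cnoidal 1}, which exhibits $g(x;\kappa,V)$ as an affine function of $V(x)$ and thereby shows the $H_\kappa$ flow preserves the cnoidal profile; granted that, the corollary is soft. The only point demanding care is retaining enough terms in the expansion of $b(\kappa)$, hence of $\nu(\kappa)$, to pin down the limiting speed rather than merely its order of magnitude; everything else follows from translation invariance and the fundamental theorem of calculus.
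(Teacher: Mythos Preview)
Your proposal is correct and follows essentially the same approach as the paper: verify admissibility by exploiting that $V_\kappa$ is a rigid translate of the fixed cnoidal profile (\cref{thm:hk cnoidal 1}), reduce condition (iii) to $\nu(\kappa)\to 6\wp(\omega_1)$ via the fundamental theorem of calculus, and extract this limit from the asymptotics of $b(\kappa)$. Your computation of the subleading term $-\tfrac12\wp(\omega_1)\kappa^{-3}$ in $b(\kappa)$, which is exactly what is needed to pin down the limiting speed, is in fact more explicit than the paper's sketch.
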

\begin{proof}
	In order to apply \cref{thm:wellposed 2} we must check that $V$ satisfies the criteria of \cref{thm:hyp}.  It only remains to show that $V_\kappa - V \to 0$ in $W^{2,\infty}$ as $\kappa\to\infty$ uniformly for initial data in $\{ V_\varkappa(t) : |t|\leq T,\ \varkappa\geq \kappa\}$.  By the fundamental theorem of calculus it suffices to show that the wave speed $\nu(\kappa)$ converges to that of the KdV traveling waves~\eqref{eq:cnoidal 2}.  Indeed, the expression~\eqref{eq:hk cnoidal 2} for $\nu(\kappa)$ combined with the asymptotics~\eqref{eq:asymp 3} and~\eqref{eq:asymp 4} yields $\nu(\kappa) \to 6\wp(\omega_1)$ as $\kappa\to\infty$, which is the propagation speed for the KdV traveling waves~\eqref{eq:cnoidal 2}.
\end{proof}

\section{Example: smooth periodic waves}
\label{sec:periodic}

The purpose of this section is to show that any $V(0,x) \in H^5(\T)$ (where $\T = \R/\Z$ denotes the circle) is admissible in the sense of \cref{thm:hyp}.  The proof consists of an energy argument in the spirit of Bona--Smith~\cite{Bona1975}.

Our convention for the Fourier transform of functions on the circle $\T$ is
\begin{equation*}
\hat{f}(\xi) = \int_0^1 e^{-i\xi x}f(x)\dx,
\quad\tx{so that}\quad
f(x) = \sum_{\xi\in 2\pi\Z} \hat{f}(\xi) e^{i\xi x} .
\end{equation*}
As with functions on the line, we also define the norm
\begin{equation*}
\norm{f}^2_{H^s_\kappa(\T)} = \sum_{\xi\in 2\pi\Z} (\xi^2 + 4\kappa^2)^s |\hat{f}(\xi)|^2 .
\end{equation*}

The Schr\"odinger operator $-\partial^2 + q$ from which we built the diagonal Green's function $g(x;\kappa,q)$ acts on $L^2(\R)$ and not $L^2(\T)$.  Consequently, for potentials $q$ on the circle this operator is no longer a relatively Hilbert--Schmidt (or even relatively compact) perturbation of the case $q\equiv 0$.  In place of the fundamental estimate~\eqref{eq:hskappa identity 1}, we will use the following two operator estimates from~\cite{Killip2019}*{Lem.~6.1}:
\begin{align}
\norm{ \sqrt{R_0}\, q\sqrt{R_0} }_\op
&\lesssim \kappa^{-1/2} \norm{q}_{H^{-1}_\kappa(\T)} ,
\label{eq:hskappa identity 3}\\
\norm{ \sqrt{R_0}f\psi R_0 q \sqrt{R_0} }_{\I_1}
&\lesssim \kappa^{-1} \norm{f_\kappa}_{H^{-1}(\T)} \norm{q}_{H^{-1}_\kappa(\T)} ,
\label{eq:hskappa identity 4}
\end{align}
both uniformly for $\kappa \geq 1$.  Here $\psi\in C_c^\infty(\R)$ is a fixed function so that $\sum_{k\in\Z} \psi(x-k) \equiv 1$.  This guarantees that we have the duality relation
\begin{equation}
\norm{ h }_{H^1(\T)} = \sup \left\{ \int_{\R} h(x)f(x)\psi(x)\dx : f\in C^\infty(\T),\ \norm{f}_{H^{-1}(\T)} \leq 1 \right\} .
\label{eq:hskappa identity 5}
\end{equation}
Here and throughout this section, we are viewing functions on the circle $\T$ as functions on the line $\R$ by periodic extension.

First, we obtain \ti{a priori} estimates for the $H_\kappa$ flow:
\begin{lem}
	\label{thm:periodic a priori}
	Given an integer $s \geq 0$ and $A, T>0$, there exist constants $C$ and $\kappa_{0}$ such that solutions $V_\kappa(t)$ to the $H_\kappa$ flow~\eqref{eq:hk flow} obey
	\begin{equation*}
	\norm{ V(0) }_{H^{s}(\T)} \leq A \quad \implies \quad
	\norm{V_\kappa(t)}_{H^s(\T)} \leq C\quad\tx{for all }|t| \leq T\tx{ and } \kappa \geq \kappa_{0} .
	\end{equation*}
\end{lem}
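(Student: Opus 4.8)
The plan is an energy argument in the spirit of Bona--Smith~\cite{Bona1975}, paralleling~\cite{Laurens2021}*{\S3}, carried out by induction on the integer $s$. The first step is to rewrite the $H_\kappa$ flow~\eqref{eq:hk flow} in a form suited to energy estimates. Using the series~\eqref{eq:g series 1} for $g(\kappa,V_\kappa)$ --- which converges on $\T$ for $\kappa$ large by~\eqref{eq:hskappa identity 3} --- together with the operator identities~\eqref{eq:linear op id} and~\eqref{eq:quad op id}, one writes, with $q = V_\kappa(t)$, $\partial_t q = -q''' - R_0(2\kappa)\partial_x^5 q + 6qq' + \mc{R}_\kappa(q)$, where $-q''' - R_0(2\kappa)\partial_x^5 q$ is the linear-in-$q$ part of $16\kappa^5 g'(\kappa,q) + 4\kappa^2 q'$ (the $4\kappa^2 q'$ exactly cancels the $4\kappa^2$ produced by~\eqref{eq:linear op id}), $6qq'$ is the principal part of $16\kappa^5\partial_x\langle\del_x, R_0 q R_0 q R_0\del_x\rangle$ coming from the $3fh$ term of~\eqref{eq:quad op id}, and $\mc{R}_\kappa(q)$ collects the remaining quadratic corrections from~\eqref{eq:quad op id} together with the cubic-and-higher tail $16\kappa^5\partial_x\sum_{\ell\geq 3}(-1)^\ell\langle\del_x, R_0(qR_0)^\ell\del_x\rangle$. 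The point --- to be verified term by term --- is that $\mc{R}_\kappa$ is, uniformly for $\kappa\geq\kappa_0$, a finite sum of products of ``smoothed derivatives of $q$'' (bounded Fourier multipliers such as $R_0(2\kappa)\partial_x^j$ with $j\leq 4$, or $4\kappa^2 R_0(2\kappa)$, applied to derivatives of $q$, many of these multipliers carrying extra powers of $\kappa^{-1}$) plus a tail that gains powers of $\kappa^{-1}$, so that $\mc{R}_\kappa$ is no worse, for the energy method, than the KdV nonlinearity. This is also the step that genuinely replaces the Hilbert--Schmidt identity~\eqref{eq:hskappa identity 1} by the periodic estimates~\eqref{eq:hskappa identity 3}--\eqref{eq:hskappa identity 5}: since on $\T$ the operator $-\partial_x^2 + V_\kappa$ on $L^2(\R)$ is no longer a relatively compact perturbation of $-\partial_x^2$, each tail summand is estimated by duality against $f\psi$ with $\norm{f}_{H^{-\sigma}(\T)}\leq 1$ via~\eqref{eq:hskappa identity 5}, pairing one factor with the localized test function through~\eqref{eq:hskappa identity 4} and the remaining factors in operator norm through~\eqref{eq:hskappa identity 3}, and then summing the geometric series in $\kappa^{-1/2}\norm{q}_{H^\sigma_\kappa(\T)}$ against the $16\kappa^5$ prefactor.

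With this decomposition the base case $s=0$ is immediate: $\partial_t\int_0^1 V_\kappa\dx = 0$ and $\partial_t\int_0^1 V_\kappa^2\dx = 0$, both being integrals of total derivatives over a period --- for the second, because $qq'$ is a derivative and $qg'(\kappa,q) = \bigl[\tfrac12 g''(\kappa,q) - qg(\kappa,q) - 2\kappa^2 g(\kappa,q)\bigr]'$ by the ODE~\eqref{eq:g alg prop 3}, which (like all the algebraic identities of \cref{sec:prelim}) holds verbatim for periodic $q$ --- so $\norm{V_\kappa(t)}_{L^2(\T)} = \norm{V(0)}_{L^2(\T)}$ for all $t$. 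For $s = 1$ and $s = 2$ I would use that the periodic $H_\kappa$ flow also conserves the first few polynomial conservation laws of KdV: the commutation relations~\eqref{eq:hk flow 2}, and hence the conservation of $\hkdv(q) = \int_0^1\bigl(\tfrac12(q')^2 + q^3\bigr)\dx$ and of the next invariant of the KdV hierarchy, rest only on the ODE~\eqref{eq:g alg prop 3} and the commutativity relation~\eqref{eq:biham relation} (hence on~\eqref{eq:g alg prop 1}--\eqref{eq:g alg prop 2}), all of which are insensitive to the domain --- for instance $\tfrac{d}{dt}\hkdv(V_\kappa) = -16\kappa^5\int_0^1 g(\kappa,q)\bigl(-q''' + 6qq'\bigr)\dx$ vanishes after integration by parts and a use of~\eqref{eq:g alg prop 3}. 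Feeding the already-established $H^{s-1}(\T)$ bound into these conservation laws and absorbing the lower-order polynomial terms by the Gagliardo--Nirenberg inequality on $\T$ (and Young's inequality) yields the $H^s(\T)$ bound; in particular $\norm{V_\kappa(t)}_{W^{1,\infty}}\lesssim\norm{V_\kappa(t)}_{H^2(\T)}$ is then bounded uniformly for $|t|\leq T$ and $\kappa\geq\kappa_0$.

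For the inductive step $s\geq 3$ I would estimate $\tfrac{d}{dt}\norm{\partial_x^s V_\kappa(t)}_{L^2(\T)}^2 = 2\int_0^1 \partial_x^s V_\kappa\cdot\partial_x^s(\partial_t V_\kappa)\dx$ directly. The two linear terms $-q'''$ and $-R_0(2\kappa)\partial_x^5 q$ are skew-adjoint Fourier multipliers that commute with $\partial_x^s$, so their contribution to $\tfrac{d}{dt}$ is exactly zero; the term $6qq'$ gives the classical $H^s$ energy estimate for KdV; and for $\mc{R}_\kappa(q)$ one distributes $\partial_x^s$ by the Leibniz rule and, using the structural description above, integrates by parts to take a derivative off the highest-order factor. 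Since every factor carries a bounded multiplier, the net contribution of $6qq' + \mc{R}_\kappa(q)$ is $\lesssim\mc{P}\bigl(\norm{V_\kappa(t)}_{H^{s-1}(\T)}\bigr)\norm{\partial_x^s V_\kappa(t)}_{L^2(\T)}^2$ for some polynomial $\mc{P}$, uniformly in $\kappa\geq\kappa_0$ (the tail even supplying powers of $\kappa^{-1}$). By the $s\leq 2$ steps the coefficient on the right is already bounded uniformly for $|t|\leq T$ and $\kappa\geq\kappa_0$, so Gr\"onwall's inequality closes the estimate and gives $\norm{V_\kappa(t)}_{H^s(\T)}\leq C$. (Local existence of $V_\kappa$ in $H^s(\T)$ for $\kappa\geq\kappa_0$ is a routine contraction argument once the nonlinearity is written as above; this a priori bound then extends the solution to $[-T,T]$ for every $T>0$.)

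The main obstacle is the uniformity in $\kappa$: the flow carries the prefactors $\kappa^5$ and $\kappa^2$, and one has to check --- in the principal part via the exact identities~\eqref{eq:linear op id} and~\eqref{eq:quad op id}, and in the tail via~\eqref{eq:hskappa identity 3}--\eqref{eq:hskappa identity 5} --- that these are entirely absorbed by the negative powers of $\kappa$ supplied by the free resolvents $R_0(\kappa)$, so that $\mc{R}_\kappa$ has the stated $\kappa$-uniform structure (the individual summands in~\eqref{eq:quad op id} appear to grow in $\kappa$, but the growing pieces cancel, consistently with the convergence estimates of \cref{sec:conv 1}). A secondary but essential point is that the Hilbert--Schmidt and trace-class bookkeeping of \cref{sec:prelim} fails on $\T$ and must be replaced throughout by~\eqref{eq:hskappa identity 4} together with the partition-of-unity duality~\eqref{eq:hskappa identity 5}; once this substitution is in place, the derivative counting and summation proceed exactly as in the line case and in~\cite{Laurens2021}*{\S3}.
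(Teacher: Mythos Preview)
Your approach is workable but takes an unnecessarily laborious route for $s\geq 3$. The paper's proof is much shorter: it observes that the $H_\kappa$ flow exactly conserves \emph{every} polynomial conservation law of KdV, not just the first few. This holds because $H_\kappa$ is built from $\alpha(\kappa,\cdot)$ and the momentum, both of which Poisson-commute with all the KdV invariants (indeed $\alpha$ is a generating function for them; cf.~\eqref{eq:alpha intro} and~\eqref{eq:hk flow 2}), and can be verified directly from the algebraic identities~\eqref{eq:g alg prop 1}--\eqref{eq:g alg prop 3}. Once this is established, the classical argument of Lax~\cite{Lax1975}*{Th.~3.1} --- that the $s$-th KdV invariant controls $\|\cdot\|_{H^s(\T)}^2$ modulo lower-order terms absorbable by Gagliardo--Nirenberg --- applies verbatim at every level $s$, yielding bounds that are uniform in $\kappa$ and even in $t$ (no Gr\"onwall is needed, since the invariants are exactly conserved).

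Your decomposition $\partial_t q = -q''' - R_0(2\kappa)\partial_x^5 q + 6qq' + \mc{R}_\kappa(q)$ and the direct $\partial_x^s$ energy estimate is essentially the argument one is \emph{forced} into for the $\hktilde$ flow, where the background $V$ breaks exact conservation --- this is precisely the content of \cref{thm:classical a priori} and the reference to~\cite{Laurens2021}*{\S3}. For the pure $H_\kappa$ flow treated here it is overkill. Your claim that $\mc{R}_\kappa$ is ``no worse than the KdV nonlinearity for the energy method'' is plausible but not as immediate as you suggest: several quadratic pieces from~\eqref{eq:quad op id} carry extra derivatives (e.g.\ $\partial_x^3$ on a product after the outer $\partial_x$), and one must check that the accompanying $\kappa^{-2}$ and the smoothing from the interposed $R_0(2\kappa)$ multipliers absorb them in a way compatible with the commutator structure needed to close the estimate. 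This can be done (as in~\cite{Laurens2021}*{\S3}), but the paper sidesteps it entirely.
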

\begin{proof}
	The Hamiltonian $H_\kappa$ is constructed from $\alpha(\kappa,q)$ and the momentum functional~\eqref{eq:momentum hkdv}.  Momentum is one of the polynomial conserved quantities of KdV and $\alpha$ can be expressed as a series~\eqref{eq:alpha intro} in terms of these quantities, and so both Poisson commute with every KdV conserved quantity.  Consequently, each KdV conserved quantity is also conserved for smooth solutions $V_\kappa(t)$ of the $H_\kappa$ (which can be individually verified using the algebraic identities~\eqref{eq:g alg prop 1}--\eqref{eq:g alg prop 3}).  Therefore the classical energy arguments for KdV (cf.~\cite{Lax1975}*{Th.~3.1}) can be applied to the $H_\kappa$ flow.
\end{proof}

Next, we prove the existence for the $H_\kappa$ flow via a contraction mapping argument:
\begin{prop}
	\label{thm:periodic hk gwp}
	Given $A,T>0$, there exists a constant $\kappa_0$ so that for $\kappa \geq \kappa_0$ the $H_\kappa$ flows~\eqref{eq:hk flow} with initial data in the closed ball $B_A\subset H^5(\T)$ of radius $A$ are globally well-posed and the corresponding solutions $V_\kappa(t)$ are in $C_tH^5([-T,T]\times\T)$.
\end{prop}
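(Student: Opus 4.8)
The plan is to construct the $H_\kappa$ flow by a contraction mapping argument at regularity $H^5(\T)$ and then globalize using the \emph{a priori} bound of \cref{thm:periodic a priori}. The transport term $4\kappa^2\partial_x$ in \eqref{eq:hk flow} generates the translation group $e^{t4\kappa^2\partial_x}$, an isometry of every $H^s(\T)$, so I would work with the Duhamel formulation
\begin{equation*}
V_\kappa(t) = e^{t4\kappa^2\partial_x}V(0) + 16\kappa^5\int_0^t e^{(t-s)4\kappa^2\partial_x} g'\big(\kappa,V_\kappa(s)\big)\ds ,
\end{equation*}
treating $q\mapsto 16\kappa^5 g'(\kappa,q)$ as the nonlinearity. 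The proof then splits into two steps: (i) local well-posedness in $C_tH^5$ on a time interval whose length $\tau>0$ depends only on $\kappa$ and on the $H^5(\T)$-norm of the datum; and (ii) the bound $\norm{V_\kappa(t)}_{H^5(\T)}\le C(A,T)$ supplied by \cref{thm:periodic a priori} with $s=5$. Since the $H^5$-norm of the solution never exceeds $C$, the local lifespan stays bounded below by $\tau(\kappa,C)>0$, so for each fixed $\kappa$ above the relevant threshold finitely many iterations of the local result cover $[-T,T]$.

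The heart of step (i) is a periodic, high-regularity analogue of \cref{thm:diffeo prop}: for each $R>0$ there should be a $\kappa_0(R)$ so that for $\kappa\ge\kappa_0(R)$ the functional $q\mapsto g(\kappa,q)-\tfrac{1}{2\kappa}$ is real-analytic from the ball $B_R\subset H^5(\T)$ into $H^7(\T)$ --- a two-derivative gain, as in \cref{thm:diffeo prop} --- with derivative bounded uniformly on sub-balls, so that $q\mapsto g'(\kappa,q)$ is locally Lipschitz $H^5(\T)\to H^5(\T)$; the gain is exactly what prevents the nonlinearity from losing a derivative. Because the Hilbert--Schmidt identity \eqref{eq:hskappa identity 1} is unavailable on the circle, I would prove this by expanding $g(\kappa,q)-\tfrac{1}{2\kappa}$ and its spatial derivatives in the Neumann-type series \eqref{eq:g series 1} in powers of $q$, pairing each summand against a test function through the periodic duality \eqref{eq:hskappa identity 5}, and estimating with the operator bounds \eqref{eq:hskappa identity 3}--\eqref{eq:hskappa identity 4} together with the product estimates \eqref{eq:hskappa ests} bootstrapped to $W^{5,\infty}$ and $H^5$; the embedding $H^5(\T)\hookrightarrow W^{1,\infty}$ lets me put the potential in operator norm, so the series converges once $\kappa$ is large relative to $R$. (Alternatively, one can bootstrap the regularity of $g$ from that of $q$ using the third-order ODE \eqref{eq:g alg prop 3}.) With this Lipschitz bound the Duhamel map is a contraction on a ball in $C([-\tau,\tau];H^5(\T))$ as soon as $\tau\lesssim\kappa^{-5}$ (with the implicit constant depending on $R$), yielding a unique local solution depending continuously on the data.

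Assembling the pieces, I would set $\kappa_0$ to be the larger of the threshold from \cref{thm:periodic a priori} and the local-theory threshold $\kappa_0(R)$ applied with $R\sim C(A,T)$. For $\kappa\ge\kappa_0$ and $\norm{V(0)}_{H^5(\T)}\le A$, one solves locally in $C_tH^5$, invokes \cref{thm:periodic a priori} to keep the $H^5$-norm $\le C$, restarts from the endpoint, and after $O(T/\tau)$ steps obtains a solution on $[-T,T]$ lying in $C_tH^5([-T,T]\times\T)$; uniqueness and continuous dependence are inherited from the local theory. A minor technical point is that \cref{thm:periodic a priori} is an energy/identity computation most cleanly run for smooth solutions, but the local solutions already lie in $H^5(\T)$, which is enough regularity to differentiate the conserved densities and integrate by parts (or one approximates with smooth data and passes to the limit using the continuous dependence just established). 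I expect the genuine obstacle to be this high-regularity mapping property of $q\mapsto g(\kappa,q)$ on $H^5(\T)$: once the periodic operator estimates \eqref{eq:hskappa identity 3}--\eqref{eq:hskappa identity 4} and the bootstrapped product estimates have been pushed through the series, the iteration scheme and the globalization are standard.
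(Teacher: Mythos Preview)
Your proposal is correct and follows essentially the same approach as the paper: Duhamel formulation with the translation group, a Lipschitz bound on $q\mapsto g'(\kappa,q)$ in high Sobolev norms obtained by expanding the series~\eqref{eq:g series 1}, pairing via the periodic duality~\eqref{eq:hskappa identity 5}, and estimating with~\eqref{eq:hskappa identity 3}--\eqref{eq:hskappa identity 4}, then globalizing via \cref{thm:periodic a priori}. The only notable difference is that the paper proves the slightly sharper mapping $W\mapsto g(\kappa,W)-\tfrac{1}{2\kappa}$ from $H^4(\T)$ into $H^6(\T)$ (so the Lipschitz bound reads $\snorm{g'(\kappa,W)-g'(\kappa,\tilde W)}_{H^5}\lesssim\snorm{W-\tilde W}_{H^4}$), handling the distribution of derivatives among the $\ell$ factors by the H\"older-in-Fourier inequality $\prod_j\snorm{W^{(\sigma_j)}}_{H^{-1}}\le\snorm{W^{(s)}}_{H^{-1}}\snorm{W}_{H^{-1}}^{\ell-1}$ rather than product estimates, but this refinement is not needed for the contraction.
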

\begin{proof}
	The solution $V_\kappa(t)$ to the $H_\kappa$ flow satisfies the integral equation
	\begin{equation}
	V_\kappa(t) = e^{t4\kappa^2\partial_x}V(0) + 16\kappa^5 \int_0^t e^{(t-s)4\kappa^2\partial_x} g'(\kappa,V_\kappa(s)) \ds .
	\label{eq:periodic wp 1}
	\end{equation}
	We will ultimately show that if $W,\tilde{W} \in H^4(\T)$ then
	\begin{equation}
	\snorm{ g'(\kappa,W) - g'(\kappa,\tilde{W}) }_{H^5(\T)}
	\lesssim \snorm{ g(\kappa,W) - g(\kappa,\tilde{W}) }_{H^6(\T)}
	\lesssim \snorm{ W - \tilde{W} }_{H^4(\T)}
	\label{eq:periodic wp 2}
	\end{equation}
	uniformly for $\kappa \geq 2\norm{W}^2_{H^{-1}(\T)}, 2\snorm{\tilde{W}}^2_{H^{-1}(\T)}$.  Assuming this claim, for fixed initial data $V(0) \in H^5(\T)$ we see that $W \mapsto g'(\kappa,W)$ is Lipschitz on the closed ball $B_R \subset H^5(\T)$ of radius $R:= 2A$ for all $\kappa \geq 2R^2$.  Consequently, there exists $\eps>0$ sufficiently small such that the integral operator~\eqref{eq:periodic wp 1} is a contraction on $C_tB_{R}([-\eps,\eps]\times\R)$.  Then, given an arbitrary $T>0$, we use the \ti{a priori} estimates of \cref{thm:periodic a priori} to increase $R := R( A )$ if necessary and iterate in order to conclude that the solution exists in $C_tB_R([-T,T]\times\R)$ for all $\kappa \geq 2R^2$.
	
	It remains to prove the Lipschitz estimate~\eqref{eq:periodic wp 2}, but first we must show that $g(\kappa,W)-\tfrac{1}{2\kappa}$ is in $H^6(\T)$ for $W\in H^4(\T)$.  To accomplish this, we will show that $[g(\kappa,W)-\tfrac{1}{2\kappa}]^{(s)}$ is in $H^1$ for $s=0,1,\dots,5$ using the duality relation~\eqref{eq:hskappa identity 5}.  For $f\in C^\infty(\T)$ we can obtain a series for $g^{(s)}(\kappa,W)$ by differentiating the translation relation~\eqref{eq:g trans prop} at $h=0$:
	\begin{equation*}
	\left| \int [g(x;\kappa,W)-\tfrac{1}{2\kappa}]^{(s)} f(x)\psi(x) \dx \right|
	\leq \sum_{\ell=1}^\infty \left| \tr\{ f\psi [\partial^s, R_0 (WR_0)^\ell ] \} \right| .
	\end{equation*}
	Using the operator estimates~\eqref{eq:hskappa identity 3} and~\eqref{eq:hskappa identity 4}, we put all copies of $W$ in $H^{-1}(\T)$:
	\begin{equation*}
	\left| \int [g(\kappa,W)-\tfrac{1}{2\kappa}]^{(s)} f\psi \dx \right|
	\lesssim \kappa^{-1/2}\norm{f}_{H^{-1}} \sum_{\ell=1}^\infty \sum_{\substack{ \sigma\in\N^\ell \\ |\sigma| = s }} \binom{s}{\sigma} \prod_{j=1}^\ell \kappa^{-1/2} \snorm{ W^{(\sigma_j)} }_{H^{-1}} .
	\end{equation*}
	Applying H\"older's inequality in Fourier variables we see that
	\begin{equation*}
	\prod_{j=1}^\ell \snorm{ W^{(\sigma_j)} }_{H^{-1}(\T)} 
	\leq \snorm{ W^{(s)} }_{H^{-1}(\T)} \snorm{ W }_{H^{-1}(\T)}^{\ell-1} ,
	\end{equation*}
	and so
	\begin{align*}
	\left| \int [g(\kappa,W)-\tfrac{1}{2\kappa}]^{(s)} f\psi \dx \right|
	&\lesssim \kappa^{-1}\norm{f}_{H^{-1}} \snorm{W^{(s)}}_{H^{-1}} \sum_{\ell=1}^\infty \ell^s \big( \kappa^{-1/2} \snorm{ W }_{H^{-1}} \big)^{\ell-1} \\
	&\lesssim \kappa^{-1}\norm{f}_{H^{-1}} \norm{W}_{H^{s-1}}
	\end{align*}
	provided that we have $\kappa \geq 2\snorm{ W }^2_{H^{-1}}$.  Taking a supremum over $\norm{f}_{H^{-1}(\T)} \leq 1$ yields the claim.
	
	Lastly, we turn to the Lipschitz inequality~\eqref{eq:periodic wp 2}.  It suffices to show that the linear functional $h\mapsto dg|_W(h) - dg|_0(h)$ is bounded $H^4(\T)\to H^6(\T)$ for $\kappa \geq 2\norm{W}^2_{H^{-1}(\T)}$ by the fundamental theorem of calculus.  To demonstrate this, we estimate its $s$th derivative in $H^1(\T)$ for $s=0,\dots,5$ using the duality relation~\eqref{eq:hskappa identity 5} and the previous argument.  Expanding the resolvents within the functional derivative expression~\eqref{eq:cleanup lem 10} into series, we have
	\begin{align*}
	&\left| \int [dg|_W(h) - dg|_0(h)]^{(s)}(x) f(x)\psi(x) \dx \right| 
	\lesssim \kappa^{-3/2} \norm{f}_{H^{-1}} \snorm{W}_{H^{s-1}} \snorm{h}_{H^{s-1}} 
	\end{align*}
	for $\kappa \geq 2\snorm{ W }^2_{H^{-1}}$.  Taking a supremum over $\norm{f}_{H^{-1}(\T)} \leq 1$ yields the claim.
\end{proof}

We now know that the $H_\kappa$ flows $V_\kappa(t)$ satisfy the second condition in the definition of admissibility, provided that we take initial data $V(0) \in H^5(\T)$.  The first condition---that the corresponding solution $V(t)$ of KdV is sufficiently regular---then follows from the well-posedness of KdV in $H^3(\T)$.  Alternatively, we could reprove this classical well-posedness result by constructing $V(t)$ as the limit of the $H_\kappa$ flows $V_\kappa$, but we will not pursue this here.

Our next objective is to verify the third condition in the definition of admissibility, which says that $V_\kappa$ converges to $V$ in $W^{2,\infty}(\R)$ as $\kappa\to\infty$.  To begin, we control the growth of the difference $V_\kappa-V$ in $L^2(\T)$:
\begin{prop}
	\label{thm:periodic L2 conv}
	Given $A,T>0$, there exists a constant $C$ so that the quantity
	\begin{equation*}
	P(t) := \tfrac{1}{2} \int_{\T} [V_\kappa(t,x) - V(t,x)]^2 \dx \quad\tx{ with }V_\kappa(0,x) = V(0,x) \in H^5(\T)
	\end{equation*}
	obeys
	\begin{equation*}
	\left| \ddt P(t) \right|
	\leq C \big( P + o(1) \sqrt{P}\big) \quad\tx{as }\kappa\to\infty
	\end{equation*}
	uniformly for $|t|\leq T$ and $\norm{ V(0) }_{H^5(\T)} \leq A$.
\end{prop}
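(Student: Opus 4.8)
Write $w(t) := V_\kappa(t) - V(t)$, so that $P(t) = \tfrac12\int_\T w(t,x)^2\dx$ and, since $w$ is periodic and (by \cref{thm:periodic hk gwp} and classical well-posedness of KdV) the curves $t\mapsto V_\kappa(t)$ and $t\mapsto V(t)$ are $C^1$ in a suitable topology,
\begin{equation*}
\ddt P(t) = \int_\T w\,\partial_t w \dx = \int_\T w\,\big(\partial_t V_\kappa - \partial_t V\big)\dx .
\end{equation*}
The plan is to exhibit the $H_\kappa$ flow~\eqref{eq:hk flow} as KdV~\eqref{eq:kdv} plus an $L^2(\T)$-small remainder. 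Concretely, extracting the linear and quadratic terms of the series~\eqref{eq:g series 1} for $g(\kappa,V_\kappa)$ I would write
\begin{equation*}
16\kappa^5 g'(x;\kappa,V_\kappa) + 4\kappa^2 V_\kappa' = -V_\kappa''' + 6V_\kappa V_\kappa' + E_\kappa ,
\end{equation*}
and prove $\norm{E_\kappa(t)}_{L^2(\T)} = o(1)$ as $\kappa\to\infty$, uniformly for $|t|\leq T$ and $\norm{V(0)}_{H^5(\T)}\leq A$.

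Granting this, substitute $\partial_t V_\kappa = -V_\kappa''' + 6V_\kappa V_\kappa' + E_\kappa$ and $\partial_t V = -V''' + 6VV'$ and integrate by parts on the circle. The term $\int_\T w(-w''')\dx$ vanishes because $ww''' = \big(ww'' - \tfrac12(w')^2\big)'$ is a total derivative. Writing $V_\kappa V_\kappa' - VV' = \tfrac12\partial_x\!\big(w(V_\kappa+V)\big)$ and integrating by parts twice gives $6\int_\T w\,(V_\kappa V_\kappa' - VV')\dx = \tfrac32\int_\T w^2(V_\kappa+V)'\dx$, which is $\lesssim \norm{(V_\kappa+V)'}_{L^\infty} P \leq CP$ since $\norm{V_\kappa}_{H^5(\T)}$ is bounded uniformly by \cref{thm:periodic a priori} (and $H^5(\T)\hookrightarrow W^{4,\infty}$), while $\norm{V}_{W^{2,\infty}}$ is bounded by \cref{thm:hyp}(i). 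Finally $\big|\int_\T w E_\kappa\dx\big| \leq \norm{w}_{L^2(\T)}\norm{E_\kappa}_{L^2(\T)} = \sqrt{2P}\cdot o(1)$. Adding the three contributions gives $|\ddt P| \leq C\big(P + o(1)\sqrt P\big)$, as claimed.

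It remains to produce the decomposition and the bound on $E_\kappa$. For the linear term, the identity~\eqref{eq:linear op id} with $f = V_\kappa'$ gives $16\kappa^5 g_1'(\kappa,V_\kappa) + 4\kappa^2 V_\kappa' = -V_\kappa''' - R_0(2\kappa)\partial^2 V_\kappa'''$, and since $\xi^4(\xi^2+4\kappa^2)^{-2}\leq (4\kappa^2)^{-1}\xi^2$ we get $\norm{R_0(2\kappa)\partial^2 V_\kappa'''}_{L^2(\T)}\lesssim \kappa^{-1}\norm{V_\kappa}_{H^4(\T)}$. For the quadratic term, $g_2'$ is obtained by differentiating the translation identity~\eqref{eq:g trans prop}, and applying the identity~\eqref{eq:quad op id} to the resulting traces isolates the main term $6V_\kappa V_\kappa'$; every remaining term carries at least one factor $R_0(2\kappa)$ acting on a derivative of $V_\kappa$, and using $\norm{R_0(2\kappa)\partial^j}_{L^2\to L^2}\lesssim\kappa^{-2}$ for $j=0$ together with $\norm{R_0(2\kappa)}_{L^\infty\to L^\infty}\lesssim\kappa^{-2}$ and Leibniz, one checks each is $O(\kappa^{-2})$ in $L^2(\T)$ provided $V_\kappa$ is bounded in $H^5(\T)$ --- the worst term involves $V_\kappa^{(5)}$, which is exactly why the hypothesis is $V(0)\in H^5(\T)$. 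For the tail $16\kappa^5\sum_{\ell\geq 3}(-1)^\ell g_\ell'(\kappa,V_\kappa)$ I would argue exactly as in the proof of \cref{thm:periodic hk gwp} (and the estimate of~\eqref{eq:classical conv 4} in \cref{thm:classical flow conv}): differentiating~\eqref{eq:g trans prop} turns $g_\ell'$ into a sum of traces of products of $R_0$'s interlaced with $\ell$ copies of $V_\kappa$, one of them differentiated; estimating by duality against $\phi\psi$ with $\norm{\phi}_{L^2(\T)}\leq 1$ and using the circle operator bounds~\eqref{eq:hskappa identity 3}--\eqref{eq:hskappa identity 4} to place all copies of $V_\kappa$ in $H^{-1}_\kappa(\T)$, with $\norm{\,\cdot\,}_{H^{-1}_\kappa(\T)}\lesssim\kappa^{-1}\norm{\,\cdot\,}_{L^2(\T)}$, yields a convergent series summing to $O(\kappa^{-1})$ in $L^2(\T)$, uniformly for $V_\kappa$ bounded in $H^5(\T)$. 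Combining with \cref{thm:periodic a priori} gives $\norm{E_\kappa(t)}_{L^2(\T)} = O(\kappa^{-1})$, the required $o(1)$.

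The main obstacle is this tail estimate on the circle: unlike on $\R$, the Schr\"odinger operator with a periodic potential is not a relatively Hilbert--Schmidt perturbation of $-\partial^2+\kappa^2$, so one must route every trace through the weaker bounds~\eqref{eq:hskappa identity 3}--\eqref{eq:hskappa identity 4} and track the powers of $\kappa$ carefully so that the $\kappa^5$ prefactor is defeated even for $\ell=3$; this is where the regularity $V(0)\in H^5(\T)$ is consumed, to absorb the derivative produced by differentiating the Green's function. The rest is a routine energy computation.
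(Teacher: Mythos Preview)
Your proposal is correct and follows the same energy-argument skeleton as the paper: write the $H_\kappa$ flow as KdV plus a remainder, multiply by $w=V_\kappa-V$, and integrate over $\T$. The paper also splits into linear, quadratic, and tail contributions (its equations (8.3)--(8.6)) and arrives at the same inequality.

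There is one genuine methodological difference worth noting. For the quadratic piece you invoke the operator identity~\eqref{eq:quad op id} directly on the periodic function $V_\kappa$; this is legitimate because \eqref{eq:quad op id} is a pointwise symbol identity and the convolution operator $R_0(2\kappa)$ acts on periodic functions as the Fourier multiplier $(\xi^2+4\kappa^2)^{-1}$, $\xi\in 2\pi\Z$. The paper takes a longer route: it writes the pairing $\int_\T u\cdot(\text{quadratic term})$ explicitly as a triple Fourier sum, replaces the sum over the internal ``loop'' frequency by an integral over $\R$ (so that the residue computation underlying~\eqref{eq:quad op id} can be carried out), and then controls the discrepancy between sum and integral by a trapezoid-rule error estimate. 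Your direct use of~\eqref{eq:quad op id} bypasses this approximation step entirely and is cleaner; the paper's route, on the other hand, makes the dependence on $\kappa$ completely explicit at the level of symbols. A second, minor difference: the paper pairs each term of the expansion against $u$ individually (e.g.\ in the linear term it splits $V_\kappa'=V'+u'$ and observes that the $u'$ contribution is a total derivative), whereas you bound the full remainder $E_\kappa$ in $L^2(\T)$ first and only then apply Cauchy--Schwarz. Either bookkeeping yields $o(1)\sqrt{P}$.
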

\begin{proof}
	Let $u := V_\kappa - V$ so that $P(t) = \tfrac{1}{2} \norm{u}^2_{L^2(\T)}$.  Then $u$ obeys the differential equation
	\begin{align*}
	\ddt u
	&= 16\kappa^5 g'(\kappa,V_\kappa) + 4\kappa^2V'_\kappa + V''' - 6VV' \\
	&= 16\kappa^5 g'(\kappa,V_\kappa) + 4\kappa^2V'_\kappa + V''' - 6V_\kappa V_\kappa' + 6(V_\kappa u)' - 6uu' .
	\end{align*}
	Multiplying by $u \in C^\infty(\T)$ and integrating over $\T$, we obtain an equality for the time derivative of $P(t)$.  The contribution from $6uu'$ is a total derivative and hence vanishes.  Expanding $g'(\kappa,V_\kappa)$ in a series and extracting the linear and quadratic terms, we write
	\begin{align}
	&\ddt P(t)
	\nonumber \\
	&= 6 \int u(x) (V_\kappa u)'(x) \dx 
	\label{eq:periodic L2 conv 1} \\
	&\phantom{={}}+ \int u(x) \{ - 16\kappa^5 \langle\del_x, R_0V'_\kappa R_0\del_x \rangle + 4\kappa^2 V'_\kappa(x) + V'''(x) \} \dx
	\label{eq:periodic L2 conv 2} \\
	&\phantom{={}}+ \int u(x) \{ 16\kappa^5 \langle\del_x, [\partial, R_0V_\kappa R_0V_\kappa R_0 ] \del_x \rangle - 3(V_\kappa^2)'(x) \} \dx
	\label{eq:periodic L2 conv 3} \\
	&\phantom{={}}+ \int u(x)\, 16\kappa^5 \sum_{\ell=3}^\infty (-1)^\ell \langle\del_x, [ \partial, R_0(V_\kappa R_0)^\ell ] \del_x \rangle \dx .
	\label{eq:periodic L2 conv 4}
	\end{align}
	We will estimate the four terms~\eqref{eq:periodic L2 conv 1}--\eqref{eq:periodic L2 conv 4} individually.
	
	For the first term~\eqref{eq:periodic L2 conv 1}, we integrate by parts to move the derivative onto $V_\kappa$:
	\begin{equation*}
	|\eqref{eq:periodic L2 conv 1}|
	= \left| \int 3V'_\kappa(x) u^2(x) \dx \right|
	\leq 3 \norm{V'_\kappa}_{L^\infty} P .
	\end{equation*}
	We know $\norm{V'_\kappa}_{L^\infty}$ is bounded uniformly for $|t|\leq T$ and $\kappa$ large by the embedding $H^1(\T) \hookrightarrow L^\infty(\T)$ and the \ti{a priori} estimates of \cref{thm:periodic a priori}.
	
	Next we estimate the linear term~\eqref{eq:periodic L2 conv 2}.  Using the first operator identity of~\eqref{eq:linear op id}, we have
	\begin{align*}
	\eqref{eq:periodic L2 conv 2}
	= &\int u \big\{ \big[ {-16}\kappa^4 R_0(2\kappa) + 4\kappa^2 + \partial^2 \big] V' \big\} \dx \\
	&+ \int u \big\{ \big[ {- 16}\kappa^4 R_0(2\kappa) + 4\kappa^2 \big] u' \big\} \dx .
	\end{align*}
	As differentiation commutes with the resolvent $R_0(2\kappa)$, the last integrand is a total derivative and the integral vanishes.  For the remaining term, we use the rest of the identity~\eqref{eq:linear op id} and Cauchy--Schwarz to estimate
	\begin{equation*}
	|\eqref{eq:periodic L2 conv 2}|
	\leq \snorm{ R_0(2\kappa) V^{(5)} }_{L^2} P^{1/2}
	\leq \kappa^{-2} \snorm{ V^{(5)} }_{L^2} P^{1/2} .
	\end{equation*}
	The factor $\snorm{ V^{(5)} }_{L^2}$ is bounded uniformly for $|t|\leq T$ and $\kappa$ large by the \ti{a priori} estimates of \cref{thm:periodic a priori}.
	
	Now we examine to the quadratic contribution~\eqref{eq:periodic L2 conv 3}.  Consider the term when the derivative $[\partial,\cdot]$ hits the second factor of $V_\kappa$, and expand in Fourier variables:
	\begin{align*}
	\int_{\T} u(x) \langle \del_x , R_0 V_\kappa R_0 V'_\kappa R_0 \del_x \rangle \dx
	= \sum_{\xi_1,\xi_2,\xi_3\in 2\pi\Z} \frac{ \hat{u}(\xi_1-\xi_3) \wh{V_\kappa} (\xi_3-\xi_2) \wh{V'_\kappa}(\xi_2-\xi_1) }{ (\xi_3^2+\kappa^2) (\xi_2^2+\kappa^2) (\xi_1^2+\kappa^2) } .
	\end{align*}
	Re-indexing $\eta_1 = \xi_2-\xi_1$, $\eta_2 = \xi_3-\xi_2$, $\eta_3 = \xi_3$, the RHS becomes
	\begin{equation*}
	\sum_{\eta_1,\eta_2,\eta_3\in 2\pi\Z} \frac{ \hat{u}(-\eta_1-\eta_2) \wh{V_\kappa} (\eta_2) \wh{V'_\kappa}(\eta_1)}{ (\eta_3^2+\kappa^2) ((\eta_3-\eta_2)^2+\kappa^2) ((\eta_3-\eta_1-\eta_2)^2+\kappa^2) } .
	\end{equation*}
	The numerator is now independent of $\eta_3$, and so if we approximate the sum over $\eta_3\in 2\pi\Z$ by an integral over $\eta_3\in\R$ then we can evaluate the integral using residue calculus and eliminate $\eta_3$: 
	\begin{align*}
	&\sum_{\eta_1,\eta_2\in 2\pi\Z} \frac{1}{2\pi} \int_{\R} \frac{ \hat{u}(-\eta_1-\eta_2) \wh{V_\kappa} (\eta_2) \wh{V'_\kappa}(\eta_1) }{ (\eta_3^2+\kappa^2) ((\eta_3-\eta_2)^2+\kappa^2) ((\eta_3-\eta_1-\eta_2)^2+\kappa^2) } \deta_3 \\
	&= \kappa^{-1} \sum_{\eta_1,\eta_2\in 2\pi\Z} \frac{ \hat{u}(-\eta_1-\eta_2) \wh{V_\kappa} (\eta_2) \wh{V'_\kappa}(\eta_1) (12\kappa^2 + \eta_1^2 + \eta_1\eta_2 + \eta_2^2)}{ (\eta_1^2+4\kappa^2) (\eta_2^2+4\kappa^2) ((\eta_1+\eta_2)^2+4\kappa^2) } .
	\end{align*}
	Note that this last summand is symmetric in $\eta_1$ and $\eta_2$, and so both terms of $[\partial , R_0V_\kappa R_0 V_\kappa R_0]$ produce the same contribution.
	
	We are now prepared to estimate the term~\eqref{eq:periodic L2 conv 3}.  Changing to Fourier variables and replacing the sum over $\eta_3\in 2\pi\Z$ with an integral over $\eta_3\in\R$, we write
	\begin{align}
	&\eqref{eq:periodic L2 conv 3} 
	\nonumber \\
	&= \sum_{\eta_1,\eta_2} \hat{u}(-\eta_1-\eta_2) \wh{V_\kappa}(\eta_1) \wh{V'_\kappa}(\eta_2) \left[ \tfrac{32\kappa^4( 12\kappa^2+\eta_1^2 + \eta_1\eta_2 + \eta_2^2 )}{ (\eta_1^2+4\kappa^2) (\eta_2^2+4\kappa^2) ((\eta_1+\eta_2)^2+4\kappa^2) } - 6 \right] 
	\label{eq:periodic L2 conv 5}\\
	&\phantom{={}}+ \sum_{\eta_1,\eta_2} \left[ \sum_{\eta_3} F(\eta_1,\eta_2,\eta_3) - \frac{1}{2\pi} \int_{\R} F(\eta_1,\eta_2,\eta_3) \deta_3 \right] .
	\label{eq:periodic L2 conv 6}
	\end{align}
	Here, all summations are over $2\pi\Z$ and the integrand is given by
	\begin{equation*}
	F(\eta_1,\eta_2,\eta_3)
	:= \frac{ 16\kappa^5 \hat{u}(-\eta_1-\eta_2) \big[ \wh{V'_\kappa} (\eta_2) \wh{V_\kappa}(\eta_1) + \wh{V_\kappa} (\eta_2) \wh{V'_\kappa}(\eta_1) \big] }{ (\eta_3^2+\kappa^2) ((\eta_3-\eta_2)^2+\kappa^2) ((\eta_3-\eta_1-\eta_2)^2+\kappa^2) } .
	\end{equation*}
	
	The upshot of our manipulation is that in~\eqref{eq:periodic L2 conv 5} the $O(1)$ term as $\kappa\to\infty$ cancels out, and we are left with
	\begin{equation*}
	\left| \frac{32\kappa^4( 12\kappa^2+\eta_1^2 + \eta_1\eta_2 + \eta_2^2 )}{ (\eta_1^2+4\kappa^2) (\eta_2^2+4\kappa^2) ((\eta_1+\eta_2)^2+4\kappa^2) } - 6 \right|
	\lesssim \frac{\eta_1^2 + \eta_2^2}{\kappa^2} .
	\end{equation*}
	Absorbing $\eta_1^2$ and $\eta_2^2$ as derivatives on $V_\kappa$, we put $u$ and the copy of $V_\kappa$ with the most derivatives in $\ell^2$ and estimate
	\begin{align*}
	| \eqref{eq:periodic L2 conv 5} |
	&\lesssim \kappa^{-2} \sup_{i,j\in\{0,2\}} \sum_{\eta_1,\eta_2} \left| \hat{u}(-\eta_1-\eta_2) \wh{V_\kappa^{(i)}}(\eta_1) \wh{V^{(j+1)}_\kappa}(\eta_2) \right| \\
	&\leq \kappa^{-2} \sup_{i,j\in\{0,2\}} \norm{ u }_{L^2(\T)} \snorm{ V_\kappa^{(j+1)} }_{L^2(\T)} \sum_{\eta_2} |\wh{V_\kappa^{(i)}}(\eta_2)| \\
	&\lesssim \kappa^{-2} \norm{V_\kappa}_{H^3(\T)}^2 P^{1/2} .
	\end{align*}
	In the last inequality, we used Cauchy--Schwarz to estimate
	\begin{equation*}
	\sum_{\eta_2} |\wh{V_\kappa}(\eta_2)|
	\leq \bigg( \sum_{\eta_2} ( 1+\eta_2^2 )^{-1} \bigg)^{\frac{1}{2}} \bigg( \sum_{\eta_2}  ( 1+\eta_2^2 ) |\wh{V_\kappa}(\eta_2)|^2 \bigg)^{\frac{1}{2}}
	\lesssim \norm{V_\kappa}_{H^1(\T)} .
	\end{equation*}
	
	Next, we must check that the remainder~\eqref{eq:periodic L2 conv 6} yields an acceptable contribution, which is due to the smoothness of the integrand $F$.  First, we will bound the trapezoid rule error term
	\begin{equation*}
	E_n(h) := \tfrac{1}{2} h [ F(\eta_1,\eta_2,a_n) + F(\eta_1,\eta_2,a_n+h) ] - \int_{a_n}^{a_n+h} F(\eta_1,\eta_2,\eta_3)\deta_3
	\end{equation*}
	for arbitrary $a_n\in\R$, $n\in\Z$.  It is easily checked that $E_n(0) = E'_n(0) = 0$, and so
	\begin{align*}
	|E_n(h)|
	= \left| \int_0^h \int_0^t E''_n(s) \ds\dt \right|
	&= \left| \int_0^h \int_0^t \tfrac{1}{2}s\,\partial^2_{\eta_3} F(\eta_1,\eta_2,a_n+s)\ds\dt \right|  \\
	&\leq \tfrac{1}{12} h^3 \norm{ \partial^2_{\eta_3} F }_{L^\infty_{\eta_3}([a_n,a_n+h])} .
	\end{align*}
	Therefore, setting $a_n = 2\pi n$ and $h=2\pi$ we have
	\begin{align*}
	&\left| \pi [ F(\eta_1,\eta_2,2\pi n) + F(\eta_1,\eta_2,2\pi(n+1)) ] - \int_{2\pi n}^{2\pi(n+1)} F(\eta_1,\eta_2,\eta_3)\deta_3 \right| \\
	&\leq \tfrac{1}{12} (2\pi)^3 \norm{ \partial^2_{\eta_3} F }_{L^\infty_{\eta_3}([2\pi n,2\pi(n+1)])}
	\end{align*}
	for all $n\in\Z$.  Each $\eta_3$ derivative applied to $F$ introduces one order of decay in $|\eta_3|$.  The term $(\eta_3^2+\kappa^2)^{-1}$ is bounded by the summable sequence $(n^2+1)^{-1}$, and every other order of decay in $|\eta_3|$ yields a factor of $\kappa^{-1}$.  Altogether we estimate
	\begin{equation*}
	\norm{ \partial^2_{\eta_3} F }_{L^\infty_{\eta_3}([2\pi n,2\pi(n+1)])}
	\lesssim \frac{\big| \hat{u}(-\eta_1-\eta_2) \big[ \wh{V'_\kappa} (\eta_2) \wh{V_\kappa}(\eta_1) + \wh{V_\kappa} (\eta_2) \wh{V'_\kappa}(\eta_1) \big] \big| }{ \kappa (n^2+1) } .
	\end{equation*}
	Summing over $n\in\Z$, the trapezoid rule error estimate yields
	\begin{align*}
	|\eqref{eq:periodic L2 conv 6}|
	&\lesssim \kappa^{-1} \sum_{\eta_1,\eta_2 \in 2\pi\Z} \left| \hat{u}(-\eta_1-\eta_2) \big[ \wh{V'_\kappa} (\eta_2) \wh{V_\kappa}(\eta_1) + \wh{V_\kappa} (\eta_2) \wh{V'_\kappa}(\eta_1) \big] \right| \\
	&\lesssim \kappa^{-1} \norm{V_\kappa}^2_{H^1} P^{1/2} .
	\end{align*}
	This is as acceptable contribution, and thus concludes the estimate of the quadratic term~\eqref{eq:periodic L2 conv 3}.
	
	Lastly we estimate the last term~\eqref{eq:periodic L2 conv 4}.  Using the operator estimates~\eqref{eq:hskappa identity 3} and~\eqref{eq:hskappa identity 4}, we have
	\begin{align*}
	|\eqref{eq:periodic L2 conv 4}|
	&\leq \kappa^5 \sum_{\ell=3}^\infty \left| \tr\{ u [\partial, R_0(V_\kappa R_0)^\ell ] \} \right| \\
	&\lesssim \kappa^4 \norm{ u }_{H^{-1}_\kappa(\T)} \norm{ V'_\kappa }_{H^{-1}_\kappa(\T)} \sum_{\ell=3}^\infty \big( \kappa^{-1/2}\norm{ V_\kappa }_{H^{-1}_\kappa(\T)} \big)^{\ell-1} \\
	&\lesssim \kappa^{-1} \norm{V_\kappa}_{L^2(\T)}^2 \norm{V'_\kappa}_{L^2(\T)} P^{1/2} .
	\end{align*}
	This concludes the estimate of $\ddt P(t)$ and hence the proof of \cref{thm:periodic L2 conv}.
\end{proof}

We are now prepared to prove \cref{thm:intro periodic}:
\begin{cor}
	\label{thm:periodic gwp}
	Given a background wave $V(0) \in H^5(\T)$, the KdV equation~\eqref{eq:kdv} with initial data $u(0) \in V(0) + H^{-1}(\R)$ is globally well-posed.
\end{cor}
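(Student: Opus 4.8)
The plan is to show that the KdV solution $V(t)$ with initial data $V(0)\in H^5(\T)$ is admissible in the sense of \cref{thm:hyp} and then to invoke \cref{thm:wellposed 2}. Condition~(i) follows from classical well-posedness of KdV on the circle: since $V(0)\in H^5(\T)\subset H^3(\T)$, the solution $V(t)$ stays bounded in $H^3(\T)$ for $|t|\le T$, and the Sobolev embedding $H^3(\T)\hookrightarrow W^{2,\infty}(\T)$ together with periodic extension gives a uniform bound in $W^{2,\infty}(\R)$. Condition~(ii) is immediate from the machinery of this section: \cref{thm:periodic hk gwp} produces the flows $V_\kappa(t)\in C_tH^5([-T,T]\times\T)$ for $\kappa$ large, \cref{thm:periodic a priori} with $s=5$ bounds them in $H^5(\T)$ uniformly for $|t|\le T$ and $\kappa$ large, and $H^5(\T)\hookrightarrow W^{4,\infty}(\T)$ then yields the uniform $W^{4,\infty}(\R)$ bound.

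The substantive point is condition~(iii). First I would fix $\kappa_0$ as in \cref{thm:periodic hk gwp} and note that, by the \textit{a priori} estimates of \cref{thm:periodic a priori}, the flow-out set $\{V_\varkappa(s):|s|\le T,\ \varkappa\ge\kappa_0\}$ lies in a ball $B_A\subset H^5(\T)$ of some radius $A$; since for $\kappa\ge\kappa_0$ the set $\{V_\varkappa(s):|s|\le T,\ \varkappa\ge\kappa\}$ only shrinks, it suffices to establish the convergence uniformly over initial data in this fixed ball $B_A$ (enlarging $\kappa_0$ once more so that \cref{thm:periodic a priori,thm:periodic hk gwp,thm:periodic L2 conv} all apply on $B_A$). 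Given such a datum $w$, let $V_\kappa(t)$ and $V(t)$ denote its $H_\kappa$ and KdV evolutions. \Cref{thm:periodic L2 conv} gives $\left|\ddt P(t)\right|\le C\big(P+o(1)\sqrt P\big)$ as $\kappa\to\infty$, uniformly for $|t|\le T$ and $w\in B_A$, where $P(t)=\tfrac12\|V_\kappa(t)-V(t)\|_{L^2(\T)}^2$; setting $Q=\sqrt{P+\delta}$, dividing, applying Gr\"onwall's inequality, and letting $\delta\downarrow0$ converts this differential inequality (with $P(0)=0$) into $\sqrt{P(t)}\lesssim_T o(1)$, i.e.\ $V_\kappa(t)-V(t)\to0$ in $L^2(\T)$ as $\kappa\to\infty$ uniformly for $|t|\le T$ and $w\in B_A$. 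To upgrade this to $W^{2,\infty}$, I would interpolate: $V_\kappa(t)$ is bounded in $H^5(\T)$ by \cref{thm:periodic a priori} and $V(t)$ is bounded in $H^5(\T)$ by well-posedness of KdV, so $V_\kappa(t)-V(t)$ is bounded in $H^5(\T)$ and tends to $0$ in $L^2(\T)$; the inequality $\|f\|_{H^3}\lesssim\|f\|_{L^2}^{2/5}\|f\|_{H^5}^{3/5}$ then forces $V_\kappa(t)-V(t)\to0$ in $H^3(\T)\hookrightarrow W^{2,\infty}(\T)\subset W^{2,\infty}(\R)$. This verifies condition~(iii), so $V$ is admissible and \cref{thm:wellposed 2} applies.

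I expect the only delicacy is the bookkeeping in the previous paragraph: recognizing that the flow-out set appearing in \cref{thm:hyp}~(iii) is $H^5(\T)$-bounded, so that \cref{thm:periodic L2 conv,thm:periodic a priori} may be applied uniformly across it, and observing that Gr\"onwall upgrades the differential inequality of \cref{thm:periodic L2 conv} to genuine $L^2(\T)$-decay (not merely boundedness) precisely because $P(0)=0$ and the forcing coefficient is $o(1)$. Everything else is a citation of the results already assembled in this section.
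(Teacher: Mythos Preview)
Your proposal is correct and follows essentially the same approach as the paper: verify the three admissibility conditions of \cref{thm:hyp} (with (i) from classical KdV well-posedness on $\T$, (ii) from \cref{thm:periodic a priori,thm:periodic hk gwp}, and (iii) from \cref{thm:periodic L2 conv} plus $L^2$--$H^5$ interpolation to $H^3\hookrightarrow W^{2,\infty}$), then invoke \cref{thm:wellposed 2}. The only cosmetic difference is in the Gr\"onwall step: the paper absorbs the $o(1)\sqrt P$ term via $o(1)\sqrt P\le P+o(1)$ and applies Gr\"onwall to $P$ directly, whereas you work with $\sqrt{P+\delta}$; both reach the same conclusion.
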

\begin{proof}
	In view of \cref{thm:wellposed 2} it suffices to check that $V$ satisfies the three conditions of \cref{thm:hyp}.  Conditions (i) and (ii) are satisfied by the embedding $H^1 \hookrightarrow L^\infty$ and the \ti{a priori} estimates of \cref{thm:periodic a priori}, and so it only remains to verify condition (iii).
	
	Fix $T>0$.  By the embedding $H^1 \hookrightarrow L^\infty$, it suffices to show that $V_\kappa - V$ converges to zero in $C_tH^3([-T,T]\times \T)$ uniformly for initial data in the fixed set $\{ V_\varkappa(t) : |t|\leq T,\ \varkappa\geq \kappa_0 \}$.  By \cref{thm:periodic hk gwp}, we may pick the constant $\kappa_0$ so that the $H_\varkappa$ flows $\{ V_\varkappa(t) : |t|\leq T,\ \varkappa\geq \kappa_0 \}$ are contained in a ball $B_A \subset H^5(\T)$ of radius $A>0$, and so that the $H_\kappa$ flows are well-posed on $B_A$ for $\kappa\geq\kappa_0$.  From \cref{thm:periodic L2 conv} and the observation $o(1)\sqrt{P} \leq P + o(1)$, we have
	\begin{equation*}
	\left| \ddt P(t) \right| \leq C ( P + o(1) )
	\quad\tx{as }\kappa\to\infty
	\end{equation*}
	uniformly for $|t|\leq T$ and initial data in $B_A$.  Gr\"onwall's inequality then yields
	\begin{equation*}
	\tfrac{1}{2} \norm{ V_\kappa - V }_{L^2(\T)}^2 = P(t) \leq e^{CT} P(0) + o(1) ( e^{CT} - 1 )
	\end{equation*}
	uniformly for $|t|\leq T$ and initial data in $B_A$.  As $P(0) = 0$ by definition, we conclude
	\begin{equation*}
	\norm{ V_\kappa - V }_{C_tL^2([-T,T]\times\T)} \to 0
	\quad\tx{as }\kappa\to\infty
	\end{equation*}
	uniformly for initial data in $B_A$.
	
	Using H\"older's inequality in Fourier variables, we have
	\begin{equation*}
	\norm{ f }_{H^3(\T)} \leq \norm{f}^{2/5}_{L^2(\T)} \norm{f}^{3/5}_{H^5(\T)} .
	\end{equation*}
	By the \ti{a priori} estimates of \cref{thm:periodic a priori}, $V_\kappa$ is bounded in $C_tH^5([-T,T]\times\T)$ uniformly for $\kappa$ large and initial data in $B_A$.  Therefore, applying the above inequality to $V_\kappa - V$, we conclude that $V_\kappa-V\to 0$ in $C_tH^3([-T,T]\times \T)$ uniformly for initial data in the smaller set $\{ V_\varkappa(t) : |t|\leq T,\ \varkappa\geq \kappa_0 \} \subset B_A$.
\end{proof}

\bibliography{kdv_gwp_nonzero_asymp}

\end{document}